\numberwithin{equation}{section}
\let\pa=\partial
\def\curl{\mathop{\rm curl}\nolimits}
\newcommand{\beq}{\begin{equation}}
\newcommand{\eeq}{\end{equation}}
\newcommand{\ben}{\begin{eqnarray}}
\newcommand{\een}{\end{eqnarray}}
\newcommand{\beno}{\begin{eqnarray*}}
\newcommand{\eeno}{\end{eqnarray*}}
\newtheorem{Theorem}{Theorem}[section]
\newtheorem{lemma}[Theorem]{Lemma}
\newtheorem{proposition}[Theorem]{Proposition}
\newtheorem{remark}[Theorem]{Remark}
\newtheorem{Proposition}[Theorem]{Proposition}
\newtheorem{Corollary}[Theorem]{Corollary}
\begin{document}
\begin{CJK*}{GBK}{song}
\title[Dynamics near Couette flow for the $\beta$-plane equation]{\textbf {Dynamics near Couette flow for the $\beta$-plane equation }}
%\begin{center}
%\textcolor[rgb]{0,0,1}{what are known}\\
%\textcolor[rgb]{1,0,0}{what we may do better}\\
%\textcolor[rgb]{0,1,0}{where I have a doubt}\\
%\end{center}

\author{Luqi Wang}
\address{School of Mathematical Science, Peking University, 100871, Beijing, P. R. China}
\email{wangluqi@pku.edu.cn}

\author{Zhifei Zhang}
\address{School of Mathematical Science, Peking University, 100871, Beijing, P. R. China}
\email{zfzhang@math.pku.edu.cn}

\author{Hao Zhu}
\address{Department of Mathematics, Nanjing University,  210093, Nanjing, Jiangsu, P. R. China}
\email{haozhu@nju.edu.cn}

\date{\today}

\maketitle
\begin{abstract}
 In this paper, we study stationary structures near the planar Couette flow in Sobolev spaces on a channel $\mathbb{T}\times[-1,1]$, and asymptotic behavior of Couette flow in Gevrey spaces on $\mathbb{T}\times\mathbb{R}$ for the $\beta$-plane equation. Let $T>0$ be the horizontal period of the channel and $\alpha={2\pi\over T}$ be the wave number. We obtain a sharp region $O$ in the whole $(\alpha,\beta)$ half-plane such that non-parallel steadily traveling waves do not exist for $(\alpha,\beta)\in O$ and such traveling waves exist for $(\alpha,\beta)$ in the remaining regions, near Couette flow for $H^{\geq5}$ velocity perturbation. The borderlines between the region $O$ and its remaining are determined by two curves of the principal eigenvalues of singular Rayleigh-Kuo operators. Our results reveal that there exists $\beta_*>0$ such that if $|\beta|\leq \beta_*$, then non-parallel traveling waves do not exist for any $T>0$, while if $|\beta|>\beta_*$, then there exists a critical period $T_\beta>0$ so that such traveling waves exist for $T\in \left[T_\beta,\infty\right)$ and do not exist for $T\in \left(0,T_\beta\right)$, near Couette flow for $H^{\geq5}$ velocity perturbation. This contrasting dynamics plays an important role in studying the long time dynamics near Couette flow with Coriolis effects. Moreover, for any $\beta\neq0$ and $T>0$,  there exist no non-parallel traveling waves with speeds converging in $(-1,1)$ near Couette flow for $H^{\geq5}$ velocity perturbation, in contrast to this, we construct non-shear stationary solutions near Couette flow for $H^{<{5\over2}}$ velocity perturbation, which is a generalization of Theorem 1 in [22] but the construction is more difficult due to the $\beta$'s term. Finally, we prove nonlinear inviscid damping for Couette flow in some Gevrey spaces by extending the method of [4] to the $\beta$-plane equation on $\mathbb{T}\times\mathbb{R}$.
 \if0
 In this paper, we study  stationary structures near the planar Couette flow in  Sobolev spaces on a channel $\mathbb{T}\times[-1,1]$, and  asymptotic behavior  of  Couette flow in Gevrey spaces on $\mathbb{T}\times\mathbb{R}$ for the $\beta$-plane equation.
Let $T>0$ be the horizontal period of  the channel and $\alpha={2\pi\over T}$ be the wave number. We obtain a sharp region $O$ in the whole $(\alpha,\beta)$ half-plane such that non-parallel steadily
 traveling waves do not exist for $(\alpha,\beta)\in O$ and such traveling waves indeed exist for $(\alpha,\beta)$ in the remaining regions, near Couette flow for $H^{\geq5}$ velocity perturbation. The borderlines between the region $O$ and its remaining are determined by
two curves of the principal eigenvalues of singular Rayleigh-Kuo operators.
 Our results reveal that there exists $\beta_*>0$ such that if $|\beta|\leq \beta_*$, then
non-parallel steadily
 traveling waves  do not exist  for any   $T>0$, while if $|\beta|>\beta_*$, then there exists a critical period  $T_\beta>0$ so that
such traveling waves  exist for   $T\in \left[T_\beta,\infty\right)$ and
 do not exist for   $T\in \left(0,T_\beta\right)$, near Couette flow for $H^{\geq5}$ velocity perturbation.
 This contrasting dynamics  plays an important role in studying the long time dynamics near Couette flow with Coriolis effects.
Moreover,
for any $\beta\neq0$ and  $T>0$, we prove that there  exist no  non-parallel traveling waves with traveling speeds converging in $(-1,1)$ near Couette flow for $H^{\geq5}$ velocity perturbation, in contrast to this,  we construct non-shear stationary solutions near Couette flow for  $H^{<{5\over2}}$ velocity  perturbation, which is a generalization of Theorem 1 in \cite{LZ} but the construction is more difficult due to the $\beta$'s term.
Finally, we prove  nonlinear inviscid damping for Couette flow in some Gevrey spaces by extending the method of \cite{BM} to the $\beta$-plane equation on  $\mathbb{T}\times\mathbb{R}$.
\fi
\end{abstract}
\section{Introduction}

%%%%%%%%% do not write in this way
\begin{comment}
Consider the incompressible inviscid fluid in a channel $\{ (x,y)|-1\leq y\leq 1\}$, satisfying the 2D $\beta-$plane equation
\ben
\left\{
\begin{aligned}
\pa_t u+u\pa_xu+v\pa_y u=-\pa_x P+\beta y v\\
\pa_t v+u\pa_x v+v\pa_y v=-\pa_y P-\beta y u
\end{aligned}
\right.
\een
with the incompressibility condition
\ben\label{ic}
\pa_x u+\pa_y v=0
\een
and the boundary conditions
\ben\label{bc}
v=0 \quad \textrm{on} \quad \{y=-1\} \quad \textrm{and} \quad \{y=1\}.
\een
Here $\vec{u}=(u,v)$ is the fluid velocity and $P$ is the pressure. Define the vorticity $\omega=u_y-v_x$, the $\omega$ satisfies the equation
\beno
\omega_t+u\omega_x+v\omega_y+\beta v=0
\eeno
\end{comment}
%%%%%%%%%%
Dynamics of oceans and planetary atmospheres is one of the central topics in geophysical fluid dynamics.
In the study of such large-scale motion in a rotating frame, it is reasonable to include  Coriolis force  to be geophysically relevant.
A common model for large-scale motion is described by
the $\beta$-plane equation
\ben\label{eq}
\pa_t \vec{v}+(\vec{v}\cdot \nabla)\vec{v}=-\nabla P-\beta y J \vec{v}, \quad  \nabla\cdot \vec{v}=0,
\een
where $\vec{v}=(u,v)$ is the fluid velocity, $P$ is the pressure,
\beno
J=\left(
\begin{aligned}
0&&-1\\
1&&0
\end{aligned}
\right)
\eeno
is the rotation matrix, and $\beta$ is the Coriolis parameter.
Then the vorticity  $\omega=\curl \vec{v}=\pa_x v-\pa_y u$ solves
\ben\label{vor-eq}
\pa_t \omega+(\vec{v}\cdot \nabla)\omega+\beta v=0.
\een
We will work on the domain
$
D_T=\mathbb{T}\times [-1,1]
$
with non-permeable boundary condition
\ben\label{bc}
v=0 \quad \textrm{on} \quad y=\pm 1,
\een
where $\mathbb{T}=\mathbb{R}/(T\mathbb{Z})$.

A shear flow is a steady solution of \eqref{vor-eq}. The Couette flow $(y,0)$ is one of the simplest laminar flows.  We are interested in the long time dynamics near Couette flow for the $\beta$-plane equation. For $\beta=0$, it is known that  nonlinear inviscid damping is true if the perturbation is taken in a suitable Gevrey space \cite{BM,Ionescu-Jia20}.
In the Sobolev space $H^{<{5\over2}}$ for velocity perturbation,
richer dynamics around Couette flow was found by constructing non-shear steady states (and traveling waves) near Couette, and on the other hand, relatively simpler dynamics for $H^{>{5\over2}}$ velocity perturbation was obtained by proving non-existence of non-parallel steadily traveling waves \cite{LZ, CL}.
 %Traveling waves $\vec{v}(x-ct,y)$ with traveling speed $c\neq0$ is further to be constructed near Couette flow in $H^{<{3\over2}}$ \cite{CL}.

In this paper, we study whether similar results are true in Gevrey and Sobolev spaces respectively for $\beta\neq0$.
Our main results roughly state that if the perturbation is taken in a suitable Gevrey space or in the (velocity) Sobolev space $H^{<{5\over2}}$, similar results are true for $\beta\neq0$, while if the velocity perturbation is considered in the Sobolev space $H^{\geq{5}}$,
it turns out that the situation is very different from the case $\beta=0$. In fact, there exists  $\beta_*>0$ such that similar results are still  true for $0<|\beta|\leq\beta_*$. The difference is for the case $|\beta|>\beta_*$, namely, there exists a critical period  $T_\beta>0$ such that   traveling waves always exist for $T\in[T_\beta,\infty)$ near Couette flow  no matter how much regularity is required  and traveling waves do not exist for $T\in(0,T_\beta)$, where $T_\beta={2\pi\over\alpha_\beta}$ is given in Theorem \ref{thm-non-shear-nodelta2}.

In the $\beta$-plane model, the $\beta$'s term  brings some fundamental changes to the internal structure of \eqref{vor-eq} near a shear flow on $
\mathbb{T}\times [-1,1]
$. This induces  new long time dynamical behavior around a shear flow, which is  useful in understanding  the various  large-scale physical  phenomena in atmospheres and oceans. Let us now explain how the $\beta$'s term in \eqref{vor-eq}  influences the spectrum of the linearized operator  and  dynamics around a shear flow.
By the incompressible condition, we can introduce
 the stream function $\psi$ such that $\vec{v}=(\pa_y\psi,-\pa_x\psi)$. The
linearized equation of \eqref{vor-eq} around a shear flow $(u(y),0)$ is
\begin{align*}
\partial_{t}\Delta\psi+u\partial_{x}\Delta\psi+(\beta-u^{\prime\prime})\partial_x\psi=0.
\end{align*}
Taking Fourier transform in $x$, we have
$
 (\partial^2_y-\alpha^2)\partial_t\widehat{\psi}=i\alpha((u''-\beta)-u(\partial^2_y-\alpha^2))\widehat{\psi},
$
 where $\alpha={2\pi\over T}$ is the wave number.
For  $\beta\in \mathbb{R}$,  the linearized  operator is given by
\begin{align}\label{linearized Euler operator}
\mathcal{R}_{\alpha,\beta}\widehat{\psi}:=-(\partial^2_y-\alpha^2)^{-1}((u''-\beta)-u(\partial^2_y-\alpha^2))\widehat{\psi}.
\end{align}
 The essential spectrum $\sigma_e(\mathcal{R}_{\alpha,\beta})=\text{Ran} (u)$, and
  $c\in \sigma_d(\mathcal{R}_{\alpha,\beta})$ (the discrete spectrum) if and only if its corresponding eigenfunction $\psi_c$ satisfies the
Rayleigh-Kuo boundary value problem (BVP):
\begin{align}\label{sturm-Liouville}
\mathcal{L}_{\alpha,\beta}\phi:=-\phi''+{u''-\beta\over u-c}\phi=\lambda\phi, \;\;\;\;\phi(\pm1)=0,
\end{align}
where $\phi\in H_0^1\cap H^2(-1,1)$ and $\lambda=-\alpha^2$.
  An important difference is that
\begin{align*}&\sigma_d(\mathcal{R}_{\alpha,\beta})\cap\mathbb{R}=\emptyset \text{ if }\beta=0,\\
 &\sigma_d(\mathcal{R}_{\alpha,\beta})\cap\mathbb{R}\neq\emptyset \text{ if }\beta\neq0
 \end{align*}
 in general \cite{LYZ}. For example, if $|\beta|$ is sufficiently large, then  $\sigma_d(\mathcal{R}_{\alpha,\beta})\cap\mathbb{R}\neq\emptyset$ for  some wave numbers. Reflected on the dynamical behavior near the shear flow,
 this  difference of the linearized operators' spectrum
  brings new non-parallel steady traveling wave families, with traveling
speeds converging outside the range of the flow, near the shear flow for $\beta\neq0$, while
 no such traveling wave families exist for $\beta=0$ \cite{LWZZ}. This implies that the  long time dynamics near a shear flow is richer for $\beta\neq0$.  In fact, the long time dynamics near a shear flow  might be much complicated due to the  $\beta$'s term. Taking Sinus flow for example,
there are
infinitely many such traveling wave families  for $\beta<-{1\over 2}\pi^2$ or $\beta>{9\over 16}\pi^2$ and any horizontal period  \cite{LWZZ}.

%If we neglect the $\beta$'s term in \eqref{eq}, the equation becomes the standard incompressible 2D Euler equation.

Lyapunov stability  is a classical issue    in the context of hydrodynamics for  general stationary flows.
 For shear flows,
 Rayleigh \cite{Rayleigh}   proved that   a necessary condition for linear instability is that $u$ has an inflection point for $\beta=0$. Kuo \cite{Kuo} extended  the necessary condition to $\beta\neq0$ that  $\beta-u^{\prime\prime}$ must change sign.
 Howard \cite{Howard1961} proved that the unstable eigenvalues must lie in a semicircle region, which is called the Howard semicircle theorem for $\beta=0$. Pedlosky \cite{Pedlosky1963} extended  the radius of the  semicircle  by
${|\beta|\over\alpha^2}$ for $\beta \neq 0$.
 By introducing the energy-Casimir functional,
Arnol'd proved nonlinear Lyapunov stability for a class of   stationary flow by  showing that it  is a minimizer or a maximizer of the functional \cite{Arnold1,Arnold2}.  The so-called  energy-Casimir method has been developed in \cite{Wolansky-Ghil,Lin2} and also extended to many other physical models,  such as
the quasi-geostrophic equations for planetary-scale rotating flows \cite{Blumen,Sakuma-Ghil}.
  The index formula developed by Lin and Zeng \cite{Lin-Zeng} provides a useful tool to study the sufficient conditions for linear instability of stationary flows, as well as to count the number of unstable modes, if the linearized equation has a Hamiltonian structure.

Our concern is the long time dynamics around Couette flow, and  this problem  has become  a topic of interest for $\beta=0$ and attracted the attention of many mathematicians.  In the earlier works,
Kelvin \cite{Kelvin1887} gave the  construction of exact solutions to the linearized problem near Couette flow.
Orr \cite{Orr} observed the decay of velocity for the linearized equation around Couette flow.
Lin and Zeng \cite{LZ} confirmed the linear inviscid damping around Couette flow for $L^2$ vorticity perturbation.
For the nonlinear equation, as mentioned  above, they found Cat's eyes flow  near Couette   for $H^{<{3\over2}}$ vorticity perturbation, and proved that  non-parallel steadily  traveling waves do not exist for $H^{>{3\over2}}$ perturbation. Similar results were obtained for the Vlasov-Poisson system \cite{LZ2}, and instability in high Sobolev spaces was established in \cite{Bedrossian1}.
Castro and Lear \cite{CL} showed the existence of nontrivial and smooth traveling waves close to Couette flow for  $H^{<\frac{3}{2}}$ vorticity perturbation  with speed of order 1.
As for the perturbation in Gevrey spaces,  Bedrossian and Masmoudi \cite{BM} proved nonlinear inviscid damping around the Couette flow  in Gevrey class $\mathcal{G}^{\lambda,1/2+}$ on $\mathbb{T}\times\mathbb{R}$. Deng and Masmoudi \cite{DM} showed that this is the critical regularity by proving the instability in Gevrey class $\mathcal{G}^{\lambda,1/2-}$.
Ionescu and  Jia \cite{Ionescu-Jia20} proved nonlinear inviscid damping  in a channel $\mathbb{T}\times[-1,1]$ under the compacted support's assumption on the initial vorticity perturbation. We also point out  some important progress on linear inviscid damping for general monotone and non-monotone  shear flows  in \cite{Zillinger2016,Zillinger2017,WZZ-mono, WZZ-shear, WZZ-Kol,Jia2020-1,Jia2020-2,GNRS}, on nonlinear inviscid damping for monotone shear flows in \cite{Ionescu-Jia2001,Masmoudi-Zhao2001}, and on nontrivial invariant structures near Kolmogorov flow and Poiseuille flow in various domains in \cite{ZEW}. It is still challenging to prove nonlinear damping for non-monotone flows.

We now turn back to   the case   $\beta\neq0$.
For  the linearized equation,
Lin, Yang and the third author gave a method to study the linear Lyapunov instability for a class of shear flows based on Hamiltonian systems and spectral analysis  of ODEs, see Subsection 3.3 in \cite{LYZ}.
Then Wei and the last two authors  gave the explicit decay rate of the velocity for a class of monotone shear flows based on  the space-time
estimate and the vector field method, as well as  proved the linear damping
for a class of general shear flows under some spectral conditions, see Theorems 1.1 and 1.2 in \cite{WZZhu}.
For  the nonlinear equation, Lin, Wei and the last two authors found some richer dynamics near a class of shear flows based on asymptotic behavior of spectrum of Rayleigh-Kuo BVP and bifurcation theory of nonlinear maps. More precisely, we proved that
 if the flow $u$ has a critical point at which $u$ attains
its minimal value, then there exists a unique  $\beta_+$  in the positive
 half-line such that the number of traveling wave families near the shear flow
changes suddenly from finite one to infinity when $\beta$ passes through it. On the other hand, if
$u$ has no such critical points, then the number is always at most  finite for positive $\beta$
values. A similar result holds true  in the negative half-line,  see Theorems 1.2 and 2.1 in \cite{LWZZ}. Here, the traveling speeds lie outside the range of the flow.
 Elgindi, Pusateri and Widmayer  took  the advantage of  dispersive operator induced by the Coriolis effect  and proved the
stability of the zero solution for  $\beta\neq0$ in \cite{EW, PW}.

%Global stability of the zero solution is
%further to be confirmed in [35]
 % we refer the readers to \cite{EW, PW} for  long time dynamics of the $\beta$-plane equation near the zero solution.

In this paper, we are interested in  the  long time nonlinear  dynamics around Couette flow for the case   $\beta\neq0$.
At first, we consider the perturbation in the Sobolev spaces.
 By Corollaries 2.2 and 2.4 in \cite{LWZZ},
it is known that there are at most finitely many traveling
wave families  near Couette flow for $H^{\geq3}$ velocity perturbation, where the traveling
speeds  converge outside the range of the  flow.
It is necessary to clarify whether
``at most finitely" implies
existence or not.
Furthermore,  if we try to understand the nontrivial stationary  structures  in any reference frame near Couette flow deeply, then  two  questions naturally arise:\smallskip

{\bf Q1.} {\it Are there  traveling waves with the traveling speeds $c$ lying  inside the range of Couette flow $[-1,1]$ near  the flow for $H^{\geq 3}$ velocity perturbation ? This is more delicate  than the case that traveling speeds lie outside $[-1,1]$, since we have to deal with the singularity in the terms involved with the factor like ``${1\over y-c}"$.}

{\bf Q2}. {\it By the method of Theorem $2$ in  \cite{LZ}, a conclusion can be essentially obtained    as follows:
for any $\delta>0$, there exists $\beta_\delta>0$ small enough such that if $|\beta|<\beta_\delta$ and the horizontal period is arbitrary, then there exist no  traveling waves near Couette flow    for $H^{\geq {5\over2}+\delta}$ velocity perturbation.
By Corollary $2.4$ in \cite{LWZZ}, if $|\beta|\gg1$, there exist  traveling waves near Couette flow for $H^{\geq3}$ velocity perturbation and for some period in $x$. Consider   $H^{\geq s_0}$ velocity perturbation near Couette flow for some $s_0\geq3$.
For any fixed  $\beta\neq0$, can we determine
 for which periods there exist  traveling waves  near  Couette flow, and for other periods  there exist no traveling waves  near the flow ?
 For any fixed horizontal period $T>0$, can we determine for which $\beta$ values there exist traveling waves  near  Couette flow,
  and for other $\beta$ values there exist no traveling waves  near  the flow ?  Here, the traveling waves always mean the non-parallel steadily ones.}

%Let $\alpha={2\pi\over T}>0$. In the whole $(\alpha,\beta)$ half-plane,
%can we determine the sharp region $O$ such that
 % no traveling waves exist for $(\alpha,\beta)\in O$ and  traveling waves indeed exist for $(\alpha,\beta)$ in the remaining regions   near Couette flow    for $H^{\geq s_0}$ velocity perturbation and $s_0\geq3$ ?

\smallskip

To answer {\bf Q1}, we have the following result.

\begin{Theorem}\label{thm-non-shear}
Let $\beta\neq0$. For any $T>0$, $s\geq5$ and $0<\delta<1$,  there exists $\varepsilon_\delta>0$ such that any traveling wave solution $(u(x-ct,y),v(x-ct,y))$  to the $\beta$-plane equation \eqref{eq}-\eqref{bc} with $c\in[-1+\delta,1-\delta]$, $x$-period $T$ and satisfying that
\beno
\|(u,v)-(y,0)\|_{H^{s}{(D_T)}}<\varepsilon_\delta,
\eeno
must have $v(x,y)\equiv 0$, that is, $(u,v)$ is necessarily a shear flow.
\end{Theorem}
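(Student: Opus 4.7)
The plan is to pass to the co-moving frame, reduce the problem to a scalar equation for the stream-function perturbation, and to invert the associated singular Rayleigh--Kuo-type operator mode by mode in $x$. In the co-moving frame the traveling-wave profile gives a steady solution; writing $(u-c,v)=(\partial_y\Psi,-\partial_x\Psi)$ with $\Psi(x,y)=\tfrac12 y^2-cy+\tilde\psi(x,y)$ and $\omega=-\Delta\Psi$, elliptic regularity together with $v|_{y=\pm1}=0$ gives $\|\tilde\psi\|_{H^{s+1}(D_T)}\lesssim\varepsilon_\delta$. The stationary $\beta$-plane equation is the Jacobian relation $\{\Psi,\omega+\beta y\}=0$, which expands to
\[
(y-c)\,\partial_x\Delta\tilde\psi + \beta\,\partial_x\tilde\psi \;=\; \partial_x\tilde\psi\,\partial_y\Delta\tilde\psi - \partial_y\tilde\psi\,\partial_x\Delta\tilde\psi.
\]
Fourier-expanding $\tilde\psi=\sum_{k\in\mathbb{Z}}\tilde\psi_k(y)e^{ik\alpha x}$ with $\alpha=2\pi/T$ and dividing by $ik\alpha$, each nonzero mode $\tilde\psi_k(y)$ satisfies the singular boundary-value problem
\[
L_{k,c,\beta}\tilde\psi_k := (y-c)\bigl(\tilde\psi_k''-(k\alpha)^2\tilde\psi_k\bigr)+\beta\tilde\psi_k \;=\; \mathcal{N}_k(\tilde\psi),\qquad \tilde\psi_k(\pm1)=0,
\]
where $\mathcal{N}_k$ is the $k$-th Fourier coefficient of the Jacobian nonlinearity on the right and is quadratic in $\tilde\psi$.

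The crux is to show that $L_{k,c,\beta}$ is boundedly invertible on a function space adapted to its interior singularity at $y=c$, uniformly in $k\neq0$, $c\in[-1+\delta,1-\delta]$ and $\beta$ on compact subsets of $\mathbb{R}\setminus\{0\}$. I would first establish kernel triviality on $H^2\cap H^1_0(-1,1)$ via a Frobenius analysis at the regular singular point $y=c$: the indicial exponents are $0$ and $1$, the $H^2$-hypothesis excludes the logarithmic branch, so any kernel element satisfies $\phi(c)=0$ and is determined on each of $[-1,c]$ and $[c,1]$ by its slope at $c$. Imposing $\phi(\pm1)=0$ then becomes two linear conditions on this single parameter; a Wronskian/Picone identity exploiting the sign change of $(y-c)$ across $c$ together with $\beta\neq0$ rules out a simultaneous vanishing. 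To upgrade qualitative triviality to a quantitative resolvent estimate, I would split $\phi(y)=\phi(c)+(y-c)\widetilde\phi(y)$ (noting that $L_{k,c,\beta}\phi|_{y=c}=\beta\phi(c)$ forces $\phi(c)$ directly from the data at $y=c$), reducing the problem to a regular degenerate ODE for $\widetilde\phi$; a standard energy argument then yields
\[
\|\phi\|_{H^2(-1,1)} \;\lesssim_\delta\; \|L_{k,c,\beta}\phi\|_{Y},
\]
where $Y$ is an appropriate weighted Sobolev space compatible with the singular structure of $L_{k,c,\beta}$.

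Once the invertibility is in hand, the assumption $s\geq5$ allows tame product estimates that give the bilinear bound $\|\mathcal{N}_k\|_Y \lesssim \|\tilde\psi\|_{H^s}\,\|\tilde\psi_k\|_{H^2}$, and combining these yields
\[
\|\tilde\psi_k\|_{H^2} \leq C_\delta\,\varepsilon_\delta\,\|\tilde\psi_k\|_{H^2},\qquad k\neq0.
\]
Choosing $\varepsilon_\delta$ small forces $\tilde\psi_k\equiv0$ for every $k\neq0$, so $\tilde\psi$ depends only on $y$ and $v=-\partial_x\tilde\psi\equiv0$; hence $(u,v)$ is a shear flow. The principal obstacle is the uniform quantitative inversion of the singular operator $L_{k,c,\beta}$ together with the compatible weighted bilinear bound on $\mathcal{N}_k$. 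The $1/(y-c)$-type singularity at the interior point $y=c\in(-1+\delta,1-\delta)$ absorbs several derivatives, which is precisely what lifts the required velocity regularity from the $H^{<5/2}$ threshold of the companion non-shear construction of this paper up to $H^{\geq5}$.
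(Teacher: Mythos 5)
Your strategy (mode-by-mode inversion of the singular operator $L_{k,c,\beta}\phi=(y-c)(\phi''-(k\alpha)^2\phi)+\beta\phi$ plus a smallness/contraction argument) is genuinely different from the paper's, which argues by contradiction along a sequence of traveling waves, normalizes the vertical velocity $\tilde v_n=v_n/\|v_n\|_{L^2}$, proves uniform $H^4$ bounds for $\tilde v_n$ using the vanishing $v_n(x,y_n(x))=0$ on the critical curve $u_n(x,y_n(x))=c_n$ (a consequence of $\beta\neq0$ via $\partial_y\omega_n+\beta\neq0$), and passes to the limit to contradict the absence of embedding eigenvalues of $\mathcal{R}_{\alpha,\beta}$ in $(-1,1)$. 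The qualitative kernel triviality you sketch is exactly that cited spectral fact and is fine to assume. The genuine gap is the quantitative step: the uniform resolvent estimate $\|\phi\|_{H^2}\lesssim_\delta\|L_{k,c,\beta}\phi\|_Y$ together with a \emph{compatible} bilinear bound $\|\mathcal{N}_k\|_Y\lesssim\varepsilon\|\tilde\psi_k\|_{H^2}$ is asserted but not established, and it is the entire difficulty of the theorem. A ``standard energy argument'' does not apply here: pairing $L_{k,c,\beta}\phi$ with $\bar\phi$ produces the indefinite form $\int(y-c)(|\phi'|^2+(k\alpha)^2|\phi|^2)\,dy$ whose weight changes sign at the interior point $y=c$, while pairing with $\bar\phi/(y-c)$ reintroduces the nonintegrable singularity $\beta|\phi|^2/(y-c)$ unless one already knows $\phi(c)=0$; there is no coercive quadratic form, and injectivity alone does not yield a bound uniform in $k$ and $c\in[-1+\delta,1-\delta]$ without a limiting-absorption-type analysis.

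Moreover, even granting such a resolvent bound, your choice to linearize at Couette forces all the shear corrections into $\mathcal{N}_k$, which therefore contains terms like $\tilde\psi_0'(y)\,(\tilde\psi_k''-(k\alpha)^2\tilde\psi_k)$. These are (i) unbounded in $k$ unless the inverse of $L_{k,c,\beta}$ gains two powers of $k\alpha$, and (ii) supported right at $y=c$, where the space $Y$ must carry the structural constraints of $\mathrm{Ran}(L_{k,c,\beta})$ (for instance $\phi(c)=g(c)/\beta$ and the absence of the logarithmic branch). Controlling such products near the critical layer is precisely what the paper's long chain of estimates \eqref{tilde vn-over-un-L2}--\eqref{vn-H4-bound} accomplishes, and it does so only by exploiting that the true critical curve $y_n(x)$ of the perturbed flow --- not the fixed point $y=c$ --- is where the vertical velocity vanishes, together with the integral representation \eqref{vn-un-int} and Gagliardo--Nirenberg interpolation. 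In your formulation the mismatch between the singularity of the frozen operator at $y=c$ and the actual critical layer at $y_c(x)=c+O(\varepsilon)$ is buried in $\mathcal{N}_k$, and no mechanism is offered for why the resulting quotient-type singular terms are small in $Y$. Until the space $Y$ is specified and both the uniform inversion and the bilinear estimate are proved in it, the contraction $\|\tilde\psi_k\|_{H^2}\leq C_\delta\varepsilon\|\tilde\psi_k\|_{H^2}$ (which should in any case be stated for the full nonzero-mode norm, since the modes are coupled by convolution) does not follow.
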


\begin{remark}
$[-1+\delta,1-\delta]$ in Theorem $\ref{thm-non-shear}$ can not be extended to the whole range $[-1,1]$, since there exist traveling waves with traveling speeds converging to $\pm1$,   see Theorem $\ref{thm-non-shear-nodelta2}\; (2\rm{i})$ for  $|\beta|>\beta_*$ and $T={T_\beta}$.
% $\beta>\beta_*$, $\alpha=\sqrt{-\lambda_1(\beta,-1)}$ or $\beta<-\beta_*$, $\alpha=\sqrt{-\lambda_1(\beta,1)}$. (Can we write as ?)
\end{remark}

 Let
\begin{align}\label{lambda-beta}
\lambda_1(\beta,-1)=\inf_{\phi\in H_0^1(-1,1),\|\phi\|_{L^2(-1,1)}=1}\int_{-1}^1\left(|\phi'|^2-{\beta\over y+1}|\phi|^2\right)dy
\end{align}
be the principal eigenvalue of the singular Rayleigh-Kuo BVP \eqref{sturm-Liouville} with $u(y)=y$, $c=-1$. The properties of $\lambda_1(\beta,-1)$ is given in Section 3. Next, we give a positive answer to {\bf Q2}.
\begin{Theorem}\label{thm-non-shear-nodelta2}
Let $s\geq5$ and $\beta_*>0$ be the unique point such that $\lambda_1(\beta,-1)=0$, where $\lambda_1(\beta,-1)$ is defined in \eqref{lambda-beta}. %the principal eigenvalue of the Rayleigh-Kuo BVP with $c=-1$.

$(1)$ Let $0<|\beta|\leq\beta_*$. For any $T>0$, there exists $\varepsilon_0>0$ such that any traveling wave solution  $(u(x-ct,y),v(x-ct,y))$ to the $\beta$-plane equation $\eqref{eq}$-$\eqref{bc}$ with $c\in\mathbb{R}$, $x$-period $T$ and satisfying that
\beno
\|(u,v)-(y,0)\|_{H^{s}{(D_T)}}<\varepsilon_0,
\eeno
must have $v(x,y)\equiv 0$, that is, $(u,v)$ is necessarily a shear flow.

$(2)$
Let $|\beta|>\beta_*$, $\alpha_\beta=\sqrt{-\lambda_1(|\beta|,-1)}>0$ and $T_\beta={2\pi\over \alpha_\beta}$.
 \begin{itemize}
 \item[(2i)]
Fix $T\in\left[T_\beta,\infty\right)$. Then  for any $\varepsilon>0$, there exists a traveling wave solution  $(u_\varepsilon(x-c_\varepsilon t,y),v_\varepsilon(x-c_\varepsilon t,y))$ to the $\beta$-plane equation $\eqref{eq}$-$\eqref{bc}$ with $x$-period $T$ and satisfying that
\beno
\|(u_\varepsilon,v_\varepsilon)-(y,0)\|_{H^{s}{(D_T)}}<\varepsilon,
\eeno
but $v_\varepsilon(x,y)\not\equiv 0$. As $\varepsilon\to0,$  we have $c_\varepsilon\to-1$ for $\beta>\beta_*$ and $T=T_\beta$;
$c_\varepsilon\to c_0\in(-\infty,-1)$ for $\beta>\beta_*$ and $T>T_\beta$; $c_\varepsilon\to1$ for $\beta<-\beta_*$ and $T=T_\beta$;  $c_\varepsilon\to c_0\in(1,\infty)$ for $\beta<-\beta_*$ and $T>T_\beta$.
\item[(2ii)]
Fix $T\in\left(0,T_\beta\right)$. Then similar conclusion  in $(1)$ holds true.
\end{itemize}
Moreover, $\alpha_\beta$ is continuous and increasing on $\beta\in[\beta_*,\infty)$, and $\alpha_\beta\to\infty$ as $\beta\to\infty$.
\end{Theorem}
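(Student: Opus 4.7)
The plan is to reduce the whole theorem to a spectral analysis of the (possibly singular) Rayleigh--Kuo operator $\mathcal{L}_{\alpha,\beta}$ of \eqref{sturm-Liouville} with $u(y)=y$, parametrized by the traveling speed $c\in\mathbb{R}$, combined with bifurcation theory. In the co-moving frame, every non-parallel steady traveling wave of speed $c\notin[-1,1]$ satisfies a Bernoulli-type relation $\omega+\beta y=F(\psi-\tfrac{(y-c)^2}{2})$ with $F$ determined by the Couette background, giving $F'(\cdot)=\beta/(y-c)$; projected on a Fourier mode $\cos(\alpha x)$, its linearization reads exactly $-\phi''-\tfrac{\beta}{y-c}\phi=-\alpha^2\phi$ on $(-1,1)$ with $\phi(\pm1)=0$. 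The question thus becomes: for which $(\alpha,\beta)$ does $-\alpha^2$ coincide with the principal eigenvalue $\lambda_1(\beta,c)$ of this BVP, where $\lambda_1(\beta,c)$ extends \eqref{lambda-beta} by replacing $-1$ by $c$.

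The first step is to catalogue the structural properties of $c\mapsto\lambda_1(\beta,c)$ on $\mathbb{R}\setminus(-1,1)$ (presumably developed in Section 3). Differentiating the Rayleigh quotient in $c$ yields strict monotonicity on each component, together with $\lambda_1(\beta,c)\to\pi^2/4$ as $|c|\to\infty$ and $\lambda_1(\beta,c)\to\lambda_1(\beta,\pm1)$ as $c\to\pm1$ from outside. A sign check shows that $\lambda_1(\beta,c)$ can drop below zero only for $\beta>0,\,c\leq-1$ or $\beta<0,\,c\geq 1$, and on this relevant branch its range is exactly $[\lambda_1(|\beta|,-1),\pi^2/4)$. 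Hence $-\alpha^2$ lies in the range iff $\alpha\leq\alpha_\beta$. In particular, for $|\beta|\leq\beta_*$ the range lies in $[0,\pi^2/4)$ and never meets the negative number $-\alpha^2$, while for $|\beta|>\beta_*$ it meets $-\alpha^2$ precisely when $T\geq T_\beta$.

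For the non-existence parts (1) and (2ii) the uniform spectral gap $\lambda_1(\beta,c)+\alpha^2\geq\eta>0$ on $\mathbb{R}\setminus(-1,1)$ delivers uniform invertibility of the linearized stationary operator on each nonzero $x$-mode. A Lyapunov--Schmidt reduction of the full nonlinear stationary equation for the stream-function perturbation $\tilde\psi$ then yields a uniform $\varepsilon_0>0$ forcing $\tilde\psi\equiv 0$ whenever the speed lies in $\mathbb{R}\setminus(-1,1)$. The complementary interior range $c\in[-1+\delta,1-\delta]$ is covered by Theorem \ref{thm-non-shear}; the two regimes are glued by shrinking $\delta$ against $\varepsilon$, using Hardy and the $s\geq 5$ Sobolev regularity to dominate the near-boundary singular factor.

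For the existence part (2i), the intermediate value theorem furnishes $c_0\leq -1$ (for $\beta>\beta_*$) with $\lambda_1(\beta,c_0)=-\alpha^2$ whenever $\alpha\leq\alpha_\beta$, and strict monotonicity guarantees transversality $\partial_c\lambda_1(\beta,c_0)\neq 0$ when $c_0<-1$. In the regular case $T>T_\beta$ the principal eigenvalue is simple with smooth eigenfunction $\phi_{c_0}$, so Crandall--Rabinowitz supplies a local branch of non-parallel stationary solutions in $H^s$ with $c_\varepsilon\to c_0$. I expect the main obstacle to be the borderline $T=T_\beta$, $c_0=-1$: the Rayleigh--Kuo BVP is then singular at the boundary, and although the eigenfunction still vanishes linearly (so that $\tfrac{\beta}{y+1}\phi_{-1}$ is bounded), the implicit function theorem must be carried out uniformly as $c\downarrow -1$. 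I would handle this by bifurcating first from the regular problem at $c<-1$ and then passing to the limit $c\downarrow -1$ with Hardy-type compactness estimates; the nonlinearity absorbs the singular factor $(y+1)^{-1}$ thanks to $s\geq 5$. Finally, continuity, monotonicity and blow-up of $\alpha_\beta$ on $[\beta_*,\infty)$ follow from the variational formula for $\lambda_1(\beta,-1)$: strict monotonicity in $\beta$ from the monotonicity of the quadratic form, and $\alpha_\beta\to\infty$ from a test function concentrated near $y=-1$.
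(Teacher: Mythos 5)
Your overall architecture (catalogue $\lambda_1(\beta,c)$ on $|c|\geq 1$, locate the threshold $\beta_*$ and the critical wave number $\alpha_\beta$, bifurcate for existence, and rule out eigenvalues for non-existence) matches the paper, and your treatment of the regular existence case $T>T_\beta$ and of the ``Moreover'' statement is essentially the paper's. But there are two genuine gaps at exactly the two places the paper works hardest.

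First, the non-existence argument does not cover speeds near $\pm1$. Your uniform invertibility via the spectral gap requires dividing the traveling-wave relation by $u-c$, and for a perturbed flow $u$ with $\|u-y\|_{C^0}\leq C\varepsilon_0$ the quantity $u-c$ can vanish even when $c$ lies slightly outside $[-1,1]$; so the claimed uniform Lyapunov--Schmidt reduction degenerates as $c\to\pm1$. On the other side, Theorem \ref{thm-non-shear} only yields an $\varepsilon_\delta$ depending on $\delta$, with no uniformity as $\delta\to0$, so ``shrinking $\delta$ against $\varepsilon$'' cannot close the transition zone $|c|\in(1-\delta,1+\delta)$. The paper instead runs a compactness/contradiction argument: along a putative sequence $c_n\to c_0$, the case $|c_0|<1$ is handled by Theorem \ref{thm-non-shear} with a fixed $\delta_0$ depending on $c_0$, while the borderline case $c_0=\pm1$ requires its own analysis (splitting $[0,T]$ into the sets where $c_n$ is inside or outside $\mathrm{Ran}\,u_n(x,\cdot)$, proving only a uniform $H^2$ bound for the normalized vertical velocity, passing to a weak limit, and testing the limiting weak equation with a Fourier mode to contradict $\lambda_1(\beta,-1)\geq -\alpha_\beta^2$). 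This case is absent from your proposal.

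Second, your construction at the critical period $T=T_\beta$ fails as stated. For $\alpha=\alpha_\beta$ there is \emph{no} $c<-1$ with $\lambda_1(\beta,c)=-\alpha_\beta^2$, since $\lambda_1(\beta,\cdot)$ is strictly decreasing on $(-\infty,-1]$ and attains the value $-\alpha_\beta^2$ only at $c=-1$; each regular bifurcation at $c<-1$ produces waves of period $2\pi/\sqrt{-\lambda_1(\beta,c)}>T_\beta$, and letting $c\downarrow-1$ only yields period $T_\beta$ for the degenerate limit, not a nontrivial wave of the prescribed period. The missing idea is the paper's rescaling trick: bifurcate from the nearby shear flow $(ay,0)$ with $a<1$, for which $\tilde\lambda_1(\beta,ac_a)=\lambda_1(\beta/a,c_a)=-\alpha_\beta^2$ has a solution $c_a<-1$, so that $ac_a$ is an \emph{isolated} eigenvalue of the linearization around $(ay,0)$ at the same wave number $\alpha_\beta$; one then takes $a\to1^-$ so that $(ay,0)$ and $c_\varepsilon=ac_a$ approach Couette and $-1$ respectively. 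A minor further point: in your Bernoulli reduction the profile $F$ is determined by the perturbed solution, not by the Couette background, so the relation $F'=\beta/(y-c)$ holds only for the linearization.
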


\begin{center}
 \begin{tikzpicture}[scale=0.58]
 \draw [->](-11, 0)--(11, 0)node[right]{$\beta$};
 \draw [->](0,0)--(0,9) node[above]{$\alpha$};
 \draw (4, 0).. controls (5, 2) and (8, 4)..(10.5,7.5);
\draw (-4, 0).. controls (-5, 2) and (-8, 4)..(-10.5,7.5);
%\draw (5, 0).. controls (6, 2) and (6, 2)..(6.25,2.75);
\path (4.5, 0)  edge [-,dotted](5.65,2.15) [line width=0.8pt];
 \path (5, 0)  edge [-,dotted](6.25,2.75) [line width=0.8pt];
  \path (5.5, 0)  edge [-,dotted](6.75,3.25) [line width=0.8pt];
  \path (6, 0)  edge [-,dotted](7.25,3.75) [line width=0.8pt];
  \path (6.5, 0)  edge [-,dotted](7.75,4.25) [line width=0.8pt];
    \path (7, 0)  edge [-,dotted](8.25,4.75) [line width=0.8pt];
    \path (7.5, 0)  edge [-,dotted](8.4,3.852) [line width=0.8pt];
    \path (8.5,4.28)  edge [-,dotted](8.75,5.35) [line width=0.8pt];
     \path (8, 0)  edge [-,dotted](9.25,5.9) [line width=0.8pt];
     \path (8.5, 0)  edge [-,dotted](9.75,6.5) [line width=0.8pt];
      \path (9, 0)  edge [-,dotted](10.25,7.15) [line width=0.8pt];
      \path (9.5, 0)  edge [-,dotted](10.5,7.48) [line width=0.8pt];
      \path (-4.5, 0)  edge [-,dotted](-5.65,2.15) [line width=0.8pt];
      \path (-5, 0)  edge [-,dotted](-6.25,2.75) [line width=0.8pt];
       \path (-5.5, 0)  edge [-,dotted](-6.75,3.25) [line width=0.8pt];
  \path (-6, 0)  edge [-,dotted](-7.25,3.75) [line width=0.8pt];
  \path (-6.5, 0)  edge [-,dotted](-7.75,4.25) [line width=0.8pt];
    \path (-7, 0)  edge [-,dotted](-8.25,4.75) [line width=0.8pt];
    \path (-7.5, 0)  edge [-,dotted](-8.4,3.852) [line width=0.8pt];
    \path (-8.5,4.28)  edge [-,dotted](-8.75,5.35) [line width=0.8pt];
     \path (-8, 0)  edge [-,dotted](-9.25,5.9) [line width=0.8pt];
     \path (-8.5, 0)  edge [-,dotted](-9.75,6.5) [line width=0.8pt];
      \path (-9, 0)  edge [-,dotted](-10.25,7.15) [line width=0.8pt];
      \path (-9.5, 0)  edge [-,dotted](-10.5,7.48) [line width=0.8pt];
       \node (a) at (4,-0.5) {\tiny$\beta_*$};
       \node (a) at (-4,-0.5) {\tiny$-\beta_*$};
        \node (a) at (-8.5,4) {\tiny$I_-$};
        \node (a) at (1,4) {\tiny$O$};
         \node (a) at (8.45,4) {\tiny$I_+$};
         \node (a) at (-7,4) {\tiny$\Gamma_-$};
        \node (a) at (7,4) {\tiny$\Gamma_+$};
 \end{tikzpicture}
\end{center}\vspace{-0.2cm}
 \begin{center}\vspace{-0.2cm}
   {\small {\bf Figure 1.} }
  \end{center}

Theorem \ref{thm-non-shear-nodelta2} is illustrated in Figure 1. The whole half-plane is divided into three regions
\begin{align*}
I_-=&\{(\alpha,\beta)|\beta<-\beta_*,0<\alpha\leq \alpha_\beta\},\quad I_+=\{(\alpha,\beta)|\beta>\beta_*,0<\alpha\leq \alpha_\beta\},\\
O=&\{(\alpha,\beta)|\beta<-\beta_*,\alpha>\alpha_\beta\}\cup\{(\alpha,\beta)||\beta|\leq\beta_*,\alpha>0\}\cup\{(\alpha,\beta)|\beta>\beta_*,\alpha>\alpha_\beta\}.
\end{align*}
Theorem \ref{thm-non-shear-nodelta2} reveals contrasting dynamics near Couette flow between $(\alpha,\beta)\in O$ and $(\alpha,\beta)\in I_-\cup I_+$: non-parallel steadily traveling waves do not exist for $(\alpha,\beta)\in O$ and such traveling waves exist for $(\alpha,\beta)\in I_-\cup I_+$,
   near Couette flow for  $H^{\geq5}$ velocity perturbation. Here, the conclusion for $\beta=0$ is proved in Theorem 2 of \cite{LZ}.
 The borderlines between the region $O$ and its remaining regions are the symmetry curves
\begin{align*}
\Gamma_-=\{(\alpha,\beta)|\beta<-\beta_*,\alpha= \alpha_\beta\},\quad \Gamma_+=\{(\alpha,\beta)|\beta>\beta_*,\alpha= \alpha_\beta\},
\end{align*}
where $-\alpha_\beta^2=\lambda_1(\beta,1)$ for $ \beta<-\beta_*$ and  $-\alpha_\beta^2=\lambda_1(\beta,-1)$  for $\beta>\beta_*$ are exactly the two curves of the principal eigenvalues of singular Rayleigh-Kuo BVP in \eqref{sturm-Liouville} with $u(y)=y$, $c=\pm1$. The symmetry of  $\Gamma_{\pm}$ and $I_{\pm}$ with respect to the vertical axis is due to
symmetry of the principal eigenvalues  of  Rayleigh-Kuo operators  for Couette flow, see Lemma
\ref{symmetry}.

From the perspective of fixed horizontal period, we get the following restatement of Theorem \ref{thm-non-shear-nodelta2}.\\
{\bf Restatement of Theorem \ref{thm-non-shear-nodelta2}.} {\it
Let $s\geq5$, $T>0$ and $\beta_T>0$  be the unique point such that $-{4\pi^2\over T^2}={\lambda_1(\beta_T,-1)}$.

$(1)$ Fix $\beta\in (-\beta_T,\beta_T)$. Then there exists $\varepsilon_0>0$ such that any traveling wave solution  $(u(x-ct,y),v(x-ct,y))$ to the $\beta$-plane equation $\eqref{eq}$-$\eqref{bc}$ with $c\in\mathbb{R}$, $x$-period $T$ and satisfying that
\beno
\|(u,v)-(y,0)\|_{H^{s}{(D_T)}}<\varepsilon_0,
\eeno
must have $v(x,y)\equiv 0$, that is, $(u,v)$ is necessarily a shear flow.

$(2)$
Fix $\beta\in (-\infty,-\beta_T]\cup[\beta_T,\infty)$. Then for any $\varepsilon>0$, there exists a traveling wave solution  $(u_\varepsilon(x-c_\varepsilon t,y),v_\varepsilon(x-c_\varepsilon t,y))$ to the $\beta$-plane equation $\eqref{eq}$-$\eqref{bc}$ with $x$-period $T$ and satisfying that
\beno
\|(u_\varepsilon,v_\varepsilon)-(y,0)\|_{H^{s}{(D_T)}}<\varepsilon,
\eeno
but $v_\varepsilon(x,y)\not\equiv 0$.  As $\varepsilon\to0,$  we have $c_\varepsilon\to-1$ for $\beta=\beta_T$; $c_\varepsilon\to c_0\in(-\infty,-1)$ for $\beta>\beta_T$; $c_\varepsilon\to1$ for $\beta=-\beta_T$;  $c_\varepsilon\to c_0\in(1,\infty)$ for $\beta<-\beta_T$.

Moreover, $\beta_T$ is continuous and decreasing on $T\in(0,\infty)$,  $\beta_T\to \infty$ as $T\to0$, and $\beta_T\to \beta_*$ as $T\to\infty$.}
\medskip

Theorem \ref{thm-non-shear-nodelta2} and its restatement are briefly refined as follows.
Consider   $H^{\geq5}$ velocity perturbation near Couette flow. From the perspective of fixed $\beta\in\mathbb{R}$, we have the conclusions as follows.
 \begin{itemize}
 \item
Fix $\beta\in[-\beta_*,\beta_*]$. Then for any  horizontal period $T>0$,  non-parallel  traveling waves do not exist near Couette flow.
 \item Fix $\beta\in(-\infty,-\beta_*)\cup(\beta_*,\infty)$. Then
  \begin{enumerate}
\item  non-parallel traveling waves  exist for the  horizontal period  $T\in \left[T_\beta,\infty\right)$,
   \item non-parallel  traveling waves do not exist for the  horizontal period  $T\in \left(0,T_\beta\right)$.
   \end{enumerate}
\end{itemize}
From the perspective of fixed horizontal period $T>0$, we have
 \begin{itemize}
 \item
 non-parallel  traveling waves do not exist for $\beta\in (-\beta_T,\beta_T)$,
\item
  non-parallel traveling waves  exist for $\beta\in (-\infty,-\beta_T]\cup[\beta_T,\infty)$.
\end{itemize}

\begin{remark} $(1)$ By Theorem $\ref{thm-non-shear-nodelta2}$ $(2\rm{i})$, if $(\alpha,\beta)\in (I_+\cup I_-)\setminus(\Gamma_+\cup\Gamma_-)$ $\left(i.e.\; T={2\pi\over \alpha}\in\left(T_\beta,\infty\right)\right)$, then the traveling speeds of  constructed   traveling waves converge to an isolated real eigenvalue of $\mathcal{R}_{\alpha,\beta}$; while  if $(\alpha,\beta)\in \Gamma_+\cup\Gamma_-$ $\left(i.e. \; T=T_\beta\right)$, then the traveling speeds of  constructed traveling waves converge to the embedding eigenvalues $\pm1$ of $\mathcal{R}_{\alpha,\beta}$.

$(2)$ For $(\alpha,\beta)\in I_+\cup I_-$, the long time dynamics near Couette flow is richer due to the existence of non-parallel  traveling waves, as  the evolutionary  velocity might tend asymptotically to some nontrivial (relative) equilibrium if the initial data is
taken close to Couette flow for $H^{\geq5}$ velocity perturbation.  It is very challenging to give a complete description of asymptotic behavior for the solutions if the initial data is taken
 near  Couette flow. On the other hand, for $(\alpha,\beta)\in O$ (including the case of no Coriolis effects), the dynamics near Couette flow is relatively simpler on account of the absence of non-parallel  traveling waves.

$(3)$ From the perspective of spectrum of  $\mathcal{R}_{\alpha,\beta}$, we give the differences among $(\alpha,\beta)\in O, \Gamma_+, \Gamma_-, I_+\setminus \Gamma_+$ and  $I_-\setminus \Gamma_-$, where $\mathcal{R}_{\alpha,\beta}$  always denotes the  linearized operator in \eqref{linearized Euler operator} with $u(y)=y$.
\begin{itemize}
 \item
For $(\alpha,\beta)\in O$,
 $\mathcal{R}_{\alpha,\beta}$ has no embedding eigenvalues or isolated  real eigenvalues.
 \item
 For $(\alpha,\beta)\in \Gamma_+$, $\mathcal{R}_{\alpha,\beta}$ has  a unique  embedding eigenvalue
 $-1$ and no   isolated real eigenvalues.
 \item
 For $(\alpha,\beta)\in \Gamma_-$, $\mathcal{R}_{\alpha,\beta}$ has  a unique  embedding eigenvalue
 $1$ and no   isolated real eigenvalues.
 \item
 For $(\alpha,\beta)\in I_+\setminus \Gamma_+$, $\mathcal{R}_{\alpha,\beta}$ has a unique  isolated real eigenvalue $c_0\in(-\infty,-1)$ and no  embedding eigenvalues.
 \item
 For $(\alpha,\beta)\in I_-\setminus\Gamma_-$, $\mathcal{R}_{\alpha,\beta}$ has a  unique  isolated real eigenvalue $c_0\in(1,\infty)$ and no  embedding eigenvalues.
\end{itemize}
The definition of the embedding eigenvalue of $\mathcal{R}_{\alpha,\beta}$ is given in Definition $3.10$ of \cite{WZZhu}. By Remark $1.3 \; (3)$ in \cite{WZZhu}, the potential embedding eigenvalues of $\mathcal{R}_{\alpha,\beta}$ can only be $\pm1$ for Couette flow and any $(\alpha,\beta)\in\mathbb{R}^+\times \mathbb{R}$.
Note that in the case $\beta=0$, there exist no embedding eigenvalues or  isolated real eigenvalues for the linearized Euler operator around  Couette flow.

$(4)$ For Sinus flow, which is a non-monotone shear flow, even though we only count the families of traveling waves with traveling speeds converging outside the range of the flow, the number is infinite for $\beta<-{1\over 2}\pi^2$ or $\beta>{9\over 16}\pi^2$, and any horizontal period, see Figure $1$ in \cite{LWZZ}. For Couette flow, which is a monotone shear flow, if we count the  families of traveling waves with traveling speeds converging no matter inside or outside of the range of the flow, the number is zero for $(\alpha,\beta)\in O$ and finite for $(\alpha,\beta)\in I_\pm$. Thus, the long time dynamics near a non-monotone flow seems more complicated than a monotone flow for the $\beta$-plane equation.

$(5)$ $s\geq5$ might be improved, as $s\geq3$ is sufficient for $(2\rm{i})$, see its proof.
However, as is shown in the next theorem, the optimal  $s$ value can not be less than ${5\over2}$.
\end{remark}

The proof of Theorem \ref{thm-non-shear} is to rule out the traveling waves with traveling speeds converging to $c\in(-1,1)$, while
 the main task in the  proof of Theorem  \ref{thm-non-shear-nodelta2} (1) and (2ii) is to  rule out the traveling waves with traveling speeds converging to $c=\pm1$. For the proof of Theorem \ref{thm-non-shear}, our approach is to prove that the sequence of  $L^2$ normalized vertical velocity $\tilde v_n$ of the  traveling waves is  uniformly $H^4$ bounded. This allows us to take limits at the equation  for $\tilde v_n$. The limit equation is exactly the Rayleigh-Kuo equation with $c\in(-1,1)$, which contradicts that  $c$ is not an embedding eigenvalue of $\mathcal{R}_{\alpha,\beta}$ \cite{LYZ}. The difficulty is  to prove the uniform $H^4$ bound for $\tilde v_n$, $n\geq1$. Thanks to the $\beta$'s term, we  use the integral expression of the velocity to cancel the singularity induced by $c\in(-1,1)$, and apply the Gagliardo-Nirenberg interpolation inequality  to close the estimates. For the proof of Theorem \ref{thm-non-shear-nodelta2} (1) and (2ii), we have no singularity cancelation as above and could only prove the uniform $H^2$ bound for  $\tilde v_n$, $n\geq1$. This turns out to be enough after we consider $\tilde v_n$'s equation in the weak sense and fully use the non-permeable boundary condition for both $\tilde v_n$ and the test functions.

The proof of Theorem \ref{thm-non-shear-nodelta2} (2i) is to construct traveling waves by bifurcation at suitable shear flows near Couette flow. In the case $(\alpha,\beta)\in \Gamma_{\pm}$,  since the eigenvalue $\mp1$ for  $\mathcal{R}_{\alpha,\beta}$ is embedded in $[-1,1]$, it is difficult to get the $C^2$ regularity of the nonlinear bifurcated map if we consider the bifurcation as Couette flow itself. Our approach is to consider the bifurcation at the scaled nearby shear flow $(ay,0)$ with $|a|<1$, and the eigenvalue of the linearized operator around  $(ay,0)$ becomes an isolated one. Thus, we could use the bifurcation result in \cite{LWZZ} at $(ay,0)$. In the case  $(\alpha,\beta)\in (I_+\cup I_-)\setminus(\Gamma_+\cup\Gamma_-)$, the traveling waves  are constructed by bifurcation directly at Couette flow, since we prove the existence of isolated real eigenvalue of $\mathcal{R}_{\alpha,\beta}$ in Section 3.

\if0
\textcolor{blue}{The proof of (1) is based on a contradiction argument. Suppose otherwise, we have the traveling wave sequence $(u_n(x-c_n t,y), v_n(x-c_n t,y))$ and $\varepsilon_n\to 0$ such that $\|(u_n,v_n)-(y,0)\|_{H^{s}{(D_T)}}\leq \varepsilon_n$. $c_n$ are unbounded or $c_n\to c_0$ up to subsequence. The unbounded case is easiest since $|u_n-c_n|$ has positive lower bound and thus $\frac{\pa_y \omega_n+\beta}{u_n-c_n}$ can be bounded. For case $|c_0|<1$, we prove lemma \ref{cor-non-shear} which shows the non-existence of traveling wave solutions with $|c|<1$ in some neighborhood of Couette flow. The case $|c_0|>1$ or $|c_0|=1$ will imply $|\beta|>\beta^*$ and cause a contradiction. Conversely, if $|\beta|>\beta^*$, we do find traveling wave solutions with $|c|>1$ using bifurcation lemma, to be specific, we have (2).\\
}
\fi

%\begin{remark}
%Using the same method, we can prove that  for any $T>0$, $s\geq 5$ and $[a,b]\subset(-1,1)$, the theorem is true for any traveling wave solution $(u(x-ct,y), v(x-ct,y))$ with $c\in[a,b]$. The interval $[a,b]$ can't be extended to $(-1,1)$ since $ \pm 1$ might be embedding eigenvalues  of the linearized Euler operator according to Theorem 1 in \cite{LYZ}.
%\end{remark}

\smallskip

Our next result is to consider the dynamics near Couette flow for the $\beta$-plane equation in the Sobolev spaces with low regularity. We construct non-shear stationary solutions near Couette flow for $H^{<{5\over2}}$ velocity perturbation. This result  is a generalization of Theorem 1 in \cite{LZ}, but the bifurcation lemma and  construction of the modified shear flow  turn out to be more delicate due to the $\beta$'s term.
\begin{Theorem}\label{thm1}
$(1)$ Let $\beta\neq0$, $T>0$ and $0\leq s< \frac{5}{2}$. Then for any $\varepsilon>0$, there exists a steady solution $(u_\varepsilon(x,y),v_\varepsilon(x,y))$ to the $\beta$-plane equation \eqref{eq}-\eqref{bc} with $x$-period $T$ satisfying that
\beno
\|(u_\varepsilon,v_\varepsilon)-(y,0)\|_{H^{s}{(D_T)}}<\varepsilon,
\eeno
but $v_\varepsilon(x,y)\not\equiv 0$.

$(2)$ Let $0<|\beta|<{4\sqrt{2}\over3\pi}$, $T>0$ and $0\leq s< \frac{5}{2}$. Then the conclusion in $(1)$ holds true, and moreover,
$T$ is  the minimal period in $x$ of the  steady solution $(u_\varepsilon(x,y),v_\varepsilon(x,y))$.
\end{Theorem}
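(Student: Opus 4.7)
Every stationary solution $\psi$ of the $\beta$-plane equation satisfies $\vec v\cdot\nabla(\omega+\beta y)=0$, so $\omega+\beta y$ is constant on each streamline and there is a scalar $F$ with
\[
-\Delta\psi+\beta y=F(\psi).
\]
For $\beta=0$, Lin--Zeng \cite{LZ} constructed non-shear Cat's-eye-type solutions close to Couette in low regularity by bifurcating off Couette itself, since then the closure $F(\psi_0)\equiv -1$ holds for $\psi_0=y^2/2$. For $\beta\neq 0$ this fails because $-\Delta\psi_0+\beta y=-1+\beta y$ is an odd function of $y$ and therefore cannot be expressed as any function of the even quantity $\psi_0=y^2/2$. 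The plan is thus to (i) introduce a modified shear flow for which the closure does hold, (ii) establish a bifurcation lemma tolerant of the $\beta$ term, and (iii) bifurcate off the modified shear to produce non-shear steady solutions close to Couette in $H^{<5/2}$.

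For the first step I would construct, for each small $\sigma>0$, an auxiliary shear $(u^\ast_\sigma(y),0)$ and a $C^1$ function $F_\sigma$ on the range of $\Psi^\ast_\sigma(y)=\int_0^y u^\ast_\sigma(s)\,ds$ satisfying $-u^{\ast\prime}_\sigma(y)+\beta y=F_\sigma(\Psi^\ast_\sigma(y))$ and $u^\ast_\sigma\to y$ in $H^s(-1,1)$ for every $s<5/2$ (but not in $H^{5/2}$). Since $u^\ast_\sigma$ has a simple zero $y^\ast_\sigma$ in the interior, $\Psi^\ast_\sigma$ attains its minimum there and the closure condition demands the matching
\[
u^{\ast\prime}_\sigma(y_2)-u^{\ast\prime}_\sigma(y_1)=\beta(y_2-y_1)\qquad\text{whenever }\Psi^\ast_\sigma(y_1)=\Psi^\ast_\sigma(y_2).
\]
I would solve this matching piecewise, producing an $u^\ast_\sigma$ whose second derivative has a $C^{0,1/2-}$ jump-type singularity at $y^\ast_\sigma$; the jump is forced by $\beta\neq 0$ and exactly pins the regularity of $u^\ast_\sigma$ at $H^{5/2-}$. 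Once $u^\ast_\sigma$ is known, $F_\sigma$ is recovered by an implicit-function argument from the two-valued inversion of $\Psi^\ast_\sigma$.

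Next I would bifurcate from $(u^\ast_\sigma,0)$ in the elliptic equation $-\Delta\psi+\beta y=F_\sigma(\psi)$. Fourier expansion in $x$ reduces the linearized problem to the Rayleigh--Kuo boundary value problem
\[
-\phi''+\frac{u^{\ast\prime\prime}_\sigma(y)-\beta}{u^\ast_\sigma(y)}\phi=-k^2\alpha^2\phi,\qquad\phi(\pm1)=0,\ \alpha=\tfrac{2\pi}{T}.
\]
I would tune $\sigma=\sigma(\beta,T)$ so that $-\alpha^2$ is a simple principal eigenvalue (attained at mode $k=\pm1$), using the continuous dependence of the eigenvalue on $\sigma$ and a monotonicity/limit argument; this is the adaptation of the bifurcation lemma of \cite{LWZZ,LZ} required by the $\beta$ term. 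Applying a Crandall--Rabinowitz-type theorem in a weighted Sobolev framework compatible with the limited regularity of $F_\sigma$ yields a local branch $\psi_\varepsilon=\Psi^\ast_\sigma+\varepsilon\phi_1+O(\varepsilon^2)$ of non-shear steady solutions, and a joint smallness of $\sigma$ and $\varepsilon$ furnishes $\|(u_\varepsilon,v_\varepsilon)-(y,0)\|_{H^s(D_T)}<\varepsilon$ for every $s<5/2$. Since $\phi_1$ depends nontrivially on $x$, $v_\varepsilon\not\equiv 0$, establishing (1).

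The main obstacle is that, in contrast to the case $\beta=0$ where the Rayleigh--Kuo numerator $u^{\ast\prime\prime}_\sigma-\beta$ vanishes at the critical point, here it equals $-\beta\neq 0$ so the potential carries a genuine $1/(y-y^\ast_\sigma)$ singularity. The bifurcation lemma must therefore be re-established with Hardy-type estimates and weighted Fredholm theory replacing the standard arguments of \cite{LZ}, and this is precisely the technical delicacy to which the theorem alludes. For part (2), the extra restriction $|\beta|<4\sqrt 2/(3\pi)$ is invoked to guarantee that the eigenvalue $-\alpha^2=-4\pi^2/T^2$ selected by the bifurcation is the \emph{lowest} eigenvalue of the Rayleigh--Kuo operator, rather than a higher harmonic. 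Using the Poincar\'e and Hardy inequalities one obtains the comparison $\lambda_1\geq -\pi^2/4-C|\beta|$ for the singular operator, and the explicit threshold $4\sqrt 2/(3\pi)$ is chosen so that all harmonics $k\geq 2$ sit strictly above $-\alpha^2$; this rules out the existence of a bifurcated solution of any period $T/k$ with $k\geq 2$ and identifies $T$ as the minimal $x$-period of $v_\varepsilon$.
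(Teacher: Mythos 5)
Your high-level diagnosis (the closure $-\Delta\psi_0+\beta y=F(\psi_0)$ fails for Couette because $\beta y$ is odd while $\psi_0=y^2/2$ is even) is correct, but the route you propose then walks straight into the difficulty that the paper is engineered to avoid. The paper does \emph{not} enforce your matching condition $u'(y_2)-u'(y_1)=\beta(y_2-y_1)$ along level sets, and it does not produce a flow with a singular $u''$ at the critical point. Instead it takes the modified profile $U_{\gamma,a}(y)=y+\tfrac12\beta y^2I_\gamma(y)+a\gamma^2\,\mathrm{erf}\bigl(\tfrac{y-5\gamma}{\gamma}\bigr)I_\gamma(y-5\gamma)$: the cut-off $\beta$-term makes $U''-\beta\equiv0$ in a neighbourhood of the critical point, so the Rayleigh--Kuo potential $(u''-\beta)/u$ is \emph{zero} (not singular) there; and the closure is obtained with \emph{two} vorticity functions $f_\pm$ (one per monotone branch of $\psi_0$), glued as $\mathbf{1}_{\{y<0\}}f_-(\psi)+\mathbf{1}_{\{y\ge0\}}f_+(\psi)$, which still yields an exact steady solution because both $f_\pm$ equal the same constant near the critical value of $\psi_0$. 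The $H^{5/2-}$ threshold then comes from the scaling $\|U_{\gamma,a}-y\|_{H^s}\sim\gamma^{5/2-s}$ as $\gamma\to0$, each $U_{\gamma,a}$ being smooth, rather than from a fixed flow with a $C^{0,1/2-}$ singularity in $u''$. By contrast, your plan keeps a genuine $1/(y-y^*_\sigma)$ singularity in the potential; then $F_\sigma$ is not $C^1$ at the critical value, the substitution map $\psi\mapsto F_\sigma(\psi)$ is not $C^2$ between the Sobolev spaces needed for Crandall--Rabinowitz, and the appeal to ``weighted Fredholm theory'' is left entirely unsubstantiated. That is a real gap, not a technicality the theorem merely ``alludes to.''

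Two further points are missing or wrong. First, you give no mechanism for making the principal eigenvalue of the Rayleigh--Kuo operator negative, which is indispensable for bifurcating at a real wavenumber; a small perturbation of Couette alone keeps $\lambda_1$ near $\pi^2/4>0$. The paper achieves this with the \emph{translated} cut-off Gauss error function (supported on $[3\gamma,7\gamma]$, away from the singularity it would otherwise create at $0$) and the explicit computation $\limsup_{\gamma\to0^+}\lambda_{1,\gamma,a}\le 3+\tfrac32 b_0a$ with $b_0<0$, plus a continuity-in-$a$ argument to hit the target value exactly. Second, your reading of the threshold $|\beta|<\tfrac{4\sqrt2}{3\pi}$ is not what the paper does: it is not about keeping harmonics $k\ge2$ above $-\alpha^2$, but about proving $\lambda_{1,\gamma,0}>0$ for small $\gamma$, so that as $a$ increases the principal eigenvalue sweeps all of $[d,0]$ and one can realize $-\alpha^2=-4\pi^2/T^2$ with $k=1$, whence the bifurcated solution has minimal $x$-period $T$. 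For general $\beta$ one only controls $\lambda_{1,\gamma,0}\ge C_\beta$ with possibly $C_\beta<0$, forcing a large harmonic $k$ and hence minimal period $T/k$ --- which is exactly why part (1) asserts period $T$ but not minimal period $T$.
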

The bifurcation lemma  in \cite{LZ} can not be applied to the case  $\beta\neq0$, since there is a singularity at the middle point $0$ of the linearized bifurcated map, which is difficult to deal with. We introduce a bifurcation lemma for $\beta\neq0$ such that the potential term of the Rayleigh-Kuo operator  is flat near $0$. To adjust the flatness condition,  we add a cut-off   $\beta$'s term in the constructed shear flow, and to produce negative eigenvalues of the Rayleigh-Kuo BVP, we try to add the Gauss error function introduced in \cite{LZ}. However, a direct addition of the error function induces new singularity at $0$. Our method is to translate the cut-off Gauss  error function such that its  support does not intersect that of the $\beta$'s term and  to make the translation sufficiently  close to $0$, besides the size of the cut-off function should be suitably small.

\begin{remark}
We summarize the modified shear flows at which the bifurcation could be used to construct traveling waves for the $\beta$-plane equation, which are technically important.
 \begin{itemize}
 \item
Scaled modified shear flow $(ay,0)$, which is used to construct traveling waves near Couette flow  if  $(\alpha,\beta)$ are chosen such that the linearized operator has an embedding eigenvalue $1$ or $-1$, see the proof of Theorem $\ref{thm-non-shear-nodelta2}\; (2\rm{i})$.
 \item
Couette flow $+$ cut-off   $\beta$'s term $+$  cut-off Gauss  error function $($see \eqref{modified shear flow}$)$, which is used to construct steady states near Couette flow  at low regularity.
 \item
$(u+\nu u_1+\tau u_2,0)$,  which is used to  guarantee that the bifurcated solutions near the flow $(u,0)$ is not a
shear one in Lemma $2.3$ of \cite{LWZZ} and Lemma $\ref{lem-bifurcation}$.
 Here, $u_1$ and $u_2$ are added to obtain monotonicity of the eigenvalues of corresponding Rayleigh-Kuo operators, and  $\nu, \tau\in \mathbb{R}$ are sufficiently small.
\end{itemize}
\end{remark}

Finally, we generalize the asymptotic stability of Couette flow  in   Gevrey spaces  \cite{BM} to the  $\beta$-plane equation on $\Omega=\mathbb{T}_{2\pi}\times\mathbb{R}$. %That is, given an initial perturbation of the Couette flow small in a suitable regularity class, specifically Gevrey space of class smaller than 2, the velocity converges strongly in $L^2$ to a shear flow close to the Couette flow.\\
Consider the $\beta$-plane equation (\ref{eq}) on $\Omega$. Take $\vec{v}=(y,0)+\vec{U}=(y,0)+(U^x,U^y)$, where $\vec{U}=(U^x,U^y)$ denotes the velocity perturbation. Then the equation reads
\begin{eqnarray*}
\left\{
\begin{aligned}
&\partial_t\vec{U}+y\pa_x\vec{U}+(U^y,0)+\vec{U}\cdot\nabla\vec{U}+\nabla P=-\beta y(- U^y,y+U^x),\\
&\nabla\cdot \vec{U}=0.\\
\end{aligned}
\right .
\end{eqnarray*}
Let $w=\curl \vec{U}=\pa_x U^y-\pa_y U^x$, then the total vorticity is $\omega=-1+w$ and the vorticity form (\ref{vor-eq}) becomes
\begin{eqnarray}\label{vor-equ}
\left\{
\begin{aligned}
&\pa_t w+y\pa_x w+\vec{U}\cdot\nabla w+\beta U^y=0,\\
&\vec{U}=\nabla^\perp(\Delta)^{-1}w,\\
&w|_{t=0}=w_{in}.
\end{aligned}
\right .
\end{eqnarray}
Here, $(x,y)\in\mathbb{T}_{2\pi}\times\mathbb{R}$, $\nabla^\perp=(-\pa_y,\pa_x)$ and $(\vec{U},w)$ are periodic in the $x$ variable with period normalized to $2\pi$. Denote
%the stream function of the perturbation by
$\tilde{\psi}=\Delta^{-1}w$.
\begin{comment}
Linearizing the vorticity equation (\ref{vor-equ}) yields the  linear evolution
\begin{eqnarray}\label{vor-w}
\left\{                        %Â·ÅÂ³ÃÃÃ©ÂµÃÃÃ³Â±ÃÂ°ÃŒÃÅ¡ÅœÃ³ÃÅ¡ÂºÃ\{
\begin{aligned}
&\pa_t w+y\pa_x w+\beta U^y=0,\\
&\vec{U}=\nabla^\perp(\Delta)^{-1} w,\\
&w(t=0)=w_{in}.
\end{aligned}              %Â·ÅÂ³ÃÃÃÃÃ³ÂµÃÅÃ¡ÃÃž
\right .
\end{eqnarray}
\end{comment}
We take an interest in the long time behavior of (\ref{vor-equ}) for small initial perturbations $w_{in}$ and get the following result.
%In particular, we show that all sufficiently small perturbations in \textcolor{blue}{Gevrey class $\mathcal{G}^{\lambda,1/2+}$ undergo `inviscid damping' and the velocity} $(y,0)+\vec{U}(t,x,y)\rightarrow (y+u_\infty(y),0)$ as $t\rightarrow \infty$ for some $u_\infty(y)$ determined by the evolution.
%The data will be chosen in a Gevrey space of class $\frac{1}{s}$ for $s>\frac{1}{2}$.
\begin{Theorem}\label{thm-damping}
For all $\frac{1}{2}<s\leq 1$, $\lambda_0>\lambda'>0$, there exists  $\epsilon_0=\epsilon_0(\lambda_0,\lambda',s)\leq {1\over 2}$ such that for all $\epsilon\leq \epsilon_0$, if $w_{in}$ satisfies $\int_\Omega w_{in} dxdy=0$, $\int_\Omega |y w_{in}| dxdy<\epsilon$ and
\beno
\|w_{in}\|_{\mathcal{G}^{\lambda_0}(\Omega)}^2:=\sum_{k\in \mathbb{Z}} \int_\mathbb{R} |\hat{w}_{in}(k,\eta)|^2 e^{2\lambda_0|k,\eta|^s} d\eta\leq \epsilon^2,
\eeno
then there exists $f_{\infty}$ with $\int_\Omega f_{\infty} dxdy=0$ and $\|f_{\infty}\|_{\mathcal{G}^{\lambda'}(\Omega)}\lesssim \epsilon$ such that
\beno
\left \|w \left(t,x+ty+\Phi(t,y),y \right)-f_{\infty}(x,y)\right\|_{\mathcal{G}^{\lambda'}(\Omega)}\lesssim \frac{\epsilon^2}{\langle t \rangle},
\eeno
where $\Phi(t,y)$ is given by
\beno
\Phi(t,y)=\frac{1}{2\pi}\int_0^t\int_{\mathbb {T}_{2\pi}} U^x(\tau,x,y) dxd\tau=u_{\infty}(y)t+O(\epsilon),
\eeno
with $u_{\infty}(y)=\pa_y\pa_{yy}^{-1}\frac{1}{2\pi}\int_{\mathbb {T}_{2\pi}} f_{\infty}(x,y) dx$ for $y\in\mathbb{R}$. Moreover, the velocity field $\vec{U}$ satisfies
\beno
\left\|\frac{1}{2\pi}\int_{\mathbb{T}_{2\pi}} U^x(t,x,\cdot)dx-u_{\infty}(y) \right\|_{\mathcal{G}^{\lambda'}(\Omega)}&\lesssim& \frac{\epsilon^2}{\langle t \rangle^2},\\
\left\|U^x-\frac{1}{2\pi}\int_{\mathbb{T}_{2\pi}} U^x(t,x,\cdot)dx\right\|_{L^2(\Omega)} &\lesssim& \frac{\epsilon}{\langle t \rangle},\\
\left\|U^y(t) \right\|_{L^2(\Omega)} &\lesssim& \frac{\epsilon}{\langle t \rangle^2}.
\eeno
%\textcolor{red}{where
%\beno
%\|f\|_{\mathcal{G}^{\lambda'}(\Omega)}^2=\sum_{k\in \mathbb{Z}} \int_\mathbb{R} |\hat{f}(k,\eta)|^2 e^{2\lambda'|k,\eta|^s} d\eta.
%\eeno}
\end{Theorem}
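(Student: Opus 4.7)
The plan is to extend the Bedrossian--Masmoudi scheme of \cite{BM} to the vorticity equation \eqref{vor-equ}, treating the new linear term $\beta U^y$ as an additional source that inherits the inviscid-damping decay of the velocity. First I move to the mean-flow coordinates $(z,v)=(x-tv-\Phi(t,v),y)$ used in \cite{BM}, with $\Phi$ as in the statement, so that the profile $f(t,z,v):=w(t,z+tv+\Phi(t,v),v)$ satisfies
\begin{equation*}
\partial_t f+(U^x-\langle U^x\rangle)\partial_z f+U^y\bigl(\partial_v-(t+\partial_v\Phi)\partial_z\bigr)f+\beta U^y=0,
\end{equation*}
where $U^y=\partial_z\Delta_L^{-1}f$, $\Delta_L=\partial_z^2+(\partial_v-(t+\partial_v\Phi)\partial_z)^2$, and $\langle\cdot\rangle$ denotes the $x$-average. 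The only structural difference from the Euler case $\beta=0$ of \cite{BM} is the extra linear forcing $\beta U^y$.

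I would then reuse the multi-scale Fourier multiplier $A(t,k,\eta)$ of \cite{BM} (a decreasing Gevrey weight $e^{\lambda(t)|k,\eta|^s}$ combined with the resonance-absorbing multipliers $J$ and $w$ tuned to the critical times $t\simeq\eta/k$) and run the same bootstrap on the top norm $\|Af\|_{L^2}$, on a ``coordinate-system'' norm controlling $t+\partial_v\Phi$, and on the improved zero-mode bound $\|A\langle U^x\rangle\|_{L^2}\lesssim\epsilon\langle t\rangle^{-2}$. The transport, reaction and ``LU'' estimates of \cite{BM} apply verbatim since the nonlinearity is unchanged, and the Cauchy--Kovalevskaya cushion from $\dot\lambda<0$ absorbs their contributions. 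Two low-frequency subtleties come from $v\in\mathbb R$ rather than a bounded channel: the hypotheses $\int_\Omega w_{in}\,dxdy=0$ and $\int_\Omega |yw_{in}|\,dxdy<\epsilon$ are precisely what is needed to control $\widehat f$ near $(k,\eta)=(0,0)$ uniformly in time and to prevent the mean correction $\Phi$ from drifting.

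The genuinely new ingredient is the estimate of $\beta U^y$ in the top-order energy identity. On the Fourier side, up to lower-order corrections in $\Phi$,
\begin{equation*}
\widehat{U^y}(t,k,\eta)=\frac{ik}{-(k^2+(\eta-kt)^2)}\widehat f(t,k,\eta),
\end{equation*}
so $|\widehat{U^y}|\lesssim \langle k,\eta-kt\rangle^{-2}|k\widehat f|$ and in particular $\widehat{U^y}(t,0,\eta)\equiv 0$, which means $\beta U^y$ neither contributes to $\partial_t\Phi=\langle U^x\rangle$ nor drives the coordinate system directly. Hence its contribution to $\tfrac12\tfrac{d}{dt}\|Af\|_{L^2}^2$ is dominated by $C|\beta|\langle t\rangle^{-2}\|Af\|_{L^2}^2$, which is time-integrable and absorbable into the bootstrap by choosing $\epsilon$ small. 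I expect the main technical obstacle not to be the estimate of $\beta U^y$ itself, but rather the indirect feedback of $\beta$ into the coordinate-system bootstrap through the evolution of $\langle U^x\rangle$ and the stream-function identities used in \cite{BM} to close the estimate on $t+\partial_v\Phi$; verifying that the Gevrey control of these quantities tolerates the new source requires exactly the $\langle t\rangle^{-2}$ decay of $U^y$ produced by the bootstrap. Once everything closes, $f_\infty$ is obtained as a Cauchy limit in $\mathcal{G}^{\lambda'}(\Omega)$, the asymptotic $\Phi(t,y)=u_\infty(y)t+O(\epsilon)$ follows by integrating $\partial_t\Phi=\langle U^x\rangle$, and the stated pointwise-in-time decays of $U^x-\langle U^x\rangle$ and $U^y$ are the usual inviscid-damping consequences of $f\in\mathcal{G}^{\lambda'}$ transported back to the original coordinates.
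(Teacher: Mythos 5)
Your overall strategy (rerun the Bedrossian--Masmoudi bootstrap, note that the coordinate-system equations are untouched because $\langle \beta\partial_z\phi\rangle=0$) matches the paper's, but the step where you estimate the new linear term is wrong, and it is precisely the step where the paper has to do real work. You claim that the contribution of $\beta U^y$ to $\tfrac12\tfrac{d}{dt}\|Af\|_{L^2}^2$ is dominated by $C|\beta|\langle t\rangle^{-2}\|Af\|_{L^2}^2$ because $|\widehat{U^y}|\lesssim \langle k,\eta-kt\rangle^{-2}|k\widehat f|$. That multiplier bound does \emph{not} give $\langle t\rangle^{-2}$ decay uniformly in $(k,\eta)$: at the resonant times $t\approx \eta/k$ one has $|k|/(k^2+(\eta-kt)^2)\approx |k|^{-1}=O(1)$, so pointwise in time the best you get from Cauchy--Schwarz is $C|\beta|\,\|Af\|_{L^2}^2$ with no smallness in $\epsilon$ and no decay in $t$; Gronwall then only yields exponential growth of the energy, which does not close the bootstrap. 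The resonances are exactly the source of the echo problem, and a linear-in-$f$ term that is $O(1)$ on resonant modes cannot be absorbed by the Cauchy--Kovalevskaya cushion the way the (quadratic) Transport and Reaction terms are.

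What saves the argument — and what your proposal is missing — is a cancellation: the leading part of the new term vanishes identically, $\int_{\mathbb{T}_{2\pi}} Af\,A\,\partial_z\Delta_L^{-1}f\,dx=0$, because on the Fourier side the symbol $ik/(k^2+(\eta-kt)^2)$ is skew (purely imaginary against $A_k^2|\hat f_k|^2$). Hence only $\beta\int Af\,A\partial_z\phi_1\,dx$ with $\phi_1=P_{\neq 0}\phi-\Delta_L^{-1}P_{\neq 0}f$ survives, i.e.\ the variable-coefficient elliptic correction, which is quadratically small in $\epsilon$. Even this remainder cannot be bounded by a time-integrable multiple of the energy alone: the paper splits it into resonant and non-resonant pieces ($\chi^R$, $\chi^{NR}$), pairs each with the corresponding $CK_w$/$CK_\lambda$ terms, and needs a sharpened version of the elliptic lemma of \cite{BM} (the ``precision elliptic control'' of Proposition \ref{Precision}) to control $\Delta_L(\frac{|\nabla|^{s/2}}{\langle t\rangle^s}A+\sqrt{\partial_tw/w}\,\tilde A)\phi_1$; the outcome is that the new term obeys Reaction-type bounds (Proposition \ref{newterm}) and is absorbed there. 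Finally, your worry about ``indirect feedback of $\beta$ into the coordinate-system bootstrap'' is a non-issue: both $\langle\beta\partial_z\phi\rangle=0$ and the vanishing of the $z$-average of $\beta yU^y$ in the momentum equation mean the equations for $\vartheta'$, $[\partial_t\vartheta]$ and $\tilde u_0$ are literally the same as in \cite{BM}. You have the easy part backwards and the hard part unproved.
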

{We use the same time-dependent norm and main energy as in \cite{BM}, and prove the bootstrap proposition. The difference comes from the new term $\int_{\mathbb{T}_{2\pi}} AfA(\beta\pa_z\phi)dx$, where $A$ is the multiplier in Subsection 2.3 of \cite{BM}. To treat this term, we improve the elliptic control (Proposition 2.4) in \cite{BM}. Notice that $\int_{\mathbb{T}_{2\pi}} AfA(\beta\pa_z\phi)dx=\beta\int_{\mathbb{T}_{2\pi}} AfA(\pa_z\phi_1)dx$ for $\phi_1=P_{\neq 0}\phi-\Delta_L^{-1}P_{\neq 0}f$, where $f$ is the vorticity under {changed coordinates $(z,\vartheta)$, $\Delta_L=\pa_z^2+(\pa_\vartheta-t\pa_z)^2$} and $P_{\neq 0}f=f-\langle f\rangle=f-\int_{\mathbb{T}_{2\pi}}f dx$. The new term has similar bound with Reaction (Proposition 2.3 in \cite{BM}), so the method of \cite{BM} works here.
}

\smallskip

The rest of this paper is organized as follows. In Section 2, we prove the non-existence of non-parallel traveling waves with traveling speeds converging  inside $(-1,1)$ near Couette flow for $H^{\geq5}$ velocity perturbation and $\beta\neq0$. In Section 3,  we study properties of  the   principal eigenvalues  of  Rayleigh-Kuo operators  for Couette flow. In Section 4, we determine the sharp region  such that
  no traveling waves exist for $(\alpha,\beta)$ in this region and  traveling waves  exist for $(\alpha,\beta)$ in the remaining regions   near Couette flow    for $H^{\geq 5}$ velocity perturbation.  In Section 5, we construct non-shear steady states  near Couette flow in low Sobolev spaces for $\beta\neq0$. Note that the domain in Sections 2-5 is a finite channel $\mathbb{T}\times [-1,1]$ and the perturbation is considered in Sobolev spaces.  Finally, we prove nonlinear inviscid damping near Couette flow  in  suitable Gevrey spaces for $\beta\neq0$ on $\Omega=\mathbb{T}_{2\pi}\times\mathbb{R}$ in Section 6.

\section{Non-existence of {traveling waves} with traveling speeds  inside (-1,1)}

In this section, we prove that there are no non-parallel traveling waves with traveling speeds converging in $(-1,1)$ near Couette flow for $H^{\geq5}$ velocity perturbation, $\beta\neq0$ and any $x$-period $T$, which is stated in Theorem \ref{thm-non-shear}.

The following two lemmas will be used, which are Hardy type inequality \cite{Godfrey-Hardy-Littlewood,LWZZ} and Gagliardo-Nirenberg interpolation inequality \cite{Gagliardo,Nirenberg1959,Nirenberg1966}.
\begin{lemma}\label{Hardy type inequality2}
Let $\phi\in H^1(a,b)$ and $\phi(y_0)=0$ for some $y_0\in [a,b]$. Then
\begin{align*}
\left\|\phi\over y-y_0\right\|_{L^2(a,b)}^2\leq C\|\phi'\|_{L^2(a,b)}^2.
\end{align*}
\end{lemma}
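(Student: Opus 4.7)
The plan is to reduce this to the classical one-sided Hardy inequality. First I would split $(a,b) = (a, y_0) \cup (y_0, b)$, so that each subinterval has $y_0$ as an endpoint (the degenerate cases $y_0 = a$ or $y_0 = b$ already have this form). Then, via the translations $y \mapsto y - y_0$ on $(y_0, b)$ and $y \mapsto y_0 - y$ on $(a, y_0)$, both contributions are transported to the model problem
\begin{equation*}
\int_0^R \frac{|\psi(y)|^2}{y^2}\,dy \leq C \int_0^R |\psi'(y)|^2\,dy, \qquad \psi \in H^1(0,R),\ \psi(0)=0,
\end{equation*}
with $R = \max(b-y_0,\, y_0-a)$, which is the one-sided Hardy inequality.

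For the model inequality, the key calculation is an integration by parts using the primitive $(1/y)' = -1/y^2$: for $0 < \varepsilon < R$,
\begin{equation*}
\int_\varepsilon^R \frac{|\psi(y)|^2}{y^2}\,dy = -\frac{|\psi(R)|^2}{R} + \frac{|\psi(\varepsilon)|^2}{\varepsilon} + 2\int_\varepsilon^R \frac{\psi(y)\psi'(y)}{y}\,dy.
\end{equation*}
The boundary contribution at $R$ is non-positive. The inner boundary term vanishes as $\varepsilon \to 0^+$ because the hypothesis $\psi(0) = 0$, combined with Cauchy--Schwarz applied to $\psi(y) = \int_0^y \psi'(s)\,ds$, yields $|\psi(y)| \leq y^{1/2} \|\psi'\|_{L^2(0,y)}$, hence $|\psi(y)|^2/y \to 0$. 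Applying Cauchy--Schwarz to the remaining integral and writing $I := \int_0^R |\psi|^2/y^2\,dy$, one gets $I \leq 2 I^{1/2} \|\psi'\|_{L^2}$, and self-improvement gives $I \leq 4 \|\psi'\|_{L^2}^2$.

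The one step requiring technical care is justifying the integration by parts for a general $H^1$ function rather than a smooth one; I would do this by multiplying $\psi$ by a cutoff $\chi_\varepsilon$ equal to $0$ on $[0,\varepsilon]$ and to $1$ on $[2\varepsilon, R]$, performing the computation above for $\chi_\varepsilon \psi$ where all integrations by parts are unambiguous, and then passing to the limit $\varepsilon \to 0^+$ by monotone/dominated convergence, using the bound $|\psi(y)|^2/y \to 0$ to discard the interior boundary term. Reassembling the estimates on $(a, y_0)$ and $(y_0, b)$ produces the lemma with a universal constant (one may take $C = 4$).
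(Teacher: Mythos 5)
Your proof is correct. Note that the paper does not actually prove this lemma; it is quoted as known, with references to Hardy--Littlewood--P\'olya and to [LWZZ], so there is no in-paper argument to compare against. Your route (splitting $(a,b)$ at $y_0$, translating to the one-sided model inequality on $(0,R)$ with $\psi(0)=0$, integrating by parts against $1/y$, killing the inner boundary term via $|\psi(y)|^2\le y\,\|\psi'\|_{L^2(0,y)}^2$, and closing by Cauchy--Schwarz plus the quadratic self-improvement $I\le 2I^{1/2}\|\psi'\|_{L^2}$) is exactly the classical proof, and the constant $C=4$ is the sharp one. Two small remarks: the self-improvement step should formally be run on the truncated quantity $I_\varepsilon=\int_\varepsilon^R|\psi|^2/y^2\,dy$, which is finite, giving $I_\varepsilon^{1/2}\le\|\psi'\|_{L^2}+\bigl(\|\psi'\|_{L^2}^2+|\psi(\varepsilon)|^2/\varepsilon\bigr)^{1/2}$ before letting $\varepsilon\to0^+$ (otherwise one is dividing by a quantity not yet known to be finite); and the cutoff regularization is unnecessary in one dimension, since $H^1$ functions on $[\varepsilon,R]$ are absolutely continuous and the integration by parts against the smooth weight $1/y$ there is already legitimate --- the only singular point is the endpoint $y=0$, which your $\varepsilon$-truncation handles.
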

\begin{lemma}\label{Gagliardo-Nirenberg interpolation inequality}
Let $G$ be a bounded domain in $\mathbb{R}^n$ having the cone property. For $1\leq q,r\leq \infty$, suppose $u\in L^q(G)$ and $D^m u\in L^r(G)$. Then for $0\leq j<m$, the following inequalities hold (with constant $C_1, C_2$ depending only on $G$, $m, j, q, r$)
\begin{align}\label{Gagliardo-Nirenberg inequality}
\|D^j u\|_{L^p(G)}\leq C_1\|D^m u\|_{L^r(G)}^\alpha\|u\|_{L^q(G)}^{1-\alpha}+C_2\|u\|_{L^q(G)},
\end{align}
where
\begin{align*}
{1\over p}={j\over n}+\alpha\left({1\over r} -{m\over n}\right)+(1-\alpha){1\over q},
\end{align*}
for all $\alpha\in[{j\over m},1]$,
unless $1<r<\infty$ and $m-j-n/r$ is a nonnegative integer, in which case \eqref{Gagliardo-Nirenberg inequality} holds only for $\alpha$ satisfying $\alpha\in[{j\over m},1)$.
\end{lemma}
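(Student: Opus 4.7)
The plan is to reduce the inequality on the bounded domain $G$ to the analogous inequality on $\mathbb{R}^n$, and then to prove the $\mathbb{R}^n$ version by an induction built on a one-dimensional base case. Since $G$ satisfies the cone property, Calder\'on's extension theorem provides a bounded linear operator $E:W^{m,r}(G)\cap L^q(G)\to W^{m,r}(\mathbb{R}^n)\cap L^q(\mathbb{R}^n)$ with
\begin{align*}
\|Eu\|_{L^q(\mathbb{R}^n)}\lesssim \|u\|_{L^q(G)},\qquad \|D^m(Eu)\|_{L^r(\mathbb{R}^n)}\lesssim \|D^m u\|_{L^r(G)}+\|u\|_{L^q(G)}.
\end{align*}
Applying the $\mathbb{R}^n$-inequality to $Eu$ and restricting back to $G$ yields the stated form; the lower-order term $\|u\|_{L^q(G)}$ appearing in the extension bound is precisely what produces the additive $C_2\|u\|_{L^q(G)}$ on the right-hand side.

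For the $\mathbb{R}^n$ case, I would first establish the one-dimensional base estimate
\begin{align*}
\|u'\|_{L^p(\mathbb{R})}\lesssim \|u''\|_{L^r(\mathbb{R})}^{1/2}\|u\|_{L^q(\mathbb{R})}^{1/2}\qquad\text{with}\qquad \tfrac{1}{p}=\tfrac{1}{2}\bigl(\tfrac{1}{r}-1\bigr)+\tfrac{1}{2}\cdot\tfrac{1}{q},
\end{align*}
proved pointwise via the representation $u'(x)=h^{-1}\bigl(u(x+h)-u(x)\bigr)-h^{-1}\int_x^{x+h}u''(t)(x+h-t)\,dt$, choosing $h=h(x)>0$ to balance the two contributions and then integrating the resulting pointwise bound. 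The multidimensional first-derivative version follows by applying this one-dimensional estimate along lines parallel to each coordinate axis and combining with H\"older's inequality in the transverse variables. The general case $0\leq j<m$ is obtained by iterating the first-derivative estimate together with standard $L^p$-interpolation, organised as an induction on $m-j$. The exponent relation $\tfrac{1}{p}=\tfrac{j}{n}+\alpha(\tfrac{1}{r}-\tfrac{m}{n})+(1-\alpha)\tfrac{1}{q}$ is forced at every stage by the scaling $u\mapsto u(\lambda\cdot)$ and power-balancing in $\lambda$.

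The admissible range $\alpha\in[j/m,1]$ arises as the convex hull of two endpoint regimes: at $\alpha=j/m$ the inequality reduces to pure $L^p$-interpolation between $L^q$ and $W^{m,r}$, and at $\alpha=1$ it becomes the Sobolev embedding $W^{m-j,r}\hookrightarrow L^p$; every intermediate $\alpha$ follows by $L^p$-interpolation between the two endpoints. The main obstacle, and the source of the exceptional case, is the endpoint $\alpha=1$: when $1<r<\infty$ and $m-j-n/r$ is a nonnegative integer, this critical Sobolev embedding fails (the BMO/Trudinger obstruction when $m-j-n/r=0$, with analogous borderline failures at higher integer values), which forces the restriction $\alpha\in[j/m,1)$. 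Away from this endpoint the argument is routine interpolation; at it, one would either replace the claim with a logarithmic refinement or, as the lemma does, simply exclude the endpoint. Finally, collecting the constants through Calder\'on extension, the one-dimensional estimate, H\"older, induction on $m-j$, and $L^p$-interpolation shows that all constants depend only on $G$, $m$, $j$, $q$, $r$.
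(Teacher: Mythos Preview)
The paper does not give its own proof of this lemma; it is stated as a classical result with citations to Gagliardo and Nirenberg, so there is nothing to compare your argument against on the paper's side.

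Your overall architecture (one-dimensional base case, slicing to pass to $\mathbb{R}^n$, induction on $m-j$, interpolation between endpoints, scaling to force the exponent relation) is the correct skeleton of Nirenberg's proof. However, the reduction from $G$ to $\mathbb{R}^n$ as you have written it is not quite right. You invoke an extension operator satisfying $\|D^m(Eu)\|_{L^r(\mathbb{R}^n)}\lesssim \|D^m u\|_{L^r(G)}+\|u\|_{L^q(G)}$, but standard extension theorems (Calder\'on, Stein) give $\|Eu\|_{W^{m,r}(\mathbb{R}^n)}\lesssim \|u\|_{W^{m,r}(G)}$, which involves \emph{all} intermediate derivatives $D^k u$ in $L^r$. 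Bounding those intermediate norms by the top and bottom norms is precisely the inequality you are trying to prove, so the argument is circular as stated; moreover, the mixed-exponent situation $u\in L^q$, $D^m u\in L^r$ with $q\neq r$ is not covered by the usual single-space extension theorems.

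The fix is the one Nirenberg actually uses: work directly on $G$. The cone property is assumed exactly so that from every $x\in G$ there is a cone of fixed height and aperture inside $G$, and the one-dimensional estimate can be applied along rays in that cone, then averaged over directions. The additive term $C_2\|u\|_{L^q(G)}$ then arises not from an extension but from the finite length of these rays (the boundary contribution in the integration by parts). With that change, the rest of your outline goes through.
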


\begin{proof}[Proof of Theorem \ref{thm-non-shear}.]
Suppose otherwise, there exist $\{\varepsilon_n\}_{n=1}^\infty$, {$c_n\in[-1+\delta,1-\delta]$ and $(u_n(x-c_n t,y), v_n(x-c_n t,y))$} to the $\beta$-plane equation (\ref{eq})-(\ref{bc})  such that $\varepsilon_n\to0$, $(u_n,v_n)$ is $T$-periodic in $x$,
$
\|(u_n,v_n)-(y,0)\|_{H^{s}{(D_T)}}\leq \varepsilon_n
$
and $\|v_n\|_{L^2(D_T)}\neq0$.  Then
\ben\label{un-vn-eq}
{(u_n-c_n)}\pa_x\omega_n+v_n(\pa_y\omega_n+\beta)=0,
\een
where $\omega_n=\pa_x v_n-\pa_y u_n$.
Up to a subsequence, $c_n\to c_0\in[-1+\delta,1-\delta]$.
$s\geq5$ implies
\begin{align}\label{un-y-C3-nd}
\|u_n-y\|_{C^3{(D_T)}}+\|v_n\|_{C^3{(D_T)}}\leq C \|(u_n,v_n)-(y,0)\|_{H^{s}{(D_T)}}\leq& C \varepsilon_n,\\
\label{omegan-C2-nd}
\|\omega_n+1\|_{C^2{(D_T)}}\leq C\|\omega_n+1\|_{H^4{(D_T)}}\leq C \|(u_n,v_n)-(y,0)\|_{H^{s}{(D_T)}}\leq& C \varepsilon_n,\\
\label{omega-n-Lp-nd}
\|\pa_{xxy}\omega_n\|_{L^4{(D_T)}}+\|\pa_{xyy}\omega_n\|_{L^4{(D_T)}}+\|\pa_{y}^3\omega_n\|_{L^4{(D_T)}}\leq C\|\omega_n+1\|_{H^4{(D_T)}}\leq& C \varepsilon_n.
\end{align}
Then
\ben \label{un-omegan-C bound-nd}
\|u_n\|_{C^3{(D_T)}}\leq C,\quad \|\omega_n\|_{C^2{(D_T)}}\leq C.
\een
Moreover,
\ben \label{un-xy-bound-nd}
{1}/{2}<\pa_y u_n(x,y)<{3}/{2},\quad(x,y)\in D_T
\een
for $n$ sufficiently large.
Then $u_n(x,\cdot)$ is  increasing on $[-1,1]$ for $x\in[0, T]$ and by taking $n$ larger, we have $\|u_n-y\|_{L^\infty(D_T)}\leq C\varepsilon_n<{\delta\over2}$. For fixed $n$,  we claim that $u_n(x,-1)<c_n<u_n(x,1)$  for  $ x\in [0,T]$. Suppose otherwise, there exists  $x_0\in[0,T]$ such that $u_n(x_0,1)\leq c_{n}$ or $c_n\leq u_n(x_0,-1)$.   For the first case, we have
\beno
{\delta\over2}>C\varepsilon_n>\|u_n-y\|_{C^0(D_T)}\geq |u_n(x_0,c_n)-c_n|\geq |u_n(x_0,c_n)-u_n(x_0,1)|>\frac{1}{2}|c_n-1|>\frac{\delta}{2},
\eeno
which is a contradiction. The latter case $c_n\leq u_n(x_0,-1)$ is similar since  ${\delta\over2}>C\varepsilon_n>\|u_n-y\|_{C^0(D_T)}\geq|u_n(x_0,c_n)-c_n|\geq |u_n(x_0,c_n)-u_n(x_0,-1)|>\frac{1}{2}|c_n+1|>\frac{\delta}{2}.$

 Thus, there exists a unique $y_n(x)\in (-1,1)$ such that $u_n(x,y_n(x))={c_n}$ for $x\in[0,T]$ and $n$ sufficiently large.
 Since $\|\pa_y\omega_n\|_{C^0(D_T)}\leq\|\omega_n+1\|_{C^2{(D_T)}}\leq C \varepsilon_n$ and
  $\beta\neq0$, we have $|\pa_y\omega_n(x,y)+\beta|\neq0$ for $(x,y)\in D_T$ and $n$ sufficiently large. This, along with $u_n(x,y_n(x))={c_n}$ and   \eqref{un-vn-eq}, implies that
  $v_n(x,y_n(x))=0$ for $x\in [0,T]$.
By the incompressible condition, we have $\partial_x\omega_n=\Delta v_n$.  Let $\tilde v_n=v_n/\|v_n\|_{L^2(D_T)}$. Then $\tilde v_n(x,y_n(x))=0$  for $x\in [0,T]$.  By
\eqref{un-vn-eq}, we have
\ben\label{tilde-vn-eq}
\Delta \tilde v_n+(\pa_y\omega_n+\beta){\tilde v_n\over u_n{-c_n}}=0.
\een

First, we prove the uniform $H^2$ bound for $\tilde v_n$, $n\geq 1$. It follows from   \eqref{un-xy-bound-nd} that  $\left|{y-y_n(x)\over u_n(x,y){-c_n}}\right|=\left|{y-y_n(x)\over u_n(x,y)-u_n(x,y_n(x))}\right|\leq 2$ for $(x,y)\in D_T$. Thus, $\left\|{y-y_n(x)\over u_n{-c_n}}\right\|_{L^\infty(D_T)}\leq C$. This, along with  \eqref{tilde-vn-eq}, \eqref{un-omegan-C bound-nd} and Lemma  \ref{Hardy type inequality2}, gives
\ben\nonumber
\|\Delta \tilde v_n\|_{L^2(D_T)}&\leq& \left\|(\pa_y\omega_n+\beta){\tilde v_n\over u_n{-c_n}}\right\|_{L^2(D_T)}\leq C \left\|{\tilde v_n\over u_n{-c_n}}\right\|_{L^2(D_T)}\\\nonumber
&\leq& C \left\|{\tilde v_n\over y-y_n(x)}\right\|_{L^2(D_T)}\left\|{y-y_n(x)\over u_n{-c_n}}\right\|_{L^\infty(D_T)}\\\nonumber
&\leq& C\left\|{\tilde v_n\over y-y_n(x)}\right\|_{L^2(D_T)}\leq C\left(\int_0^T\left\|{\tilde v_n(x,\cdot)}\right\|_{H^1(-1,1)}^2dx\right)^{1\over2}\\\label{tilde vn-over-un-L2}
&\leq&C\left\|{\tilde v_n}\right\|_{H^1(D_T)}\leq C\left\|{\tilde v_n}\right\|_{H^2(D_T)}^{1/2}\left\|{\tilde v_n}\right\|_{L^2(D_T)}^{1/2}=C\left\|{\tilde v_n}\right\|_{H^2(D_T)}^{1/2}.
\een
Thus, by  periodic
boundary condition in $x$ and Dirichlet boundary condition in $y$ of $\tilde v_n$, we have
\ben\label{tilde vn-H2 bound}
\| \tilde v_n\|_{H^2(D_T)}\leq C\|\Delta \tilde v_n\|_{L^2(D_T)}\leq  C\left\|{\tilde v_n}\right\|_{H^2(D_T)}^{1/2}\Longrightarrow\| \tilde v_n\|_{H^2(D_T)}\leq C.
\een

Next, we prove the uniform $H^3$ bound for $\tilde v_n$, $n\geq 1$.
Taking derivative of  \eqref{tilde-vn-eq} with respect to $y$, we have
\beno
\pa_y\Delta\tilde v_n+\pa_y^2\omega_n{ \tilde v_n\over u_n{-c_n}}+(\pa_y\omega_n+\beta)\pa_y\left({ \tilde v_n\over u_n{-c_n}}\right)=0.
\eeno
Thus, by \eqref{un-omegan-C bound-nd} and  \eqref{tilde vn-over-un-L2} we have
\begin{align}\nonumber
\|\pa_y\Delta\tilde v_n\|_{L^2(D_T)}
\leq& \|\pa_y^2\omega_n\|_{L^\infty(D_T)}\left\|{ \tilde v_n\over u_n{-c_n}}\right\|_{L^2(D_T)}
+\|\pa_y\omega_n+\beta\|_{L^\infty(D_T)}\left\|\pa_y\left({ \tilde v_n\over u_n{-c_n}}\right)\right\|_{L^2(D_T)}\\
\label{derivative-to-y-norm}
\leq&C\left\|{ \tilde v_n}\right\|_{H^1(D_T)}
+C\left\|\pa_y\left({ \tilde v_n\over u_n{-c_n}}\right)\right\|_{L^2(D_T)}.
\end{align}
 Since $\tilde v_n(x,y_n(x))=u_n(x,y_n(x)){-c_n}=0$ for $x\in[0,T]$ and $n$ sufficiently large, we have
\begin{align}\nonumber
{\tilde v_n(x,y)\over u_n(x,y){-c_n}}=&{\tilde v_n(x,y)-\tilde v_n(x,y_n(x))\over u_n(x,y)-u_n(x,y_n(x))}={\int_{y_n(x)}^y\pa_y\tilde v_n(x,s_n)ds_n\over
\int_{y_n(x)}^y\pa_yu_n(x,s_n)ds_n} \\\label{vn-un-int}
=&{\int_0^1\pa_y\tilde v_n(x,y_n(x)+t(y-y_n(x)))dt\over \int_0^1\pa_y u_n(x,y_n(x)+t(y-y_n(x)))dt}={\int_0^1\pa_y\tilde v_n(x,s_n)dt\over \int_0^1\pa_y u_n(x,s_n)dt},
\end{align}
where  $s_n=y_n(x)+t(y-y_n(x))$. Here, $s_n$ is dependent on $x$, and we always use $s_n$ to
avoid tedious notation. Then
\begin{align*}
\pa_y\left({\tilde v_n(x,y)\over u_n(x,y){-c_n}}\right)
=&{\int_0^1\pa_y^2\tilde v_n(x,s_n)tdt\int_0^1\pa_y u_n(x,s_n)dt-\int_0^1\pa_y\tilde v_n(x,s_n)dt\int_0^1\pa_y^2 u_n(x,s_n) tdt\over \left(\int_0^1\pa_y u_n(x,s_n)dt\right)^2}.
\end{align*}
By \eqref{un-omegan-C bound-nd} and \eqref{un-xy-bound-nd}, we have
\begin{align}\label{un-derivative-to-y}
1/2<\int_0^1\pa_y u_n(x,s_n)dt<3/2\;\;\;\forall(x,y)\in D_T, \text{ and }\left\|\int_0^1\pa_y^2 u_n(x,s_n) tdt\right\|_{L^\infty(D_T)}\leq C.
\end{align}
 Thus,
\begin{align}\nonumber
\left\|\pa_y\left({\tilde v_n\over u_n{-c_n}}\right)\right\|_{L^2(D_T)}
{\leq}&C\left\|\int_0^1\pa_y^2\tilde v_n(x,s_n)tdt\right\|_{L^2(D_T)}\left\|\int_0^1\pa_y u_n(x,s_n)dt\right\|_{L^\infty(D_T)}\\\nonumber
&+C\left\|\int_0^1\pa_y\tilde v_n(x,s_n)dt\right\|_{L^2(D_T)}\left\|\int_0^1\pa_y^2 u_n(x,s_n)tdt\right\|_{L^\infty(D_T)}\\\label{vn-overun-derivative-L2}
\leq &C\left\|\int_0^1\pa_y^2\tilde v_n(x,s_n)tdt\right\|_{L^2(D_T)}
+C\left\|\int_0^1\pa_y\tilde v_n(x,s_n)dt\right\|_{L^2(D_T)}.
\end{align}
A direct computation implies
\begin{align}\nonumber
&\left\|\int_0^1|\pa_y\tilde v_n(x,s_n)|dt\right\|_{L^2(D_T)}^2=\int_0^T\int_{-1}^1\left|\int_{y_n(x)}^y{|\pa_y\tilde v_n(x,s_n)|\over y-y_n(x)}ds_n\right|^2dydx\\\nonumber
\leq&\int_0^T\int_{-1}^1{1\over |y-y_n(x)|}\left|\int_{y_n(x)}^y{|\pa_y\tilde v_n(x,s_n)|^2}ds_n\right|dydx\\\nonumber
=&\int_0^T\int_{-1}^{y_n(x)}{1\over y_n(x)-y}\int_y^{y_n(x)}{|\pa_y\tilde v_n(x,s_n)|^2}ds_ndydx\\\nonumber
&+\int_0^T\int_{y_n(x)}^1{1\over y-y_n(x)}\int_{y_n(x)}^y{|\pa_y\tilde v_n(x,s_n)|^2}ds_ndydx\\\nonumber
=&-\int_0^T\left(\ln(y_n(x)-y)\int_y^{y_n(x)}|\pa_y\tilde v_n(x,s_n)|^2ds_n\right)\big|_{y=-1}^{y_n(x)}dx\\\nonumber
&-\int_0^T\int_{-1}^{y_n(x)}\ln(y_n(x)-y)|\pa_y\tilde v_n(x,y)|^2dydx\\\nonumber
&+\int_0^T\left(\ln(y-y_n(x))\int_{y_n(x)}^y|\pa_y\tilde v_n(x,s_n)|^2ds_n\right)\big|_{y=y_n(x)}^{1}dx\\\nonumber
&-\int_0^T\int_{y_n(x)}^1\ln(y-y_n(x))|\pa_y\tilde v_n(x,y)|^2dydx\\\label{I1-4}
=&I_1(\pa_y\tilde v_n)+I_2(\pa_y\tilde v_n)+I_3(\pa_y\tilde v_n)+I_4(\pa_y\tilde v_n).
\end{align}
Since $\tilde v_n\in C^1(D_T)$, for fixed $n\geq1$ and $x\in[0,T]$ we have
\begin{align*}
&\lim_{y\to y_n(x)}\ln|y_n(x)-y|\left|\int_y^{y_n(x)}|\pa_y\tilde v_n(x,s_n)|^2ds_n\right|\\
=&\lim_{y\to y_n(x)}|y_n(x)-y|\ln|y_n(x)-y|\cdot\left|\lim_{y\to y_n(x)}{\int_y^{y_n(x)}|\pa_y\tilde v_n(x,s_n)|^2ds_n\over y_n(x)-y}\right|\\
=&0\cdot|\pa_y\tilde v_n(x,y_n(x))|^2=0.
\end{align*}
By \eqref{un-xy-bound-nd} and \eqref{un-y-C3-nd}, we have $|y_n(x){-c_n}|=|y_n(x)-u_n(x,{y_n(x)})|\leq\|u_n-y\|_{C^0(D_T)}\leq C\varepsilon_n,$ and thus,
\begin{align}\label{yn(x)-0uniform}
|y_n(x)-c_0|\leq |y_n(x)-c_n|+|c_n-c_0| \to 0
\end{align}
as $n\to\infty$ uniformly for $x\in[0,T]$. Then {$|y_n(x)|<\frac{1+|c_0|}{2}\leq 1-\frac{\delta}{2}$} for $x\in[0,T]$ and $n$ sufficiently large. Thus,
\begin{align}\nonumber
|I_1(\pa_y\tilde v_n)|+|I_3(\pa_y\tilde v_n)|\leq &\int_0^T|\ln(y_n(x)+1)|\int_{-1}^{y_n(x)}|\pa_y\tilde v_n(x,s_n)|^2ds_ndx\\\nonumber
&+\int_0^T|\ln(1-y_n(x))|\int_{y_n(x)}^1|\pa_y\tilde v_n(x,s_n)|^2ds_ndx\\\label{I13}
\leq& {C_\delta}\|\pa_y\tilde v_n\|_{L^2(D_T)}^2\leq C\|\tilde v_n\|_{H^1(D_T)}^2.
\end{align}
By \eqref{tilde vn-H2 bound} we have
\begin{align*}
\|\partial_y\tilde v_n\|_{L^4(D_T)}\leq C\|\tilde v_n\|_{H^2(D_T)}\leq C,
\end{align*}
and thus,
\begin{align}\nonumber
|I_2(\pa_y\tilde v_n)|\leq &\left(\int_0^T\int_{-1}^{y_n(x)}|\ln(y_n(x)-y)|^2dydx\right)^{1\over2} \left(\int_0^T\int_{-1}^{y_n(x)}|\pa_y\tilde v_n(x,y)|^4dydx\right)^{1\over2}\\\nonumber
\leq &\left(\int_0^T\int_{0}^{y_n(x)+1}|\ln \tau_n|^2d\tau_ndx\right)^{1\over2}\|\pa_y\tilde v_n\|_{L^4(D_T)}^2\\\label{I2}
\leq& \left(\int_0^T\int_{0}^2|\ln \tau|^2d\tau dx\right)^{1\over2}\|\tilde v_n\|_{H^2(D_T)}^2\leq C\|\tilde v_n\|_{H^2(D_T)}^2\leq C.
\end{align}
Similarly, we have
\begin{align}\label{I4}
|I_4(\pa_y\tilde v_n)|
\leq& \left(\int_0^T\int_{0}^2|\ln \tau|^2d\tau dx\right)^{1\over2}\|\tilde v_n\|_{H^2(D_T)}^2\leq C.
\end{align}
Combining \eqref{I1-4}, \eqref{I13}, \eqref{I2} and \eqref{I4}, we have
\begin{align}\label{I1-4-sum}
\left\|\int_0^1|\pa_y\tilde v_n(x,s_n)|dt\right\|_{L^2(D_T)}\leq C\|\tilde v_n\|_{H^2(D_T)}\leq C.
\end{align}
Thanks to the factor $t$, we can also prove that
 \begin{align}\label{derivative-y2}
 \left\|\int_0^1\pa_y^2\tilde v_n(x,s_n)tdt\right\|_{L^2(D_T)}\leq C\|\tilde v_n\|_{H^2(D_T)} \leq C.
 \end{align}
In fact,
\begin{align}&\left\|\int_0^1\pa_y^2\tilde v_n(x,s_n)tdt\right\|_{L^2(D_T)}^2\nonumber\\
=&\int_0^T\int_{-1}^1\left|\int_{y_n(x)}^y\pa_y^2\tilde v_n(x,s_n){s_n-y_n(x)\over (y-y_n(x))^2}ds_n\right|^2dydx\nonumber\\
\leq&\int_0^T\int_{-1}^1{1\over |y-y_n(x)|^3}\left|\int_{y_n(x)}^y|\pa_y^2\tilde v_n(x,s_n)|^2{(s_n-y_n(x))^2}ds_n\right|dydx\nonumber\\
=&\int_0^T\int_{-1}^{y_n(x)}{1\over (y_n(x)-y)^3}\int_y^{y_n(x)}|\pa_y^2\tilde v_n(x,s_n)|^2{(y_n(x)-s_n)^2}ds_ndydx\nonumber\\
&+\int_0^T\int_{y_n(x)}^1{1\over (y-y_n(x))^3}\int_{y_n(x)}^y|\pa_y^2\tilde v_n(x,s_n)|^2{(s_n-y_n(x))^2}ds_ndydx\nonumber\\
=&{1\over2}\int_0^T\left({1\over (y_n(x)-y)^2}\int_y^{y_n(x)}|\pa_y^2\tilde v_n(x,s_n)|^2{(y_n(x)-s_n)^2}ds_n\right)\bigg|_{y=-1}^{y_n(x)}dx\nonumber\\
&+{1\over2}\int_0^T\int_{-1}^{y_n(x)}|\pa_y^2\tilde v_n(x,y)|^2dydx\nonumber\\
&-{1\over2}\int_0^T\left({1\over (y-y_n(x))^2}\int_{y_n(x)}^y|\pa_y^2\tilde v_n(x,s_n)|^2{(s_n-y_n(x))^2}ds_n\right)\bigg|_{y=y_n(x)}^{1}dx\nonumber\\
&+{1\over2}\int_0^T\int_{y_n(x)}^1|\pa_y^2\tilde v_n(x,y)|^2dydx\nonumber\\
\leq&{1\over2}\int_0^T{1\over (y_n(x)+1)^2}\int_{-1}^{y_n(x)}|\pa_y^2\tilde v_n(x,s_n)|^2{(y_n(x)-s_n)^2}ds_ndx+{1\over2}\|\pa_y^2\tilde v_n\|_{L^2(D_T)}^2\nonumber\\
&+{1\over2}\int_0^T{1\over (1-y_n(x))^2}\int_{y_n(x)}^1|\pa_y^2\tilde v_n(x,s_n)|^2{(s_n-y_n(x))^2}ds_ndx+{1\over2}\|\pa_y^2\tilde v_n\|_{L^2(D_T)}^2\nonumber\\
%\leq&{1\over2}\|\pa_y^2\tilde v_n\|_{L^2(D_T)}^2+{1\over2}\|\pa_y^2\tilde v_n\|_{L^2(D_T)}^2+{1\over2}\|\pa_y^2\tilde v_n\|_{L^2(D_T)}^2+{1\over2}\|\pa_y^2\tilde v_n\|_{L^2(D_T)}^2\\
\leq&2 \|\pa_y^2\tilde v_n\|_{L^2(D_T)}^2\leq 2\|\tilde v_n\|_{H^2(D_T)}^2\leq C,\label{til-v-derivative-2}
\end{align}
where we used $\tilde v_n\in C^2(D_T)$ to deduce that
$\lim\limits_{y\to y_n(x)}{1\over (y-y_n(x))^2}\left|\int_{y_n(x)}^y|\pa_y^2\tilde v_n(x,s_n)|^2{(s_n-y_n(x))^2}ds_n\right|$ $=0$ for fixed $n$ and $x\in[0,T]$.

By  \eqref{vn-overun-derivative-L2}, \eqref{I1-4-sum} and \eqref{derivative-y2}, we have
\begin{align}\label{vn-overun-derivative-L2-sum}
\left\|\pa_y\left({\tilde v_n\over u_n{-c_n}}\right)\right\|_{L^2(D_T)}\leq C,
\end{align}
and thus, by \eqref{derivative-to-y-norm},    \eqref{vn-overun-derivative-L2-sum} and \eqref{tilde vn-H2 bound} we have
\begin{align}\label{Delta-tilde-vn-derivative-to-y}
\|\pa_y\Delta\tilde v_n\|_{L^2(D_T)}
\leq &C\left\|{ \tilde v_n}\right\|_{H^1(D_T)}+C \leq C.
\end{align}
Taking derivative of  \eqref{tilde-vn-eq} with respect to $x$, by \eqref{un-omegan-C bound-nd} and  \eqref{tilde vn-over-un-L2} we have
\begin{align}\nonumber
\|\pa_x\Delta\tilde v_n\|_{L^2(D_T)}\leq&\|\pa_{xy}\omega_n\|_{L^\infty(D_T)}\left\|{ \tilde v_n\over u_n{-c_n}}\right\|_{L^2(D_T)}+\|\pa_y\omega_n+\beta\|_{L^\infty(D_T)}\left\|\pa_x\left({ \tilde v_n\over u_n{-c_n}}\right)\right\|_{L^2(D_T)}\\
\label{derivative-to-x-norm}
\leq&C\left\|{ \tilde v_n}\right\|_{H^1(D_T)}
+C\left\|\pa_x\left({ \tilde v_n\over u_n{-c_n}}\right)\right\|_{L^2(D_T)}.
\end{align}
Taking derivative of  \eqref{vn-un-int} with respect to $x$, we have
\begin{align}\nonumber
\pa_x\left({\tilde v_n(x,y)\over u_n(x,y){-c_n}}\right)
=&{\int_0^1\left(\pa_{xy}\tilde v_n(x,s_n)+\pa_y^2\tilde v_n(x,s_n)(1-t)y_n'(x)\right)dt\over \int_0^1\pa_y u_n(x,s_n)dt}\\\label{vn-un-int-derivative-to-x}
&-{\int_0^1\pa_y\tilde v_n(x,s_n)dt\int_0^1\left(\pa_{xy}u_n(x,s_n)+\pa_y^2u_n(x,s_n)(1-t)y_n'(x)\right)dt\over \left(\int_0^1\pa_y u_n(x,s_n)dt\right)^2}.
\end{align}
Since $u_n(x,y_n(x))={c_n}$ for $x\in[0,T]$, we have $y_n'(x)=-\pa_xu_n(x,y_n(x))/\pa_yu_n(x,y_n(x))$. By
\eqref{un-omegan-C bound-nd} and \eqref{un-xy-bound-nd}, we have
\begin{align}\label{yn-derivative-un-t}
\|y_n'\|_{L^\infty(0,T)}\leq C\;\text{and}\;
\left\|\int_0^1\left(\pa_{xy}u_n(x,s_n)+\pa_y^2u_n(x,s_n)(1-t)y_n'(x)\right)dt\right\|_{L^\infty(D_T)}\leq C.
\end{align}
 This, along with \eqref{un-derivative-to-y} and \eqref{I1-4-sum}, gives
\begin{align}\nonumber
\left\|\pa_x\left({\tilde v_n\over u_n{-c_n}}\right)\right\|_{L^2(D_T)}
\leq&C\left\|\int_0^1\pa_{xy}\tilde v_n(x,s_n)dt\right\|_{L^2(D_T)}+C\left\|\int_0^1\pa_y^2\tilde v_n(x,s_n)(1-t)y_n'(x)dt\right\|_{L^2(D_T)}\\\nonumber
&+C\left\|\int_0^1\pa_y\tilde v_n(x,s_n)dt\right\|_{L^2(D_T)}\\\label{tilde-vn-over-un-derivative-to-x}
\leq&C\left\|\int_0^1|\pa_{xy}\tilde v_n(x,s_n)|dt\right\|_{L^2(D_T)}+C\left\|\int_0^1|\pa_y^2\tilde v_n(x,s_n)|dt\right\|_{L^2(D_T)}
+C.
\end{align}
Let $\Gamma_1\tilde v_n=\pa_{xy}\tilde v_n$ and $\Gamma_2\tilde v_n=\pa_{y}^2\tilde v_n$.
We prove that for $i=1,2$,
\begin{align}\label{Gamma12-L2}
\left\|\int_0^1|\Gamma_i\tilde v_n(x,s_n)|dt\right\|_{L^2(D_T)}
%\leq& C \|\tilde v_n\|_{H^3(D_T)}^\alpha\|\tilde v_n\|_{H^2(D_T)}^{1-\alpha}+C\|\tilde v_n\|_{H^2(D_T)}\\
\leq& C \|\tilde v_n\|_{H^3(D_T)}^\alpha+
C,
\end{align}
where $\alpha\in(0,1)$. By the same computation as in \eqref{I1-4}, we have
\begin{align}\label{Gamma-I1-4}
\left\|\int_0^1|\Gamma_i\tilde v_n(x,s_n)|dt\right\|_{L^2(D_T)}^2\leq I_1(\Gamma_i\tilde v_n)+I_2(\Gamma_i\tilde v_n)+I_3(\Gamma_i\tilde v_n)+I_4(\Gamma_i\tilde v_n)
\end{align}
for $i=1,2$. Since $\tilde v_n\in C^2(D_T)$, for fixed $n\geq1$ and $x\in[0,T]$ we have
\begin{align*}
\lim_{y\to y_n(x)}\ln|y_n(x)-y|\left|\int_y^{y_n(x)}|\Gamma_i\tilde v_n(x,s_n)|^2ds_n\right|=0.
\end{align*}
Since $|y_n(x)|<{1-\frac{\delta}{2}}$ for $x\in[0,T]$ and $n$ sufficiently large, by \eqref{tilde vn-H2 bound} we have
\begin{align}\label{Gamma-I13}
|I_1(\Gamma_i\tilde v_n)|+|I_3(\Gamma_i\tilde v_n)|
\leq C\|\Gamma_i\tilde v_n\|_{L^2(D_T)}^2\leq C\|\tilde v_n\|_{H^2(D_T)}^2\leq C.
\end{align}
Letting $q=2, r=2, m=1, n=2, j=0, \alpha\in(0,1)$, $G=D_T$ and $u=\Gamma_i\tilde v_n$ in Lemma \ref{Gagliardo-Nirenberg interpolation inequality}, we have by \eqref{tilde vn-H2 bound} that
\begin{align*}
\|\Gamma_i\tilde v_n\|_{L^{p}(D_T)}\leq& C\|D^1(\Gamma_i\tilde v_n)\|_{L^2(D_T)}^\alpha\|\Gamma_i\tilde v_n\|_{L^2(D_T)}^{1-\alpha}+C\|\Gamma_i\tilde v_n\|_{L^2(D_T)}\\
\leq &C\|\tilde v_n\|_{H^3(D_T)}^\alpha\|\tilde v_n\|_{H^2(D_T)}^{1-\alpha}+C\|\tilde v_n\|_{H^2(D_T)}\leq C\|\tilde v_n\|_{H^3(D_T)}^\alpha+C,
\end{align*}
where $p={2\over1-\alpha}\in(2,\infty)$. Let $p_0=\left({p\over2}\right)'={p\over p-2}>1.$ Then
\begin{align}\nonumber
|I_2(\Gamma_i\tilde v_n)|
\leq &\left(\int_0^T\int_{-1}^{y_n(x)}|\ln(y_n(x)-y)|^{p_0}dydx\right)^{1\over p_0} \left(\int_0^T\int_{-1}^{y_n(x)}|\Gamma_i\tilde v_n(x,y)|^{p}dydx\right)^{2\over p}\\\label{Gamma-I2}
\leq& \left(\int_0^T\int_{0}^2|\ln \tau|^{p_0}d\tau dx\right)^{1\over p_0}\|\Gamma_i\tilde v_n\|_{L^p(D_T)}^2\leq C\|\tilde v_n\|_{H^3(D_T)}^{2\alpha}+C.
\end{align}
A similar argument gives
\begin{align}\label{Gamma-I4}
|I_4(\Gamma_i\tilde v_n)|
\leq& C\|\tilde v_n\|_{H^3(D_T)}^{2\alpha}+C.
\end{align}
By \eqref{Gamma-I1-4}, \eqref{Gamma-I13}, \eqref{Gamma-I2} and \eqref{Gamma-I4}, we have
\begin{align*}
\left\|\int_0^1|\Gamma_i\tilde v_n(x,s_n)|dt\right\|_{L^2(D_T)}^2\leq C\|\tilde v_n\|_{H^3(D_T)}^{2\alpha}+C,
\end{align*}
 and thus,  \eqref{Gamma12-L2} holds for $i=1,2$. Then we deduce from   \eqref{tilde-vn-over-un-derivative-to-x} and \eqref{Gamma12-L2}  that
 \begin{align}\label{tilde-vn-over-un-derivative-to-x-L2-sum}
\left\|\pa_x\left({\tilde v_n\over u_n{-c_n}}\right)\right\|_{L^2(D_T)}
{\leq}&C\|\tilde v_n\|_{H^3(D_T)}^{\alpha}+C.
\end{align}
By \eqref{derivative-to-x-norm}, \eqref{tilde-vn-over-un-derivative-to-x-L2-sum} and \eqref{tilde vn-H2 bound}, we have
\begin{align}
\|\pa_x\Delta\tilde v_n\|_{L^2(D_T)}\leq&C\left\|{ \tilde v_n}\right\|_{H^1(D_T)}+C \|\tilde v_n\|_{H^3(D_T)}^\alpha+
C\leq C \|\tilde v_n\|_{H^3(D_T)}^\alpha+
C.\label{Delta-tilde-vn-derivative-to-x}
\end{align}
Thus, by the periodic
boundary condition in $x$ and Dirichlet boundary condition in $y$ of $\tilde v_n$,  we infer from \eqref{Delta-tilde-vn-derivative-to-y} and \eqref{Delta-tilde-vn-derivative-to-x} that
\begin{align*}
\|\tilde v_n\|_{H^3(D_T)}\leq C\|\pa_y\Delta\tilde v_n\|_{L^2(D_T)}+C\|\pa_x\Delta\tilde v_n\|_{L^2(D_T)}\leq  C \|\tilde v_n\|_{H^3(D_T)}^\alpha+
C,
\end{align*}
where $\alpha\in(0,1)$.
Thus,
\begin{align}\label{vn-H3-bound}
\|\tilde v_n\|_{H^3(D_T)}\leq C, \;\left\|\pa_x\left({\tilde v_n\over u_n{-c_n}}\right)\right\|_{L^2(D_T)}\leq C,\;\left\|\int_0^1|\Gamma_i\tilde v_n(x,s_n)|dt\right\|_{L^2(D_T)}
\leq C
\end{align}
for $i=1,2$.

Then we  prove the uniform $H^4$ bound for $\tilde v_n$, $n\geq 1$. Taking  derivative of  \eqref{tilde-vn-eq} with respect to $x$ twice, we have
\begin{align*}
\pa_x^2\Delta\tilde  v_n+\pa_{xxy}\omega_n{\tilde v_n\over u_n{-c_n}}+2\pa_{xy}\omega_n\pa_x\left({\tilde v_n\over u_n{-c_n}}\right)+\left(\pa_y\omega_n+\beta\right)\pa_x^2\left({\tilde v_n\over u_n{-c_n}}\right)=0.
\end{align*}
By \eqref{tilde vn-over-un-L2}, \eqref{vn-overun-derivative-L2-sum} and \eqref{vn-H3-bound}, we have
\begin{align}\label{tilde-vn-over-un-H1-L4}
\left\|{\tilde v_n\over u_n{-c_n}}\right\|_{H^1(D_T)}\leq C\Longrightarrow\left\|{\tilde v_n\over u_n{-c_n}}\right\|_{L^4(D_T)}\leq C\left\|{\tilde v_n\over u_n{-c_n}}\right\|_{H^1(D_T)}\leq C.
\end{align}
Then by \eqref{omega-n-Lp-nd}, \eqref{un-omegan-C bound-nd} and \eqref{tilde-vn-over-un-H1-L4}, we have
\begin{align}\nonumber
\left\|\pa_x^2\Delta\tilde  v_n\right\|_{L^2(D_T)}
\leq&\|\pa_{xxy}\omega_n\|_{L^4(D_T)}\left\|{\tilde v_n\over u_n{-c_n}}\right\|_{L^4(D_T)}+2\|\pa_{xy}\omega_n\|_{C^0(D_T)}\left\|\pa_x\left({\tilde v_n\over u_n{-c_n}}\right)\right\|_{L^2(D_T)}\\\nonumber
&+\|\pa_y\omega_n+\beta\|_{C^0(D_T)}\left\|\pa_x^2\left({\tilde v_n\over u_n{-c_n}}\right)\right\|_{L^2(D_T)}\\\label{Delta-tilde-v-n-derivative-to-x2-norm}
\leq& C+C\left\|\pa_x^2\left({\tilde v_n\over u_n{-c_n}}\right)\right\|_{L^2(D_T)}.
\end{align}
By \eqref{vn-un-int-derivative-to-x}, we have
\begin{align}\nonumber
&\pa_x^2\left({\tilde v_n(x,y)\over u_n(x,y){-c_n}}\right)
\\\nonumber
=&{\int_0^1(\pa_{xxy}\tilde v_n+2\pa_{xyy}\tilde v_n(1-t)y_n'+\pa_{y}^3\tilde v_n(1-t)^2y_n'^2+\pa_y^2\tilde v_n(1-t)y_n'')dt\over\int_0^1\pa_yu_ndt}\\\nonumber
&-2{\int_0^1(\pa_{xy}\tilde v_n+\pa_y^2\tilde v_n(1-t)y_n')dt\int_0^1(\pa_{xy}u_n+\pa_y^2u_n(1-t)y_n')dt\over \left(\int_0^1\pa_yu_ndt\right)^2}\\\nonumber
&-{\int_0^1\pa_y\tilde v_ndt\int_0^1(\pa_{xxy} u_n+2\pa_{xyy}u_n(1-t)y_n'+\pa_{y}^3u_n(1-t)^2y_n'^2+\pa_y^2u_n(1-t)y_n'')dt\over \left(\int_0^1\pa_yu_ndt\right)^2 }\\\nonumber
&+2{\int_0^1\pa_y\tilde v_ndt\left(\int_0^1(\pa_{xy}u_n+\pa_y^2u_n(1-t)y_n')dt\right)^2\over \left(\int_0^1\pa_yu_ndt\right)^3}\\
=&II_1+II_2+II_3+II_4.\label{vn-un-int-derivative-to-x2}
\end{align}
 Since $u_n(x,y_n(x))={c_n}$ for $x\in[0,T]$, we have
\begin{align*}
y_n''(x)=-{\pa_x^2u_n(x,y_n(x))+2\pa_{xy}u_n(x,y_n(x))y_n'(x)+\pa_y^2u_n(x,y_n(x))(y_n'(x))^2\over\pa_y u_n(x,y_n(x))},
\end{align*}
and thus, we deduce from
\eqref{un-omegan-C bound-nd}, \eqref{un-xy-bound-nd} and
\eqref{yn-derivative-un-t} that
\begin{align}\label{yn-derivative2-un-t}&\|y_n''\|_{L^\infty(0,T)}\leq C,\\
\label{un-int3-t}
&\left\|\int_0^1(\pa_{xxy} u_n+2\pa_{xyy}u_n(1-t)y_n'+\pa_{y}^3u_n(1-t)^2y_n'^2+\pa_y^2u_n(1-t)y_n'')dt\right\|_{L^\infty(D_T)}\leq C.
\end{align}
By \eqref{un-derivative-to-y},
\eqref{yn-derivative-un-t}, \eqref{yn-derivative2-un-t} and \eqref{un-int3-t}, we have
\begin{align}\nonumber
\|II_1\|_{L^2(D_T)}\leq&C\left\|\int_0^1|\pa_{xxy}\tilde v_n|dt\right\|_{L^2(D_T)}+C\left\|\int_0^1|\pa_{xyy}\tilde v_n|dt\right\|_{L^2(D_T)}\\\nonumber
&+C\left\|\int_0^1|\pa_{y}^3\tilde v_n|dt\right\|_{L^2(D_T)}+C\left\|\int_0^1|\pa_y^2\tilde v_n|dt\right\|_{L^2(D_T)},\\\nonumber
\|II_2\|_{L^2(D_T)}\leq&C\left\|\int_0^1|\pa_{xy}\tilde v_n|dt\right\|_{L^2(D_T)}+C\left\|\int_0^1|\pa_{y}^2\tilde v_n|dt\right\|_{L^2(D_T)},\\
\|II_j\|_{L^2(D_T)}\leq&C\left\|\int_0^1|\pa_{y}\tilde v_n|dt\right\|_{L^2(D_T)}\label{II1-4}
\end{align}
for $j=3,4$.
Let $\tilde\Gamma_1\tilde v_n=\pa_{xxy}\tilde v_n$, $\tilde\Gamma_2\tilde v_n=\pa_{xyy}\tilde v_n$ and $\tilde\Gamma_3\tilde v_n=\pa_{y}^3\tilde v_n$.
We prove that for $i=1,2,3$,
\begin{equation}\label{tilde-Gamma-vn}
\left\|\int_0^1|\tilde\Gamma_i\tilde v_n(x,s_n)|dt\right\|_{L^2(D_T)}\leq C \|\tilde v_n\|_{H^4(D_T)}^\alpha+C,
\end{equation}
where $\alpha\in(0,1)$. Based on the uniform $H^3$ bound for $\tilde v_n(n\geq1)$, the proof is similar as \eqref{Gamma12-L2}, and we give it here for completeness.
In fact, we have
\begin{align}\label{tilde-Gamma-I1-4}
\left\|\int_0^1|\tilde\Gamma_i\tilde v_n(x,s_n)|dt\right\|_{L^2(D_T)}^2\leq I_1(\tilde\Gamma_i\tilde v_n)+I_2(\tilde\Gamma_i\tilde v_n)+I_3(\tilde\Gamma_i\tilde v_n)+I_4(\tilde\Gamma_i\tilde v_n),\\\label{tilde-Gamma-I13}
|I_1(\tilde\Gamma_i\tilde v_n)|+|I_3(\tilde\Gamma_i\tilde v_n)|
\leq C\|\tilde\Gamma_i\tilde v_n\|_{L^2(D_T)}^2\leq C\|\tilde v_n\|_{H^3(D_T)}^2\leq C,
\end{align}
where we used $\tilde v_n\in C^3(D_T)$.
By Lemma \ref{Gagliardo-Nirenberg interpolation inequality}, we deduce from  \eqref{vn-H3-bound} that
\begin{align*}
\|\tilde\Gamma_i\tilde v_n\|_{L^{p}(D_T)}\leq& C\|D^1(\tilde\Gamma_i\tilde v_n)\|_{L^2(D_T)}^\alpha\|\tilde\Gamma_i\tilde v_n\|_{L^2(D_T)}^{1-\alpha}+C\|\tilde\Gamma_i\tilde v_n\|_{L^2(D_T)}\\
\leq &C\|\tilde v_n\|_{H^4(D_T)}^\alpha\|\tilde v_n\|_{H^3(D_T)}^{1-\alpha}+C\|\tilde v_n\|_{H^3(D_T)}\leq C\|\tilde v_n\|_{H^4(D_T)}^\alpha+C,
\end{align*}
where $p={2\over1-\alpha}\in(2,\infty)$. Let $p_0=\left({p\over2}\right)'={p\over p-2}>1.$ Then
\begin{align}\label{tilde-Gamma-I24}
|I_{j}(\tilde \Gamma_i\tilde v_n)|
\leq& \left(\int_0^T\int_{0}^2|\ln \tau|^{p_0}d\tau dx\right)^{1\over p_0}\|\tilde\Gamma_i\tilde v_n\|_{L^p(D_T)}^2\leq C\|\tilde v_n\|_{H^4(D_T)}^{2\alpha}+C
\end{align}
for $j=2,4$.
By \eqref{tilde-Gamma-I1-4}, \eqref{tilde-Gamma-I13} and \eqref{tilde-Gamma-I24}, we have
\begin{align*}
\left\|\int_0^1|\tilde\Gamma_i\tilde v_n(x,s_n)|dt\right\|_{L^2(D_T)}^2\leq C\|\tilde v_n\|_{H^4(D_T)}^{2\alpha}+C,
\end{align*}
 and thus,  \eqref{tilde-Gamma-vn} holds for $i=1,2,3$. By \eqref{II1-4}, \eqref{tilde-Gamma-vn}, \eqref{vn-H3-bound} and \eqref{I1-4-sum},  we have
\begin{align*}
\|II_1\|_{L^2(D_T)}\leq C\|\tilde v_n\|_{H^4(D_T)}^{\alpha}+C,\;\|II_j\|_{L^2(D_T)}\leq C
\end{align*}
for $j=2,3,4$, and thus, it follows from \eqref{vn-un-int-derivative-to-x2} and \eqref{Delta-tilde-v-n-derivative-to-x2-norm}  that
\begin{align}\label{vn-un-int-derivative-to-x2-L2}
\left\|\pa_x^2\left({\tilde v_n\over u_n{-c_n}}\right)\right\|_{L^2(D_T)}\leq C\|\tilde v_n\|_{H^4(D_T)}^{\alpha}+C\Longrightarrow\left\|\pa_x^2\Delta\tilde  v_n\right\|_{L^2(D_T)}\leq C\|\tilde v_n\|_{H^4(D_T)}^{\alpha}+C.
\end{align}

\if0
By \eqref{derivative-to-y}, we have
\begin{align*}
\pa_{xy}\Delta\tilde  v_n+\pa_{xyy}\omega_n{\tilde v_n\over u_n}+\pa_{xy}\omega_n\pa_y\left({\tilde v_n\over u_n}\right)+\pa_{y}^2\omega_n\pa_x\left({\tilde v_n\over u_n}\right)+\left(\pa_y\omega_n+\beta\right)\pa_{xy}\left({\tilde v_n\over u_n}\right)=0.
\end{align*}
Thus, by \eqref{omega-n-Lp}, \eqref{un-omegan-C bound}  and \eqref{tilde-vn-over-un-H1-L4}, we have
\begin{align}\nonumber
\left\|\pa_{xy}\Delta\tilde  v_n\right\|_{L^2(D_T)}
\leq&\|\pa_{xyy}\omega_n\|_{L^4(D_T)}\left\|{\tilde v_n\over u_n}\right\|_{L^4(D_T)}+\|\pa_{xy}\omega_n\|_{C^0(D_T)}\left\|\pa_y\left({\tilde v_n\over u_n}\right)\right\|_{L^2(D_T)}\\\nonumber
&+\|\pa_{y}^2\omega_n\|_{C^0(D_T)}\left\|\pa_x\left({\tilde v_n\over u_n}\right)\right\|_{L^2(D_T)}+\|\pa_y\omega_n+\beta\|_{C^0(D_T)}\left\|\pa_{xy}\left({\tilde v_n\over u_n}\right)\right\|_{L^2(D_T)}\\\label{Delta-tilde-v-n-derivative-to-xy-norm}
\leq&C+C\left\|\pa_{xy}\left({\tilde v_n\over u_n}\right)\right\|_{L^2(D_T)}.
\end{align}
Taking derivative of \eqref{vn-un-int-derivative-to-x}  with respect to $y$, we have
\begin{align}\nonumber
\pa_{xy}\left({\tilde v_n(x,y)\over u_n(x,y)}\right)
=&{\int_0^1\pa_{xyy}\tilde v_nt+\pa_{y}^3\tilde v_nt(1-t)y_n'dt\over\int_0^1\pa_yu_ndt}\\\nonumber
&-{\int_0^1\pa_{xy}\tilde v_n+\pa_y^2\tilde v_n(1-t)y_n'dt\int_0^1\pa_y^2u_ntdt\over \left(\int_0^1\pa_yu_ndt\right)^2}\\\nonumber
&-{\int_0^1\pa_y^2\tilde v_ntdt\int_0^1\pa_{xy} u_n+\pa_{y}^2u_n(1-t)y_n'dt\over \left(\int_0^1\pa_yu_ndt\right)^2 }\\\nonumber
&-{\int_0^1\pa_y\tilde v_ndt \int_0^1\pa_{xyy}u_nt+\pa_y^3u_nt(1-t)y_n'dt\over \left(\int_0^1\pa_yu_ndt\right)^2 }\\\nonumber
&+2{\int_0^1\pa_y\tilde v_ndt\int_0^1\pa_{xy}u_n+\pa_y^2u_n(1-t)y_n'dt\int_0^1\pa_y^2u_ntdt\over \left(\int_0^1\pa_yu_ndt\right)^3}\\\label{vn-un-int-derivative-to-xy}
=&III_1+III_2+III_3+III_4+III_5.
\end{align}
By \eqref{un-omegan-C bound} and
\eqref{yn-derivative-un-t}, we have
\begin{align}\label{un-int-y2-xy2}
\left\|\int_0^1\pa_{xyy}u_nt+\pa_y^3u_nt(1-t)y_n'dt\right\|_{L^\infty(D_T)}\leq C.
\end{align}
Thus, by  \eqref{un-derivative-to-y}, \eqref{yn-derivative-un-t}, \eqref{un-int-y2-xy2}, \eqref{tilde-Gamma-vn}, \eqref{vn-H3-bound} and \eqref{I1-4-sum}, we have
\begin{align}\nonumber
\|III_1\|_{L^2(D_T)}\leq& C\left\|\int_0^1|\pa_{xyy}\tilde v_n|dt\right\|_{L^2(D_T)}+C\left\|\int_0^1|\pa_{y}^3\tilde v_n|dt\right\|_{L^2(D_T)}\leq C \|\tilde v_n\|_{H^4(D_T)}^\alpha+C,\\\nonumber
&\|III_2\|_{L^2(D_T)}\leq C \left\| \int_0^1|\pa_{xy}\tilde v_n|dt\right\|_{L^2(D_T)}+C\left\| \int_0^1|\pa_y^2\tilde v_n|dt\right\|_{L^2(D_T)}\leq C,\\\label{III1-4-estimate}
&\|III_j\|_{L^2(D_T)}\leq C\left\| \int_0^1|\pa_y^2\tilde v_n|dt\right\|_{L^2(D_T)}+C\left\| \int_0^1|\pa_y\tilde v_n|dt\right\|_{L^2(D_T)}\leq C
\end{align}
for $j=3,4,5$. Thus, by \eqref{vn-un-int-derivative-to-xy} and \eqref{Delta-tilde-v-n-derivative-to-xy-norm} we have
\begin{align}\label{vn-un-int-derivative-to-xy-L2}
 \left\|\pa_{xy}\left({\tilde v_n\over u_n}\right)\right\|_{L^2(D_T)}\leq  C \|\tilde v_n\|_{H^4(D_T)}^\alpha+C\Longrightarrow\left\|\pa_{xy}\Delta\tilde  v_n\right\|_{L^2(D_T)}\leq C \|\tilde v_n\|_{H^4(D_T)}^\alpha+C.
\end{align}
\fi
Taking  derivative of  \eqref{tilde-vn-eq} with respect to $y$ twice, we have
\begin{align*}
\pa_y^2\Delta\tilde  v_n+\pa_{y}^3\omega_n{\tilde v_n\over u_n{-c_n}}+2\pa_{y}^2\omega_n\pa_y\left({\tilde v_n\over u_n{-c_n}}\right)+\left(\pa_y\omega_n+\beta\right)\pa_y^2\left({\tilde v_n\over u_n{-c_n}}\right)=0.
\end{align*}
Then we deduce from \eqref{omega-n-Lp-nd}, \eqref{un-omegan-C bound-nd} and \eqref{tilde-vn-over-un-H1-L4} that
\begin{align}\nonumber
\left\|\pa_y^2\Delta\tilde  v_n\right\|_{L^2(D_T)}
\leq&\|\pa_{y}^3\omega_n\|_{L^4(D_T)}\left\|{\tilde v_n\over u_n{-c_n}}\right\|_{L^4(D_T)}+2\|\pa_{y}^2\omega_n\|_{C^0(D_T)}\left\|\pa_y\left({\tilde v_n\over u_n{-c_n}}\right)\right\|_{L^2(D_T)}\\\nonumber
&+\|\pa_y\omega_n+\beta\|_{C^0(D_T)}\left\|\pa_y^2\left({\tilde v_n\over u_n{-c_n}}\right)\right\|_{L^2(D_T)}\\\label{Delta-tilde-v-n-derivative-to-y2-norm}
\leq& C+C\left\|\pa_y^2\left({\tilde v_n\over u_n{-c_n}}\right)\right\|_{L^2(D_T)}.
\end{align}
By \eqref{vn-un-int} and direct computation, we have
\begin{align}\nonumber
\pa_{y}^2\left({\tilde v_n(x,y)\over u_n(x,y){-c_n}}\right)
=&{\int_0^1\pa_{y}^3\tilde v_nt^2dt\over\int_0^1\pa_yu_ndt}-2{\int_0^1\pa_{y}^2\tilde v_ntdt\int_0^1\pa_y^2u_ntdt\over \left(\int_0^1\pa_yu_ndt\right)^2}\\\nonumber
&-{\int_0^1\pa_y\tilde v_ndt\int_0^1\pa_y^3 u_nt^2dt\over \left(\int_0^1\pa_yu_ndt\right)^2 }
+2{\int_0^1\pa_y\tilde v_ndt\left(\int_0^1\pa_{y}^2u_ntdt\right)^2\over \left(\int_0^1\pa_yu_ndt\right)^3}\\\label{vn-un-int-derivative-to-y2}
=&III_1+III_2+III_3+III_4.
\end{align}
By \eqref{un-omegan-C bound-nd}, we have $\left\|\int_0^1\pa_y^3 u_nt^2dt\right\|_{L^\infty(D_T)}\leq C$, and thus, we deduce from
 \eqref{un-derivative-to-y},  \eqref{tilde-Gamma-vn}, \eqref{vn-H3-bound}  and \eqref{I1-4-sum} that
 \begin{align}\nonumber
\|III_1\|_{L^2(D_T)}\leq& C\left\|\int_0^1|\pa_{y}^3\tilde v_n|dt\right\|_{L^2(D_T)}\leq C \|\tilde v_n\|_{H^4(D_T)}^\alpha+C,\\\label{IV1-4-estimate}
\|III_j\|_{L^2(D_T)}\leq&C\left\| \int_0^1|\pa_y^2\tilde v_n|dt\right\|_{L^2(D_T)}+C\left\| \int_0^1|\pa_y\tilde v_n|dt\right\|_{L^2(D_T)}\leq C
\end{align}
for $j=2,3,4$.
 Here we omit the variables $(x,s_n)$ for the derivatives  of $\tilde v_n$, $u_n$ and the variable $x$ for the derivative of $y_n$ in
 \eqref{vn-un-int-derivative-to-x2}, \eqref{un-int3-t}, \eqref{II1-4},  \eqref{vn-un-int-derivative-to-y2} and \eqref{IV1-4-estimate}.
By \eqref{vn-un-int-derivative-to-y2}, \eqref{IV1-4-estimate} and \eqref{Delta-tilde-v-n-derivative-to-y2-norm}, we have
\begin{align}\label{vn-un-int-derivative-to-y2-L2}
 \left\|\pa_{y}^2\left({\tilde v_n\over u_n{-c_n}}\right)\right\|_{L^2(D_T)}\leq  C \|\tilde v_n\|_{H^4(D_T)}^\alpha+C\Longrightarrow\left\|\pa_{y}^2\Delta\tilde  v_n\right\|_{L^2(D_T)}\leq C \|\tilde v_n\|_{H^4(D_T)}^\alpha+C.
\end{align}

In summary, by the  periodic
boundary condition in $x$ and Dirichlet boundary condition in $y$ of $\tilde v_n$, we deduce from \eqref{vn-un-int-derivative-to-x2-L2} and \eqref{vn-un-int-derivative-to-y2-L2} that
\begin{align}\nonumber
&\|\tilde v_n\|_{H^4(D_T)}\leq C\|\pa_{x}^2\Delta\tilde v_n\|_{L^2(D_T)}+C\|\pa_{y}^2\Delta\tilde v_n\|_{L^2(D_T)}
\leq  C \|\tilde v_n\|_{H^4(D_T)}^\alpha+
C,
\end{align}
where $\alpha\in(0,1)$.
Thus, we have
\begin{align}\label{vn-H4-bound}
\|\tilde v_n\|_{H^4(D_T)}\leq C.
\end{align}
\if0
and
by   \eqref{tilde-vn-over-un-H1-L4}, \eqref{tilde-vn-over-un-x2-xy-y2} and \eqref{tilde-Gamma-vn}  we have
 \begin{align}\label{vn-over-un-H2}
 \;\left\|{\tilde v_n\over u_n}\right\|_{H^2(D_T)}\leq C\text{ and }\left\|\int_0^1|\tilde\Gamma_i\tilde v_n(x,s_n)|dt\right\|_{L^2(D_T)}\leq C
 \end{align}
 for $i=1,2,3$.
\fi

 By \eqref{vn-H4-bound}, there exists $\tilde v_0\in H^4(D_T)$ such that up to a subsequence, $\tilde v_n\rightharpoonup\tilde v_0 $ in $H^4(D_T)$,   $\tilde v_n\rightarrow\tilde v_0 $ in $C^2(D_T)$ and $\|\tilde v_0\|_{L^2(D_T)}=1$. Since
 $|\tilde{v}_0(x,{c_0})|=|\tilde{v}_n(x,y_n(x))-\tilde{v}_0(x,{c_0})|\leq |\tilde{v}_n(x,y_n(x))-\tilde{v}_n(x,{c_0})|+|\tilde{v}_n(x,{c_0})-\tilde{v}_0(x,{c_0})|\leq \|\pa_y \tilde{v}_n\|_{L^\infty(D_T)}|y_n(x){-c_0}|+|\tilde{v}_n(x,{c_0})-\tilde{v}_0(x,{c_0})|\to 0,$ we have $\tilde{v}_0(x,{c_0})=0$ for $x\in[0,T]$.
We claim that ${\tilde v_n\over u_n{-c_n}}\rightarrow {\tilde v_0\over y{-c_0}} $ in $C^0(D_T)$.
In fact,  by \eqref{un-derivative-to-y}, \eqref{yn(x)-0uniform}, \eqref{vn-H4-bound}, \eqref{un-y-C3-nd} and the fact that $\tilde{v}_n\rightarrow\tilde{v}_0$ in $C^1(D_T)$, we have
\begin{align*}
&\left|\frac{\tilde{v}_n(x,y)}{u_n(x,y){-c_n}}-\frac{\tilde{v}_0(x,y)}{y{-c_0}}\right|
=\left|\frac{\int_0^1\pa_y \tilde{v}_n(x,s_n)dt}{\int_0^1\pa_y u_n(x,s_n)dt}-\int_0^1\pa_y \tilde{v}_0(x,ty{+(1-t)c_0})dt\right|\\
\leq&\left|\int_0^1\pa_y \tilde{v}_n(x,s_n)dt\right|\cdot \left|\frac{1}{\int_0^1\pa_y u_n(x,s_n)dt}-1\right|
+\int_0^1|\pa_y \tilde{v}_n(x,s_n)-\pa_y \tilde{v}_n(x,ty{+(1-t)c_0})|dt\\
&+\int_0^1|\pa_y\tilde{v}_n(x,ty{+(1-t)c_0})-\pa_y \tilde{v}_0(x,ty{+(1-t)c_0})|dt\\
\leq&C\|\pa_y \tilde{v}_n\|_{L^\infty(D_T)}\|\pa_y u_n-1\|_{C^0(D_T)}+\|\pa_{y}^2\tilde{v}_n\|_{L^\infty(D_T)}|y_n(x){-c_0}|+\|\pa_y\tilde{v}_n-\pa_y\tilde{v}_0\|_{C^0(D_T)}\\
\leq&C\| \tilde{v}_n\|_{H^3(D_T)}\| u_n-y\|_{C^1(D_T)}+C\| \tilde{v}_n\|_{H^4(D_T)}|y_n(x){-c_0}|+\|\tilde{v}_n-\tilde{v}_0\|_{C^1(D_T)}\\
\leq&C\| u_n-y\|_{C^1(D_T)}+C|y_n(x){-c_0}|+\|\tilde{v}_n-\tilde{v}_0\|_{C^1(D_T)}\\
\to&0,\text{ as } n \to \infty
\end{align*}
uniformly for $(x,y)\in D_T$. By \eqref{omegan-C2-nd}, $\pa_y\omega_n\to0$ in $C^0(D_T)$. Since $\tilde v_n\rightarrow\tilde v_0 $ in $C^2(D_T)$ and ${\tilde v_n\over u_n{-c_n}}\rightarrow {\tilde v_0\over y{-c_0}} $ in $C^0(D_T)$, we now send $n\to\infty$ in \eqref{tilde-vn-eq} to get
\begin{align*}
\Delta \tilde v_0+{\beta\over y{-c_0}}\tilde v_0=0.
\end{align*}
Since $\tilde v_0$ is $T$-periodic in $x$, we have
 $\tilde v_0(x,y)=\sum_{k\in\mathbf{Z}}\widehat{\tilde v}_{{0},k}(y)e^{ik\alpha x}\neq0$,
% $\tilde v_0=\sum\limits_{k=0}^\infty\widehat{v}_k(y)\cos(k\alpha x)\neq0$
 where $\alpha={2\pi\over T}$. Since $\tilde v_0\in H^4(D_T)$ and $\|\tilde v_0\|_{L^2(D_T)}=1$, there exists $k_0\in\mathbf{Z}$ such that
$0\neq\widehat{\tilde v}_{{0},k_0}\in H^4(-1,1)$ solves
\begin{align*}
-\widehat{\tilde v}_{{0},k_0}''-{\beta\over y}\widehat{\tilde v}_{{0},k_0}=-(k_0\alpha)^2\widehat{\tilde v}_{{0},k_0}, \quad \widehat{\tilde v}_{{0},k_0}(\pm1)=0.
\end{align*}
 If $k_0\neq 0$, this is a contradiction to Theorem 1 in \cite{LYZ}. If $k_0= 0$,
since $\tilde v_n=v_n/\|v_n\|_{L^2(D_T)}=-\partial_x\psi_n/\|v_n\|_{L^2(D_T)}$, we have
 ${1\over T}\int_0^T\tilde v_ndx=0$. Taking limit as $n\to\infty$, we have ${1\over T}\int_0^T\tilde v_0dx=0$ and thus, $\widehat{\tilde v}_{{0},0}(y)={1\over T}\int_0^T\tilde v_n(x,y)dx\equiv0$, which is also a contradiction.
 \end{proof}
%%%%%%
%%%%%%%% \beno
%f_0:[\frac{1}{2}-c_0,\frac{1}{2}+c_0]\to [1-\beta,1+\beta]
%\eeno
 \begin{remark}
  $(1)$ ${\tilde v_n\over u_n-c_n}, n\geq1,$ have a uniform $H^2$ bound due to \eqref{tilde-vn-over-un-H1-L4}, \eqref{vn-un-int-derivative-to-x2-L2}, \eqref{vn-un-int-derivative-to-y2-L2} and \eqref{vn-H4-bound}.

 $(2)$ The uniform bound of  the left hand side of \eqref{derivative-y2} can   be also obtained from \eqref{vn-H3-bound}. Here, we use the factor $t$ in \eqref{til-v-derivative-2}  to give a direct proof of \eqref{derivative-y2} since the left hand side of \eqref{derivative-y2} can be  bounded  by the $H^2$ bound of $\tilde v_n$, $n\geq1$.
 \end{remark}
\section{The   principal eigenvalues  of  the singular  Rayleigh-Kuo BVP}
In this section, we study the   principal eigenvalues  of  the singular  Rayleigh-Kuo BVP  for Couette flow, which determine the borderlines $\Gamma_-\cup\Gamma_+$ in Figure 1. Dynamics  near  Couette flow is different if  $(\alpha,\beta)$ passes through the borderlines (see Theorem \ref{thm-non-shear-nodelta2}).

First, we  prove the existence, monotonicity and continuity  of the principal eigenvalues $\lambda_1(\beta,c)$ of  the singular  Rayleigh-Kuo BVP \eqref{sturm-Liouville} with $u(y)=y$, $c=\pm1$.
We mainly discuss  $\beta\geq0$ and $c=-1$, and give the analog result by symmetry  for $\beta\leq0$ and $c=1$.
Recall that $\lambda_1(\beta,-1)$ is precisely defined in  \eqref{lambda-beta}.
Then we have the following basic property of $\lambda_1(\beta,-1)$.

%Moreover, $\lambda_1(\cdot,-1)$ is non-increasing on $[0,\infty)$.

\begin{lemma}\label{eigenfunction} Let $\beta\geq0$. Then
the infimum in \eqref{lambda-beta} is attained by some function $\phi_\beta\in H_0^1(-1,1),\|\phi_\beta\|_{L^2(-1,1)}=1$. Moreover,
 there exists $m_\beta>-\infty$ such that
$m_\beta\leq\lambda_1(\beta,-1)\leq {\pi^2\over4}$.
\end{lemma}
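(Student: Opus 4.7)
The plan is to establish the three claims in order: upper bound, lower bound, and attainment, using the direct method of the calculus of variations together with the boundary vanishing of test functions.

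For the upper bound $\lambda_1(\beta,-1)\leq \pi^2/4$, I would simply plug in the test function $\phi_0(y)=\cos(\pi y/2)$, which lies in $H_0^1(-1,1)$, satisfies $\|\phi_0\|_{L^2(-1,1)}^2=1$, and gives $\|\phi_0'\|_{L^2}^2=\pi^2/4$. Since $\beta\geq 0$ and the integrand $|\phi_0|^2/(y+1)$ is non-negative, the Rayleigh quotient for $\phi_0$ is $\pi^2/4-\beta\int_{-1}^1 \cos^2(\pi y/2)/(y+1)\,dy\leq \pi^2/4$, which bounds the infimum from above. Note also that $|\phi_0(y)|^2/(y+1)$ is integrable since $\phi_0(-1)=0$ yields $|\phi_0(y)|^2\lesssim (y+1)^2$ near $y=-1$.

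For the lower bound, the key ingredient is the estimate $|\phi(y)|^2\leq (y+1)\|\phi'\|_{L^2(-1,1)}^2$ for $\phi\in H_0^1(-1,1)$, which follows from writing $\phi(y)=\int_{-1}^y \phi'(s)\,ds$ and applying Cauchy--Schwarz. For any $\delta\in(0,2]$, I would split
\begin{align*}
\int_{-1}^1 \frac{|\phi|^2}{y+1}\,dy = \int_{-1}^{-1+\delta}\frac{|\phi|^2}{y+1}\,dy+\int_{-1+\delta}^1 \frac{|\phi|^2}{y+1}\,dy\leq \delta\|\phi'\|_{L^2}^2+\frac{1}{\delta}\|\phi\|_{L^2}^2,
\end{align*}
using the pointwise bound on the first interval and $1/(y+1)\leq 1/\delta$ on the second. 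For $\|\phi\|_{L^2}=1$, this gives
\begin{align*}
\int_{-1}^1\left(|\phi'|^2-\frac{\beta}{y+1}|\phi|^2\right)dy\geq (1-\beta\delta)\|\phi'\|_{L^2}^2-\frac{\beta}{\delta}.
\end{align*}
Choosing $\delta=1/(2\beta)$ for $\beta>0$ (and trivially for $\beta=0$), the right-hand side is bounded below by $-2\beta^2$, so one may take $m_\beta:=-2\beta^2>-\infty$.

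For the attainment, I would use the direct method. Let $\{\phi_n\}\subset H_0^1$ with $\|\phi_n\|_{L^2}=1$ be a minimizing sequence. The lower-bound computation with $\delta=1/(2\beta)$ shows that the functional dominates $\tfrac{1}{2}\|\phi_n'\|_{L^2}^2-2\beta^2$, so $\{\phi_n\}$ is bounded in $H_0^1$. Extract a subsequence with $\phi_n\rightharpoonup \phi_\beta$ in $H_0^1$ and $\phi_n\to\phi_\beta$ in $L^2$ by Rellich--Kondrachov; then $\|\phi_\beta\|_{L^2}=1$. The gradient term is weakly lower semicontinuous, so $\|\phi_\beta'\|_{L^2}^2\leq \liminf_n\|\phi_n'\|_{L^2}^2$. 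For the singular term, the main obstacle is that $1/(y+1)$ is not in $L^\infty(-1,1)$, so strong $L^2$ convergence does not immediately give convergence of $\int|\phi_n|^2/(y+1)\,dy$. Here I would again split at $y=-1+\delta$: on $(-1+\delta,1)$ the weight $1/(y+1)$ is bounded, so strong $L^2$ convergence gives convergence of the integral; on $(-1,-1+\delta)$ the pointwise bound $|\phi_n(y)|^2\leq (y+1)M$ (with $M=\sup_n\|\phi_n'\|_{L^2}^2$) and the analogous bound for $\phi_\beta$ yield $\int_{-1}^{-1+\delta}|\phi_n|^2/(y+1)\,dy\leq M\delta$, uniformly in $n$. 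Sending $n\to\infty$ first and then $\delta\to 0$ shows $\int_{-1}^1|\phi_n|^2/(y+1)\,dy\to \int_{-1}^1 |\phi_\beta|^2/(y+1)\,dy$. Combining with the lower semicontinuity of the $H^1$ seminorm, the functional at $\phi_\beta$ is $\leq \lambda_1(\beta,-1)$, so the infimum is attained by $\phi_\beta$.
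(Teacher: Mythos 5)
Your proposal is correct, and the route is genuinely different from the paper's in the two places where the singular weight ${1/(y+1)}$ must be controlled. The paper handles both the coercivity of the functional and the passage to the limit in the potential term by integrating by parts against $\ln(y+1)$ and invoking the Gagliardo--Nirenberg inequality, which yields the bound $\int_{-1}^1\frac{\beta}{y+1}|\phi|^2\,dy\leq C\|\phi'\|_{L^2}^{5/4}$ and, for the limit, a splitting into an $L^4$-convergence piece and a weak pairing with a fixed functional in $H^{-1}$. You instead use only the elementary Hardy-type pointwise bound $|\phi(y)|^2\leq (y+1)\|\phi'\|_{L^2}^2$ together with a $\delta$-splitting of the interval at $y=-1+\delta$; this gives an explicit lower bound $m_\beta$ (the paper's $m_\beta$ is implicit) and reduces the convergence of the singular term to strong $L^2$ convergence on $(-1+\delta,1)$ plus a tail that is $O(\delta)$ uniformly in $n$ -- a cleaner and more self-contained argument. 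The only detail worth tightening is the choice $\delta=1/(2\beta)$, which exceeds the interval length $2$ when $\beta<1/4$; in that regime the first piece of your splitting already covers all of $(-1,1)$ and the second is empty, so the estimate still closes (indeed the functional is then nonnegative), but you should say so. The upper bound via the test function $\cos(\pi y/2)$ is the same in substance as the paper's monotonicity argument $\lambda_1(\beta,-1)\leq\lambda_1(0,-1)=\pi^2/4$. One reason the paper prefers the integration-by-parts/$L^4$ machinery is that the identical computation is reused verbatim in the subsequent continuity lemma for $\lambda_1(\cdot,-1)$, so your approach, while simpler here, would not fully replace it downstream.
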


\begin{proof}
Let $\{\phi_n\}$ be a minimizing sequence with $\|\phi_n\|_{L^2(-1,1)}=1$ and
\begin{align}\label{minimizing sequence}
\int_{-1}^1\left(|\phi_n'|^2-{\beta\over y+1}|\phi_n|^2\right)dy\to \lambda_1(\beta,-1) \quad\text{as}\quad n\to \infty.
\end{align}
Let $q=2$, $m=1$, $r=2$, $j=0$, $p=4$, $\alpha={1\over4}$, $G=(-1,1)$ and $u=\phi_n$  in Lemma \ref{Gagliardo-Nirenberg interpolation inequality}, we have
\begin{align}\label{GN-APP}
\|\phi_n\|_{L^4(-1,1)}\leq C_1\|\phi_n\|_{L^2(-1,1)}^{3\over4}\|\phi_n'\|_{L^2(-1,1)}^{1\over4}+C_2\|\phi_n\|_{L^2(-1,1)}.
\end{align}
Since $|\ln(y+1)||\phi_n(y)|^2\leq(y+1) |\ln(y+1)|\|\phi_n'\|_{L^2(-1,1)}^2\to0$ as $y\to-1^+$ for fixed $n\geq1$,  we have by \eqref{GN-APP} that
\begin{align}\nonumber
\int_{-1}^1{\beta\over y+1}|\phi_n|^2dy&=\beta\left(\ln(y+1)|\phi_n(y)|^2\big|_{y=-1}^1-\int_{-1}^1\ln(y+1)2\phi_n(y)\phi_n(y)'dy \right)\\\nonumber
&\leq 2\beta\|\ln(y+1)\|_{L^4(-1,1)}\|\phi_n\|_{L^4(-1,1)}\|\phi_n'\|_{L^2(-1,1)}
\\\nonumber
&\leq C(C_1\|\phi_n\|_{L^2(-1,1)}^{3\over4}\|\phi_n'\|_{L^2(-1,1)}^{1\over4}+C_2\|\phi_n\|_{L^2(-1,1)})\|\phi_n'\|_{L^2(-1,1)}\\\label{sec-term-estimates}
&\leq C\|\phi_n'\|_{L^2(-1,1)}^{5\over4}+C\|\phi_n\|_{L^2(-1,1)}^{1\over4}\|\phi_n'\|_{L^2(-1,1)}\leq C\|\phi_n'\|_{L^2(-1,1)}^{5\over4},
\end{align}
where we used $1=\|\phi_n\|_{L^2(-1,1)}\leq C\|\phi_n'\|_{L^2(-1,1)}$. By \eqref{minimizing sequence}, \eqref{sec-term-estimates} and the fact that  $\lambda_1(\beta,-1)\leq {\pi^2\over4}$, there exists $M>0$ such that
\begin{align}\label{u-l-bound}
M
\geq \|\phi_n'\|^2_{L^2(-1,1)}-\int_{-1}^1{\beta\over y+1}|\phi_n|^2dy\geq \|\phi_n'\|^2_{L^2(-1,1)}-C\|\phi_n'\|_{L^2(-1,1)}^{5\over4},
\end{align}
which implies
\begin{align*}
 \|\phi_n'\|^2_{L^2(-1,1)}\leq C\Longrightarrow\|\phi_n\|^2_{H^1(-1,1)}\leq C.
\end{align*}
Thus, there exists $\phi_{\beta}\in H_0^1(-1,1)$ such that up to a subsequence, $\phi_n\rightharpoonup\phi_{\beta}$ in $H_0^1(-1,1)$ and $\phi_n\to\phi_\beta$ in $L^4(-1,1)$, and
\begin{align}\label{first-term-liminf}
\|\phi_{\beta}\|_{H^1(-1,1)}^2\leq \liminf_{n\to\infty}\|\phi_n\|_{H^1(-1,1)}^2\Longrightarrow\|\phi_{\beta}'\|_{L^2(-1,1)}^2\leq \liminf_{n\to\infty}\|\phi_n'\|_{L^2(-1,1)}^2.
\end{align}
Note that
\begin{align}\label{sec-term-lim}
\lim_{n\to\infty}\int_{-1}^1{1\over y+1}|\phi_n|^2dy=\int_{-1}^1{1\over y+1}|\phi_\beta|^2dy.
\end{align}
In fact, since $|\ln(y+1)(|\phi_n(y)|^2-|\phi_\beta(y)|^2)|\leq(y+1)|\ln(y+1)|(\|\phi_n'\|_{L^2(-1,1)}^2+|\phi_\beta'\|_{L^2(-1,1)}^2)\to0$ as $y\to-1^+$, we have
\begin{align*}
\int_{-1}^1{1\over y+1}(|\phi_n|^2-|\phi_\beta|^2)dy=&\ln(y+1)(|\phi_n|^2-|\phi_\beta|^2)\big|_{y=-1}^1-\int_{-1}^12\ln(y+1)(\phi_n\phi_n'-\phi_\beta\phi_\beta')dy\\
=&-\int_{-1}^12\ln(y+1)(\phi_n-\phi_\beta)\phi_n'dy-\int_{-1}^12\ln(y+1)\phi_\beta(\phi_n'-\phi_\beta') dy\\
=&I_1+I_2.
\end{align*}
Note that
\begin{align*}
|I_1|\leq \|\ln(y+1)\|_{L^4(-1,1)}\|\phi_n-\phi_\beta\|_{L^4(-1,1)}\|\phi_n'\|_{L^2(-1,1)}\leq C\|\phi_n-\phi_\beta\|_{L^4(-1,1)}\to0.
\end{align*}
Define a functional $f$ on $H_0^1(-1,1)$ by
\begin{align*}
\langle f,\varphi\rangle=\int_{-1}^1\ln(y+1)\phi_\beta\varphi' dy,\quad\varphi\in H_0^1(-1,1).
\end{align*}
Then
\begin{align*}
&\langle f,\varphi\rangle\leq \|\ln(y+1)\|_{L^4(-1,1)}\|\phi_\beta\|_{L^4(-1,1)}\|\varphi' \|_{L^2(-1,1)}\\
\leq& C\|\phi_\beta\|_{H^1(-1,1)}\|\varphi\|_{H^1(-1,1)}\leq C\|\varphi\|_{H^1(-1,1)}.
\end{align*}
Thus, $f$ is a bounded functional on $H_0^1(-1,1)$ and $f\in H^{-1}(-1,1)$. Since $\phi_n\rightharpoonup\phi_{\beta}$ in $H_0^1(-1,1)$, we have
\begin{align*}
I_2=-2\langle f,\phi_n-\phi_\beta\rangle\to0\quad \text{as}\quad n\to\infty.
\end{align*}
This proves \eqref{sec-term-lim}.   By \eqref{first-term-liminf}, \eqref{sec-term-lim} and \eqref{minimizing sequence}, we have
\begin{align*}
\int_{-1}^1\left(|\phi_\beta'|^2-{\beta\over y+1}|\phi_\beta|^2\right)dy\leq\liminf_{n\to\infty}\int_{-1}^1\left( |\phi_n'|^2-{\beta\over y+1}|\phi_n|^2\right)dy=\lambda_1(\beta,-1).
\end{align*}
Thus,
\begin{align*}
\lambda_1(\beta,-1)=\int_{-1}^1\left(|\phi_\beta'|^2-{\beta\over y+1}|\phi_\beta|^2\right)dy.
\end{align*}
$\phi_\beta$ is called an eigenfunction of $\lambda_1(\beta,-1)$ in the sequel.

For any $\phi\in H_0^1(-1,1)$ with $\|\phi\|_{L^2(-1,1)}=1$, similar to \eqref{u-l-bound}, there exists $m_\beta>-\infty$ such that
\begin{align*}
\int_{-1}^1\left(|\phi'|^2-{\beta\over y+1}|\phi|^2\right)dy\geq \|\phi'\|^2_{L^2(-1,1)}-C_{\beta}\|\phi'\|_{L^2(-1,1)}^{5\over4}\geq m_\beta.
\end{align*}
For $\beta>0$, we have  $\lambda_1(\beta,-1)\leq \lambda_1(0,-1)={\pi^2\over4}$. Thus,
\begin{align}\label{lambda-bound}m_\beta\leq\lambda_1(\beta,-1)\leq {\pi^2\over4},\end{align}
which implies that $\lambda_1(\beta,-1)$ is  finite  for $\beta>0$.
\end{proof}
Then we get the monotonicity of $\lambda_1(\cdot,-1)$.
\begin{Corollary}\label{lambda-decreasing}
$\lambda_1(\cdot,-1)$ is decreasing on $[0,\infty)$.
\end{Corollary}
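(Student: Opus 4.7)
The plan is to exploit the variational characterization of $\lambda_1(\beta,-1)$ together with the existence of a minimizer provided by Lemma \ref{eigenfunction}. For any $0\le\beta_1<\beta_2$, let $\phi_{\beta_1}\in H_0^1(-1,1)$ be an eigenfunction attaining $\lambda_1(\beta_1,-1)$, normalized by $\|\phi_{\beta_1}\|_{L^2(-1,1)}=1$. Since $\phi_{\beta_1}$ is admissible in the variational problem defining $\lambda_1(\beta_2,-1)$, I would test:
\begin{align*}
\lambda_1(\beta_2,-1)
&\le \int_{-1}^1\Bigl(|\phi_{\beta_1}'|^2-\tfrac{\beta_2}{y+1}|\phi_{\beta_1}|^2\Bigr)dy\\
&= \int_{-1}^1\Bigl(|\phi_{\beta_1}'|^2-\tfrac{\beta_1}{y+1}|\phi_{\beta_1}|^2\Bigr)dy-(\beta_2-\beta_1)\int_{-1}^1\tfrac{|\phi_{\beta_1}|^2}{y+1}\,dy\\
&= \lambda_1(\beta_1,-1)-(\beta_2-\beta_1)\int_{-1}^1\tfrac{|\phi_{\beta_1}|^2}{y+1}\,dy.
\end{align*}

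The only point that requires care is to justify that the perturbation integral is well defined and strictly positive. Finiteness follows from the same integration-by-parts argument used in the proof of Lemma \ref{eigenfunction}: writing $\tfrac{1}{y+1}=\tfrac{d}{dy}\ln(y+1)$, integrating by parts (the boundary term at $y=-1$ vanishes since $|\ln(y+1)||\phi_{\beta_1}(y)|^2\le(y+1)|\ln(y+1)|\,\|\phi_{\beta_1}'\|_{L^2}^2\to0$), and applying H\"older with the Gagliardo--Nirenberg bound \eqref{GN-APP} yields
\[
\int_{-1}^1\tfrac{|\phi_{\beta_1}|^2}{y+1}\,dy\le C\|\phi_{\beta_1}'\|_{L^2(-1,1)}^{5/4}<\infty.
\]
Strict positivity is immediate: $1/(y+1)>0$ on $(-1,1)$ and $\phi_{\beta_1}\not\equiv 0$ (indeed $\|\phi_{\beta_1}\|_{L^2}=1$), so $\int_{-1}^1\frac{|\phi_{\beta_1}|^2}{y+1}\,dy>0$.

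Combining these two facts with the displayed inequality gives $\lambda_1(\beta_2,-1)<\lambda_1(\beta_1,-1)$, which proves that $\lambda_1(\cdot,-1)$ is (strictly) decreasing on $[0,\infty)$. There is no real obstacle here beyond verifying the integrability at the singular endpoint $y=-1$, which is already encoded in the estimates established for Lemma \ref{eigenfunction}; the core of the argument is simply that increasing $\beta$ strictly lowers the Rayleigh quotient on the nontrivial admissible function $\phi_{\beta_1}$.
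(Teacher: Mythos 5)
Your proposal is correct and follows essentially the same route as the paper: test the variational characterization of $\lambda_1(\beta_2,-1)$ with the minimizer $\phi_{\beta_1}$ furnished by Lemma \ref{eigenfunction} and observe that the perturbation integral $\int_{-1}^1\frac{|\phi_{\beta_1}|^2}{y+1}\,dy$ is positive (and finite). Your extra paragraph justifying finiteness and strict positivity of that integral is a sound elaboration of a step the paper leaves implicit.
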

\begin{proof}
Let $0\leq \beta_1<\beta_2<\infty$.
By Lemma \ref{eigenfunction}, $\lambda_1(\beta_1,-1)$ is attained  by  $\phi_{\beta_1}\in H_0^1(-1,1)$ with $\|\phi_{\beta_1}\|_{L^2}=1$.
Then
\begin{align*}
\lambda_1(\beta_1,-1)=\int_{-1}^1\left(|\phi_{\beta_1}'|^2-{\beta_1\over y+1}|\phi_{\beta_1}|^2\right)dy>\int_{-1}^1\left(|\phi_{\beta_1}'|^2-{\beta_2\over y+1}|\phi_{\beta_1}|^2\right)dy\geq \lambda_1(\beta_2,-1).
\end{align*}
\end{proof}
Next, we consider the continuity of $\lambda_1(\cdot,-1)$.
\begin{lemma}\label{lambda-continuous}
$\lambda_1(\cdot,-1)$ is continuous on $[0,\infty)$.
\end{lemma}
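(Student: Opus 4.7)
Fix $\beta_0\in[0,\infty)$ and take an arbitrary sequence $\beta_n\to \beta_0$ in $[0,\infty)$. I will prove $\lambda_1(\beta_n,-1)\to\lambda_1(\beta_0,-1)$ by separately establishing matching upper and lower bounds on the limits.

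For the upper semicontinuity bound, the plan is to use the minimizer $\phi_{\beta_0}\in H_0^1(-1,1)$ provided by Lemma \ref{eigenfunction} as a test function in the variational problem for $\lambda_1(\beta_n,-1)$. Since $\phi_{\beta_0}(-1)=0$ and $\phi_{\beta_0}\in H_0^1$, the pointwise bound $|\phi_{\beta_0}(y)|^2\leq (y+1)\|\phi_{\beta_0}'\|_{L^2(-1,1)}^2$ near $y=-1$ shows that $\int_{-1}^1|\phi_{\beta_0}|^2/(y+1)\,dy$ is finite. Testing gives
\begin{align*}
\lambda_1(\beta_n,-1)\leq \lambda_1(\beta_0,-1)+(\beta_0-\beta_n)\int_{-1}^1\frac{|\phi_{\beta_0}|^2}{y+1}\,dy,
\end{align*}
and letting $n\to\infty$ yields $\limsup_n\lambda_1(\beta_n,-1)\leq\lambda_1(\beta_0,-1)$.

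For the lower semicontinuity bound, I would take minimizers $\phi_n:=\phi_{\beta_n}$, which exist by Lemma \ref{eigenfunction}. Inserting $\phi_n$ into the estimate \eqref{u-l-bound} from the proof of Lemma \ref{eigenfunction} gives
\begin{align*}
\|\phi_n'\|_{L^2}^2-C_{\beta_n}\|\phi_n'\|_{L^2}^{5/4}\leq \lambda_1(\beta_n,-1)\leq \tfrac{\pi^2}{4},
\end{align*}
and since $\{\beta_n\}$ is bounded the constants $C_{\beta_n}$ are uniformly bounded, whence $\{\phi_n\}$ is bounded in $H_0^1(-1,1)$. Passing to a subsequence, $\phi_n\rightharpoonup \phi_\ast$ weakly in $H_0^1$ and strongly in $L^2\cap L^4$ by Rellich, so $\|\phi_\ast\|_{L^2}=1$. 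Weak lower semicontinuity gives $\liminf_n\|\phi_n'\|_{L^2}^2\geq\|\phi_\ast'\|_{L^2}^2$, and the convergence of the singular weighted term,
\begin{align*}
\int_{-1}^1\frac{|\phi_n|^2}{y+1}\,dy\to \int_{-1}^1\frac{|\phi_\ast|^2}{y+1}\,dy,
\end{align*}
follows from exactly the integration-by-parts argument used in \eqref{sec-term-lim} (splitting into an $L^4$ piece that converges strongly and a duality pairing with the fixed $H^{-1}$ functional $\varphi\mapsto\int\ln(y+1)\phi_\ast\varphi'\,dy$). Combining these with $\beta_n\to\beta_0$,
\begin{align*}
\liminf_n\lambda_1(\beta_n,-1)\geq \int_{-1}^1|\phi_\ast'|^2\,dy-\beta_0\int_{-1}^1\frac{|\phi_\ast|^2}{y+1}\,dy\geq \lambda_1(\beta_0,-1),
\end{align*}
the last inequality being the variational definition applied to the admissible $\phi_\ast$.

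The main obstacle is the singular factor $1/(y+1)$: under merely weak $H^1$ convergence one cannot pass to the limit by dominated convergence, and one needs the integration-by-parts trick of Lemma \ref{eigenfunction} to replace the singular weight $1/(y+1)$ by the $L^p$ weight $\ln(y+1)$. Once that step is available, everything else is routine direct method reasoning, and the argument works uniformly at $\beta_0=0$ as well as at interior points, so no separate endpoint analysis is needed.
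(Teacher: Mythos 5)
Your proposal is correct, but it takes a partly different route from the paper. The paper leans on the monotonicity of $\lambda_1(\cdot,-1)$ (Corollary \ref{lambda-decreasing}) to get one of the two inequalities for free in each direction: for left continuity it combines $\lambda_1(\beta,-1)\geq\lambda_1(\beta_0,-1)$ (monotonicity) with the test-function bound using the fixed minimizer $\phi_{\beta_0}$, while for right continuity it tests the problem at $\beta_0$ with the \emph{moving} minimizers $\phi_\beta$, $\beta\in[\beta_0,\beta_0+1]$, and the only analytic input is the uniform bound $\|\phi_\beta'\|_{L^2}\leq C$ giving $\int_{-1}^1{|\phi_\beta|^2\over y+1}\,dy\leq 2C$ uniformly; no compactness or passage to a weak limit is needed. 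You instead prove upper semicontinuity exactly as in the paper's left-continuity half (fixed test function $\phi_{\beta_0}$, valid for $\beta_n$ on either side of $\beta_0$ since the integral is a fixed finite number), but replace the monotonicity input by a direct-method lower-semicontinuity argument: uniform $H^1$ bounds on the minimizers $\phi_{\beta_n}$ via the \eqref{u-l-bound}-type estimate, weak compactness, weak lower semicontinuity of the Dirichlet term, and convergence of the singular term by the integration-by-parts device of \eqref{sec-term-lim}. Both halves of your argument are sound (the key points — finiteness of $\int|\phi_{\beta_0}|^2/(y+1)$, uniformity of the constants $C_{\beta_n}$, admissibility of the weak limit $\phi_*$ with $\|\phi_*\|_{L^2}=1$, and the inequality $\liminf(a_n-b_n)\geq\liminf a_n-\lim b_n$ — all check out). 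Your version is slightly longer but self-contained and does not presuppose monotonicity, so it would generalize to situations where $\lambda_1$ is not monotone in the parameter; the paper's version is shorter precisely because Corollary \ref{lambda-decreasing} is already available.
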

\begin{proof}
First, we consider the left continuity of $\lambda_1(\cdot,-1)$ at $\beta_0\in(0,\infty)$. By Lemma \ref{eigenfunction}, $\lambda_1(\beta_0,-1)$ is attained  by  $\phi_{\beta_0}\in H_0^1(-1,1)$. Since $|\phi_{\beta_0}(y)|^2\leq \|\phi_{\beta_0}'\|_{L^2(-1,1)}^2(y+1)$, we have $\int_{-1}^1{1\over y+1}|\phi_{\beta_0}|^2dy\leq 2\|\phi_{\beta_0}'\|_{L^2(-1,1)}^2<\infty$. For $\varepsilon>0$, there exists $\delta>0$ such that $\delta\int_{-1}^1{1\over y+1}|\phi_{\beta_0}|^2dy<\varepsilon$. By the monotonicity of $\lambda_1(\cdot,-1)$, we have
 \begin{align*}0\leq&\lambda_1(\beta,-1)- \lambda_1(\beta_0,-1)\\
 \leq&\int_{-1}^1\left(|\phi_{\beta_0}'|^2-{\beta\over y+1}|\phi_{\beta_0}|^2\right)dy-\int_{-1}^1\left(|\phi_{\beta_0}'|^2-{\beta_0\over y+1}|\phi_{\beta_0}|^2\right)dy\\
 =&(\beta_0-\beta)\int_{-1}^1{1\over y+1}|\phi_{\beta_0}|^2dy<\delta\int_{-1}^1{1\over y+1}|\phi_{\beta_0}|^2dy<\varepsilon\end{align*}
  for $0<\beta_0-\beta<\delta$.

 Next, we consider the right continuity of $\lambda_1(\cdot,-1)$ at $\beta_0\in[0,\infty)$. We claim that
 \begin{align}\label{phi-beta-bound}
 \|\phi_{\beta}'\|_{L^2(-1,1)}\leq C \quad\text{for}\quad \beta\in[\beta_0,\beta_0+1].
 \end{align}
 In fact, since $\lim_{y\to-1^+}\ln(y+1)|\phi_\beta(y)|^2=0$, by Lemma \ref{Gagliardo-Nirenberg interpolation inequality} and $1=\|\phi_\beta\|_{L^2(-1,1)}\leq C\|\phi_\beta'\|_{L^2(-1,1)}$ we have
 \begin{align*}
 &\int_{-1}^1{1\over y+1}|\phi_\beta|^2dy=-\int_{-1}^12\ln(y+1)\phi_\beta\phi_\beta'dy\leq \|\ln(y+1)\|_{L^4(-1,1)}\|\phi_\beta\|_{L^4(-1,1)}\|\phi_\beta'\|_{L^2(-1,1)}\\
 \leq &C(C_1\|\phi_\beta\|_{L^2(-1,1)}^{3\over4}\|\phi_\beta'\|_{L^2(-1,1)}^{1\over4}+C_2\|\phi_\beta\|_{L^2(-1,1)})\|\phi_\beta'\|_{L^2(-1,1)}\leq C\|\phi_\beta'\|_{L^2(-1,1)}^{5\over4}.
 \end{align*}
 This, along with \eqref{lambda-bound}, gives
 \begin{align*}
{\pi^2\over4}\geq\|\phi_\beta'\|_{L^2(-1,1)}^2-\beta C\|\phi_\beta'\|_{L^2(-1,1)}^{5\over4}\geq \|\phi_\beta'\|_{L^2(-1,1)}^2-(\beta_0+1) C\|\phi_\beta'\|_{L^2(-1,1)}^{5\over4}
 \end{align*}
 for $\beta\in[\beta_0,\beta_0+1]$. This proves \eqref{phi-beta-bound}. By \eqref{phi-beta-bound},
we have $\int_{-1}^1{1\over y+1}|\phi_{\beta}|^2dy\leq 2\|\phi_{\beta}'\|_{L^2(-1,1)}^2<2C$ uniformly for $\beta\in[\beta_0,\beta_0+1]$. For $\varepsilon>0$, there exists $\delta\in(0,1)$ such that $\delta\int_{-1}^1{1\over y+1}|\phi_{\beta}|^2dy\leq 2\delta C<\varepsilon$ for $\beta\in[\beta_0,\beta_0+1]$. By the monotonicity of $\lambda_1(\cdot,-1)$, we have
 \begin{align*}0\leq&\lambda_1(\beta_0,-1)- \lambda_1(\beta,-1)\\
 \leq&\int_{-1}^1\left(|\phi_{\beta}'|^2-{\beta_0\over y+1}|\phi_{\beta}|^2\right)dy-\int_{-1}^1\left(|\phi_{\beta}'|^2-{\beta\over y+1}|\phi_{\beta}|^2\right)dy\\
 =&(\beta-\beta_0)\int_{-1}^1{1\over y+1}|\phi_{\beta}|^2dy<\delta\int_{-1}^1{1\over y+1}|\phi_{\beta}|^2dy<\varepsilon\end{align*}
  for $0<\beta-\beta_0<\delta$.
\end{proof}

For $\beta\geq0$ and $c<-1$,
\begin{align}\label{lambda1betac-1}
\lambda_1(\beta,c)=\inf_{\phi\in H_0^1(-1,1),\|\phi\|_{L^2(-1,1)}=1}\int_{-1}^1\left(|\phi'|^2-{\beta\over y-c}|\phi|^2\right)dy
\end{align}
is the principal eigenvalue of the regular Rayleigh-Kuo BVP
\begin{align}\label{lambda1betac-1-R-K}
-\phi''-{\beta\over y-c}\phi=\lambda\phi,\quad\phi(\pm1)=0,
\end{align}
where $\phi\in H^2\cap H_0^1(-1,1)$.
Then we have the following result.
\begin{lemma}\label{mono-beta}
$(1)$ $\lambda_1(\beta,\cdot)$ is decreasing on $(-\infty,-1]$ for fixed $\beta>0$.

$(2)$ $\lim_{\beta\to\infty}\lambda_1(\beta,c)=-\infty$ for fixed $c\leq -1$.
\end{lemma}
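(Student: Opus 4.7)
\medskip

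\noindent\textbf{Proof proposal for Lemma \ref{mono-beta}.}

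The plan for part (1) is a standard variational comparison: for fixed $\beta>0$ and $c_1<c_2\le -1$, pick a minimizer $\phi_{c_1}$ of the Rayleigh quotient defining $\lambda_1(\beta,c_1)$ with $\|\phi_{c_1}\|_{L^2(-1,1)}=1$, and test it against the $c_2$-functional. Since $c_1<-1$, the potential $\beta/(y-c_1)$ is bounded on $[-1,1]$, so existence of $\phi_{c_1}\in H_0^1\cap H^2(-1,1)$ for the regular BVP \eqref{lambda1betac-1-R-K} follows from standard Sturm--Liouville theory; if $c_1=-1$ occurred, Lemma \ref{eigenfunction} would supply the minimizer, but our hypothesis $c_1<c_2\le -1$ forces $c_1<-1$. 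For every $y\in[-1,1]$ we have $y-c_1>y-c_2>0$, hence $\beta/(y-c_1)<\beta/(y-c_2)$ pointwise. Since $\phi_{c_1}\not\equiv0$, integrating against $|\phi_{c_1}|^2$ gives a strict inequality
\[
\lambda_1(\beta,c_1)=\int_{-1}^{1}\Bigl(|\phi_{c_1}'|^2-\tfrac{\beta}{y-c_1}|\phi_{c_1}|^2\Bigr)dy
>\int_{-1}^{1}\Bigl(|\phi_{c_1}'|^2-\tfrac{\beta}{y-c_2}|\phi_{c_1}|^2\Bigr)dy\ge \lambda_1(\beta,c_2),
\]
where the last step is the variational principle \eqref{lambda1betac-1} applied at $c_2$ (using $\phi_{c_1}$ as an admissible test function). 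This gives the strict monotonicity on $(-\infty,-1]$.

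For part (2), the plan is to build a single $\beta$-independent test function and use it to push the infimum to $-\infty$. Fix $c\le -1$ and choose any $\phi_0\in C_c^\infty((-1,1))$ with $\|\phi_0\|_{L^2(-1,1)}=1$. Because $\phi_0$ has compact support in $(-1,1)$ and $y-c\ge y+1>0$ on that support, the constant
\[
K_0(c):=\int_{-1}^{1}\frac{|\phi_0|^2}{y-c}\,dy
\]
is a finite positive number depending only on $c$ and the choice of $\phi_0$. Applying the variational characterization \eqref{lambda1betac-1} with this fixed $\phi_0$,
\[
\lambda_1(\beta,c)\le \int_{-1}^{1}|\phi_0'|^2\,dy-\beta K_0(c),
\]
and letting $\beta\to\infty$ yields $\lambda_1(\beta,c)\to-\infty$. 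For $c=-1$ the same argument goes through verbatim because $\phi_0$ is compactly supported away from the singularity, so no extra care is required.

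The main (minor) point to be careful about is the existence and $L^2$-normalization of the minimizer $\phi_{c_1}$ used in (1); for $c_1<-1$ this is a routine consequence of the direct method in the calculus of variations applied to a bounded potential, and the strict monotonicity is then forced by the pointwise strict inequality of the weights combined with $\phi_{c_1}\not\equiv0$. No real obstacle is expected, as both statements reduce to elementary monotonicity properties of the Rayleigh quotient once the weights $1/(y-c)$ and the test-function construction in (2) are set up.
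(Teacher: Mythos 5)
Your proposal is correct and follows essentially the same route as the paper: part (1) is the identical eigenfunction-comparison argument (testing the minimizer for $c_1$ against the $c_2$-functional and using the pointwise monotonicity of $\beta/(y-c)$), and part (2) is the same fixed compactly-supported test function bound, with the paper simply choosing $\mathrm{supp}\,(\varphi)\subset(\tfrac14,\tfrac34)$ explicitly. Your extra remarks on existence of the minimizer for $c_1<-1$ and on the $c=-1$ case are correct but not needed beyond what the paper already establishes in Lemma \ref{eigenfunction}.
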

\begin{proof}(1)
For   $c_1<c_2\leq -1$, we have
\begin{align*}
\lambda_1(\beta,c_1)=\int_{-1}^{1} \left(|\phi_{\beta,c_1}'|^2-\frac{\beta}{y-c_1}|\phi_{\beta,c_1}|^2\right)dy
>\int_{-1}^{1}\left( |\phi_{\beta,c_1}'|^2-\frac{\beta}{y-c_2}|\phi_{\beta,c_1}|^2\right)dy
\geq \lambda_1(\beta,c_2),
\end{align*}
where $\phi_{\beta,c_1}\in H_0^1(-1,1)$ is  an eigenfunction of  $  \lambda_1(\beta,c_1)$ with $\|\phi_{\beta,c_1}\|_{L^2(-1,1)}=1$.

 (2)
Let $\varphi$ be a smooth function such that $ \text{supp}\,(\varphi)\subset({1\over4},{3\over4})$ and
 $\|\varphi\|_{L^2(-1,1)}=1$. Then (2) can be obtained by
 \begin{align*}
\lambda_{1}(\beta,c)
\leq\int_{1\over4}^{3\over4} \left(|\varphi'|^2-\frac{\beta}{y-c}|\varphi|^2\right)dy\to -\infty \;\;\text{as}\;\;\beta\to\infty.
\end{align*}
\end{proof}

We list some properties of $\lambda_1(\beta,c)$ for $\beta\geq0$ and $c\leq -1$.

 \begin{itemize}
 \item
For fixed $c<-1$,   $\lambda_1(\cdot,c)$ is smooth and  decreasing on $[0,\infty)$, see Lemma 11 in \cite{LYZ}.
\item
For $c=-1$, $\lambda_1(\cdot,c)$ is continuous and  decreasing on $[0,\infty)$, see Corollary
\ref{lambda-decreasing} and Lemma \ref{lambda-continuous}.
\item
For fixed $c\leq -1$,
$\lambda_1(0,c)={\pi^2\over4}$ and $\lim_{\beta\to\infty}\lambda_1(\beta,c)=-\infty$, see Lemma \ref{mono-beta} (2).
\item
For fixed $\beta>0$,
$\lambda_1(\beta,\cdot)$ is continuous and decreasing on $(-\infty,-1]$, see Lemma \ref{mono-beta} (1) and Lemma \ref{c less than -1 lim}.
\end{itemize}
Thus,
there exists a unique $\beta(c)>0$ such that $\lambda_1(\beta(c),c)=0$ for $c\leq-1$. Now, we  define the transitional value as
\begin{align}\label{def-beta*+}
\beta_*=\inf_{c\in(-\infty,-1]}\beta(c).
\end{align}
Then we claim that $\beta=\beta_*$ is  the unique point  such that $\lambda_1(\beta,-1)=0$.
\begin{lemma}\label{lem beta(-1)}
\beno
\beta_*=\beta(-1)\in(0,\infty).
\eeno
\end{lemma}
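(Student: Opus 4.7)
The plan is to first pin down $\beta(-1)$ as a well-defined finite positive number, and then use a one-variable monotonicity comparison to show that $\beta(c) > \beta(-1)$ for every $c<-1$, so that the infimum in \eqref{def-beta*+} is attained precisely at $c=-1$.

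\textbf{Step 1 (finiteness and positivity of $\beta(-1)$).} I would use the three properties of $\lambda_1(\cdot,-1)$ already established: Lemma~\ref{eigenfunction} gives $\lambda_1(0,-1)=\pi^2/4>0$; Corollary~\ref{lambda-decreasing} gives that $\lambda_1(\cdot,-1)$ is strictly decreasing on $[0,\infty)$; Lemma~\ref{lambda-continuous} gives continuity on $[0,\infty)$; and Lemma~\ref{mono-beta}(2) (applied with $c=-1$) yields $\lambda_1(\beta,-1)\to-\infty$ as $\beta\to\infty$. By the intermediate value theorem there is a (unique by strict monotonicity) $\beta(-1)\in(0,\infty)$ with $\lambda_1(\beta(-1),-1)=0$.

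\textbf{Step 2 (the infimum is attained at $c=-1$).} Fix any $c<-1$ and consider $\beta=\beta(c)>0$, so that $\lambda_1(\beta(c),c)=0$. By Lemma~\ref{mono-beta}(1), $\lambda_1(\beta(c),\cdot)$ is strictly decreasing on $(-\infty,-1]$, hence
\begin{equation*}
\lambda_1(\beta(c),-1)<\lambda_1(\beta(c),c)=0=\lambda_1(\beta(-1),-1).
\end{equation*}
Since $\lambda_1(\cdot,-1)$ is strictly decreasing on $[0,\infty)$ by Corollary~\ref{lambda-decreasing}, the inequality $\lambda_1(\beta(c),-1)<\lambda_1(\beta(-1),-1)$ forces $\beta(c)>\beta(-1)$. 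Taking the infimum over $c\in(-\infty,-1]$ in the definition \eqref{def-beta*+} then gives $\beta_*=\beta(-1)\in(0,\infty)$, as claimed.

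\textbf{Main obstacle.} Conceptually the argument is a short monotonicity comparison; the only subtlety is making sure that the ``correct'' direction of monotonicity in $c$ is available at the endpoint $c=-1$, where the Rayleigh--Kuo BVP becomes singular and the eigenvalue is only characterized variationally rather than via smoothness. This is precisely the content of the preceding lemmas: Lemma~\ref{lambda-continuous} supplies continuity of $\lambda_1(\cdot,-1)$ up to the singular endpoint (needed for Step~1), while Lemma~\ref{mono-beta}(1) supplies strict monotonicity in $c$ up to $c=-1$ (needed for Step~2). Once these ingredients are in hand, no further estimates are required.
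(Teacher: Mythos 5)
Your argument is correct and is essentially the paper's proof run in mirror image: the paper applies the $c$-monotonicity of Lemma \ref{mono-beta} (1) at $\beta=\beta(-1)$ to get $\lambda_1(\beta(-1),c)>0=\lambda_1(\beta(c),c)$ for $c<-1$ and then concludes $\beta(-1)<\beta(c)$ from monotonicity in $\beta$ at the fixed regular value $c<-1$ (Lemma 11 in \cite{LYZ}), whereas you apply the $c$-monotonicity at $\beta=\beta(c)$ and conclude via monotonicity in $\beta$ at the singular endpoint $c=-1$ (Corollary \ref{lambda-decreasing}); both chains use only ingredients already established, so either works. Your Step 1, establishing that $\beta(-1)$ is a well-defined element of $(0,\infty)$ via continuity, strict monotonicity and $\lambda_1(\beta,-1)\to-\infty$, reproduces the discussion the paper places just before the lemma rather than inside its proof, and is a reasonable inclusion since the statement explicitly asserts $\beta(-1)\in(0,\infty)$.
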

\begin{proof}
By Lemma \ref{mono-beta} (1), we have
\beno
\lambda_1(\beta(-1),c)>\lambda_1(\beta(-1),-1)=0=\lambda_1(\beta(c),c)
\eeno
for $c<-1$, which implies that
 $\beta(-1)<\beta(c)$ by Lemma 11 in \cite{LYZ}.
\end{proof}

Then we study the asymptotic  behavior of the principal eigenvalues for  regular approximations of
singular  Rayleigh-Kuo BVP with $c\to-1^-$.

\begin{lemma}\label{c less than -1 lim} $\lim_{c\to-1^-}\lambda_1(\beta,c)=\lambda_1(\beta,-1)$ for fixed $\beta>0$.
\end{lemma}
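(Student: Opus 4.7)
The plan is to establish the limit by proving both inequalities separately, exploiting the monotonicity of $\frac{1}{y-c}$ in $c$ combined with the fact that $\beta>0$.

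For the easy direction (lower bound), note that for $c<-1$ and $y\in(-1,1)$ we have $y-c>y+1>0$, hence $\frac{1}{y-c}<\frac{1}{y+1}$. Since $\beta>0$, for every admissible $\phi\in H_0^1(-1,1)$ with $\|\phi\|_{L^2(-1,1)}=1$,
\begin{align*}
\int_{-1}^1\left(|\phi'|^2-\frac{\beta}{y-c}|\phi|^2\right)dy \;\geq\; \int_{-1}^1\left(|\phi'|^2-\frac{\beta}{y+1}|\phi|^2\right)dy \;\geq\; \lambda_1(\beta,-1).
\end{align*}
Taking the infimum over $\phi$ yields $\lambda_1(\beta,c)\geq\lambda_1(\beta,-1)$ for every $c<-1$, hence $\liminf_{c\to-1^-}\lambda_1(\beta,c)\geq\lambda_1(\beta,-1)$.

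For the reverse (upper bound) direction, I would use the minimizer $\phi_\beta\in H_0^1(-1,1)$ from Lemma \ref{eigenfunction} as a test function for $\lambda_1(\beta,c)$. This gives
\begin{align*}
\lambda_1(\beta,c)\leq\int_{-1}^1\left(|\phi_\beta'|^2-\frac{\beta}{y-c}|\phi_\beta|^2\right)dy.
\end{align*}
The task is then to pass to the limit $c\to-1^-$ in the potential term. Since $\phi_\beta(-1)=0$ and $\phi_\beta\in H^1$, a Hardy-type bound (Lemma \ref{Hardy type inequality2}) gives $|\phi_\beta(y)|^2\leq(y+1)\|\phi_\beta'\|_{L^2(-1,1)}^2$, so $\frac{|\phi_\beta|^2}{y+1}\in L^1(-1,1)$ with $\int_{-1}^1\frac{|\phi_\beta|^2}{y+1}\,dy\leq2\|\phi_\beta'\|_{L^2(-1,1)}^2<\infty$. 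Moreover, as $c\uparrow-1$, the functions $\frac{|\phi_\beta|^2}{y-c}$ increase monotonically to $\frac{|\phi_\beta|^2}{y+1}$ pointwise on $(-1,1)$. Monotone convergence then gives
\begin{align*}
\int_{-1}^1\frac{\beta}{y-c}|\phi_\beta|^2\,dy \;\longrightarrow\; \int_{-1}^1\frac{\beta}{y+1}|\phi_\beta|^2\,dy \quad \text{as } c\to-1^-,
\end{align*}
from which $\limsup_{c\to-1^-}\lambda_1(\beta,c)\leq\lambda_1(\beta,-1)$ follows.

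Combining the two bounds yields the claimed continuity. The only mildly subtle point is the integrability of $\frac{|\phi_\beta|^2}{y+1}$ at $y=-1$, but this is free once we have the Dirichlet boundary condition and the $H^1$ regularity of $\phi_\beta$ provided by Lemma \ref{eigenfunction}; no other difficulty arises.
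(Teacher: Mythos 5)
Your proof is correct and follows essentially the same route as the paper: the lower bound via the pointwise comparison $\frac{1}{y-c}\leq\frac{1}{y+1}$ in the variational characterization, and the upper bound by testing $\lambda_1(\beta,c)$ with the minimizer $\phi_\beta$ and passing to the limit in the potential term. The only (immaterial) difference is that you justify that limit by monotone convergence, whereas the paper uses the explicit quantitative estimate $\left|\int_{-1}^1\bigl(\tfrac{1}{y-c}-\tfrac{1}{y+1}\bigr)|\phi_\beta|^2dy\right|\leq|1+c|\,\|\phi_\beta'\|_{L^2}^2\bigl(\ln(1-c)-\ln(-1-c)\bigr)\to0$; both arguments rest on the same bound $|\phi_\beta(y)|^2\leq(y+1)\|\phi_\beta'\|_{L^2}^2$.
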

\begin{proof}
By  Lemma \ref{mono-beta},   $\lambda_1(\beta,-1)\leq \lim_{c\to-1^-}\lambda_1(\beta,c)$.
Conversely, let  $\phi_{\beta}$ be  an eigenfunction of $\lambda_1(\beta,-1)$ with $\|\phi_{\beta}\|_{L^2(-1,1)}=1$. For $c<-1$, we have
\begin{align}\label{potential-lim}
\lim_{c\to-1^{-}}\int_{-1}^1{1\over y-c}|\phi_\beta|^2dy =\int_{-1}^1{1\over y+1}|\phi_\beta|^2dy.
\end{align}
In fact,
\begin{align*}
&\left|\int_{-1}^1\left({1\over y-c}-{1\over y+1}\right)|\phi_{\beta}|^2dy\right|=\left|\int_{-1}^1\left({1+c\over( y-c)(y+1)}\right)|\phi_{\beta}|^2dy\right|\\
\leq&|1+c|\|\phi_{\beta}'\|_{L^2(-1,1)}^2 \int_{-1}^1{1\over y-c}dy=|1+c|\|\phi_{\beta}'\|_{L^2(-1,1)}^2 (\ln(1-c)-\ln(-1-c))\to0
\end{align*}
as $c\to-1^{-}$. Taking limit in
\begin{align*}
\lambda_1(\beta,c)\leq \int_{-1}^1\left(|\phi_{\beta}'|^2-{\beta\over y-c}|\phi_{\beta}|^2\right)dy,
\end{align*}
by \eqref{potential-lim} we have
\begin{align*}\lim_{c\to-1^-}\lambda_1(\beta,c)\leq \int_{-1}^1\left(|\phi_{\beta}'|^2-{\beta\over y+1}|\phi_{\beta}|^2\right)dy=\lambda_1(\beta,-1).\end{align*}
\end{proof}
Then we point out the difference of the principal eigenvalues of the  Rayleigh-Kuo BVP before and after $\beta_*$.

\begin{Proposition}\label{larger than less than beta*+}
$(1)$ Let $\beta>\beta_*$. Then there exists $c_\beta<-1$ such that $\lambda_1(\beta,c_\beta)=0$ and $\{\lambda_1(\beta,c)|c\in(c_\beta,-1)\}=(\lambda_1(\beta,-1),0)$.

$(2)$ Let $0<\beta\leq\beta_*$. Then  $\lambda_1(\beta,c)\geq0$ for $c\leq -1$.
\end{Proposition}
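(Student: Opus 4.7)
My plan is to combine the monotonicity of $\lambda_1$ in each of its two arguments with continuity in $c$ on $(-\infty,-1]$, and to exhibit an a priori lower bound forcing $\lambda_1(\beta,c)>0$ for $c$ sufficiently negative. Intermediate value then delivers $c_\beta$, and strict monotonicity gives uniqueness and the range identity.

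For part $(1)$, I would start by noting that since $\beta>\beta_*$ and $\lambda_1(\cdot,-1)$ is strictly decreasing on $[0,\infty)$ (Corollary \ref{lambda-decreasing}) with $\lambda_1(\beta_*,-1)=0$ (Lemma \ref{lem beta(-1)}), one gets $\lambda_1(\beta,-1)<0$. Next, for any $c<-1$ and any $\phi\in H_0^1(-1,1)$ with $\|\phi\|_{L^2(-1,1)}=1$, the pointwise bound $0<\frac{1}{y-c}\leq\frac{1}{-1-c}$ on $(-1,1)$ combined with the Poincar\'e inequality yields
\[
\int_{-1}^1\left(|\phi'|^2-\frac{\beta}{y-c}|\phi|^2\right)dy\geq \frac{\pi^2}{4}-\frac{\beta}{|c|-1},
\]
so that $\lambda_1(\beta,c)\geq \frac{\pi^2}{4}-\frac{\beta}{|c|-1}>0$ once $|c|$ is large enough. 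Now $\lambda_1(\beta,\cdot)$ is strictly decreasing on $(-\infty,-1]$ by Lemma \ref{mono-beta}~$(1)$, continuous at $c=-1$ by Lemma \ref{c less than -1 lim}, and continuous on $(-\infty,-1)$ by the variational definition \eqref{lambda1betac-1}: for $c_1,c_2<-1$, taking competitor functions in both directions gives
\[
|\lambda_1(\beta,c_1)-\lambda_1(\beta,c_2)|\leq \beta\sup_{y\in[-1,1]}\left|\frac{1}{y-c_1}-\frac{1}{y-c_2}\right|\leq \frac{\beta\,|c_1-c_2|}{(|c_1|-1)(|c_2|-1)}.
\]
With continuity and strict monotonicity on $(-\infty,-1]$, together with $\lambda_1(\beta,-1)<0$ and $\lambda_1(\beta,c)>0$ for $|c|$ large, the intermediate value theorem produces a unique $c_\beta\in(-\infty,-1)$ with $\lambda_1(\beta,c_\beta)=0$, and strict monotonicity plus continuity on $[c_\beta,-1]$ give the range identity $\{\lambda_1(\beta,c):c\in(c_\beta,-1)\}=(\lambda_1(\beta,-1),0)$.

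For part $(2)$, if $0<\beta\leq\beta_*$, Corollary \ref{lambda-decreasing} gives $\lambda_1(\beta,-1)\geq\lambda_1(\beta_*,-1)=0$, and then Lemma \ref{mono-beta}~$(1)$ propagates nonnegativity leftward: $\lambda_1(\beta,c)\geq\lambda_1(\beta,-1)\geq 0$ for all $c\leq-1$. The only mildly subtle point in the whole argument is the continuity of $c\mapsto\lambda_1(\beta,c)$ on the open half-line $(-\infty,-1)$, but as shown above this is a one-line consequence of the min-max formula together with the uniform smallness of $\beta/(y-c_1)-\beta/(y-c_2)$ on $[-1,1]$; everything else is a direct combination of results already established earlier in this section.
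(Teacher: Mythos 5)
Your argument is correct and follows essentially the same route as the paper: deduce $\lambda_1(\beta,-1)<0$ from Corollary \ref{lambda-decreasing}, use the monotonicity in $c$ from Lemma \ref{mono-beta}~(1) and the continuity at $c=-1$ from Lemma \ref{c less than -1 lim}, and conclude by the intermediate value theorem, with part (2) handled identically. The only difference is that you explicitly verify the two facts the paper invokes without proof — positivity of $\lambda_1(\beta,c)$ for $|c|$ large (the paper cites $\lim_{c\to-\infty}\lambda_1(\beta,c)=\pi^2/4$) and continuity on the open half-line $(-\infty,-1)$ — and both of your verifications are sound.
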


\begin{remark} We get more precise conclusions for the range of the principal eigenvalue.

$(1)$ Fix $\beta>0$. Since  $\lim_{c\to-\infty}\lambda_1(\beta,c)={\pi^2\over4}$ and $\lim_{c\to-1^-}\lambda_1(\beta,c)=\lambda_1(\beta,-1)$ by Lemma \ref{c less than -1 lim}, we infer from Lemma $\ref{mono-beta}$ $(1)$ that $\{\lambda_1(\beta,c)|c\in(-\infty,-1]\}=\left[\lambda_1(\beta,-1),{\pi^2\over4}\right)$.

$(2)$ For any $(\lambda_0,\beta_0)\in I:=\{(\lambda,\beta)| \beta\in(0,\infty),\lambda\in\left[\lambda_1(\beta,-1),{\pi^2\over4}\right)\}$, there exists a unique $c_0\in(-\infty, -1]$ such that
$\lambda_1(\beta_0,c_0)=\lambda_0$.  In particular, for any $(\lambda_0,\beta_0)\in \tilde I:=\left\{(\lambda,\beta)| \beta\in(\beta_*,\infty),\lambda\in\left(\lambda_1(\beta,-1),0\right)\right\}$, there exists a unique $c_0\in(c_{\beta_0}, -1)$ such that
$\lambda_1(\beta_0,c_0)$ $=\lambda_0$.
See Figure $2$.
\end{remark}

\begin{center}
 \begin{tikzpicture}[scale=0.58]
 \draw [->](-1, 0)--(10, 0)node[right]{$\beta$};
 \draw [->](0,-8)--(0,4) node[above]{$\lambda$};
 \draw  (0, 2).. controls (3, 0) and (5, -4)..(9.3,-8.75);
 \draw  (0, 2).. controls (3, 2) and (5, 2)..(8.7,2);
%\draw  [fill=gray] (0, 2).. controls (10.25, 2) and (10.25,3)..(10.25,-7.15);
%\draw (-4, 0).. controls (-5, 2) and (-8, 4)..(-10.5,7.5);
%\draw (5, 0).. controls (6, 2) and (6, 2)..(6.25,2.75);
\path   (0.5, 2)  edge [-,dotted](0.6,1.57) [line width=0.8pt];
 \path   (1, 2)  edge [-,dotted](1.2,1.1) [line width=0.8pt];
  \path   (1.5, 2)  edge [-,dotted](1.8,0.45) [line width=0.8pt];
  \path (2, 2)  edge [-,dotted](2.3,-0.1) [line width=0.8pt];
    \path (2.5, 2)  edge [-,dotted](2.9,-0.75) [line width=0.8pt];
     \path (3, 2)  edge [-,dotted](3.5,-1.5) [line width=0.8pt];
      \path (3.5, 2)  edge [-,dotted](4,-2.1) [line width=0.8pt];
       \path (4, 2)  edge [-,dotted](4.5,-2.75) [line width=0.8pt];
       \path (4.5, 2)  edge [-,dotted](5,-3.5) [line width=0.8pt];
       \path (5, 2)  edge [-,dotted](5.6,-4.23) [line width=0.8pt];
       \path (5.5, 2)  edge [-,dotted](6.2,-5) [line width=0.8pt];
       \path (6, 2)  edge [-,dotted](6.7,-5.65) [line width=0.8pt];
       \path (6.5, 2)  edge [-,dotted](7.2,-6.4) [line width=0.8pt];
       \path (7, 2)  edge [-,dotted](7.7,-6.95) [line width=0.8pt];
       \path (7.5, 2)  edge [-,dotted](8.2,-7.56) [line width=0.8pt];
       \path (8, 2)  edge [-,dotted](8.8,-8.25) [line width=0.8pt];
       \path (8.5, 2)  edge [-,dotted](9.3,-8.75) [line width=0.8pt];
 %     \path (0, 2)  edge [-,dotted](10,2) [line width=0.8pt];
 %     \path (-5, 0)  edge [-,dotted](-6.25,2.75) [line width=0.8pt];
  %\path (-6, 0)  edge [-,dotted](-7.25,3.75) [line width=0.8pt];
   % \path (-7, 0)  edge [-,dotted](-8.25,4.75) [line width=0.8pt];
    % \path (-8, 0)  edge [-,dotted](-9.25,5.9) [line width=0.8pt];
     % \path (-9, 0)  edge [-,dotted](-10.25,7.15) [line width=0.8pt];
       \node (a) at (2.3,-0.5) {\tiny$\beta_*$};
      \node (a) at (-0.5,2) {\tiny${\pi^2\over4}$};
      % \node (a) at (-4,-0.5) {\tiny$-\beta_*$};
       % \node (a) at (-8.5,4) {\tiny$I_-$};
       % \node (a) at (1,4) {\tiny$O$};
       %  \node (a) at (8.45,4) {\tiny$I_+$};
        % \node (a) at (-7,4) {\tiny$\Gamma_-$};
        %\node (a) at (7,4) {\tiny$\Gamma_+$};
        \node (a) at (4.3,-4) {\tiny$\lambda_1(\cdot,-1)$};
 \end{tikzpicture}
\end{center}\vspace{-0.2cm}
 \begin{center}\vspace{-0.2cm}
   {\small {\bf Figure 2.} }
  \end{center}

\begin{proof} (1)
Since $\beta>\beta_*$, by Corollary \ref{lambda-decreasing} we have $\lambda_1(\beta,-1)<0$.  The conclusion then follows from Lemma \ref{mono-beta} (1), Lemma \ref{c less than -1 lim} and the fact that $\lim_{c\to-\infty}\lambda_1(\beta,c)={\pi^2\over4}$.

(2) Since $0<\beta\leq\beta_*$, by Corollary \ref{lambda-decreasing} we have $\lambda_1(\beta,-1)\geq0$. By Lemma \ref{mono-beta} (1),
 we have  $\lambda_1(\beta,c)\geq\lambda(\beta,-1)\geq0$ for $c\leq -1$.
\end{proof}

Similarly,  for $\beta\leq0$, we can define
\begin{align}\label{lambda-beta-neg}
\lambda_1(\beta,1)=\inf_{\phi\in H_0^1(-1,1),\|\phi\|_{L^2(-1,1)}=1}\int_{-1}^1\left(|\phi'|^2-{\beta\over y-1}|\phi|^2\right)dy.
\end{align}
For $\beta\leq0$ and $c>1$,
$
\lambda_1(\beta,c)$ defined in \eqref{lambda1betac-1}
is the principal eigenvalue of the regular Rayleigh-Kuo BVP \eqref{lambda1betac-1-R-K}.
We list some properties of $\lambda_1(\beta,c)$ for $\beta\leq 0$ and $c\geq 1$, whose proof is similar as above, and thus, omitted here.
\begin{lemma}\label{eigenfunction-neg}
$(1)$ Let $\beta\leq 0$. Then the infimum in \eqref{lambda-beta-neg} is attained by some function $\phi_\beta\in H_0^1(-1,1),$ $\|\phi_\beta\|_{L^2(-1,1)}=1$.
Moreover,
 there exists $m_\beta>-\infty$ such that
$m_\beta\leq\lambda_1(\beta,1)\leq {\pi^2\over4}$.

$(2)$ For fixed $c>1$, $\lambda_1(\cdot,c)$ is smooth and increasing on $\beta\in(-\infty,0]$.

$(3)$ For $c=1$, $\lambda_1(\cdot,c)$ is continuous and increasing on $\beta\in(-\infty,0]$.

$(4)$ For fixed $c\geq1$, $\lambda_1(0,c)={\pi^2\over4}$ and $\lim_{\beta\to-\infty}\lambda_1(\beta,c)=-\infty.$

$(5)$ For fixed $\beta<0$, $\lim_{c\to1^+}\lambda_1(\beta,c)=\lambda_1(\beta,1)$, and $\lambda_1(\beta,\cdot)$ is continuous and increasing on $c\in[1,\infty)$.
\end{lemma}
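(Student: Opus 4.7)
\textbf{Proof proposal for Lemma \ref{eigenfunction-neg}.} The plan is to exploit the reflection symmetry $y \mapsto -y$, $(\beta,c) \mapsto (-\beta,-c)$ to reduce the entire lemma to results already established in the section for $\beta \geq 0$ and $c \leq -1$. Specifically, for any $\phi \in H_0^1(-1,1)$ with $\|\phi\|_{L^2(-1,1)} = 1$, setting $\tilde\phi(y) = \phi(-y)$ gives $\tilde\phi \in H_0^1(-1,1)$ with $\|\tilde\phi\|_{L^2(-1,1)} = 1$, and the substitution $z = -y$ yields
\begin{align*}
\int_{-1}^1 \left(|\tilde\phi'(y)|^2 - \frac{\beta}{y-c}|\tilde\phi(y)|^2\right) dy = \int_{-1}^1 \left(|\phi'(z)|^2 - \frac{-\beta}{z-(-c)}|\phi(z)|^2\right) dz.
\end{align*}
Taking infima on both sides over admissible $\phi$ then produces the key identity $\lambda_1(\beta,c) = \lambda_1(-\beta,-c)$, valid for all $\beta \leq 0$ and $c \geq 1$ (using the definitions \eqref{lambda-beta}, \eqref{lambda-beta-neg}, \eqref{lambda1betac-1}). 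Moreover, $\phi$ attains the infimum for $(-\beta,-c)$ if and only if $\tilde\phi$ attains the infimum for $(\beta,c)$.

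With this symmetry in hand, I would dispatch the five items in order. For (1), Lemma \ref{eigenfunction} produces a minimizer $\phi_{-\beta}$ for $\lambda_1(-\beta,-1)$; then $y\mapsto\phi_{-\beta}(-y)$ is the sought minimizer for $\lambda_1(\beta,1)$, and the bound $m_\beta\leq\lambda_1(\beta,1)\leq \pi^2/4$ transfers directly from that lemma (with $m_\beta := m_{-\beta}$). Part (2) follows from Lemma 11 of \cite{LYZ}: for fixed $-c<-1$, $\eta\mapsto\lambda_1(\eta,-c)$ is smooth and decreasing on $[0,\infty)$, so $\beta\mapsto\lambda_1(-\beta,-c)=\lambda_1(\beta,c)$ is smooth and increasing on $(-\infty,0]$. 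Part (3) is the analogous statement at the boundary $c=1$, obtained by invoking Corollary \ref{lambda-decreasing} and Lemma \ref{lambda-continuous} in place of Lemma 11 of \cite{LYZ}. For (4), when $\beta=0$ the problem reduces to the Dirichlet Laplacian on $(-1,1)$ whose principal eigenvalue equals $\pi^2/4$; for the limit, Lemma \ref{mono-beta}(2) gives $\lim_{\eta\to\infty}\lambda_1(\eta,-c) = -\infty$, which via the symmetry becomes $\lim_{\beta\to-\infty}\lambda_1(\beta,c)=-\infty$.

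Finally, for (5), monotonicity of $\lambda_1(\beta,\cdot)$ on $[1,\infty)$ comes from applying Lemma \ref{mono-beta}(1) to $-c$: if $1\leq c_1<c_2$, then $-c_2<-c_1\leq -1$, so $\lambda_1(-\beta,-c_2)>\lambda_1(-\beta,-c_1)$, i.e.\ $\lambda_1(\beta,c_2)>\lambda_1(\beta,c_1)$. For the one-sided continuity at $c=1$, the symmetry converts $c\to 1^+$ into $-c\to -1^-$, so that
\begin{align*}
\lim_{c\to 1^+}\lambda_1(\beta,c) = \lim_{(-c)\to -1^-}\lambda_1(-\beta,-c) = \lambda_1(-\beta,-1) = \lambda_1(\beta,1),
\end{align*}
by Lemma \ref{c less than -1 lim}. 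Continuity at interior points $c>1$ follows from smoothness of the regular eigenvalue problem \eqref{lambda1betac-1-R-K} in its coefficients (as used implicitly via Lemma 11 of \cite{LYZ}). The only mildly delicate step is (5)'s one-sided continuity at $c=1$, which is precisely why Lemma \ref{c less than -1 lim} was proved above as the analogous statement at $c=-1$; every other item is essentially a one-line transfer via the reflection identity $\lambda_1(\beta,c)=\lambda_1(-\beta,-c)$.
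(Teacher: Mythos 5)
Your proposal is correct, but it takes a different route from the paper's. The paper literally omits the proof of this lemma, stating only that it is ``similar as above'' --- i.e.\ the intended argument is to rerun, with signs flipped, the proofs of Lemma \ref{eigenfunction}, Corollary \ref{lambda-decreasing}, Lemma \ref{lambda-continuous}, Lemma \ref{mono-beta} and Lemma \ref{c less than -1 lim} for the potential $-\beta/(y-c)$ with $\beta\leq 0$, $c\geq 1$. You instead prove the reflection identity $\lambda_1(\beta,c)=\lambda_1(-\beta,-c)$ once and transfer every item. This identity is exactly the paper's Lemma \ref{symmetry}, which the authors only state \emph{after} the present lemma and whose stated proof uses minimizers on both sides (hence implicitly part (1) here); your version avoids any circularity because you establish the identity purely as a bijection $\phi\mapsto\phi(-\cdot)$ of the admissible set, with no attainment needed, and attainment for $(\beta,1)$ then follows from attainment for $(-\beta,-1)$ in Lemma \ref{eigenfunction}. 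The payoff of your route is economy and the guarantee that the two halves of the theory are genuinely mirror images (which the paper also needs later for $\beta_*^-=-\beta_*$); the payoff of the paper's route is that it never has to reorder the logical dependence on the symmetry lemma. One small point worth tightening: for the interior continuity in (5) you appeal to ``smoothness of the regular eigenvalue problem in its coefficients''; it would be cleaner to cite the same source the paper uses for the mirror statement (Lemma 11 and Corollary 1 of \cite{LYZ}, via the reflection), but this is a presentational matter, not a gap.
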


By Lemma \ref{eigenfunction-neg} (2)-(4),  there exists a unique $\beta(c)<0$ such that $\lambda_1(\beta(c),c)=0$ for $c\geq 1$.  Define
\begin{align*}
\beta_{*}^-=\sup_{c\in[1,+\infty)}\beta(c).
\end{align*}
Then $\beta_{*}^-$ has the following analog  properties as $\beta_*$, whose proof is similar to Lemma \ref{lem beta(-1)} and Proposition \ref{larger than less than beta*+}.
\begin{Proposition}\label{larger than less than beta*-}
$(1)$
$\beta_{*}^-=\beta(1)\in (-\infty,0)$.

$(2)$ Let $\beta<\beta_{*}^-$. Then there exists $c_\beta>1$ such that $\lambda_1(\beta,c_\beta)=0$ and $\{\lambda_1(\beta,c)|c\in(1,c_\beta)\}=(\lambda_1(\beta,1),0)$.

$(3)$ Let $\beta_*^-\leq\beta<0$. Then  $\lambda_1(\beta,c)\geq0$ for $c\geq 1$.
\end{Proposition}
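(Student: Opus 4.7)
The plan is to mirror the proofs of Lemma \ref{lem beta(-1)} and Proposition \ref{larger than less than beta*+}, exchanging the half-line $c\in(-\infty,-1]$ for $c\in[1,\infty)$ and using the monotonicity/continuity properties collected in Lemma \ref{eigenfunction-neg} in place of Lemma \ref{mono-beta} and Lemma \ref{c less than -1 lim}. The symmetry $y\mapsto -y$ together with $(\beta,c)\mapsto(-\beta,-c)$ can also be invoked to transfer the positive-$\beta$ statements, but a direct transcription is cleaner.

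For (1), I would first confirm that $\beta(1)$ is well-defined in $(-\infty,0)$: by Lemma \ref{eigenfunction-neg}(3)--(4), $\lambda_1(\cdot,1)$ is continuous and increasing on $(-\infty,0]$ with $\lambda_1(0,1)=\pi^2/4>0$ and $\lim_{\beta\to-\infty}\lambda_1(\beta,1)=-\infty$, so there is a unique root. To upgrade this to $\beta_*^-=\beta(1)$, I would show $\beta(1)>\beta(c)$ for every $c>1$. Using Lemma \ref{eigenfunction-neg}(5),
\begin{equation*}
\lambda_1(\beta(1),c)>\lambda_1(\beta(1),1)=0=\lambda_1(\beta(c),c),
\end{equation*}
and then the strict monotonicity of $\lambda_1(\cdot,c)$ on $(-\infty,0]$ from Lemma \ref{eigenfunction-neg}(2) forces $\beta(1)>\beta(c)$. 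Taking the supremum over $c\in[1,\infty)$ gives $\beta_*^-=\beta(1)$.

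For (2), given $\beta<\beta_*^-=\beta(1)$, strict monotonicity of $\lambda_1(\cdot,1)$ yields $\lambda_1(\beta,1)<\lambda_1(\beta(1),1)=0$. By Lemma \ref{eigenfunction-neg}(5), $\lambda_1(\beta,\cdot)$ is continuous and strictly increasing on $[1,\infty)$ with $\lim_{c\to 1^+}\lambda_1(\beta,c)=\lambda_1(\beta,1)<0$, while $\lim_{c\to\infty}\lambda_1(\beta,c)=\pi^2/4>0$ since the potential $-\beta/(y-c)$ vanishes uniformly in $y\in(-1,1)$ as $c\to\infty$. The intermediate value theorem then produces a unique $c_\beta>1$ with $\lambda_1(\beta,c_\beta)=0$, and strict monotonicity identifies the image of $(1,c_\beta)$ as $(\lambda_1(\beta,1),0)$. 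For (3), when $\beta_*^-\leq\beta<0$, the monotonicity of $\lambda_1(\cdot,1)$ gives $\lambda_1(\beta,1)\geq \lambda_1(\beta_*^-,1)=0$, and then the monotonicity in $c$ from Lemma \ref{eigenfunction-neg}(5) yields $\lambda_1(\beta,c)\geq \lambda_1(\beta,1)\geq 0$ for all $c\geq 1$.

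All analytical work has been absorbed into Lemma \ref{eigenfunction-neg}; the only potential sticking point is verifying $\lim_{c\to\infty}\lambda_1(\beta,c)=\pi^2/4$, but this is immediate from uniform convergence of the potential to $0$ and the variational definition, comparing with the Dirichlet Laplacian eigenvalue on $(-1,1)$. Consequently, no new estimate is required beyond what is already established, and the proof is a direct transcription of the positive-$\beta$ arguments.
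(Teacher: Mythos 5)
Your proposal is correct and is essentially the proof the paper intends: the paper explicitly omits the argument, stating only that it is "similar to Lemma \ref{lem beta(-1)} and Proposition \ref{larger than less than beta*+}," and your write-up is precisely that mirrored argument, with Lemma \ref{eigenfunction-neg} supplying the needed monotonicity, continuity, and limiting properties on the half-line $c\geq 1$. The one step you flag — $\lim_{c\to\infty}\lambda_1(\beta,c)=\pi^2/4$ — is likewise taken for granted in the paper's positive-$\beta$ proof and follows as you say from the variational characterization.
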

%The proof is similar to Proposition \ref{larger than less than beta*+}.
%

Symmetry of the principal eigenvalue  $\lambda_1(\beta,c)$ could be  deduced by that  of Couette flow.

\begin{lemma}\label{symmetry}
 For $\beta\geq 0$ and $c\leq -1$, we have
\beno
\lambda_1(\beta,c)=\lambda_1(-\beta,-c).
\eeno
\end{lemma}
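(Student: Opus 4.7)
The plan is to exploit the reflection symmetry $y \mapsto -y$ of the domain $(-1,1)$, which maps Couette flow $u(y)=y$ to $-y$. Under this map the Rayleigh--Kuo variational problem for $(\beta,c)$ transforms into the one for $(-\beta,-c)$.

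More concretely, first I would show $\lambda_1(-\beta,-c)\le \lambda_1(\beta,c)$. Use Lemma \ref{eigenfunction} (or its approximation version) to pick a normalized minimizer $\phi_{\beta,c}\in H_0^1(-1,1)$ of the functional defining $\lambda_1(\beta,c)$, and set $\tilde\phi(y):=\phi_{\beta,c}(-y)$. Clearly $\tilde\phi\in H_0^1(-1,1)$ with $\|\tilde\phi\|_{L^2(-1,1)}=1$, so it is admissible in the variational problem \eqref{lambda-beta-neg}--type definition for $\lambda_1(-\beta,-c)$. Performing the change of variables $z=-y$ yields $\int_{-1}^1|\tilde\phi'(y)|^2\,dy=\int_{-1}^1|\phi_{\beta,c}'(z)|^2\,dz$, and
\begin{align*}
\int_{-1}^1\frac{-\beta}{y-(-c)}|\tilde\phi(y)|^2\,dy
=\int_{-1}^1\frac{-\beta}{-z+c}|\phi_{\beta,c}(z)|^2\,dz
=\int_{-1}^1\frac{\beta}{z-c}|\phi_{\beta,c}(z)|^2\,dz.
\end{align*}
Combining these, the Rayleigh quotient of $\tilde\phi$ for $(-\beta,-c)$ equals exactly that of $\phi_{\beta,c}$ for $(\beta,c)$, namely $\lambda_1(\beta,c)$. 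Taking the infimum gives the desired inequality.

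The reverse inequality $\lambda_1(\beta,c)\le \lambda_1(-\beta,-c)$ follows by the symmetric construction: pick a minimizer $\psi$ of the functional defining $\lambda_1(-\beta,-c)$ (available via Lemma \ref{eigenfunction-neg}~(1)), set $\tilde\psi(y)=\psi(-y)$, and repeat the same change of variables. The involution $y\mapsto -y$ being its own inverse makes the two directions completely parallel.

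The only subtlety to watch is bookkeeping of signs in the potential term, together with checking that the relevant minimizers exist in each regime: for $c=-1$ one uses Lemma \ref{eigenfunction}, and for $-c=1$ one uses Lemma \ref{eigenfunction-neg}~(1); for $c<-1$ the problem \eqref{lambda1betac-1} is regular so minimizers exist by the standard direct method. No singularity issue arises because when $\beta\ge 0$ and $c\le -1$ the factor $\frac{\beta}{y-c}$ is uniformly bounded for $c<-1$ and controlled near the boundary (as in the proof of Lemma \ref{eigenfunction}) for $c=-1$; the same applies after reflection on the $(-\beta,-c)$ side. There is no real obstacle here---the argument is a clean reflection symmetry---so the main task is simply to track the signs carefully through the change of variables.
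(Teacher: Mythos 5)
Your proposal is correct and is essentially identical to the paper's own argument: reflect a normalized minimizer via $\tilde\phi_0(y)=\phi_0(-y)$, change variables $z=-y$ to show the Rayleigh quotient for $(\beta,c)$ equals that for $(-\beta,-c)$, and conclude both inequalities by symmetry of the involution. The sign bookkeeping in your potential-term computation matches the paper's.
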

\begin{proof}
Let $\phi_0\in H_0^1(-1,1)$ be an eigenfunction of $\lambda_1(\beta,c)$ with $\|\phi_0\|_{L^2(-1,1)}=1$, and define $\tilde{\phi}_0(y)=\phi_0(-y)$ for $y\in [-1,1]$. Then
\beno
\lambda_1(\beta,c)
&=&\int_{-1}^1\left(|\phi_0'(y)|^2-{\beta\over y-c}|\phi_0(y)|^2\right)dy\\
&=&\int_{-1}^1\left(|\phi_0'(-z)|^2-{\beta\over -z-c}|\phi_0(-z)|^2\right)dz\\
&=&\int_{-1}^1\left(|\tilde\phi_0'(z)|^2-{-\beta\over z-(-c)}|\tilde\phi_0(z)|^2\right)dz\\
&\geq&\inf_{\phi\in H_0^1(-1,1),\|\phi\|_{L^2(-1,1)}=1}\int_{-1}^1\left(|\phi'|^2-{-\beta\over y-(-c)}|\phi|^2\right)dy\\
&=&\lambda_1(-\beta,-c).
\eeno
Similarly, we have $\lambda_1(-\beta,-c)\geq \lambda_1(\beta,c)$.
\end{proof}

\begin{lemma}
$\beta_*^-=-\beta_{*}$.
\end{lemma}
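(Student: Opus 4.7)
The plan is to read off the identity directly from the symmetry relation in Lemma \ref{symmetry} combined with the characterizations of $\beta_*$ and $\beta_*^-$ as principal-eigenvalue zeros of the one-sided singular problems.

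First, I would recall that by Lemma \ref{lem beta(-1)}, $\beta_*=\beta(-1)$, so $\lambda_1(\beta_*,-1)=0$, and by Proposition \ref{larger than less than beta*-} (1), $\beta_*^-=\beta(1)$, so $\lambda_1(\beta_*^-,1)=0$. Applying Lemma \ref{symmetry} with $\beta=\beta_*\geq 0$ and $c=-1$ gives
\begin{equation*}
0=\lambda_1(\beta_*,-1)=\lambda_1(-\beta_*,1).
\end{equation*}
Thus $-\beta_*\leq 0$ is a value at which $\lambda_1(\cdot,1)$ vanishes.

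Next, I would invoke uniqueness. By Lemma \ref{eigenfunction-neg} (3), $\lambda_1(\cdot,1)$ is continuous and strictly increasing on $(-\infty,0]$; together with $\lambda_1(0,1)=\pi^2/4>0$ and the fact (from the standard comparison $\lambda_1(\beta,1)\leq \lambda_1(\beta,c)$ for $c>1$ and Lemma \ref{eigenfunction-neg} (4)) that $\lambda_1(\beta,1)\to -\infty$ as $\beta\to -\infty$, there is exactly one $\beta<0$ at which $\lambda_1(\cdot,1)$ equals $0$. Since $-\beta_*$ is such a value and $\beta_*^-=\beta(1)$ is the unique one, we conclude $\beta_*^-=-\beta_*$.

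No serious obstacle is anticipated: the symmetry lemma does all the work, and strict monotonicity of $\lambda_1(\cdot,1)$ on the negative $\beta$-axis supplies uniqueness. The only thing to be careful about is matching the signs in the symmetry statement (which is phrased for $\beta\geq 0$, $c\leq -1$) with the target identity phrased in the opposite regime, which is immediate by taking $\beta=\beta_*$, $c=-1$.
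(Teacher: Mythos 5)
Your proposal is correct and follows essentially the same route as the paper: apply the symmetry relation $\lambda_1(\beta,c)=\lambda_1(-\beta,-c)$ at $(\beta_*,-1)$ to see that $-\beta_*$ is a zero of $\lambda_1(\cdot,1)$, then use the monotonicity of $\lambda_1(\cdot,1)$ on $(-\infty,0]$ to identify it with the unique zero $\beta(1)=\beta_*^-$. Your write-up just makes the uniqueness step more explicit than the paper's terse version.
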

\begin{proof}
Note that $\lambda_1(\beta(1),1)=0=\lambda_1(\beta(-1),-1)$. By  Corollary \ref{lambda-decreasing} and Lemma \ref{eigenfunction-neg} (3), we infer from Lemma \ref{symmetry} that $-\beta(1)=\beta(-1)$. Thus, $\beta_*^-=\beta(1)=-\beta(-1)=-\beta_*$ by Proposition \ref{larger than less than beta*-} (1) and Lemma \ref{lem beta(-1)}.
\end{proof}

Combining Propositions \ref{larger than less than beta*+} and \ref{larger than less than beta*-} (2)-(3), we get the following result.
\begin{Proposition}\label{larger than less than beta*}
$(1)$ Let $|\beta|>\beta_*$. Then there exists $|c_\beta|>1$ such that $\beta c_\beta<0$, $\lambda_1(\beta,c_\beta)=0$ and $\{\lambda_1(\beta,c)||c|\in(1,|c_\beta|),\beta c<0\}=(\lambda_1(|\beta|,-1),0)$.

$(2)$ Let $0<|\beta|\leq\beta_*$. Then  $\lambda_1(\beta,c)\geq0$ for $|c|\geq 1$.
\end{Proposition}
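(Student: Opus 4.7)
The plan is to reduce both assertions to the one-sided results already proved (Propositions \ref{larger than less than beta*+} and \ref{larger than less than beta*-}) by splitting into four cases according to the signs of $\beta$ and $c$. The symmetry relation $\lambda_1(\beta,c)=\lambda_1(-\beta,-c)$ from Lemma \ref{symmetry} together with the identity $\beta_*^-=-\beta_*$ will let me transfer the ``$\beta\geq 0$, $c\leq-1$'' statements to their ``$\beta\leq 0$, $c\geq 1$'' counterparts, and a trivial sign argument handles the remaining two cases.

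For part (1), assume $|\beta|>\beta_*$. If $\beta>\beta_*$, Proposition \ref{larger than less than beta*+} (1) produces $c_\beta<-1$ with $\lambda_1(\beta,c_\beta)=0$ and $\{\lambda_1(\beta,c)\mid c\in(c_\beta,-1)\}=(\lambda_1(\beta,-1),0)$; here $\beta c_\beta<0$, $|c_\beta|>1$, and for every $c$ in $(c_\beta,-1)$ one has $|c|\in(1,|c_\beta|)$ and $\beta c<0$, so since $|\beta|=\beta$ the set equals $(\lambda_1(|\beta|,-1),0)$. If $\beta<-\beta_*$, I apply Proposition \ref{larger than less than beta*-} (2) to get $c_\beta>1$ with $\lambda_1(\beta,c_\beta)=0$ and $\{\lambda_1(\beta,c)\mid c\in(1,c_\beta)\}=(\lambda_1(\beta,1),0)$, then invoke Lemma \ref{symmetry} in the form $\lambda_1(\beta,1)=\lambda_1(-\beta,-1)=\lambda_1(|\beta|,-1)$ to match the stated range.

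For part (2), assume $0<|\beta|\leq\beta_*$, and fix $c$ with $|c|\geq 1$. The nontrivial subcases are $\beta>0$, $c\leq-1$ (covered by Proposition \ref{larger than less than beta*+} (2)) and $\beta<0$, $c\geq 1$ (covered by Proposition \ref{larger than less than beta*-} (3), using $\beta_*^-=-\beta_*$). In the remaining two subcases ($\beta>0$ with $c\geq 1$, and $\beta<0$ with $c\leq-1$) one has $\beta c>0$, hence $-\beta/(y-c)\geq 0$ for all $y\in[-1,1]$, and consequently
\begin{align*}
\lambda_1(\beta,c)\;\geq\;\inf_{\phi\in H_0^1(-1,1),\,\|\phi\|_{L^2}=1}\int_{-1}^1|\phi'|^2\,dy\;=\;\frac{\pi^2}{4}\;>\;0.
\end{align*}

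No real obstacle is expected here, since every case reduces to an already-established fact or to the elementary positivity of the potential. The only point requiring a little care is making sure the symmetry reflection $y\mapsto-y$ correctly sends the interval $\{\lambda_1(\beta,c)\mid c\in(1,c_\beta)\}$ in part (1) onto the interval stated in terms of $\lambda_1(|\beta|,-1)$; this is where Lemma \ref{symmetry} is used and where the definition $\beta_*^-=-\beta_*$ must be invoked explicitly.
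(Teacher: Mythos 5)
Your proposal is correct and follows essentially the same route as the paper: the paper likewise obtains the result by combining Propositions \ref{larger than less than beta*+} and \ref{larger than less than beta*-} (together with the symmetry $\lambda_1(\beta,1)=\lambda_1(-\beta,-1)$ and $\beta_*^-=-\beta_*$), and its written proof consists only of the remaining case $\beta c>0$ in (2), disposed of exactly as you do by the positivity of the potential $-\beta/(y-c)$ in \eqref{lambda1betac-1}.
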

\begin{proof}
 It suffices to prove for the case $\beta c > 0$ in (2).   Since $-{\beta \over y- c} > 0$ for $y \in(-1, 1)$,
the conclusion then follows from \eqref{lambda1betac-1}.
\end{proof}

\section{Contrasting dynamics near Couette flow  for $0<|\beta|\leq\beta_*$ and $|\beta|>\beta_*$ }

In this section, we prove  Theorem \ref{thm-non-shear-nodelta2}, which gives contrasting dynamics near Couette flow between
$(\alpha,\beta)\in O$ and $(\alpha,\beta)\in I_{\pm}$, see Figure 1.

\if0
\begin{remark}\label{rem1.5} (1)
Let $|\beta|<\delta_0$ for some $\delta_0>0$. For any $T>0$, $s>3$,  there exists $\varepsilon_0>0$ such that any traveling solution $(u(x-ct,y),v(x-ct,y))$ $c\in \mathbb{R}$ to $\beta-$plane equation (\ref{eq})-(\ref{bc}) with $x-$period $T$ and satisfying that
\beno
\|\omega-1\|_{H^s_{(0,T)\times(-1,1)}}<\varepsilon_0,
\eeno
must have $v(x,y)\equiv 0$, that is, $(u,v)$ is necessarily a shear flow.

(2)
Fixed any $T>0$, $s>\frac{3}{2}$, $|\beta|<C_s$, there exists $\varepsilon_0>0$ such that any traveling solution $(u(x-ct,y),v(x-ct,y))$ $c\in \mathbb{R}$ to $\beta-$plane equation (\ref{eq})-(\ref{bc}) with $x-$period $T$ and satisfying that
\beno
\|\omega-1\|_{H^s_{(0,T)\times(-1,1)}}<\varepsilon_0,
\eeno
must have $v(x,y)\equiv 0$, that is, $(u,v)$ is necessarily a shear flow.
\end{remark}
\fi

%When we only consider $c=0$, we have the following corollary for steady state solution, which can also be proved in the same way.
%%%%%%%%%%%%%%%%%%%%%%%%%%%%

\begin{proof}[Proof of Theorem \ref{thm-non-shear-nodelta2}.] First, we give the proof of  (1) and (2ii) for $\beta>0$, and the proof for  $\beta<0$ is similar.
Suppose otherwise, there exist $\{\varepsilon_n\}_{n=1}^\infty$,  {$c_n\in\mathbb{R}$ and traveling wave solutions $(u_n(x-c_n t,y), v_n(x-c_n t,y))$} to the $\beta$-plane equation (\ref{eq})-(\ref{bc})  such that $\varepsilon_n\to0$, $(u_n,v_n)$ is $T$-periodic in $x$,
$
\|(u_n,v_n)-(y,0)\|_{H^{s}{(D_T)}}\leq \varepsilon_n
$
and $\|v_n\|_{L^2(D_T)}\neq0$. Then $(u_n,v_n)$ solves \eqref{un-vn-eq}.
$s\geq5$ implies
(\ref{un-y-C3-nd})-(\ref{un-xy-bound-nd}) holds
%\begin{align}\label{un-y-C3}
%\|u_n-y\|_{C^3{(D_T)}}+\|v_n\|_{C^3{(D_T)}}\leq& C \|(u_n,v_n)-(y,0)\|_{H^{s}{(D_T)}}\leq C \varepsilon_n,\\\label{omegan-C2}
%\|\omega_n+1\|_{C^2{(D_T)}}\leq C\|\omega_n+1\|_{H^4{(D_T)}}\leq& C \|(u_n,v_n)-(y,0)\|_{H^{s}{(D_T)}}\leq C \varepsilon_n,
%\end{align}
%\begin{align}\label{omega-n-Lp}
%\|\pa_{xxy}\omega_n\|_{L^4{(D_T)}}+\|\pa_{xyy}\omega_n\|_{L^4{(D_T)}}+\|\pa_{y}^3\omega_n\|_{L^4{(D_T)}}\leq& C\|%\omega_n+1\|_{H^4{(D_T)}}\leq C \varepsilon_n.
%\end{align}
%Then
%\ben \label{un-omegan-C bound}
%\|u_n\|_{C^3{(D_T)}}\leq C,\quad \|\omega_n\|_{C^2{(D_T)}}\leq C.
%\een
%Moreover,
%\ben \label{un-xy-bound}
%{1}/{2}<\pa_y u_n(x,y)<{3}/{2},\quad(x,y)\in D_T
%\een
for $n$ sufficiently large.  Let $\tilde v_n=v_n/\|v_n\|_{L^2(D_T)}$.   By
\eqref{un-vn-eq} and the fact that $\partial_x\omega_n=\Delta v_n$, we have
\ben\label{tilde-vn-eq-nd}
\Delta \tilde v_n+(\pa_y\omega_n+\beta){\tilde v_n\over u_n{-c_n}}=0.
\een
Then up to a subsequence, $c_n\to c_0\in\mathbb{R}\cup\{\pm\infty\}$, and we divide the proof into four cases in terms of $c_0$.\\
{\bf Case 1.} $|c_0|=\infty$.

Up to a subsequence, we can assume that $|c_n|>M_n+2$ with $0<M_n\to \infty$. By \eqref{un-y-C3-nd}-\eqref{omegan-C2-nd}, we have $\|u_n\|_{C^0(D_T)}<\frac{3}{2}$ and $\|\pa_y\omega_n\|_{C^0(D_T)}<1$ for $n$ large enough, and thus,
\beno
\|\Delta \tilde{v}_n\|_{L^2(D_T)}\leq \frac{|\beta|+1}{M_n}\|\tilde{v}_n\|_{L^2(D_T)}\to 0.
\eeno
However, by  periodic
boundary condition in $x$ and Dirichlet boundary condition in $y$ of $\tilde v_n$, we have
\beno%\label{tilde vn-H2 bound-nd}
1\leq \| \tilde v_n\|_{H^2(D_T)}\leq C\|\Delta \tilde v_n\|_{L^2(D_T)},
\eeno
which is a contradiction.\\
{\bf Case 2.}  $|c_0|>1$.

By Lemma 2.5 in \cite{LWZZ} and $|c_0|>1$, $c_0 \in \cup_{k\geq 1}(\sigma_d(\mathcal{R}_{k\alpha,\beta}) \cap \mathbb{R})$. Thus, $\lambda_1(\beta,c_0)=-(k_0\alpha)^2<0$ for some $k_0\geq 1$, where $\lambda_1(\beta,c_0)$ is the  principal eigenvalue of the regular Rayleigh-Kuo BVP \eqref{lambda1betac-1-R-K} with $c=c_0$.

For (1), by Proposition \ref{larger than less than beta*} (2), we have $\lambda_1(\beta,c_0)\geq0$ for $0<\beta\leq\beta_*$, which is a contradiction.

For (2ii), since $k_0\alpha={2k_0\pi\over T}>\alpha_\beta=\sqrt{-\lambda_1(\beta,-1)}$, we have $\lambda_1(\beta,c_0)=-(k_0\alpha)^2<\lambda_1(\beta,-1)$.
If $c_0>1$, then $\lambda_1(\beta,c_0)\geq0$ by \eqref{lambda1betac-1}, which contradicts $\lambda_1(\beta,c_0)<0$. If
$c_0<-1$, then $\lambda_1(\beta,c_0)>\lambda_1(\beta,-1)$ by Lemma \ref{mono-beta} (1), which contradicts $\lambda_1(\beta,c_0)<\lambda_1(\beta,-1)$.
\\
{\bf Case 3.}  $c_0=\pm 1$.

We only give the proof  for $c_0=-1$ and the other is similar.
By (\ref{un-xy-bound-nd}), $u_n(x,\cdot)$ is increasing on $[-1,1]$  for $n$ large enough and $x\in[0,T]$. Divide $[0,T]$ into three subsets
\beno
P_n=\{x| c_n\leq u_n(x,-1)\},\quad Q_n=\{x| c_n\geq u_n(x,1)\},\quad
S_n=\{x|u_n(x,-1)<c_n<u_n(x,1)\}.
\eeno
For $c_0=-1$, we have $Q_n=\emptyset$ for $n$ sufficiently large. For fixed $n$ and $x\in[0,T]$,
let $d_n(x)$ be the point closest to $c_n$ in $\text{Ran}\,(u_n(x,\cdot))$,   i.e.
\beno
d_n(x)=\left\{\begin{array}{lr}
u_n(x,-1), & x\in P_n,\\
u_n(x,1),& x\in Q_n,\\
c_n, &x\in S_n.
\end{array}
 \right.
\eeno
Then there exists a unique $z_n(x)\in [-1,1]$ such that $u_n(x,z_n(x))=d_n(x)$ for $x\in [0,T]$. For $x\in S_n$, we have $u_n(x,z_n(x))=d_n(x)=c_n$, and   it follows from \eqref{un-vn-eq} and \eqref{omegan-C2-nd} that $\tilde{v}_n(x,z_n(x))=0$. For $x\in P_n$, $z_n(x)=-1$ and by the non-permeable boundary condition, we also have $\tilde{v}_n(x,z_n(x))=0$. Thus,  $\tilde{v}_n(x,z_n(x))=0$ for $x\in [0,T]$. Moreover,  $|u_n(x,y)-c_n|\geq |u_n(x,y)-d_n(x)|$ for $(x,y)\in D_T$.

Now we can prove the uniform $H^2$ bound for $\tilde v_n$, $n\geq 1$. By  \eqref{un-xy-bound-nd}, $\left|{y-z_n(x)\over u_n(x,y){-d_n(x)}}\right|=\left|{y-z_n(x)\over u_n(x,y)-u_n(x,z_n(x))}\right|\leq 2$ for $(x,y)\in D_T$ and $n$ large enough. Thus, $\left\|{y-z_n(x)\over u_n{-d_n(x)}}\right\|_{L^\infty(D_T)}\leq C$. This, along with  \eqref{tilde-vn-eq-nd}, \eqref{un-omegan-C bound-nd} and Lemma  \ref{Hardy type inequality2}, gives
\ben\nonumber
\|\Delta \tilde v_n\|_{L^2(D_T)}&\leq& \left\|(\pa_y\omega_n+\beta){\tilde v_n\over u_n{-c_n}}\right\|_{L^2(D_T)}\leq C \left\|{\tilde v_n\over u_n{-c_n}}\right\|_{L^2(D_T)}\leq C \left\|{\tilde v_n\over u_n{-d_n(x)}}\right\|_{L^2(D_T)}
\\\nonumber
&\leq& C \left\|{\tilde v_n\over y-z_n(x)}\right\|_{L^2(D_T)}\left\|{y-z_n(x)\over u_n{-d_n(x)}}\right\|_{L^\infty(D_T)}\\\nonumber
&\leq& C\left\|{\tilde v_n\over y-z_n(x)}\right\|_{L^2(D_T)}\leq C\left(\int_0^T\left\|{\tilde v_n(x,\cdot)}\right\|_{H^1(-1,1)}^2dx\right)^{1\over2}\\\label{tilde vn-over-un-L2-nd}
&\leq&C\left\|{\tilde v_n}\right\|_{H^1(D_T)}\leq C\left\|{\tilde v_n}\right\|_{H^2(D_T)}^{1/2}\left\|{\tilde v_n}\right\|_{L^2(D_T)}^{1/2}=C\left\|{\tilde v_n}\right\|_{H^2(D_T)}^{1/2}.
\een
Thus, by  periodic
boundary condition in $x$ and Dirichlet boundary condition in $y$ of $\tilde v_n$, we have
\ben\label{tilde vn-H2 bound-nd}
\| \tilde v_n\|_{H^2(D_T)}\leq C\|\Delta \tilde v_n\|_{L^2(D_T)}\leq  C\left\|{\tilde v_n}\right\|_{H^2(D_T)}^{1/2}\Longrightarrow\| \tilde v_n\|_{H^2(D_T)}\leq C.
\een
Then there exists $\tilde{v}_0\in H^2(D_T)$ such that up to a subsequence, $\tilde{v}_n\rightharpoonup \tilde{v}_0$ in $H^2(D_T)$, $\tilde{v}_n\to \tilde{v}_0$ in $H^1(D_T)$ and $C^0(D_T)$.
By (\ref{tilde-vn-eq-nd}),  we have
\ben\label{virtical-vel-weak-n}
\int_{D_T} \left(-\nabla \tilde{v}_n\cdot \nabla\phi+(\pa_y \omega_n+\beta)\frac{\tilde{v}_n\phi}{u_n-c_n}\right)dxdy=0
\een
for any $ \phi\in H^1(D_T)$ satisfying periodic
boundary condition in $x$ and Dirichlet boundary condition in $y$.
We prove that
\ben\label{tildev_0-eq}
\int_{D_T} \left(-\nabla \tilde{v}_0\cdot \nabla\phi+\beta\frac{\tilde{v}_0\phi}{y+1}\right)dxdy=0.
\een
By (\ref{omegan-C2-nd}), \eqref{tilde vn-over-un-L2-nd} and \eqref{tilde vn-H2 bound-nd}, we have
\begin{align}\nonumber
\bigg|\int_{D_T} \pa_y \omega_n \frac{\tilde{v}_n\phi}{u_n-c_n}dxdy \bigg|&\leq \|\pa_y \omega_n\|_{C^0(D_T)} \bigg\|\frac{\tilde{v}_n}{u_n-c_n}\bigg\| _{L^2(D_T)}\|\phi\|_{L^2(D_T)}\\\label{sec-term-n-inf}
&\leq C\varepsilon_n\|\tilde v_n\|_{H^1(D_T)}\|\phi\|_{L^2(D_T)}\to 0.
\end{align}
Moreover,
\begin{align}\nonumber
&\bigg|\beta\int_{D_T} \bigg(\frac{\tilde{v}_n}{u_n-c_n}-\frac{\tilde{v}_0}{y+1}\bigg)\phi dxdy \bigg|\\ \label{I+II-nd}
\leq &\bigg|\beta\int_{D_T} \bigg(\frac{\tilde{v}_n}{u_n-c_n}-\frac{\tilde{v}_n}{y+1}\bigg)\phi dxdy \bigg|+\bigg|\beta\int_{D_T} \bigg(\frac{\tilde{v}_n}{y+1}-\frac{\tilde{v}_0}{y+1}\bigg)\phi dxdy \bigg|=I+II.
\end{align}
For $II$, since $\phi(x,\pm1)=0$ for $x\in[0,T]$, by Lemma \ref{Hardy type inequality2} we have
\ben
II\leq |\beta| \bigg|\int_{D_T}(\tilde{v}_n-\tilde{v}_0) \frac{\phi}{y+1}dxdy\bigg|\leq C\|\tilde{v}_n-\tilde{v}_0\|_{C^0(D_T)} \|\phi\|_{H^1(D_T)}\to 0.
\een
For $I$, we decompose it into two parts:
\ben\label{I bound}
I&\leq&|\beta|\bigg|\int_{S_n}\int_{-1}^1 \bigg(\frac{\tilde{v}_n}{u_n-c_n}(y+1)-\tilde{v}_n\bigg)\frac{\phi}{y+1} dydx \bigg|\\  \nonumber
&&+|\beta|\bigg|\int_{P_n}\int_{-1}^1 (y+1)\frac{y+1-(u_n-c_n)}{u_n-c_n}\frac{\tilde{v}_n}{y+1}\frac{\phi}{y+1} dydx \bigg|=I_1+I_2.
\een
For $x\in S_n$, we have $u_n(x,z_n(x))=c_n$, and thus,
\beno
\frac{\tilde{v}_n}{u_n-c_n}(y+1)
%&=&\frac{\int_0^1\pa_y \tilde{v}_n(x,s_n)dt}{\int_0^1\pa_y u_n(x,s_n)dt}(y-y_n(x)+y_n(x)+1)\\
%&=&\frac{\tilde{v}_n(x,y)}{\int_0^1\pa_y u_n(x,s_n)dt}+\frac{\int_0^1\pa_y \tilde{v}_n(x,s_n)dt}{\int_0^1\pa_y u_n(x,s_n)dt}(y_n(x)+1),
=\frac{\tilde{v}_n}{u_n-c_n}(y-z_n(x)+z_n(x)+1)=\frac{\tilde{v}_n}{\int_0^1 \pa_yu_n(x,\tau_n)dt}+\frac{\tilde{v}_n}{u_n-c_n}(z_n(x)+1),
\eeno
where $\tau_n=ty+(1-t)z_n(x)$.
By \eqref{un-y-C3-nd}, we have
\begin{align*}
\left\|\frac{1}{\int_0^1 \pa_y u_n(x,\tau_n)dt}-1\right\|_{L^\infty(D_T)}\leq C\|\partial_yu_n-1\|_{C^0(D_T)}\leq C\varepsilon_n\to0,
\end{align*}
and
\begin{align*}%\label{yn(x)-0uniform-nd}
|z_n(x)+1|\leq&|z_n(x)-c_n|+|c_n+1|\leq |z_n(x)-u_n(x,{z_n(x)})|+|c_n+1| \\ \nonumber
\leq&\|u_n-y\|_{C^0(D_T)}+|c_n+1|\leq C\varepsilon_n+|c_n+1|\to 0
\end{align*}
for $x\in[0,T]$.
So by \eqref{tilde vn-over-un-L2-nd}-\eqref{tilde vn-H2 bound-nd}, Lemma \ref{Hardy type inequality2} and the fact that $\phi(x,\pm1)=0$ for $x\in[0,T]$,
\begin{align}
I_1\leq&|\beta|\bigg|\int_{S_n}\int_{-1}^1 \bigg(\frac{\tilde{v}_n}{\int_0^1 \pa_yu_n(x,\tau_n)dt}-\tilde{v}_n\bigg)\frac{\phi}{y+1} dydx\bigg|  \nonumber
\\&+|\beta|\bigg|\int_{S_n}\int_{-1}^1 \frac{\tilde{v}_n}{u_n-c_n}(z_n(x)+1)\frac{\phi}{y+1} dydx \bigg|  \nonumber\\
\leq&C\varepsilon_n\bigg|\int_{D_T} |\tilde{v}_n|\frac{|\phi|}{y+1} dydx\bigg|+C(C\varepsilon_n+|c_n+1|)  \bigg|\int_{D_T} \frac{|\tilde{v}_n|}{|u_n-c_n|}\frac{|\phi|}{y+1} dydx \bigg|\nonumber\\
\leq& C\varepsilon_n\|\tilde{v}_n\|_{L^2(D_T)}\|\phi\|_{H^1(D_T)}+C(C\varepsilon_n+|c_n+1|)\|\tilde{v}_n\|_{H^1(D_T)}\|\phi\|_{H^1(D_T)}\to 0.
\end{align}
By \eqref{un-xy-bound-nd},  we have $|u_n(x,y)-c_n|\geq |u_n(x,y)-u_n(x,-1)|\geq {1\over2}(y+1)$ for $x\in P_n$ and $y\in(-1,1)$, and thus,
\begin{align*}
\bigg|(y+1)\frac{y+1-(u_n(x,y)-c_n)}{u_n(x,y)-c_n}\bigg|\leq &2|y+1-(u_n(x,y)-c_n)|\\
\leq& 2\|u_n-y\|_{C^0(D_T)}+|c_n+1|\to0.
\end{align*}
Then by  the fact that $\tilde v_n(x,\pm1)=0$, $\phi(x,\pm1)=0$ for $x\in[0,T]$,
Lemma \ref{Hardy type inequality2} and \eqref{tilde vn-H2 bound-nd}, we have
\ben\label{I2-bound-nd}
I_2\leq C(2\|u_n-y\|_{C^0(D_T)}+|c_n+1|)\|\tilde{v}_n\|_{H^1(D_T)}\|\phi\|_{H^1(D_T)}\to 0.
\een
Taking (\ref{I+II-nd})-(\ref{I2-bound-nd}) into account, we have
\ben\label{last-term-inf}
\bigg|\beta\int_{D_T} \bigg(\frac{\tilde{v}_n}{u_n-c_n}-\frac{\tilde{v}_0}{y+1}\bigg)\phi dxdy \bigg|\to 0.
\een
By  \eqref{sec-term-n-inf}, \eqref{last-term-inf} and the fact that $\tilde{v}_n\to \tilde{v}_0$ in $H^1(D_T)$, we  obtain \eqref{tildev_0-eq} by sending $n\to \infty$ in \eqref{virtical-vel-weak-n}.

Since  $\tilde v_n=-\partial_x\psi_n/\|v_n\|_{L^2(D_T)}$ is $T$-periodic in $x$, we have
 $\tilde v_0(x,y)=\sum_{k\neq0}\widehat{\tilde v}_{{0},k}(y)e^{ik\alpha x}$. It follows from $\|\tilde v_0\|_{L^2(D_T)}=1$ that  there exists $k_0\neq 0$ such that $\widehat{\tilde v}_{{0},k_0}\not\equiv 0$, where $\alpha={2\pi\over T}$.
 Notice that for $j=0,1,2$,
\beno
\|\widehat{\tilde v}_{0,k_0}^{(j)}\|^2_{L^2(-1,1)}=\int_{-1}^1 \bigg|{1\over T}\int_0^T  \pa_y^j \tilde v_{0}(x,y) e^{-ik_0\alpha x} dx\bigg|^2dy
\leq C\int_{-1}^1\int_0^T|\pa_y^j \tilde v_{0}(x,y)|^2dxdy <\infty
\eeno
and $\widehat{\tilde v}_{{0},k_0}(\pm 1)={1\over T}\int_0^T\tilde v_{0}(x,\pm 1) e^{-ik_0\alpha x} dx= 0$, Thus, we can take the test function $\phi(x,y)=\widehat{\tilde v}_{{0},k_0}(y) e^{ik_0\alpha x}$ in (\ref{tildev_0-eq}), and use the fact that $\widehat{\tilde v}_{{0},k_0}=\overline{\widehat{\tilde v}_{{0},-k_0}}$ and $\int_0^T e^{i(k-k_0)\alpha x} dx=0$ for $k\neq k_0$ to obtain
% \beno
%\int_{-1}^1\int_{0}^T \sum_{k\in\mathbf{Z}}[\pa_y^2-(k\alpha)^2]\widehat{\tilde v}_{{0},k}(y)e^{ik\alpha x}\phi(x,y)+\frac{\beta}{y+1}\sum_{k\in\mathbf{Z}}\widehat{\tilde v}_{{0},k}(y)e^{ik\alpha x}\phi(x,y) dydx=0
% \eeno
%Take $\phi(x,y)=e^{-in\alpha x}\phi_n(y)$, $\phi_n\in H^1(-1,1)$, $\phi_n(\pm 1)=0$, $n\in \mathbb{Z}$
 \beno
\int_{-1}^{1}\left( -|\pa_y\widehat{\tilde v}_{{0},k_0}(y)|^2-(k_0\alpha)^2|\widehat{\tilde v}_{{0},k_0}(y)|^2+\frac{\beta}{y+1}|\widehat{\tilde v}_{{0},k_0}(y)|^2\right) dy=0.
\eeno
Thus,
\ben\label{til-v-k0}
\int_{-1}^{1} \left(|\pa_y\widehat{\tilde v}_{{0},k_0}(y)|^2-\frac{\beta}{y+1}|\widehat{\tilde v}_{{0},k_0}(y)|^2 \right) dy=-\int_{-1}^{1}(k_0\alpha)^2|\widehat{\tilde v}_{{0},k_0}(y)|^2 dy<0.
\een

For (1),
 since $0<\beta<\beta_*=\beta(-1)$, by Corollary \ref{lambda-decreasing} we have $\lambda_1(\beta,-1)>\lambda_1(\beta(-1),-1)=0$, and thus,
\beno
\inf_{\phi\in H_0^1(-1,1),\|\phi\|_{L^2(-1,1)}=1} \int_{-1}^1\left(|\phi'|^2-\frac{\beta}{y+1}|\phi|^2\right)dy>0,
\eeno
 which is a contradiction.

 For (2ii), by \eqref{til-v-k0} we have $\lambda_1(\beta,-1)\leq -(k_0\alpha)^2 <-\alpha_\beta^2=\lambda_1(\beta,-1)$, which is a contradiction.
 \\
{\bf Case 4.}  $|c_0|<1$.

Let $\delta_0=\frac{1-|c_0|}{2}>0$. Then $[c_0-\delta_0,c_0+\delta_0]\subset [-1+\delta_0,1-\delta_0]$.   By Theorem \ref{thm-non-shear},  there exists $\varepsilon_{\delta_0}>0$ such that
any traveling wave solution $(u(x-ct,y),v(x-ct,y))$ to the $\beta$-plane equation with $c\in[-1+\delta_0,1-\delta_0]$, $x$-period $T$ and satisfying that
$
\|(u,v)-(y,0)\|_{H^{s}{(D_T)}}<\varepsilon_{\delta_0},
$
must have $v(x,y)\equiv 0$.
Since $c_n\to c_0$ and $\varepsilon_n\to0$, we have  $c_n\in [c_0-\delta_0,c_0+\delta_0]$
and
$\|(u_n,v_n)-(y,0)\|_{H^{s}{(D_T)}}\leq\varepsilon_n<\varepsilon_{\delta_0}$ for $n$ large enough. Then  $v_n\equiv 0$ on $D_T$ for $n$ large enough, which contradicts  $\|v_n\|_{L^2(D_T)}\neq0$.

Finally, we prove (2i). We only prove it for $\beta>\beta_*$, and the proof for $\beta<-\beta_*$ is similar.
We divide the discussion in terms of $\alpha$.
\\{\bf Case I.} $\alpha={2\pi\over T}=\sqrt{-\lambda_1(\beta,-1)}$.

We first modify Couette flow to the nearby shear flow $(ay,0)$ with $a\in(0,1)$, and then  construct traveling waves  by bifurcation at $(ay,0)$.

For the shear flow $(ay,0)$, let $\tilde \lambda_1(\beta,c)$ be the principal
eigenvalue of the Rayleigh-Kuo BVP
\begin{align}\label{lambda1betac-1-R-K-ay}
-\phi''-{\beta\over ay-c}\phi=\lambda\phi,\quad\phi(\pm1)=0,
\end{align}
where $\phi\in H_0^1\cap H^2(-1,1)$ and $c\leq -1<-a$. Recall that $ \lambda_1(\beta,c)$ is  the principal
eigenvalue of the Rayleigh-Kuo BVP \eqref{lambda1betac-1-R-K} for Couette  flow. Since
$
-\phi''-{\beta\over ay-c}\phi=-\phi''-{\beta/a\over y-c/a}\phi,
$
we have
\begin{align}\label{ay-prin eigen-y}
\tilde \lambda_1(\beta,c)= \lambda_1\left({\beta\over a},{c\over a}\right)
\end{align}
for $c\leq -a$. Since $a\in(0,1)$, we infer from Corollary \ref{lambda-decreasing} that $0>-\alpha^2=\lambda_1(\beta,-1)>\lambda_1\left({\beta\over a},-1\right)$. It follows from Proposition \ref{larger than less than beta*+} (1) and Lemma \ref{mono-beta} (1) that there exists a unique $c_a\in(-\infty,-1)$ such that
$-\alpha^2=\lambda_1\left({\beta\over a},c_a\right)$. This, along with Lemma \ref{c less than -1 lim}, implies that $c_a\to-1$ as $a\to1^-$. By \eqref{ay-prin eigen-y}, we have $-\alpha^2=\lambda_1\left({\beta\over a},c_a\right)=\tilde \lambda_1\left({\beta},ac_a\right)$. Note that  $ac_a<-a$, and thus, $ac_a\notin \text{Ran} \, (ay)= [-a,a]$. This implies that  $ac_a\in
\sigma_d(\mathcal{R}_{\alpha,\beta})$ with $u(y)=ay$ in \eqref{linearized Euler operator}.

Let $s\geq3$. For any $\varepsilon>0$, there exists $a=a_\varepsilon\in(0,1)$ such that
\begin{align}\label{shear flow perturbation u+tau-varepsilon-u-1-case 2a-r}
\|(a_\varepsilon y,0)-(y,0)\|_{H^s(D_T)}\leq C|a_\varepsilon-1|< {\varepsilon\over2}\quad \text{and} \quad |a_\varepsilon c_{a_\varepsilon}+1|<{\varepsilon\over2}.
\end{align}
By Corollary 2.4 in \cite{LWZZ} and $a_\varepsilon c_{a_\varepsilon}\in
\sigma_d(\mathcal{R}_{\alpha,\beta})$ with $u(y)=a_\varepsilon y$ in \eqref{linearized Euler operator},  there exists a traveling wave solution $(u_\varepsilon(x-c_\varepsilon t,y),v_\varepsilon(x-c_\varepsilon t,y))$ to
\eqref{eq}-\eqref{bc} which has
 period $T={2\pi}/{\alpha}$ in $x$,
\begin{align}\label{traveling wave to u+tau-varepsilon-u-1-case 2a-r}
 \|(u_\varepsilon,v_\varepsilon)-(a_\varepsilon y,0)\|_{H^s(D_T)} \leq{\varepsilon\over2} \quad \text{and} \quad | c_{\varepsilon}-a_\varepsilon c_{a_\varepsilon}|\leq{\varepsilon\over2},
\end{align}
$u_{\varepsilon}\left(  x,y\right)-c_\varepsilon  \neq0$ and $\|v_\varepsilon\|_{L^2\left(  D_T\right)}\neq0$. Then by \eqref{shear flow perturbation u+tau-varepsilon-u-1-case 2a-r}-\eqref{traveling wave to u+tau-varepsilon-u-1-case 2a-r}, we have
$
 \|(u_\varepsilon,v_\varepsilon)-(y,0)\|_{H^s(D_T)}$ $<{\varepsilon}$ {and}  $|c_{\varepsilon}+1|<{\varepsilon}.
$
\\
{\bf Case II.} $\alpha={2\pi\over T}\in(0,\sqrt{-\lambda_1(\beta,-1)})$.

By Proposition \ref{larger than less than beta*+} (1),  there exists $c_{\alpha,\beta}<-1$ such that $\lambda_1(\beta,c_{\alpha,\beta})=-\alpha^2$.
Thus, $c_{\alpha,\beta}\in\sigma_d(\mathcal{R}_{\alpha,\beta})$. Let $s\geq3$.
Then the conclusion  follows from Corollary 2.4 in \cite{LWZZ}.
 \end{proof}

%%%%%%%%%%%%%%%%%%%%%

\section{ Existence of non-shear steady state in $H^{<{5\over2}}$ }

In this section, we prove the existence of non-shear steady states near Couette flow in (velocity) $H^{<{5\over2}}$ space.
The method is to construct non-shear steady states by bifurcation at modified shear flows near Couette. First, we give a bifurcation lemma for $\beta\neq0$. Note that the bifurcation lemma for $\beta=0$ in \cite{LZ} can not be applied or extended  to the case $\beta\neq0$, since the extension cause a singularity at the middle point $y=0$ for the Rayleigh-Kuo BVP, which is difficult to deal with. In the following bifurcation lemma, we require that $u''-\beta=0$ near $0$, and the price is that a similar  construction of modified shear flow in \cite{LZ} cannot be applied here, since the Gauss error function is an odd function. We introduce the new modified shear flow latter.  Another difference is to deal with the degeneracy of the Rayleigh-Kuo BVP.

\begin{lemma}
\label{lem-bifurcation} Consider a shear flow $u\in C^{4}\left( [ -1,1]\right)
$,  $u(0)=0$, $u'>0$ on $[-1,1]$,
 $\beta\in\mathbb{R}$ and $-u'(y)+\beta y\equiv K$ on $[-\delta_0,\delta_0]$ for some $\delta_0\in(0,1)$ and $K\in\mathbb{R}$. Define the Rayleigh-Kuo  operator
 \begin{align*}
 \mathcal{L}\phi:=-\phi''+{u''-\beta\over u}\phi,\;H^2\cap H_0^1(-1,1)\to L^2(-1,1).
 \end{align*}
If the principal eigenvalue of  $\mathcal{L}$ satisfies that  $\mu_1=-\alpha_0^2<0$, then there exists $\varepsilon_0>0$ such that for each $0<\varepsilon<\varepsilon_0$, there exists a steady solution $(u_\varepsilon(x,y),v_\varepsilon(x,y))$ to the $\beta$-plane equation with minimal period $T_\varepsilon$ in $x$ such that
\begin{align*}
\|(u_\varepsilon,v_\varepsilon)-(u,0)\|_{H^{3}{(D_{T_\varepsilon})}}<\varepsilon,
 \end{align*}
 and $v_{\varepsilon}\not\equiv0$, where $D_{T_\varepsilon}=[0,T_\varepsilon]\times[-1,1]$.
 Furthermore,  $T_\varepsilon\to {2\pi\over \alpha_0}$ as $\varepsilon\to0^+$.
\end{lemma}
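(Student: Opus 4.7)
My plan is to reformulate the steady $\beta$-plane system as a semilinear elliptic equation for the perturbation of the stream function and then apply Crandall--Rabinowitz bifurcation from a simple eigenvalue, using the wave number $\alpha=2\pi/T$ as the bifurcation parameter. Let $\Psi_0(y)=\int_0^y u(s)\,ds$; writing the total stream function as $\Psi=\Psi_0+\psi$, the steady identity $\{\Psi,\omega+\beta y\}=0$ together with the flatness hypothesis allows one to build a $C^3$ profile $F$ on the range of $\Psi_0$ satisfying $F(\Psi_0(y))=-u'(y)+\beta y$ and $F\equiv K$ on a neighborhood of the critical value $\Psi_0(0)=0$. The perturbation then satisfies
\begin{equation}\label{plan-bif}
-\Delta\psi=F(\Psi_0+\psi)-F(\Psi_0)=:\mathcal{N}(y,\psi),
\end{equation}
with $x$-periodicity on $[0,T]$ and Dirichlet conditions at $y=\pm 1$.

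After rescaling $x\mapsto\alpha x$ so that the domain is fixed at $[0,2\pi]\times[-1,1]$, I would cast \eqref{plan-bif} as $\mathcal{F}(\alpha,\psi)=0$ with $X$ the closed subspace of $H^4$ consisting of $x$-even, zero-$x$-average functions vanishing at $y=\pm 1$ (evenness and zero average together kill the translation and constant-mode degeneracies) and $Y=H^2$ with the same constraints. The Fr\'echet derivative $D_\psi\mathcal{F}(\alpha,0)=-\Delta-F'(\Psi_0)$ diagonalises in Fourier modes $e^{ikx}$ as $\mathcal{L}+k^2\alpha^2$. The potential $F'(\Psi_0(y))=(\beta-u'')/u$ is bounded on $[-1,1]$ because its numerator vanishes identically on $[-\delta_0,\delta_0]$, so $\mathcal{L}$ is a regular Sturm--Liouville operator with simple, increasing eigenvalues $\mu_1<\mu_2<\cdots$. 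At $\alpha=\alpha_0$, the hypothesis $\mu_1=-\alpha_0^2$ furnishes a one-dimensional kernel $\mathrm{span}\{\phi_1(y)\cos(\alpha_0 x)\}$; modes $|k|\geq 2$ yield $\mu_j+k^2\alpha_0^2\geq\mu_1+4\alpha_0^2=3\alpha_0^2>0$ and contribute no kernel. Transversality follows from $\tfrac{d}{d\alpha}(\mu_1+\alpha^2)|_{\alpha=\alpha_0}=2\alpha_0\neq 0$.

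Crandall--Rabinowitz then produces a $C^1$ branch $\varepsilon\mapsto(\alpha_\varepsilon,\psi_\varepsilon)$ of nontrivial solutions with $\psi_\varepsilon=\varepsilon\phi_1(y)\cos(\alpha_\varepsilon x)+o(\varepsilon)$ in $H^4$ and $\alpha_\varepsilon\to\alpha_0$, hence $T_\varepsilon=2\pi/\alpha_\varepsilon\to 2\pi/\alpha_0$. Undoing the rescaling and setting $(u_\varepsilon,v_\varepsilon)=(u+\partial_y\psi_\varepsilon,-\partial_x\psi_\varepsilon)$ gives the desired steady solution; the $H^3(D_{T_\varepsilon})$ bound follows from continuity of the branch in $H^4$ at $\varepsilon=0$, and since the dominant Fourier component is $k=\pm 1$, $v_\varepsilon\not\equiv 0$ and $T_\varepsilon$ is the minimal $x$-period.

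The main obstacle is the construction and Sobolev-smoothness of the profile $F$ near the critical value $\Psi_0(0)=0$: because $\Psi_0'(0)=u(0)=0$, the inverse $\Psi_0^{-1}$ is non-smooth at $0$ and $F$ would a priori inherit a singularity that precludes any meaningful bifurcation analysis in Sobolev spaces. The flatness hypothesis $-u'+\beta y\equiv K$ on $[-\delta_0,\delta_0]$ is exactly what removes this obstruction: it forces $-u'(y)+\beta y$ to be the constant $K$ on a full neighborhood of $y=0$, so one may declare $F\equiv K$ on a neighborhood of $0$ in $\Psi$-space (trivially $C^\infty$ there) and extend by $F(s)=-u'(\Psi_0^{-1}(s))+\beta\Psi_0^{-1}(s)$ on each monotone branch of $\Psi_0$, where $u$ is uniformly bounded away from $0$. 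The same mechanism makes the linearized potential $(\beta-u'')/u$ globally bounded on $[-1,1]$, so $\mathcal{L}$ is a genuine Sturm--Liouville operator to which the spectral hypothesis $\mu_1=-\alpha_0^2$ applies, and standard composition estimates in $H^4$ yield the $C^2$-regularity of the Nemytskii map $\psi\mapsto\mathcal{N}(\cdot,\psi):X\to Y$ required for Crandall--Rabinowitz to close.
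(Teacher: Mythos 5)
Your setup --- the two-branch construction of the profile $F$ using the flatness hypothesis, the rescaling in $x$, the identification of the linearization with $\mathcal{L}+k^{2}\alpha^{2}$ on the $k$-th Fourier mode, and the transversality check --- reproduces the paper's argument in the generic case. The gap is in how you dispose of the $k=0$ modes. Since $\mu_{1}=-\alpha_{0}^{2}<0$ and $\mu_{n}\to+\infty$, nothing in the hypotheses prevents $0\in\sigma(\mathcal{L})$, i.e.\ $\mu_{n_{0}}=0$ for some $n_{0}\geq2$; in that case the kernel of $D_{\psi}\mathcal{F}(\alpha_{0},0)$ on the natural space of $x$-even functions is two-dimensional, spanned by $\phi_{1}(y)\cos\xi$ and the $x$-independent function $\phi_{n_{0}}(y)$. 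You propose to kill the second kernel element by restricting both $X$ and $Y$ to zero-$x$-average functions, but the Nemytskii map $\psi\mapsto F(\Psi_{0}+\psi)-F(\Psi_{0})$ does not preserve this constraint: already its quadratic part $\tfrac{1}{2}F''(\Psi_{0})\psi^{2}$ has nonvanishing $x$-average for $\psi=\varepsilon\phi_{1}(y)\cos\xi$, so $\mathcal{F}$ does not map $X$ into $Y$. If instead you compose with the projection onto nonzero modes, you drop the $k=0$ component of the steady equation and the bifurcating functions no longer solve the $\beta$-plane equation; recovering that dropped component by an implicit-function argument requires inverting $\mathcal{L}$ on the $k=0$ sector, i.e.\ precisely $0\notin\sigma(\mathcal{L})$, which is the assumption you were trying to avoid.

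The paper treats this degenerate case by a separate argument (Case 2 of its proof): it perturbs the shear flow to $u+\nu u_{1}+\tau u_{2}$, where $u_{1},u_{2}$ solve $u_{i}''u-(u''-\beta)u_{i}=\zeta_{i}$ for right-hand sides supported away from $y=0$ (so the flatness hypothesis is preserved), and uses the resulting formulas for $\partial_{\nu}\mu_{n}$ and $\partial_{\tau}\mu_{n}$ to move $\mu_{n_{0}}$ off zero while steering $\mu_{1}$ back to exactly $-\alpha_{0}^{2}$; one then runs the nondegenerate argument at the modified flow, which is $H^{3}$-close to $(u,0)$. Your proof needs either this additional step or an explicit extra hypothesis $0\notin\sigma(\mathcal{L})$, which the lemma as stated does not grant. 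The remaining ingredients of your sketch (the $C^{2}$ regularity of the Nemytskii map via the flatness of $F$ near the critical value $\Psi_{0}(0)$, the exclusion of $|k|\geq2$ kernel modes, minimality of the period) are sound and coincide with the paper's.
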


\begin{proof}
$(u,v) $ is a solution of \eqref{eq}-\eqref{bc} if
and only if
$\partial_x\omega\partial_y\psi-\partial_x\psi(\partial_y\omega+\beta)=0$
and $\psi$ takes constant values on $\left\{  y=\pm1\right\}$, where
$\omega=\partial_xv-\partial_yu$ and $(u,v)=(\partial_y\psi,-\partial_x\psi)$. Let $\mu_n$ be the $n$-th eigenvalue of $\mathcal{L}$ for $n\geq1$.
The proof is divided into two cases.\\
{\bf Case 1.} $\mu_n\neq0$ for $n\geq2$.

 Let $\psi_{0}$  be a stream function associated
with the shear flow $\left(  u,0\right)  $, i.e., $\psi_{0}^{\prime
}\left(  y\right)  =u\left(  y\right)$. Since $u<0$ on $[-1,0)$ and $u>0$ on $(0,1]$, $\psi_{0}$
is decreasing on $[-1,0]$ and increasing on $[0,1]$.
 Let $\psi_{0+}^{-1}$ and $\psi_{0-}^{-1}$ be the inverse maps of  $\psi_{0}|_{[0,1]}$ and $\psi_{0}|_{[-1,0]}$, respectively. Then we can define two functions $\tilde f_{-}=(-\psi_0''
+\beta (\cdot))\circ\psi_{0-}^{-1}$ on
$[\psi_0(0),\psi_{0}(-1)]$ and  $\tilde f_{+}=(-\psi_0''
+\beta (\cdot))\circ\psi_{0+}^{-1}$ on
$[\psi_0(0),\psi_{0}(1)]$. Then $\tilde f_{-}\equiv K$ on
$[\psi_0(0),\psi_{0}(-\delta_0)]$ and  $\tilde f_{+}\equiv K$ on
$[\psi_0(0),\psi_{0}(\delta_0)]$. Clearly, $\tilde f_{-}\in C^3([\psi_0(-\delta_0/2),\psi_{0}(-1)])$ and $\tilde f_{+}\in C^3([\psi_0(\delta_0/2),\psi_{0}(1)])$. Thus, $\tilde f_{-}\in C^3([\psi_0(0),\psi_{0}(-1)])$   and  $\tilde f_{+}\in C^3([\psi_0(0),\psi_{0}(1)])$.
Then we extend $\tilde f_{+}, \tilde f_{-}$ to $f_+, f_-$ such
that $f_+, f_-\in C_{0}^{3}\left(  \mathbb{R}\right)$, $f_{-}=\tilde f_{-}$ on $[\psi_0(0),\psi_{0}(-1)]$, $f_{-}\equiv K$ on $[\psi_0(0)-\delta_1,\psi_{0}(0)]$, $f_{+}=\tilde f_{+}$ on $[\psi_0(0),\psi_{0}(1)]$, and $f_{+}\equiv K$ on $[\psi_0(0)-\delta_1,\psi_{0}(0)]$ for a given $\delta_1\in(0,{1\over2}\left(\min\{\psi_0(-\delta_0),\psi_0(\delta_0)\}-\psi_0(0)\right))$.
 Then  we construct steady solutions near
$\left(  u,0\right)  $ by solving the elliptic equation
\[
-\Delta\psi+\beta y=\mathbf{1}_{[-1,0)}(y)f_-(\psi(x,y)) +\mathbf{1}_{[0,1]}(y)f_+(\psi(x,y))=\left\{
\begin{array}
[c]{cc}%
f_-(\psi(x,y)) & \text{if }y<0,\\
f_+(\psi(x,y)) & \text{if }y\geq0.
\end{array}
\right.
\]
 Let
$\xi=\alpha x$ and $\psi\left(  x,y\right)  =\tilde{\psi}\left(  \xi,y\right)  ,$
where $\tilde{\psi}\left(  \xi,y\right)  $ is $2\pi$-periodic in $\xi.$ We use
$\alpha^{2}$ as the bifurcation parameter. The equation for $\tilde{\psi
}\left(  \xi,y\right)  $ becomes
\begin{equation}
-\alpha^{2}\frac{\partial^{2}\tilde{\psi}}{\partial\xi^{2}}-\frac{\partial
^{2}\tilde{\psi}}{\partial y^{2}}+\beta y-\left(\mathbf{1}_{[-1,0)}(y)f_-(\tilde\psi(\xi,y)) +\mathbf{1}_{[0,1]}(y)f_+(\tilde\psi(\xi,y))\right)=0
\label{eqn-psi-tilde}%
\end{equation}
with the boundary conditions that $\tilde{\psi}$ takes constant values on
$\left\{  y=\pm1\right\}  $.
Define a map
 \[\mathscr {F}:
  H^{4}(\mathbb{T}_{{2\pi}}\times\lbrack-1,1])\rightarrow H^{2}(\mathbb{T}_{{2\pi}}\times\lbrack-1,1])\]
  by
  \[\mathscr {F}(\psi)(x,y)=\mathbf{1}_{[-1,0)}(y)f_-(\psi(x,y)) +\mathbf{1}_{[0,1]}(y)f_+(\psi(x,y))=\left\{
\begin{array}
[c]{cc}%
f_-(\psi(x,y)) & \text{if }y<0,\\
f_+(\psi(x,y)) & \text{if }y\geq0.
\end{array}
\right.\]
Then $\mathscr {F}\in C^2(V)$, where $V=\{\psi\in H^4(\mathbb{T}_{{2\pi}}\times\lbrack-1,1])|\|\psi-\psi_0\|_{H^4}\leq \delta_2\}$ for $\delta_2>0$ sufficiently small. In fact, since
$|\psi(x,0)-\psi_0(0)|\leq \|\psi-\psi_0\|_{C^0(\mathbb{T}_{2\pi}\times[-1,1])}\leq C\|\psi-\psi_0\|_{H^4(\mathbb{T}_{2\pi}\times[-1,1])}\leq C\delta_2$ for $x\in \mathbb{T}_{2\pi}$ and $\psi\in V$, we have $\psi(x,0)\in\left(\psi_0(0)-{\delta_1\over2},\psi_0(0)+{\delta_1\over2}\right)$ by taking $\delta_2\in\left(0,{\delta_1\over 2C}\right)$. Thus,
 $f_-(s)=f_+(s)=K$ for $s\in\left(\psi(x,0)-{\delta_1\over2},\psi(x,0)+{\delta_1\over2}\right)$. Since $f_+, f_-\in C_{0}^{3}\left(  \mathbb{R}\right)$, we have $\mathscr {F}(\psi)\in C^2(\mathbb{T}_{{2\pi}}\times[-1,1])$ and $\mathscr {F}\in C^2(V)$. Note that
\begin{align}\label{steady-equ}
-\psi_0''(y)+\beta y=&\mathscr {F}(\psi_0)(y),\\\nonumber
\mathscr {F}'(\psi_0)(y)=& \mathbf{1}_{[-1,0)}(y)f'_{-}(\psi_0(y))+
\mathbf{1}_{[0,1]}(y)f'_{+}(\psi_0(y))=-{u''(y)-\beta\over u(y)}.
\end{align}
Define the perturbation of the stream function
by
\[
\phi\left(  \xi,y\right)  =\tilde{\psi}\left(  \xi,y\right)  -\psi_{0}\left(
y\right)  .
\]
Then we reduce the equation \eqref{eqn-psi-tilde}-\eqref{steady-equ} to
\begin{equation*}
-\alpha^{2}\frac{\partial^{2}\phi}{\partial\xi^{2}}-\frac{\partial^{2}\phi
}{\partial y^{2}}-(\mathscr {F}(\phi+\psi_0)-\mathscr {F}(\psi_0))
=0.
\end{equation*}
Define the spaces%
\begin{align*}
B= \{\phi(\xi,y)\in H^{4}(\mathbb{T}_{2\pi}\times\lbrack-1,1]):\text{ }\phi
(\xi,\pm1)=0\,  \text{ and even in }\xi\}
\end{align*}
and
\[
C=\left\{  \phi(\xi,y)\in H^{2}(\mathbb{T}_{2\pi}\times\lbrack-1,1]):\text{ }%
\text{ even in }\xi\right\}  .
\]
Consider the mapping
\[
F\ :B\times\mathbb{R}^{+}\rightarrow C
\]
defined by
\[
F(\phi,\alpha^{2})=-\alpha^{2}\frac{\partial^{2}\phi}{\partial\xi^{2}}%
-\frac{\partial^{2}\phi}{\partial y^{2}}-(\mathscr {F}(\phi+\psi_0)-\mathscr {F}(\psi_0)).
\]
We study the bifurcation near the trivial solution $\phi=0$ of the equation
$F(\phi,\alpha^{2})=0$ in $B$. First,  $F\in C^2(V\times\mathbb{R}^{+})$. By \eqref{steady-equ}, the linearized operator of $F$
around$\ \left(  0,\alpha_{0}^{2}\right)  $ has the form
\begin{align*}
\mathcal{G}:=F_{\phi}(0,\alpha_{0}^{2})=-\alpha_{0}^{2}\frac{\partial^{2}}{\partial\xi^{2}}-\frac{\partial^{2}%
}{\partial y^{2}}+{u''-\beta\over u}.\end{align*}
Since  $\mu_1=-\alpha_{0}^{2}<0$ is  the principal eigenvalue of  the operator $\mathcal{L}$ and $\mu_n\neq0$ for $n\geq2$,   the kernel of $\mathcal{G}:$
$\ B\rightarrow C\ $is given by
\[
\ker(\mathcal{G})=\left\{  \phi_{0}(y)\cos\xi\right\},
\]
where $\phi_0$ is an eigenfunction corresponding to  $\mu_1$.
Thus, $\dim(\ker({\mathcal{G}}))=1$. Since
{$\mathcal{G}$} is self-adjoint, $\phi_{0}(y)\cos\xi\not \in \text{Ran}\,($%
{$\mathcal{G}$}$)$. Note that $\partial_{\alpha^{2}}\partial_{\phi}%
F(\phi,\alpha^{2})$ is continuous and
\[
\partial_{\alpha^{2}}\partial_{\phi}F(0,\alpha_{0}^{2})\left(  \phi_{0}%
(y)\cos\xi\right)  =-\frac{\partial^{2}}{\partial\xi^{2}}\left[  \phi
_{0}(y)\cos\xi\right]  =\phi_{0}(y)\cos\xi\not \in \text{Ran}\,({\mathcal{G}}).
\]
By the Crandall-Rabinowitz local bifurcation theorem, there exists
a local bifurcating curve $\left(  \phi_{\kappa},\alpha(\kappa)^{2}\right),\kappa\in[-\kappa_0,\kappa_0], $
of the equation $F(\phi,\alpha^{2})=0$, which intersects the trivial curve $\left(
0,\alpha^{2}\right)  $ at $\alpha^{2}=\alpha_{0}^{2}$, such that
\[
\phi_{\kappa}(\xi,y)=\kappa\phi_{0}(y)\cos\xi+o(|\kappa|),
\]
$\alpha(\kappa)^{2}$ is a continuous function of $\kappa$, and $\alpha(0)^{2}=\alpha_{0}^{2}$. So the stream functions of the perturbed steady flows
in $(\xi,y)\ $coordinates take the form
\begin{equation*}
\psi_{(\kappa)}(\xi,y)=\psi_{0}\left(  y\right)  +\kappa\phi_{0}(y)\cos
\xi+o(|\kappa|).
\end{equation*}
Let the velocity $\vec{u}_{(\kappa)}=\left(  u_{(\kappa)},v_{(\kappa)}\right)
=\left(  \partial_{y}\psi_{(\kappa)},-\partial_{x}\psi_{(\kappa)}\right)  $. Then
\begin{align*}
u_{(\kappa)}(x,y)&=u\left(  y\right)+\kappa\phi_{0}^{\prime}(y)\cos(\alpha x)+o(|\kappa|),\\
v_{(\kappa)}(x,y)&=-\alpha\kappa\phi_{0}(y)\sin
(\alpha x)+o(|\kappa|)\not\equiv0
\end{align*}
for $\kappa\in[-\kappa_0,\kappa_0]$.
Then $ \|(u_{(\kappa)},v_{(\kappa)})-(u,0)\|_{H^3([0,{2\pi\over\alpha(\kappa)}]\times[-1,1])} \leq
C_0\kappa$ for some constant $C_0>0$. Then we can choose $\kappa_0$ smaller and $\varepsilon_0=C_0\kappa_0>0$ such that for $\varepsilon\in(0,\varepsilon_0)$, $(u_{\varepsilon},v_{\varepsilon}) :=(u_{(\kappa)},v_{(\kappa)})|_{\kappa=\varepsilon/C_0}$ satisfies that $\|(u_{\varepsilon},v_{\varepsilon})-(u,0)\|_{H^3(D_{T_\varepsilon})} \leq
\varepsilon$ and $\|v_{\varepsilon}\|_{L^2(D_{T_\varepsilon})}\neq0$, where $D_{T_{\varepsilon}}=[0,{2\pi\over\alpha(\varepsilon/C_0)}]\times[-1,1]$ and $T_\varepsilon={2\pi\over\alpha(\varepsilon/C_0)}\to{2\pi\over\alpha_0}$ as $\varepsilon\to0^+$.\\
{\bf Case 2.}
There exists $n_0\geq2$ such that
$\mu_{n_0}=0$.

Choose $\zeta\in C^4([-1,1])$ such that $\zeta\equiv0$ on $[-\delta_0,\delta_0]$ and $\zeta<0$ on $[-1,-\delta_0)\cup(\delta_0,1]$. Let $u_1$ solve the regular ODE
$u_1'' u-(u''-\beta)u_1=\zeta$ on $[-1,1]$. Then $u_1\in C^4([-1,1])$ and $u_1''\equiv0$ on $[-\delta_0,\delta_0]$. Thus, $u''+\nu u_1''-\beta\equiv0$ on $[-\delta_0,\delta_0]$ for $\nu\in\mathbb{R}$. Let $\mu_n(\nu)$ be the $n$-th eigenvalue of the regular BVP
\begin{align*}
\mathcal{L}_{(\nu)}\phi=-{d^2\over dy^2}\phi+{u''+\nu u_1''-\beta\over u+\nu u_1}\phi=\mu\phi,\quad \phi\in H^2\cap H_0^1(-1,1)
\end{align*}
for  $\nu\in[-\nu_0,\nu_0]$, where $n\geq1$, and $\nu_0>0$ satisfies that there exists $\delta_3>0$ such that   $|u(y)+\nu u_1(y)|\geq\delta_3$   for  $y\in[-1,-\delta_0]\cup[\delta_0,1]$ and $\nu\in[-\nu_0,\nu_0]$.  Then \begin{align*}
\mu_{n}'(0)
=\int_{-1}^{1}{u_1''u-(u''-\beta)u_1\over u^2}\phi_{n}^2dy=\int_{-1}^{1}{\zeta\over u^2}\phi_{n}^2dy<0,
\end{align*}
where $\phi_{n}$ is a real $L^2$ normalized eigenfunction of $\mu_{n}(0)\in\sigma(\mathcal{L}_{(0)})$. Then $\mu_1(\cdot)$ and $\mu_{n_0}(\cdot)$ are decreasing on $[-\nu_0,\nu_0]$ if we take $\nu_0>0$ smaller.

Note that $\phi_1$ is linearly independent of $\phi_{n_0}$. Choose $\xi_1,\xi_2\in C^4([-1,1])$ such that $\xi_1,\xi_2$ are supported on $[-1,-\delta_0]\cup[\delta_0,1]$ and
\begin{align*}
\int_{-1}^1{\xi_1\over u^2}\phi_1^2dy\int_{-1}^1{\xi_2\over u^2}\phi_{n_0}^2dy-\int_{-1}^1{\xi_2\over u^2}\phi_1^2dy\int_{-1}^1{\xi_1\over u^2}\phi_{n_0}^2dy\neq0.
\end{align*}
 Then there exist $k_1,k_2\in\mathbb{R}$, which are not both zero, such that $\xi=k_1\xi_1+k_2\xi_2$ satisfies
\begin{align*}
\int_{-1}^1{\xi\over u^2}\phi_1^2dy=1\quad\text{and}\quad\int_{-1}^1{\xi\over u^2}\phi_{n_0}^2dy=-1.
\end{align*}
Let $u_2$ solve the regular ODE
$u_2'' u-(u''-\beta)u_2=\xi$ on $[-1,1]$. Then $u_2\in C^4([-1,1])$ and $u_2''\equiv0$ on $[-\delta_0,\delta_0]$. Thus, $u''+\nu u_1''+\tau u_2''-\beta\equiv0$ on $[-\delta_0,\delta_0]$. Let $\mu_n(\nu,\tau)$ be the $n$-th eigenvalue of the regular BVP
\begin{align*}
\mathcal{L}_{(\nu,\tau)}\phi=-{d^2\over dy^2}\phi+{u''+\nu u_1''+\tau u_2''-\beta\over u+\nu u_1+\tau u_2}\phi=\mu\phi, \quad \phi\in H^2\cap H_0^1(-1,1)
\end{align*}
for  $\nu\in[-\nu_0,\nu_0]$ and $\tau\in[-\tau_0,\tau_0]$, where $n\geq1$, $\tau_0>0$ and $\nu_0>0$ can be taken smaller such that there exists $\delta_4>0$ such that  $|u(y)+\nu u_1(y)+\tau u_2(y)|>\delta_4$ for  $y\in[-1,-\delta_0]\cup[\delta_0,1]$, $\nu\in[-\nu_0,\nu_0]$ and $\tau\in[-\tau_0,\tau_0]$. Then \begin{align*}
\partial_\tau\mu_{1}(0,0)
=&\int_{-1}^{1}{u_2''u-(u''-\beta)u_2\over u^2}\phi_{1}^2dy=\int_{-1}^{1}{\xi\over u^2}\phi_{1}^2dy=1>0,\\
\partial_\tau\mu_{n_0}(0,0)
=&\int_{-1}^{1}{\xi\over u^2}\phi_{n_0}^2dy=-1<0.
\end{align*}
Take $\nu_0,\tau_0>0$ smaller such that
$\partial_\tau\mu_{1}(\nu,\tau)>0$
 for $(\nu,\tau)\in[0,\nu_0]\times[-\tau_0,\tau_0]$.
Note that $\mu_{n_0-1}(0,0)<\mu_{n_0}(0,0)=0<\mu_{n_0+1}(0,0)$. By the continuity of $\partial_\tau\mu_{n_0}$, $\mu_{n_0-1}$ and $\mu_{n_0+1}$, we can choose $\nu_0>0$ and $\tau_0>0$ smaller such that
$
 \partial_{\tau}\mu_{n_0}(\nu,\tau)<0\quad\text{and}\quad \mu_{n_0-1}(\nu,\tau)<0<\mu_{n_0+1}(\nu,\tau)
$
for $(\nu,\tau)\in[0,\nu_0]\times [-\tau_0,\tau_0]$.
Since $ \partial_\tau\mu_{1}(0,0)>0$ and $\mu_{1}(\nu_0,0)<\mu_{1}(0,0)$,
   we can choose $\tilde \tau\in(0,\tau_0]$ such that
 $\mu_{1}(\nu_0,\tilde\tau)<\mu_{1}(0,0)=-\alpha_0^2<\mu_{1}(0,\tilde\tau)$.
 Then there exists $\nu_{\tilde\tau}\in(0,\nu_0)$ such that $\mu_{1}(\nu_{\tilde\tau},\tilde\tau)=-\alpha_0^2. $
On the other hand, $\partial_{\tau}\mu_{n_0}(\nu,\tau)<0$ and $\mu_{n_0}(\cdot,0)$ is decreasing on $[0,\nu_0]$, we have
$
\mu_{n_0}(\nu,\tau)<\mu_{n_0}(\nu,0)<\mu_{n_0}(0,0)=0,
$
which implies that
$
 \mu_{n_0+1}(\nu,\tau)>0>\mu_{n_0}(\nu,\tau)$ for $ (\nu,\tau)\in(0,\nu_0]\times(0,\tau_0].
$
 Since $(\nu_{\tilde\tau},\tilde\tau)\in(0,\nu_0]\times(0,\tau_0]$, we have $\mu_n(\nu_{\tilde\tau},\tilde\tau)\neq0$ for $n\geq2$.
%Since $b\partial_\tau\lambda_{j_0}(c_0,0)<0$ and $\lambda_{j_0}(c_0,0)=0$, we can take $\tau_0>0$ and $\delta_1>0$ smaller such that
%$\partial_\tau\lambda_{j_0}(c,\tau)<0$ and \eqref{j0-1j0+1} holds for $(c,\tau)\in[c_0,c_0+\delta_1]\times[0,\tau_0]$.
%Since $\lambda_{j_0}(\cdot,0)$ is decreasing in $[c_0,c_0+\delta_1]$, we obtain
%$\lambda_{j_0}(c,\tau)<\lambda_{j_0}(c,0)<\lambda_{j_0}(c_0,0)=0$, and thus
% \eqref{j0j0+1}, which implies  $0\notin\sigma(\mathcal{L}_{c_{\tau_1},\tau_1})$.
 Fix $\varepsilon\in(0,\varepsilon_0)$. Then we can choose $\nu_0,\tau_0>0$ smaller such that for $ \nu_{\tilde\tau}\in(0,\nu_0]$ and $\tilde \tau\in(0,\tau_0]$,
\begin{align}\label{shear flow perturbation u+tau-varepsilon-u-1-case 2a}
\|(u+\nu_{\tilde\tau} u_1+\tilde\tau u_2,0)-(u,0)\|_{H^3(-1,1)}\leq\nu_{\tilde\tau}\|u_1\|_{H^3(-1,1)} +\tilde\tau\|u_2\|_{H^3(-1,1)}< {\varepsilon\over2}.
\end{align}
Since $\mu_{1}(\nu_{\tilde\tau},\tilde\tau)=-\alpha_0^2$ and $\mu_n(\nu_{\tilde\tau},\tilde\tau)\neq0$ for $n\geq2$, we can apply Case 1 to the shear flow $(u+\nu_{\tilde\tau} u_1+\tilde\tau u_2,0)$ to obtain that there exists a nonparallel steady  solution $(u_\varepsilon,v_\varepsilon)$ which has minimal
 period $T_\varepsilon$ in $x$,
\begin{align}\label{traveling wave to u+tau-varepsilon-u-1-case 2a}
 \|(u_\varepsilon,v_\varepsilon)-(u+\nu_{\tilde\tau} u_1+\tilde\tau u_2,0)\|_{H^3(D_{T_\varepsilon})} \leq{\varepsilon\over2}
\end{align}
 and $\|v_\varepsilon\|_{L^2\left(  D_{T_\varepsilon}\right)}\neq0$. Then by \eqref{shear flow perturbation u+tau-varepsilon-u-1-case 2a}--\eqref{traveling wave to u+tau-varepsilon-u-1-case 2a}, we have
$
 \|(u_\varepsilon,v_\varepsilon)-(u,0)\|_{H^3(D_{T_\varepsilon})}$ $<{\varepsilon}$.
\end{proof}

\noindent{\bf{Construction of the modified shear flow}}\medskip

Fix $\beta\neq0$. Let $\gamma<\min\{\frac{1}{2},\frac{1}{10|\beta|}\}$. %(The case $\beta<0$ can be discussed in the same way)\\
Recall that
\beno
erf(x)=\frac{2}{\sqrt{\pi}}\int_0^x e^{-s^2} ds,        \quad x\in\mathbb{R}
\eeno
is the Gauss error function introduced in \cite{LZ}.
Let
\beno
\alpha(y)=\left\{
\begin{aligned}
&0,&&x\leq 0,\\
&e^{-\frac{1}{x}},&&x>0,
\end{aligned}
\right.
\eeno
 $$\tilde\eta(x)=\alpha(x-1)\cdot\alpha(2-x),$$
 and
$$\tilde{I}(x)= \frac{\int_{|x|}^{2}\tilde\eta(t)dt}{\int_1^{2}\tilde\eta(t)dt}.$$
Then $\tilde{I}(x)\in C^\infty(\mathbb{R})$, $\tilde{I}(x)=1$ for $|x|\leq 1$ and $\tilde{I}(x)=0$ for $|x|\geq 2$. $\tilde{I}'$ and $\tilde{I}''$ are smooth functions with compact support. Thus, there exist $M_1, M_2>0$ such that  $|\tilde{I}'(x)|\leq M_1$ and $|\tilde{I}''(x)|\leq M_2$ for $x\in\mathbb{R}$. Define the cut-off function by
$$I_{\gamma}(y)=\tilde{I}\left(\frac{y}{\gamma}\right)=\frac{\int_{|y|/\gamma}^{2}\tilde\eta(t)dt}{\int_1^{2}\tilde\eta(t)dt}.$$
Then
\beno
I_{\gamma}(y)=\left\{
\begin{aligned}
&1,&&y\in [-\gamma,\gamma],\\
&\text{smooth},&&y\in[-2\gamma,-\gamma]\cup[\gamma,2\gamma],\\
&0,&&y\in[-1,-2\gamma]\cup[2\gamma,1].
\end{aligned}
\right.
\eeno
%\textcolor{red}{Good news:} let $g(y)=[\frac{1}{2}\beta y^2I_\gamma(y)]''-\beta$ be an even function.
We define the modified shear profile
\ben\label{modified shear flow}
U_{\gamma,a}(y)&=&y+\frac{1}{2}\beta y^2 I_{\gamma}(y)+a\gamma^2 erf\left(\frac{y-5\gamma}{\gamma}\right)I_{\gamma}(y-5\gamma), \; y\in[-1,1].
\een
Here, the second term is supported on $[-2\gamma,2\gamma]$, and the last term is supported on $[3\gamma,7\gamma]$.
The second term is added such  that $U_{\gamma,a}''-\beta\equiv0$ on $[-2\gamma,2\gamma]$, which  cancels the singularity near $y=0$. To apply the bifurcation lemma for $\beta\neq0$, we have to add an error function to produce negative eigenvalues of the Rayleigh-Kuo  operator. A direct addition of the Gauss error function in \cite{LZ} induces new singularity near $y=0$. Our novelty is to translate a cut-off Gauss error function away from $0$ but not very far, and to choose  the size of the cut-off function  suitably small.
% Indeed, if the size is a constant independent of $\gamma$, then the modified shear flow is not a small perturbation of  Couette flow in $H^{<{5\over2}}$.

Denote
\beno
Q_{\gamma,a}(y)&=&\frac{U_{\gamma,a}''(y)-\beta}{U_{\gamma,a}(y)},
\eeno
and define the Rayleigh-Kuo operator  by
\beno
\mathcal{L}_{[\gamma,a]}=-\frac{d^2}{dy^2}+Q_{\gamma,a}(y):  H^2\cap H_0^1(-1,1)\to L^2(-1,1).
\eeno
Then $\mathcal{L}_{[\gamma,a]}\phi=\lambda\phi$ is a regular BVP for $\gamma>0$ small enough and fixed $a\geq0$.
Let $\lambda_{n,\gamma,a}$ be the $n$-th eigenvalue of $\mathcal{L}_{[\lambda,a]}$, and consider it as a function of $(a,\gamma)$ on $[0,+\infty)\times (0,+\infty)$. Then we have the following estimates of the principal eigenvalue $\lambda_{1,\gamma,a}$ of  $\mathcal{L}_{[\gamma,a]}$.

\begin{lemma}\label{prin-neg} Fix $\beta\neq0$ and $a>0$. Then
\begin{align}\label{limsup-lambda1}
\limsup_{\gamma\to0^+}\lambda_{1,\gamma,a}\leq 3+{3\over2}b_0a,
\end{align}
where $b_0=2\int_{0}^{2}\left(\frac{1}{(x+5)^3}-\frac{1}{(-x+5)^3}\right)erf(x)\tilde{I}(x)  dx$ is a negative constant.
In particular, we have the following conclusions.

$(1)$
Fix $a>-\frac{2}{b_0}$. Then $\lambda_{1,\gamma,a}<0$ for $\gamma>0$ sufficiently small.

$(2)$
Fix $d<0$ and $a_d>\frac{2d-6}{3b_0}$. Then $\lambda_{1,\gamma,a_d}<d$ for $\gamma>0$ sufficiently small.
\end{lemma}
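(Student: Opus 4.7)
The plan is to bound $\lambda_{1,\gamma,a}$ from above by the Rayleigh quotient at a carefully chosen test function. I will take the piecewise-linear tent $\phi(y)=\sqrt{3/2}\,(1-|y|)\in H_0^1(-1,1)$, which is $L^2$-normalized and has Dirichlet energy $\int_{-1}^{1}|\phi'|^2\,dy=3$ identically, so the variational principle gives
\[
\lambda_{1,\gamma,a}\leq 3+\int_{-1}^{1}Q_{\gamma,a}(y)\phi(y)^2\,dy
\]
for every admissible $\gamma$. The whole lemma then reduces to showing $\lim_{\gamma\to 0^+}\int_{-1}^{1}Q_{\gamma,a}\phi^2\,dy=\tfrac{3}{2}b_0 a$.

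To evaluate this limit I will split $[-1,1]$ into four regions determined by the three constituents of $U_{\gamma,a}$: an outer region $E_o=[-1,-2\gamma]\cup[2\gamma,3\gamma]\cup[7\gamma,1]$ on which $U_{\gamma,a}(y)=y$ and hence $Q_{\gamma,a}=-\beta/y$; a buffer region $E_b=[-2\gamma,-\gamma]\cup[\gamma,2\gamma]$ where the cutoff $I_\gamma$ transitions; the inner interval $[-\gamma,\gamma]$ on which $U_{\gamma,a}=y+\tfrac12\beta y^2$ forces $U_{\gamma,a}''=\beta$ and thus $Q_{\gamma,a}\equiv 0$; and the bump region $E_e=[3\gamma,7\gamma]$ carrying the translated error function. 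On $E_o$ I will substitute $y\mapsto -y$ in the piece on $[-1,-2\gamma]$ so that the divergent $\ln\gamma$-contributions cancel among the three subintervals, leaving $\tfrac{3\beta}{2}\int_{3\gamma}^{7\gamma}(1-y)^2y^{-1}\,dy\to\tfrac{3\beta}{2}\ln(7/3)$. On $E_b$ I will rescale by $x=y/\gamma$: since $\phi(\gamma x)^2\to\phi(0)^2=3/2$ and $U_{\gamma,a}(\gamma x)=\gamma x(1+O(\gamma))$ stays bounded away from zero for $|x|\in[1,2]$, dominated convergence gives a limit of the form $\tfrac{3\beta}{2}\bigl[(\tilde I(x)-1)+2x\tilde I'(x)+\tfrac12 x^2\tilde I''(x)\bigr]/x$ integrated over $[-2,-1]\cup[1,2]$; the parities of $\tilde I,\tilde I',\tilde I''$ (even/odd/even) make each of the three summands an odd function of $x$, so the integral over this symmetric set vanishes.

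The bump region $E_e$ is where $b_0$ enters. Rescaling $x=(y-5\gamma)/\gamma$ and using the identity $U_{\gamma,a}''(y)=a\,(\mathrm{erf}\,\tilde I)''(x)$ (immediate from the chain rule applied twice), the factor $\gamma$ from $U_{\gamma,a}(y)\approx\gamma(5+x)$ cancels the factor $\gamma$ from $dy$ and yields
\[
\int_{3\gamma}^{7\gamma}Q_{\gamma,a}\phi^2\,dy\longrightarrow\frac{3a}{2}\int_{-2}^{2}\frac{(\mathrm{erf}\,\tilde I)''(x)}{5+x}\,dx-\frac{3\beta}{2}\ln(7/3).
\]
Two integrations by parts, justified by $(\mathrm{erf}\,\tilde I)(\pm 2)=(\mathrm{erf}\,\tilde I)'(\pm 2)=0$, transform the first integral into $2\int_{-2}^{2}\mathrm{erf}(x)\tilde I(x)(5+x)^{-3}\,dx$; splitting this at $0$ and using oddness of $\mathrm{erf}$ together with evenness of $\tilde I$ identifies it as $b_0$. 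Summing the three nonzero regions, the two $\tfrac{3\beta}{2}\ln(7/3)$-pieces cancel and the buffer contributes nothing, which gives \eqref{limsup-lambda1}. Conclusions (1) and (2) then follow by elementary arithmetic once I observe $b_0<0$ (since $(5+x)^{-3}-(5-x)^{-3}<0$ and $\mathrm{erf}(x)\tilde I(x)\geq 0$ on $(0,2)$, with strict positivity on a set of positive measure): solving $3+\tfrac{3}{2}b_0 a<0$ rearranges to $a>-2/b_0$, and solving $3+\tfrac{3}{2}b_0 a_d<d$ rearranges to $a_d>(2d-6)/(3b_0)$.

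The main obstacle I foresee is ensuring that the two $\tfrac{3\beta}{2}\ln(7/3)$-terms from $E_o$ and the $\beta$-part of $E_e$ cancel exactly and that the $E_b$-contribution vanishes by parity; these cancellations are not accidental but reflect the deliberate choices to center the error-function bump at $5\gamma$ (producing the factor $(5+x)$ whose reciprocal integrates to $\ln(7/3)$ over $[-2,2]$) and to take $\tilde I$ even. Once these symmetries are in place, the dominated-convergence estimates on each rescaled integral are routine, given the bound $\gamma<1/(10|\beta|)$, which keeps the correction term $\tfrac{1}{2}\beta\gamma x\tilde I(x)$ small and prevents $U_{\gamma,a}$ from vanishing on the relevant regions.
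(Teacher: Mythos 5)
Your proposal is correct and follows essentially the same route as the paper: the same tent test function $1-|y|$ (you normalize it up front, the paper carries the factor $\tfrac{3}{2}$ separately), the same rescalings $x=y/\gamma$ and $x=(y-5\gamma)/\gamma$, the same double integration by parts producing $b_0$, and the same cancellation of the $\beta$-contributions (which the paper organizes via the symmetrization $\tfrac{1}{U_{\gamma,a}(y)}+\tfrac{1}{U_{\gamma,a}(-y)}$ rather than your explicit $\pm\tfrac{3\beta}{2}\ln(7/3)$ bookkeeping, and bounds the buffer by $O(\beta^2\gamma)$ where you invoke parity). These are only organizational differences, not a different argument.
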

%Remark: $a>\frac{11}{20}$ can be changed by $a>\frac{1}{2}+\epsilon$ for any $\epsilon>0$. Can we write $a>\frac{1}{2}$?\\
\begin{proof} The principal eigenvalue $\lambda_{1,\gamma,a}$ of $\mathcal{L}_{[\gamma,a]}$ satisfies
% Let $\lambda_{1,\gamma,a}$ be the lowest eigenvalue of $\mathcal{L}_{\lambda,a}$,
\begin{align}\nonumber
\lambda_{1,\gamma,a}=&\inf_{\phi\in H_0^1(-1,1),\|\phi\|_{L^2(-1,1)}=1}\langle\mathcal{L}_{[\gamma,a]}\phi,\phi\rangle\\\label{lambda1variation}
=&\inf_{\phi\in H_0^1(-1,1),\|\phi\|_{L^2(-1,1)}=1}\left(\|\phi'\|^2_{L^2(-1,1)}+\int_{-1}^1Q_{\gamma,a}(y)|\phi(y)|^2 dy\right).
\end{align}
Let $\phi_1(y)=1-|y|$ for $y\in[-1,1]$. Then $\|\phi_1\|_{L^2(-1,1)}^2={2\over3}$ and
\begin{align*}
&\langle\mathcal{L}_{[\gamma,a]}\phi,\phi\rangle=\|\phi'_1\|^2_{L^2(-1,1)}+\int_{-1}^1Q_{\gamma,a}(y)\phi_1(y)^2 dy\\
=&2+\int_{-1}^1\frac{\left(a\gamma^2 erf\left(\frac{y-5\gamma}{\gamma}\right)I_{\gamma}(y-5\gamma)\right)''}{U_{\gamma,a}(y)}(1-|y|)^2dy+\int_{-1}^1\frac{\left(\frac{1}{2}\beta y^2I_\gamma(y)\right)''-\beta}{U_{\gamma,a}(y)}(1-|y|)^2dy\\
=&2+B(\gamma,a)+C(\gamma,a).
\end{align*}
Note that
\beno
\frac{1}{U_{\gamma,a}}=\frac{1}{y}\frac{1}{1+\frac{1}{2}\beta\gamma \frac{y}{\gamma}\tilde{I}\left(\frac{y}{\gamma}\right)+\gamma a \frac{\gamma}{y}\Lambda\left(\frac{y-5\gamma}{\gamma}\right)\frac{y-5\gamma}{\gamma}\tilde{I}\left(\frac{y-5\gamma}{\gamma}\right)},
\eeno
where $\Lambda(y)=\frac{erf(y)}{y}$. Since $|x\tilde{I}(x)|\leq C$ and $|\Lambda(x)|\leq C$ for $x\in \mathbb{R}$, $\frac{\gamma}{y}\in[\frac{1}{7},\frac{1}{3}]$ for $y\in [3\gamma,7\gamma]$, we have
\beno
D_\gamma(y):=\frac{1}{1+\frac{1}{2}\beta\gamma \frac{y}{\gamma}\tilde{I}\left(\frac{y}{\gamma}\right)+\gamma a \frac{\gamma}{y}\Lambda\left(\frac{y-5\gamma}{\gamma}\right)\frac{y-5\gamma}{\gamma}\tilde{I}\left(\frac{y-5\gamma}{\gamma}\right)} \to 1
\eeno
as $\gamma \to0^+$.
Since $\tilde{I}(\pm 2)=\tilde{I}'(\pm 2)=0$ and $erf(\cdot)$ is an odd function, we have
\begin{align*}
&\int_{-1}^{1} \frac{1}{y}\left(a\gamma^2 erf\left(\frac{y-5\gamma}{\gamma}\right){I}_{\gamma}({y-5\gamma})\right)'' dy\\
=&\int_{3\gamma}^{7\gamma} \frac{1}{y}\left(a\gamma^2 erf\left(\frac{y-5\gamma}{\gamma}\right)\tilde{I}\left(\frac{y-5\gamma}{\gamma}\right)\right)'' dy\\
=&a\int_{-2}^{2}\frac{1}{x+5}\left(erf(x)\tilde{I}(x)\right)''  dx\\
=&a\int_{-2}^{2}\left(\frac{1}{x+5}\right)''\left(erf(x)\tilde{I}(x)\right)  dx\\
=&2a\int_{-2}^{2}\frac{1}{(x+5)^3}\left(erf(x)\tilde{I}(x)\right)  dx\\
=&2a\int_{0}^{2}\left(\frac{1}{(x+5)^3}-\frac{1}{(-x+5)^3}\right)\left(erf(x)\tilde{I}(x)\right)  dx
=b_0 a,
\end{align*}
where $b_0=2\int_{0}^{2}\left(\frac{1}{(x+5)^3}-\frac{1}{(-x+5)^3}\right)erf(x)\tilde{I}(x)  dx$ is a negative constant. We claim that
 \begin{align} \label{B-lim} B(\gamma,a)\to b_0a \quad\text{as} \quad\gamma\to 0^+.\end{align}
Note that
\begin{align*}
&|B(\gamma,a)-(b_0a)|\\
\leq&\left|\int_{3\gamma}^{7\gamma}\frac{\left(a\gamma^2 erf\left(\frac{y-5\gamma}{\gamma}\right)I_{\gamma}(y-5\gamma)\right)''}{y}
\left(D_\gamma(y)-1\right) |\phi_1(y)|^2dy\right|\\
&+\left|\int_{3\gamma}^{7\gamma}\frac{\left(a\gamma^2 erf\left(\frac{y-5\gamma}{\gamma}\right)I_{\gamma}(y-5\gamma)\right)''}{y}(\phi_1(y)^2-1) dy\right| = B_1(\gamma,a)+B_2(\gamma,a).
\end{align*}
For any $\varepsilon>0$, there exists $\delta_1>0$ such that  $|D_\gamma(y)-1|<\varepsilon$ for $y\in[3\gamma,7\gamma]$ and $0<\gamma<\delta_1$.
Let $\delta_2=\frac{\varepsilon}{14}$. Then $|\phi_1(y)^2-1|<14\gamma<\varepsilon$ for $3\gamma<y<7\gamma$ and $0<\gamma<\delta_2$.
\if0
Thus we only need to show that
\ben
\int_{3\gamma}^{7\gamma}|\frac{[\gamma^2 erf(\frac{y-5\gamma}{\gamma})I_{\gamma}(y-5\gamma)]''}{y}|dy\leq C
\een
\fi
Since $\left|{y-5\gamma\over \gamma}\right|\leq C,$ $|\gamma I_{\gamma}'(y-5\gamma)|\leq C$ and $|\gamma^2 I_{\gamma}''(y-5\gamma)|\leq C$ for $y\in[3\gamma,7\gamma]$, we have
\ben\label{2-derivative}&&\left(\gamma^2 erf\left(\frac{y-5\gamma}{\gamma}\right)I_{\gamma}(y-5\gamma)\right)''\\\nonumber
&=&
\left|{4\over\sqrt{\pi}}e^{-\left({y-5\gamma\over \gamma}\right)^2}\left({-(y-5\gamma)\over \gamma}I_{\gamma}(y-5\gamma)+\gamma I'_\gamma(y-5\gamma)\right)+\gamma^2erf\left({y-5\gamma\over \gamma}\right)I''_{\gamma} (y-5\gamma)\right|\\\nonumber
&\leq& C,\een and thus,
\beno
\int_{3\gamma}^{7\gamma}\left|\frac{\left(\gamma^2 erf\left(\frac{y-5\gamma}{\gamma}\right)I_{\gamma}(y-5\gamma)\right)''}{y}\right|dy
\leq C\int_{3\gamma}^{7\gamma}{1\over y}dy=C\ln(7/3)\leq C.
\eeno
Then
\begin{align*}
|B(\gamma,a)-(b_0a)|\leq |B_1(\gamma,a)|+|B_2(\gamma,a)|\leq Ca\varepsilon
\end{align*}
for $0<\gamma<\delta=\min\{\delta_1,\delta_2\}$.
This proves \eqref{B-lim}.
 %$|\frac{1}{1+\frac{1}{2}\beta y I_\gamma(y)+\gamma a\Lambda(\frac{y}{\gamma})}-1|<\frac{\epsilon}{16a}$, thus $B_1<\frac{\epsilon}{16a}\frac{1}{\sqrt{\pi}}\int_{-\infty}^{+\infty}4a\frac{1}{\gamma}e^{-(\frac{y}{\gamma})^2}dy=\frac{\epsilon}{4}$; there exists $L>0$ such that $\frac{1}{\sqrt{\pi}}\int_{|x|>L}4a e^{-x^2} dx<\frac{\epsilon}{4}$, let $\delta_2=\frac{\epsilon}{32aL}$, then when $0<\gamma<\delta_2$,
%\beno
%B_2&=& \frac{1}{\sqrt{\pi}}\int_{-L\gamma}^{L\gamma}4a\frac{1}{\gamma}e^{-(\frac{y}{\gamma})^2}[2|y|-|y|^2] dy
%+\frac{1}{\sqrt{\pi}}\int_{L\gamma<|y|<1}4a\frac{1}{\gamma}e^{-(\frac{y}{\gamma})^2}[2|y|-|y|^2] dy\\
%&\leq& 2L\gamma \frac{1}{\sqrt{\pi}}\int_{-L\gamma}^{L\gamma}4a\frac{1}{\gamma}e^{-(\frac{y}{\gamma})^2}dy
%+\frac{1}{\sqrt{\pi}}\int_{|y|>L\gamma}4a\frac{1}{\gamma}e^{-(\frac{y}{\gamma})^2}dy \leq 8aL\gamma+\frac{\epsilon}{4}<\frac{\epsilon}{2}.
%\eeno
%Let $\delta_3=\frac{1}{L}$, then when $0<\gamma<\delta_3$, $B_3< \frac{\epsilon}{4}$. Let $\delta=\min\{\delta_1,\delta_2,\delta_3\}$, then when $0<\gamma<\delta$, $$\textcolor{red}{|B(\gamma,a)-(-4a)|<\epsilon.}$$

For $C(\gamma,a)$, let $g_{\gamma}(y)=\left(\frac{1}{2}\beta y^2I_\gamma(y)\right)''-\beta$ for $y\in[-1,1]$, which  is an even function. Note that $g_{\gamma}(y)=0$ for $y\in[0,\gamma]$ and $g_{\gamma}(y)=-\beta$ for $y\in[2\gamma,1]$. For $y\in[\gamma,2\gamma]$, we get
\begin{align*}
g_{\gamma}(y)=&\left(\frac{1}{2}\beta y^2I_\gamma(y)\right)''-\beta\\
=&\beta I_\gamma(y)+2\beta y I_\gamma'(y)+\frac{1}{2}\beta y^2 I_\gamma''(y)-\beta\\
=&\beta (I_\gamma(y)-1)+2\beta {y\over \gamma} \tilde{I}'\left(\frac{y}{\gamma}\right)+\frac{1}{2}\beta \left({y\over\gamma}\right)^{2} \tilde{I}''\left(\frac{y}{\gamma}\right).
\end{align*}
Thus,
\ben\label{g-estimate}
|g_{\gamma}(y)|\leq (1+4M_1+2M_2)|\beta|
\een
for $y\in[\gamma,2\gamma]$. Then
\begin{align*}
&C(\gamma,a)=\int_{-1}^1\frac{g_{\gamma}(y)}{U_{\gamma,a}(y)}(1-|y|)^2dy\\
=&\int_0^1\left(\frac{1}{U_{\gamma,a}(y)}+\frac{1}{U_{\gamma,a}(-y)}\right)g_{\gamma}(y)(1-|y|)^2dy\\
=&\int_{\gamma}^{2\gamma}\frac{-g_{\gamma}(y)(1-|y|)^2 \beta y^2 I_\gamma(y)}{y^2-\left(\frac{1}{2}\beta y^2I_\gamma(y)\right)^2} dy\\
&+\int_{3\gamma}^{7\gamma}
\left(\frac{1}{y+a\gamma^2 \Lambda\left(\frac{y-5\gamma}{\gamma}\right)\frac{y-5\gamma}{\gamma}\tilde{I}\left(\frac{y-5\gamma}{\gamma}\right)}-\frac{1}{y}\right) g_{\gamma}(y)(1-|y|)^2dy\\
=&\int_{\gamma}^{2\gamma}\frac{-g_{\gamma}(y)(1-|y|)^2 \beta \left(\frac{y}{\gamma}\right)^2 I_\gamma(y)}{\left(\frac{y}{\gamma}\right)^2-\left(\frac{1}{2}\beta \gamma  \left(\frac{y}{\gamma}\right)^2I_\gamma(y)\right)^2} dy\\
&+\int_{3\gamma}^{7\gamma} \frac{1}{y}
\left(\frac{1}{1+\gamma a \frac{\gamma}{y}\Lambda\left(\frac{y-5\gamma}{\gamma}\right)\frac{y-5\gamma}{\gamma}\tilde{I}\left(\frac{y-5\gamma}{\gamma}\right)}-1\right) g_{\gamma}(y)(1-|y|)^2dy\\
=&C_1(\gamma,a)+C_2(\gamma,a).
\end{align*}
Since $|g_{\gamma}(y)|\leq C|\beta|$, $|(1-|y|)^2I_\gamma(y)|\leq 1$,  $ \beta \left(\frac{y}{\gamma}\right)^2\leq 4|\beta|$  and $\left(\frac{y}{\gamma}\right)^2-\left(\frac{1}{2}\beta \gamma  \left(\frac{y}{\gamma}\right)^2I_\gamma(y)\right)^2\geq \frac{24}{25}$ for $y\in[\gamma,2\gamma]$  and  $\gamma<\min\{\frac{1}{2},\frac{1}{10|\beta|}\}$, we have
\beno
|C_1(\gamma,a)|\leq C\beta^2 \gamma\to 0
\eeno
as $\gamma\to0^+$.
For any $\varepsilon>0$, there exists $\delta_3>0$ such that
\begin{align*}
\left|\frac{1}{1+\gamma a \frac{\gamma}{y}\Lambda\left(\frac{y-5\gamma}{\gamma}\right)\frac{y-5\gamma}{\gamma}\tilde{I}\left(\frac{y-5\gamma}{\gamma}\right)}-1\right|<\varepsilon
\end{align*}
for $y\in [3\gamma,7\gamma]$ and $0<\gamma<\delta_3$,
 and
thus,
\beno
|C_2(\gamma,a)|\leq C|\beta|\ln(7/3)\varepsilon\to 0
\eeno
as $\gamma\to0^+$.
In summary, for fixed $a>0$ we have
\beno
\lambda_{1,\gamma,a}\leq {3\over2}\langle\mathcal{L}_{[\gamma,a]}\phi_1,\phi_1\rangle={3\over2}(2+B(\gamma,a)+C(\gamma,a))\to 3+{3\over2}b_0a
\eeno
as $\gamma\to 0^+$. This proves \eqref{limsup-lambda1}. The conclusions (1)-(2) are  direct applications   of \eqref{limsup-lambda1}.
\end{proof}

For  $\beta\neq 0$, we study the range of the principal eigenvalue $\lambda_{1,\gamma,a}$ with respect to $a$ for $\gamma>0$ small enough.
Roughly speaking, the range could cover any interval $[a,b]$ for some $b<0$ and any $a<b$.
Note that there exist $M, M_0>0$ such that  $\left|\left(x^2\tilde{I}(x)\right)'\right|\leq M$ and $\left|\left(erf(x)\tilde{I}(x)\right)'\right|\leq M_0$ for $x\in \mathbb{R}$.

\begin{lemma}\label{lem-range-anybeta}
Let $ \beta\neq 0$ and  $C_\beta:=\inf_{0<\gamma\leq \delta_*} \lambda_{1,\gamma,0}$, where $\delta_*\in\left(0,{2\over {|\beta|}M}\right)$. Then $C_\beta>-\infty$, and
for any $d<C_\beta$ and $a_d>{2d-6\over 3b_0}$, there exists $\delta=\delta(a_d)>0$ such that
$[d,C_\beta]\subset  \{\lambda_{1,\gamma,a}:a\in[0,a_d]\}$ for any fixed  $\gamma\in(0,\delta)$.
\end{lemma}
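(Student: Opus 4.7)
The plan is first to establish the uniform lower bound $C_\beta>-\infty$ for $\lambda_{1,\gamma,0}$, and then to combine it with the smallness of $\lambda_{1,\gamma,a_d}$ provided by \eqref{limsup-lambda1} through the intermediate value theorem applied to the continuous function $a\mapsto\lambda_{1,\gamma,a}$ on $[0,a_d]$.

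The main obstacle is the uniform bound $C_\beta>-\infty$, since on the transition layer $\{\gamma\le|y|\le 2\gamma\}$ the potential $Q_{\gamma,0}=g_\gamma/U_{\gamma,0}$ grows like $|\beta|/\gamma$, and on the outer region $\{2\gamma\le|y|\le 1\}$ it equals the singular weight $-\beta/y$ (because $I_\gamma\equiv 0$ there), so no pointwise $L^\infty$ bound is available as $\gamma\to 0^+$. My approach is to attack the variational characterisation \eqref{lambda1variation} directly: for arbitrary $\phi\in H_0^1(-1,1)$ with $\|\phi\|_{L^2(-1,1)}=1$, split the potential integral into the outer and transition pieces. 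On the outer piece, where $Q_{\gamma,0}(y)=-\beta/y$, decompose $\phi=\phi_e+\phi_o$ into its even and odd parts with respect to $y\mapsto -y$; by symmetry the squared terms $|\phi_e|^2$ and $|\phi_o|^2$ integrate to zero against the odd weight $-\beta/y$ on the symmetric domain, leaving only the cross term proportional to $\beta\int_{2\gamma}^1 \phi_e\phi_o/y\,dy$, which, since $\phi_o(0)=0$, is dominated via Cauchy--Schwarz, the one-dimensional Hardy inequality $\|\phi_o/y\|_{L^2(0,1)}\le 2\|\phi_o'\|_{L^2(0,1)}$, and $\|\phi_e\|_{L^2(0,1)}\le 1/\sqrt 2$, by a quantity of the form $C|\beta|\,\|\phi'\|_{L^2}$ uniformly in $\gamma$. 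On the transition piece, I will use $|g_\gamma|\le C|\beta|$ from \eqref{g-estimate}, the lower bound $|U_{\gamma,0}(y)|\ge c_{\delta_*}|y|$ guaranteed by the hypothesis $\delta_*<2/(|\beta|M)$ (which keeps $U_{\gamma,0}$ monotone with slope bounded below by $c_{\delta_*}=1-|\beta|\delta_* M/2>0$), together with the Sobolev estimate $\|\phi\|_{L^\infty(-1,1)}^2\le 2\|\phi\|_{L^2}\|\phi'\|_{L^2}=2\|\phi'\|_{L^2}$ and the measure bound $|\{\gamma\le|y|\le 2\gamma\}|=2\gamma$, which again yields a bound of the form $C|\beta|\,\|\phi'\|_{L^2}$. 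Combining the two estimates and completing the square gives
\beno
\langle\mathcal{L}_{[\gamma,0]}\phi,\phi\rangle\ge \|\phi'\|_{L^2}^2-C|\beta|\,\|\phi'\|_{L^2}\ge -C^2|\beta|^2/4
\eeno
uniformly in $\gamma\in(0,\delta_*]$, so that $C_\beta\ge -C^2|\beta|^2/4>-\infty$.

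For the second step, fix $d<C_\beta$ and $a_d>(2d-6)/(3b_0)$. The estimate \eqref{limsup-lambda1} with $a=a_d$ gives $\limsup_{\gamma\to 0^+}\lambda_{1,\gamma,a_d}\le 3+\tfrac{3}{2}b_0 a_d<d$, so there is $\delta=\delta(a_d)>0$ with $\lambda_{1,\gamma,a_d}<d$ for every $\gamma\in(0,\delta)$ (this is essentially Lemma~\ref{prin-neg}(2), whose proof works verbatim for arbitrary $d$ satisfying $a_d>(2d-6)/(3b_0)$). Shrinking $\delta$ further if needed, one can guarantee that for $\gamma\in(0,\delta)$ the flow $U_{\gamma,a}$ has the same sign as $y$ on $(-1,0)\cup(0,1)$ uniformly for $a\in[0,a_d]$ (the $a$-dependent term in $U_{\gamma,a}$ is supported on $[3\gamma,7\gamma]$ and bounded by $a_d\gamma^2$, whereas $|y|\ge 3\gamma$ there, and the remaining part is uniformly comparable to $y$). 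Then $Q_{\gamma,a}$ depends continuously on $a\in[0,a_d]$ in $L^\infty(-1,1)$, and standard perturbation theory for regular Sturm--Liouville problems yields the continuity of $a\mapsto\lambda_{1,\gamma,a}$ on $[0,a_d]$. Since $\lambda_{1,\gamma,a_d}<d<C_\beta\le\lambda_{1,\gamma,0}$, the intermediate value theorem gives
\beno
[d,C_\beta]\subset[\lambda_{1,\gamma,a_d},\lambda_{1,\gamma,0}]\subset\{\lambda_{1,\gamma,a}:a\in[0,a_d]\},
\eeno
which is the claim.
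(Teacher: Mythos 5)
Your proposal is correct, and its overall architecture (a uniform lower bound for $\lambda_{1,\gamma,0}$, Lemma \ref{prin-neg}(2) to get $\lambda_{1,\gamma,a_d}<d$, continuity of $a\mapsto\lambda_{1,\gamma,a}$ from $U_{\gamma,a}'>0$, and the intermediate value theorem) coincides with the paper's. Where you genuinely diverge is in the key estimate, the uniform bound for the outer-region integral $\beta\int_{\{2\gamma\le|y|\le1\}}|\phi|^2 y^{-1}\,dy$. The paper integrates by parts against $\ln|y|$ and then controls $\int\phi\phi'\ln|y|\,dy$ with Young's inequality and Gagliardo--Nirenberg, producing a bound of the form $\varepsilon\|\phi'\|_{L^2}^2+C_\varepsilon$ that is absorbed into $\|\phi'\|_{L^2}^2$ after tuning $\varepsilon_0,\varepsilon_1,\varepsilon_2$. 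You instead exploit the oddness of $1/y$ on the symmetric set: the even/odd decomposition annihilates the square terms, and the cross term is handled by Cauchy--Schwarz together with Hardy applied to the odd part (which vanishes at $0$), yielding a bound \emph{linear} in $\|\phi'\|_{L^2}$ and hence a lower bound by completing the square. This is closer in spirit to the paper's proof of Lemma \ref{prin-0-pos}, which also pairs $\phi(y)$ with $\phi(-y)$, but there the resulting coefficient of $\|\phi'\|_{L^2}^2$ forces the restriction $|\beta|<4\sqrt2/(3\pi)$; your combination of Hardy on $\phi_o$ with $\|\phi_e\|_{L^2(0,1)}\le 1/\sqrt2$ keeps the bound linear in $\|\phi'\|_{L^2}$ and therefore works for every $\beta$, which is exactly what this lemma requires. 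It is also slightly more economical: it avoids Gagliardo--Nirenberg and gives the bound uniformly for all $\gamma\in(0,\delta_*]$ at once, so the continuity of $\lambda_{1,\gamma,0}$ in $\gamma$ invoked by the paper is not even needed. Two minor housekeeping points: the constant in your transition-layer estimate depends on $c_{\delta_*}=1-|\beta|\delta_*M/2$, which is harmless since $C_\beta$ may depend on $\beta$ and $\delta_*$; and in the final step you should record $\delta\le\delta_*$ explicitly, since the inequality $C_\beta\le\lambda_{1,\gamma,0}$ is only guaranteed for $\gamma\in(0,\delta_*]$ (the paper takes $\delta=\min\{\delta_*,\delta_{a_d},\tfrac{1}{\frac{|\beta|}{2}M+a_dM_0}\}$).
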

%%%%%%%%%%%%%%%%%%%%%%%%%%%%%%%
\begin{proof}
Since $\delta_*\in\left(0,{2\over {|\beta|}M}\right)$, we have $|U_{\gamma,0}'(y)|\geq 1-\left|{1\over2}\beta\gamma\left((\cdot)^2\tilde I(\cdot)\right)'\circ\left({y\over\gamma}\right)\right|>0$ for $0<\gamma\leq \delta_*$ and $y\in[-1,1]$. Thus, $\lambda_{1,\gamma,0}$ is continuous with respect to $\gamma\in(0,\delta_*]$. It suffices to show that $\lambda_{1,\gamma,0}$ has lower bound as $\gamma\to 0^+$. For any real function $\phi\in H_0^1(-1,1)$ with $\|\phi\|_{L^2(-1,1)}=1$, we have
\begin{align}\nonumber
&\langle\mathcal{L}_{[\gamma,0]}\phi,\phi\rangle
=\|\phi'\|^2_{L^2(-1,1)}+\int_{-1}^1Q_{\gamma,0}(y)\phi(y)^2 dy\\\nonumber
=&\|\phi'\|^2_{L^2(-1,1)}+\left(\int_{-2\gamma}^{-\gamma}+\int_{\gamma}^{2\gamma}\right)\frac{g_\gamma(y)}{y+\frac{1}{2}\beta y^2I_\gamma(y)}\phi(y)^2 dy+\left(\int_{-1}^{-2\gamma}+\int_{2\gamma}^{1}\right){-\beta\over y}\phi(y)^2 dy\\\label{L-phi-phi-derivative-B-C}
=&\|\phi'\|^2_{L^2(-1,1)}+B(\gamma)+C(\gamma).
\end{align}
For $B(\gamma)$, we have
\begin{align}\nonumber
B(\gamma)=&\int_{\gamma}^{2\gamma}g_\gamma(y)\left({\phi(y)^2\over y+{1\over2}\beta y^2I_\gamma (y)}-{\phi(-y)^2\over y-{1\over2}\beta y^2I_\gamma (y)}\right)dy\\\label{B-gamma-estimates}
=&\int_{\gamma}^{2\gamma}{1\over y}g_\gamma(y){(\phi(y)^2-\phi(-y)^2)-{1\over2}\beta y I_\gamma(y)(\phi(y)^2+\phi(-y)^2)\over 1-\left({1\over2}\beta yI_\gamma (y)\right)^2}dy.
\end{align}
Note that $|g_\gamma(y)|<C|\beta|< C$, $|\phi(y)-\phi(-y)|\leq \|\phi'\|_{L^2(-1,1)}\sqrt{2y}$, $|\phi(y)+\phi(-y)|\leq |\phi(y)|+|\phi(-y)|\leq \|\phi'\|_{L^2(-1,1)}\sqrt{y+1}+\|\phi'\|_{L^2(-1,1)}\sqrt{-y+1}\leq 2\sqrt{2}\|\phi'\|_{L^2(-1,1)}$ and $\phi(y)^2+\phi(-y)^2\leq \|\phi'\|_{L^2(-1,1)}^2(y+1)+\|\phi'\|_{L^2(-1,1)}^2(-y+1)\leq 4\|\phi'\|_{L^2(-1,1)}^2$ for $y\in[\gamma,2\gamma]$.
For any $\varepsilon_0>0$, there exists $\delta_0>0$ such that $\left|1-\left({1\over2}\beta yI_\gamma (y)\right)^2\right|\geq{1\over2}$ and $|{1\over2}\beta yI_\gamma (y)|+\sqrt{y}\leq {\varepsilon_0\over 8C \ln2}$ for $y\in[\gamma,2\gamma]$, where $\gamma\in(0,\delta_0)$. Then
\ben\label{B-gamma}
|B(\gamma)|&\leq&{\varepsilon_0\over 8C \ln2}8C\|\phi'\|_{L^2(-1,1)}^2\int_{\gamma}^{2\gamma}{1\over y}dy\leq\|\phi'\|_{L^2(-1,1)}^2\varepsilon_0
\een
for $\gamma\in(0,\delta_0)$. For $C(\gamma)$, we have
\begin{align*}
&C(\gamma)=-\beta\left(\int_{-1}^{-2\gamma}+\int_{2\gamma}^{1}\right){1\over y}\phi(y)^2 dy\\
=&-\beta\left(\phi(y)^2 \ln y \bigg|_{2\gamma}^1-\int_{2\gamma}^1 2\phi(y)\phi'(y)\ln y dy+\phi(y)^2\ln |y| \bigg|_{-1}^{-2\gamma}-\int_{-1}^{-2\gamma} 2\phi(y)\phi'(y)\ln |y| dy\right)\\
=&-\beta\left(\ln(2\gamma)\left(\phi(-2\gamma)^2-\phi(2\gamma)^2\right)-\left(\int_{-1}^{-2\gamma}+\int_{2\gamma}^1\right) 2\phi(y)\phi'(y)\ln |y| dy\right)\\
=&-\beta(I_\gamma+II_\gamma).
\end{align*}
Choose $\delta_0>0$ smaller such that
\beno
|I_\gamma|&=&|\ln(2\gamma)| \cdot|\phi(-2\gamma)-\phi(-1)+\phi(2\gamma)-\phi(1)| \cdot|\phi(-2\gamma)-\phi(2\gamma)|\\
&\leq&|\ln(2\gamma)| (4\gamma)^{1\over2} \|\phi'\|^2_{L^2(-1,1)}<\varepsilon_0\|\phi'\|^2_{L^2(-1,1)}.
\eeno
For any $\varepsilon_1, \varepsilon_2>0$, we have
\beno
|II_\gamma|&\leq& \left(\int_{-1}^{-2\gamma}+\int_{2\gamma}^1\right)\left(\frac{1}{\varepsilon_1} (\phi(y)\ln |y|)^2+\varepsilon_1|\phi(y)'|^2 \right)dy\\
&\leq&\left(\int_{-1}^{-2\gamma}+\int_{2\gamma}^1\right)\left(\frac{1}{2\varepsilon_1\varepsilon_2}|\ln |y||^4+\frac{\varepsilon_2}{2\varepsilon_1}  |\phi(y)|^4+\varepsilon_1|\phi'(y)|^2 \right)dy\\
&\leq&\frac{1}{2\varepsilon_1\varepsilon_2}\int_{-1}^1 |\ln |y||^4 dy +\frac{\varepsilon_2}{2\varepsilon_1} \int_{-1}^1 |\phi(y)|^4dy +\varepsilon_1\int_{-1}^1|\phi'(y)|^2 dy\\
&\leq&\frac{C_0}{2\varepsilon_1\varepsilon_2} +\frac{\varepsilon_2}{2\varepsilon_1} \left(C_1\int_{-1}^1 |\phi'(y)|^2dy+C_2\right) +\varepsilon_1\int_{-1}^1|\phi'(y)|^2 dy,
\eeno
where we used   Gagliardo-Nirenberg interpolation inequality (\ref{Gagliardo-Nirenberg inequality}) in the last inequality. Take $\varepsilon_0$, $\varepsilon_1$ and $\varepsilon_2$ smaller such that $\varepsilon_0+|\beta|\left(\varepsilon_0+\varepsilon_1+\frac{C_1\varepsilon_2}{2\varepsilon_1}\right)<1$. Then we have
\begin{align*}\langle\mathcal{L}_{[\gamma,0]}\phi,\phi\rangle\geq&\left(1-\varepsilon_0-|\beta|\left(\varepsilon_0+\varepsilon_1+
\frac{C_1\varepsilon_2}{2\varepsilon_1}\right)\right)\|\phi'\|^2_{L^2(-1,1)} -\left(\frac{C_0}{2\varepsilon_1\varepsilon_2}+\frac{C_2\varepsilon_2}{2\varepsilon_1}\right)|\beta|\\
\geq&-\left(\frac{C_0}{2\varepsilon_1\varepsilon_2}+\frac{C_2\varepsilon_2}{2\varepsilon_1}\right)|\beta|>-\infty
\end{align*}
for $\gamma\in(0,\delta_0)$. This proves that  $\lambda_{1,\gamma,0}$ has lower bound as $\gamma\to 0^+$, and thus, $C_\beta>-\infty$.

For any $d<C_\beta$ and $a_d>{2d-6\over 3b_0}$, we infer from  Lemma \ref{prin-neg} (2) that there exists $\delta_{a_d}>0$ such that $\lambda_{1,\gamma,a_d}<d$ for  $0<\gamma<\delta_{a_d}$. By the definition of $C_\beta$, we have $\lambda_{1,\gamma,0}\geq C_\beta$ for $0<\gamma<\delta_*$.
 For  fixed $0<\gamma<\delta=\delta(a_d)=\min\left\{\delta_*,\delta_{a_d},\frac{1}{\frac{|\beta|}{2}M+a_dM_0}\right\}$, we claim that $\lambda_{1,\gamma,a}$ is  continuous with respect to $a\in[0,a_d]$.
In fact, since $0<\gamma<\frac{1}{\frac{|\beta|}{2}M+a_dM_0}$ and
\beno
U_{\gamma,a}(y)=y+\frac{1}{2}\beta\gamma^2\left((\cdot)^2\tilde{I}(\cdot)\right)\circ\left(\frac{y}{\gamma}\right) +a\gamma^2 \left(erf(\cdot)\tilde{I}(\cdot)\right)\circ\left(\frac{y-5\gamma}{\gamma}\right),\eeno we have
\ben\label{U-gamma-a-der}
U_{\gamma,a}'(y)=1+\frac{1}{2}\beta\gamma\left((\cdot)^2\tilde{I}(\cdot)\right)'\circ\left(\frac{y}{\gamma}\right) +a\gamma \left(erf(\cdot)\tilde{I}(\cdot)\right)'\circ\left(\frac{y-5\gamma}{\gamma}\right)>0
\een
for $y\in[-1,1]$, which implies that  there is no singularity for $Q_{\gamma,a}$ and thus, $\lambda_{1,\gamma,a}$ is  continuous on $a\in[0,a_d]$.
This, along with the fact that
 $\lambda_{1,\gamma,a_d}<d$ and $\lambda_{1,\gamma,0}\geq C_\beta$, implies that $[d,C_\beta]\subset  \{\lambda_{1,\gamma,a}:a\in[0,a_d]\}$ for any fixed  $\gamma\in(0,\delta)$.
\end{proof}
%%%%%%%%%%%%%%%%%%%%%%%%%%%%%%%%%%%%%%%
\begin{remark}\label{beta-gg-1}
$(1)$ For $|\beta|\gg1$, $\lambda_{1,\gamma, 0}<0$  for   $\gamma>0$ small enough, and thus, we  can not expect $C_\beta>0$ in general. In fact, for any fixed $n\geq1$, if $|\beta|$ is large enough, then
 $\lambda_{n,\gamma, a}(\beta)<0$  uniformly for $a\in[0,1]$ and $\gamma\in\left(0,\frac{1}{\frac{|\beta|}{2}M+M_0}\right)$, where we write $\lambda_{n,\gamma, a}=\lambda_{n,\gamma, a}(\beta)$ to indicate its dependence on $\beta$.

 We only give the proof for $\beta\gg1$.
Let $\varphi_i$, $1\leq i\leq n$,  be $n$ smooth functions such that $ \text{supp}\,(\varphi_i)=\left({1\over2}+{i-1\over 2n},{1\over2}+{i\over 2n}\right)$ and
 $\|\varphi_i\|_{L^2(-1,1)}=1$. Thus,
$\varphi_i\bot\varphi_j$ in the  sense of $L^2$ for $i\neq j$.
Let
$V_{n}=\text{span}\{\varphi_1,\cdots,\varphi_{n}\}.$ Then $V_{n}\subset H_0^1(-1,1)$.
Moreover,
$C_n:=\max\limits_{1\leq i\leq n}\int_{{1\over2}+{i-1\over 2n}}^{{1\over2}+{i\over 2n}}\left(|\varphi_i'|^2-{\beta\over y}|\varphi_i|^2\right)dy<0$ for $\beta>0$ sufficiently large.
%Then for any $\phi\in V_{m_0}$ and $\phi\neq 0$, there exists $1\leq i_0\leq m_0$ such that
%$\phi(a_{i_0})\neq0$.
Since  $\gamma\in\left(0,\frac{1}{\frac{|\beta|}{2}M+M_0}\right)$,  there is no singularity on $Q_{\gamma,a}$ by \eqref{U-gamma-a-der}, and thus,
 \begin{align}\label{n-th-eigenvalue-gamma-a}
\lambda_{n,\gamma, a}(\beta)
=&\inf_{\dim V_n=n}\;\;\sup_{\phi\in H_0^1(-1,1), \phi\in V_n, \|\phi\|_{L^2(-1,1)}=1}\int_{-1}^{1}\left(|\phi'|^2+{U_{\gamma,a}''-\beta\over U_{\gamma,a}}|\phi|^2\right)dy.
\end{align}
 Then there exist $b_{i,\beta}\in\mathbf{R}$, $i=1,\cdots,n$, with $\sum_{i=1}^{n}|b_{i,\beta}|^2=1 $ such that $\varphi_\beta=\sum_{i=1}^{n}b_{i,\beta}\varphi_i\in V_{n}$ with $\|\varphi_\beta\|^2_{L^2}=1$, and
\begin{align}\nonumber
\lambda_{n,\gamma, a}(\beta)\leq &\sup_{\|\phi\|_{L^2}=1,\phi\in V_{n}}\int_{-1}^{1}\left(|\phi'|^2+{U_{\gamma,a}''-\beta\over U_{\gamma,a}}|\phi|^2\right)dy=
\int_{-1}^{1}\left(|\varphi_\beta'|^2+{U_{\gamma,a}''-\beta\over U_{\gamma,a}}|\varphi_\beta|^2\right)dy\\ \nonumber
=&\int_{{1\over2}}^{1}\left(|\varphi_\beta'|^2-{\beta\over y}|\varphi_\beta|^2\right)dy=\sum_{i=1}^{n}|b_{i,\beta}|^2\int_{{1\over2}+{i-1\over 2n}}^{{1\over2}+{i\over 2n}}\left(|\varphi_i'|^2-{\beta\over y}|\varphi_i|^2\right)dy\leq C_n<0
\end{align}
uniformly for $a\in[0,1]$ and $\gamma\in\left(0,\frac{1}{\frac{|\beta|}{2}M+M_0}\right)$.

$(2)$  As is indicated in Remark $\ref{beta-gg-1}$ $(1)$, the number of negative eigenvalues of  the Rayleigh-Kuo operator
$
\mathcal{L}_{[\gamma,a]}$ could take any  positive  integer  as long as  $|\beta|$ is taken appropriately large. Then we prove that  the number is exactly $1$ for some $(\gamma,a)$ when $|\beta|$ is small.
Precisely,
 let $|\beta|<1$ and $a_0>0$. Then there exists $\delta=\delta(a_0)>0$ such that   $\lambda_{2,\gamma, a}>0$ for $a\in[0,a_0]$ and $\gamma\in(0,\delta)$.

%Remark: $a>\frac{11}{20}$ can be changed by $a>\frac{1}{2}+\epsilon$ for any $\epsilon>0$. Can we write $a>\frac{1}{2}$?\\
%\begin{proof} The second eigenvalue $\lambda_{2,\gamma,a}$ of $\mathcal{L}_{[\lambda,a]}$ satisfies that
%\begin{equation}\label{eigen-variation}
%\lambda_{2,\gamma,a}=\inf_{\dim(V)=2}\sup_{\phi\in H_0^1(-1,1), \phi\in V, \|\phi\|_{L^2(-1,1)}=1}\left(\|\phi'\|^2_{L^2(-1,1)}+\int_{-1}^1Q_{\gamma,a}(y)\phi(y)^2 dy\right).
%\end{equation}
For any 2-dimensional space $V_2=\text{span}\{ \phi_1,\phi_2\}\subset H_0^1(-1,1)$, there exists $0\neq(\xi_{1}, \xi_{2})\in \mathbb{R}^2$  such that  $\xi_{1}\phi_1(0)+\xi_{2}\phi_2(0)=0$. Define $\phi_{*}=\xi_{1}\phi_1+\xi_{2}\phi_2$.  We normalize   $\phi_{*}$ such that $\|\phi_{*}\|_{L^2(-1,1)}=1$. Then $ \phi_{*}\in V$, $\phi_{*}(0)=0$
 and
\begin{align*}
&\langle\mathcal{L}_{[\gamma,a]}\phi_*,\phi_*\rangle
=\|\phi'_{*}\|^2_{L^2(-1,1)}+\int_{-1}^1\frac{\left(a\gamma^2 erf\left(\frac{y-5\gamma}{\gamma}\right)I_{\gamma}(y-5\gamma)\right)''}{U_{\gamma,a}(y)}\phi_{*}(y)^2 dy\\
&+\int_{-1}^1\frac{\left(\frac{1}{2}\beta y^2I_\gamma(y)\right)''-\beta}{U_{\gamma,a}(y)}\phi_{*}(y)^2 dy
=\|\phi'_{*}\|^2_{L^2(-1,1)}+B_*(\gamma,a)+C_*(\gamma,a).
\end{align*}
For $B_*(\gamma,a)$, we have
\begin{align}\label{B*}
|B_*(\gamma,a)|
=\left|\int_{3\gamma}^{7\gamma}{1\over y}{\left(a\gamma^2 erf\left(\frac{y-5\gamma}{\gamma}\right)I_{\gamma}(y-5\gamma)\right)''\over 1+{a\gamma^2\over y}erf\left({y-5\gamma\over \gamma}\right)I_{\gamma} (y-5\gamma)}\phi_{*}(y)^2dy\right|.
\end{align}
Choose $\delta_1({a_0})>0$ such that $\left|1+{a\gamma^2\over y}erf\left({y-5\gamma\over \gamma}\right)I_{\gamma} (y-5\gamma)\right|>{1\over2}$ for  $y\in [{3\gamma},{7\gamma}]$, where $\gamma\in(0,\delta_1({a_0}))$. By \eqref{2-derivative},
we have
\begin{align}\label{B*2}
\left|{\left(\gamma^2 erf\left(\frac{y-5\gamma}{\gamma}\right)I_{\gamma}(y-5\gamma)\right)''\over 1+{a\gamma^2\over y}erf\left({y-5\gamma\over \gamma}\right)I_{\gamma} (y-5\gamma)}\right|\leq C_1
\end{align}
for $y\in[3\gamma,7\gamma]$.
For $C_*(\gamma,a)$, we have
\begin{align}\nonumber
|C_*(\gamma,a)|\leq&\left(\int_{-1}^{-7\gamma}+\int_{7\gamma}^{1}+\int_{-7\gamma}^{-2\gamma}+\int_{2\gamma}^{3\gamma}\right){|\beta|\over |y|} \phi_{*}(y)^2dy\\\nonumber
&+\left(\int_{-2\gamma}^{-\gamma}+\int_{\gamma}^{2\gamma}\right){1\over |y|} \frac{|g_{\gamma}(y)|}{|1+\frac{1}{2}\beta y I_{\gamma}(y)|}\phi_{*}(y)^2dy\\\label{C*}
&+\int_{3\gamma}^{7\gamma}{1\over y}{|\beta|\over \left|1+a\gamma\left({\gamma\over y}\right)erf\left({y-5\gamma\over \gamma}\right)I_{\gamma} (y-5\gamma)\right|} \phi_{*}(y)^2dy.
\end{align}
Choose $\delta_2({a_0})>0$ such that  $\left|1+a\gamma\left({\gamma\over y}\right)erf\left({y-5\gamma\over \gamma}\right)I_{\gamma} (y-5\gamma)\right|>{1\over2} $ for $y\in[{3\gamma},{7\gamma}]$ and ${\left|1+\frac{1}{2}\beta y I_{\gamma}(y)\right|}>{1\over2}$ for $y\in [{-2\gamma},{-\gamma}]\cup[{\gamma},{2\gamma}]$, where $\gamma\in(0,\delta_2(a_0))$. This, along with \eqref{g-estimate}, gives
\begin{align}\label{C*2}
{|\beta|\over \left|1+a\gamma\left({\gamma\over y_1}\right)erf\left({y_1-5\gamma\over \gamma}\right)I_{\gamma} (y_1-5\gamma)\right|}\leq 2|\beta|,\;\;
\frac{|g_{\gamma}(y)|}{|1+\frac{1}{2}\beta y I_{\gamma}(y)|}\leq C_2|\beta|
\end{align}
for $y_1\in[{3\gamma},{7\gamma}]$ and $y\in[{-2\gamma},{-\gamma}]\cup[{\gamma},{2\gamma}]$.
Let $\varepsilon_0>0$ and  $\varepsilon_1>0$ such that $\varepsilon_0+ \varepsilon_1+{|\beta|}={1}$ and set $\delta(a_0)=\min\left\{{1\over 4C_1+2C_2|\beta|+14|\beta|}\varepsilon_0,\delta_1(a_0),\delta_2(a_0)\right\}>0.$
\if0 Then
\beno
&|\phi_{*}(\tilde y_1)-\phi_{*}(\tilde y_2)|=\left|\int_{\tilde y_1}^{\tilde y_2}\phi_{*}'(\tilde y) d\tilde y\right|\leq \|
\phi_{*}'\|_{L^2({\tilde y_1},{\tilde y_2})}|\tilde y_2-\tilde y_1|^{1\over2}\\
&\leq \|
\phi_{*}'\|_{L^2(-1,1)}\delta_*^{1\over2}=\|
\phi_{*}'\|_{L^2(-1,1)}\min\{\sqrt{{1\over 4C_1\ln(7/3)}\varepsilon_0},\sqrt{{1\over 8|\beta|\ln(7/3)}\varepsilon_0},\sqrt{{1\over 8C_2|\beta|\ln2}\varepsilon_0}\}
 \eeno
  for $|\tilde y_1-\tilde y_2|\leq \delta_*.$
  Let  $\gamma>0$ be chosen such that $7\gamma<\delta_*$.
  \fi
   Since $\phi_{*}(0)=0$, we have
$|\phi_{*}(y)|^2\leq \|\phi_{*}'\|_{L^2(-1,1)}^2|y|$ for $y\in[-1,1]$.
Thus, by \eqref{B*}, \eqref{B*2}, \eqref{C*} and \eqref{C*2} we have
\begin{align}\nonumber
&|B_*(\gamma,a)|+|C_*(\gamma,a)|\\\nonumber
\leq& \|\phi_{*}'\|_{L^2(-1,1)}^2(4 C_1\gamma+6|\beta|\gamma+2C_2|\beta|\gamma+8|\beta|\gamma)+\left(\int_{-1}^{-7\gamma}+\int_{7\gamma}^{1}\right){|\beta|\over |y|} \phi_{*}(y)^2dy\\\label{BC*}
\leq&\|\phi_{*}'\|_{L^2(-1,1)}^2\varepsilon_0+\left(\int_{-1}^{-7\gamma}+\int_{7\gamma}^{1}\right){|\beta|\over |y|} \phi_{*}(y)^2dy
\end{align}
for $\gamma\in(0,\delta(a_0))$.
Since $\phi_{*}(0)=0$,  we have $\phi_{*}(y)^2\leq \|\phi_{*}'\|_{L^2(-1,0)}^2|y|$ for $y\in[-1,0]$, and $\phi_{*}(y)^2\leq \|\phi_{*}'\|_{L^2(0,1)}^2|y|$ for $y\in[0,1]$. Thus,
\begin{align}\label{C*-gamma}
\left(\int_{-1}^{-7\gamma}+\int_{7\gamma}^{1}\right){|\beta|\over |y|} \phi_{*}(y)^2dy\leq |\beta|(\|\phi_*'\|_{L^2(-1,0)}^2+\|\phi_*'\|_{L^2(0,1)}^2)=|\beta|\|\phi_*'\|_{L^2(-1,1)}^2.
\end{align}
\if0
Since $|\phi_*(y)-\phi_*(-y)|\leq \|\phi_*'\|_{L^2(0,y)}\sqrt{y}+\|\phi_*'\|_{L^2(-y,0)}\sqrt{y}\leq \|\phi_*'\|_{L^2(-y,y)}\sqrt{y},$  we have by \eqref{phi1-y} that
\ben\nonumber
&&\left|\left(\int_{-1}^{-7\gamma}+\int_{7\gamma}^{1}\right){-\beta\over y} \phi_{*}(y)^2dy\right|\\\nonumber
&=&|\beta|\int_{7\gamma}^1{1\over y}|\phi_*(y)-\phi_*(-y)||\phi_*(y)+\phi_*(-y)|dy\\\label{C*-gamma}
&\leq &|\beta|\int_{7\gamma}^1{1\over\sqrt{ y}}\sqrt{1-y}\|\phi_*'\|_{L^2(-y,y)}(\|\phi_*'\|_{L^2(y,1)}+\|\phi_*'\|_{L^2(-1,-y)})dy\\\nonumber
&\leq &|\beta|\int_{7\gamma}^1{1\over\sqrt{ y}}\sqrt{1-y}\left({1\over4}(\|\phi_*'\|_{L^2(-y,y)}+\|\phi_*'\|_{L^2(y,1)}+\|\phi_*'\|_{L^2(-1,-y)})^2\right)dy\\\nonumber
&\leq &{|\beta|\over4}\|\phi_*'\|_{L^2(-1,1)}^2\int_{0}^1{1\over\sqrt{ y}}\sqrt{1-y}dy={\pi|\beta|\over8}\|\phi_*'\|_{L^2(-1,1)}^2.
\een\fi
\if0
Thus, by \eqref{B*} and \eqref{B*2} we have
\begin{align}\label{B-d}
|B_*(\gamma,a)|\leq \int_{3\gamma}^{7\gamma}{1\over y}C_1\phi_{*}(y)^2dy\leq {{\|\phi'_{*}\|^2_{L^2(-1,1)}}\over 4C_1\ln(7/3)}\varepsilon_0C_1\ln(7/3)={\|\phi'_{*}\|^2_{L^2(-1,1)}\over4}\varepsilon_0.
\end{align}
Since $\phi_{*}(0)=0$,  we have $\phi_{*}(y)^2\leq \|\phi_{*}'\|_{L^2(-1,0)}|y|$ for $y\in[-1,0]$, and $\phi_{*}(y)^2\leq \|\phi_{*}'\|_{L^2(0,1)}|y|$ for $y\in[0,1]$. Thus,
\begin{align*}
\left(\int_{-7\gamma}^{-2\gamma}+\int_{2\gamma}^{3\gamma}\right){|\beta|\over |y|} \phi_{*}(y)^2dy\leq |\beta|(\|\phi_*'\|_{L^2(-1,0)}^2+\|\phi_*'\|_{L^2(0,1)}^2)=|\beta|\|\phi_*'\|_{L^2(-1,1)}^2.
\end{align*}
By \eqref{C*2}, we have
\begin{align*}
&\left(\int_{-2\gamma}^{-\gamma}+\int_{\gamma}^{2\gamma}\right){1\over |y|} \frac{|g_{\gamma}(y)|}{|1+\frac{1}{2}\beta y I_{\gamma}(y)|}\phi_{*}(y)^2dy\leq C_2|\beta|\left(\int_{-2\gamma}^{-\gamma}+\int_{\gamma}^{2\gamma}\right){1\over |y|} \phi_{*}(y)^2dy\\
\leq&
2C_2|\beta|{\|\phi_{*}'\|_{L^2(-1,1)}^2\over 6C_2|\beta|\ln2}\varepsilon_0\ln2={\|\phi'_{*}\|^2_{L^2(-1,1)}\over3}\varepsilon_0.
\end{align*}
Moreover,
\begin{align*}
&\int_{3\gamma}^{7\gamma}{1\over y}{|\beta|\over |1+a\gamma\left({\gamma\over y}\right)erf\left({y-5\gamma\over \gamma}\right)I_{\gamma} (y-5\gamma)|} \phi_{*}(y)^2dy\leq 2|\beta|{\|\phi_{*}'\|_{L^2(-1,1)}^2\over 6|\beta|\ln(7/3)}\varepsilon_0\ln(7/3)={\|\phi_{*}'\|_{L^2(-1,1)}^2\over 3}\varepsilon_0.
\end{align*}
Thus,  we get by \eqref{C*} that
\begin{align}\label{C-all}
|C_*(\gamma,a)|&\leq\left(|\beta|+{2\over 3}\varepsilon_0\right)\|\phi_{*}'\|_{L^2(-1,1)}^2.
\end{align}
\fi
By \eqref{eigen-mu}, we have
$
{\pi^2\over4}={\pi^2\over4}\|\phi_{*}\|_{L^2(-1,1)}^2\leq \|\phi_{*}'\|_{L^2(-1,1)}^2.
$
Then by \eqref{BC*} and \eqref{C*-gamma}, we have
\begin{align*}
&\langle\mathcal{L}_{[\gamma,a]}\phi_*,\phi_*\rangle
=\|\phi'_{*}\|^2_{L^2(-1,1)}+B_*(\gamma,a)+C_*(\gamma,a)\\
\geq&\|\phi'_{*}\|^2_{L^2(-1,1)}-\varepsilon_0\|\phi_{*}'\|_{L^2(-1,1)}^2-{|\beta|}\|\phi_*'\|_{L^2(-1,1)}^2
=\varepsilon_1{\|\phi'_{*}\|^2_{L^2(-1,1)}}\geq {\pi^2\over4}\varepsilon_1
\end{align*}
for $\gamma\in(0,\delta(a_0))$.
This, along with \eqref{n-th-eigenvalue-gamma-a}, implies that $\lambda_{2,\gamma,a}\geq{\pi^2\over4}\varepsilon_1>0$ for $0<\gamma<{\delta(a_0)}$.
\end{remark}

In contrast with the case $|\beta|\gg1$ in Remark \ref{beta-gg-1}, the principal eigenvalue of the Rayleigh-Kuo operator $\mathcal{L}_{[\gamma,0]}$ is positive for $|\beta|<{4\sqrt{2}\over3\pi}$ and $\gamma>0$ small enough.

\begin{lemma}\label{prin-0-pos}
Let $|\beta|<{4\sqrt{2}\over3\pi}$. If $\gamma>0$ is small enough, then $\lambda_{1,\gamma,0}>0$.
\end{lemma}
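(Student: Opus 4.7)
The plan is to use the variational characterization
$$\lambda_{1,\gamma,0}=\inf_{\phi\in H_0^1(-1,1),\;\|\phi\|_{L^2(-1,1)}=1}\langle\mathcal L_{[\gamma,0]}\phi,\phi\rangle$$
and to extract a strict positive lower bound by decomposing any admissible $\phi$ into its even and odd parts. The starting point is the expansion already carried out for $a=0$ in the proof of Lemma~\ref{lem-range-anybeta},
$$\langle\mathcal L_{[\gamma,0]}\phi,\phi\rangle=\|\phi'\|_{L^2(-1,1)}^2+B(\gamma)+C(\gamma),\qquad C(\gamma)=-\beta\Bigl(\int_{-1}^{-2\gamma}+\int_{2\gamma}^{1}\Bigr)\frac{\phi(y)^2}{y}\,dy,$$
where $B(\gamma)$ is supported on $|y|\in[\gamma,2\gamma]$. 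From that same proof, for any prescribed $\varepsilon_0>0$ there exists $\delta_0=\delta_0(\varepsilon_0)>0$ with $|B(\gamma)|\le\varepsilon_0\|\phi'\|_{L^2(-1,1)}^2$ uniformly in $\phi$, provided $0<\gamma<\delta_0$. This reduces the problem to a sharp control on $C(\gamma)$.

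The key new ingredient is cancellation obtained via the symmetry $y\mapsto -y$. Writing $\phi=\phi_e+\phi_o$ with $\phi_e(y)=\tfrac12(\phi(y)+\phi(-y))$ and $\phi_o(y)=\tfrac12(\phi(y)-\phi(-y))$, one has $\phi(y)^2-\phi(-y)^2=4\phi_e(y)\phi_o(y)$, so
$$C(\gamma)=-4\beta\int_{2\gamma}^{1}\frac{\phi_e(y)\phi_o(y)}{y}\,dy.$$
Since $\phi_o(0)=0$, Hardy's inequality (Lemma~\ref{Hardy type inequality2}, with the classical constant $4$ coming from the half-line extension) yields $\int_0^{1}\phi_o(y)^2/y^2\,dy\le 4\int_0^{1}\phi_o'(y)^2\,dy=2\|\phi_o'\|_{L^2(-1,1)}^2$, while evenness gives $\int_0^{1}\phi_e(y)^2\,dy=\tfrac12\|\phi_e\|_{L^2(-1,1)}^2$. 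Cauchy--Schwarz then produces
$$|C(\gamma)|\le 4|\beta|\,\|\phi_e\|_{L^2(-1,1)}\|\phi_o'\|_{L^2(-1,1)}.$$

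Assembling these estimates and invoking the Dirichlet Poincaré bound $\|\phi_e'\|_{L^2}^2\ge(\pi^2/4)\|\phi_e\|_{L^2}^2$ (the lowest Dirichlet eigenfunction on $(-1,1)$ being the even function $\cos(\pi y/2)$), one obtains
$$\langle\mathcal L_{[\gamma,0]}\phi,\phi\rangle\ge(1-\varepsilon_0)\Bigl(\tfrac{\pi^2}{4}\|\phi_e\|_{L^2}^2+\|\phi_o'\|_{L^2}^2\Bigr)-4|\beta|\,\|\phi_e\|_{L^2}\|\phi_o'\|_{L^2}.$$
The right-hand side is a quadratic form in the two nonnegative variables $\|\phi_e\|_{L^2}$ and $\|\phi_o'\|_{L^2}$, with determinant $(1-\varepsilon_0)^2\pi^2/4-4\beta^2$, hence positive definite iff $|\beta|<(1-\varepsilon_0)\pi/4$. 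Since $\tfrac{4\sqrt 2}{3\pi}<\tfrac{\pi}{4}$, one may first choose $\varepsilon_0>0$ so small that the hypothesis $|\beta|<\tfrac{4\sqrt 2}{3\pi}$ still implies $|\beta|<(1-\varepsilon_0)\pi/4$, then take $\gamma<\delta_0(\varepsilon_0)$, and denote the smallest eigenvalue of the form by $\mu_->0$. Using the odd-function Poincaré bound $\|\phi_o'\|_{L^2}^2\ge\pi^2\|\phi_o\|_{L^2}^2\ge\|\phi_o\|_{L^2}^2$ together with $\|\phi_e\|_{L^2}^2+\|\phi_o\|_{L^2}^2=1$ one has $\|\phi_e\|_{L^2}^2+\|\phi_o'\|_{L^2}^2\ge 1$, so $\langle\mathcal L_{[\gamma,0]}\phi,\phi\rangle\ge\mu_->0$ uniformly in $\phi$, whence $\lambda_{1,\gamma,0}\ge\mu_->0$. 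The main obstacle is extracting \emph{strict} positivity uniformly in $\phi$: any coarse pointwise estimate on $C(\gamma)$ loses the $y\leftrightarrow -y$ cancellation and produces a logarithmic divergence from $1/y$, so the even/odd decomposition followed by Hardy is really essential.
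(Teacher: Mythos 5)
Your proposal is correct, and it reaches the conclusion by a genuinely different route for the key estimate on $C(\gamma)$. Both you and the paper exploit the same $y\leftrightarrow -y$ cancellation (without it the $1/y$ singularity produces a logarithmic divergence, as you note), but the paper does so through pointwise bounds $|\phi(y)-\phi(-y)|\le\sqrt{2y}\,\|\phi'\|_{L^2(-y,y)}$ and $|\phi(y)+\phi(-y)|\le\sqrt{1-y}\,(\|\phi'\|_{L^2(y,1)}+\|\phi'\|_{L^2(-1,-y)})$, the elementary inequality $ab\le\tfrac14(a+b)^2$, and the explicit Beta integral $\int_0^1 y^{-1/2}(1-y)^{1/2}\,dy=\pi/2$, arriving at $|C(\gamma)|\le\tfrac{3\sqrt2\pi|\beta|}{8}\|\phi'\|_{L^2(-1,1)}^2$; the threshold $\tfrac{3\sqrt2\pi|\beta|}{8}<1$ is exactly where the hypothesis $|\beta|<\tfrac{4\sqrt2}{3\pi}$ originates, and positivity then follows in one line from $\|\phi'\|^2\ge\tfrac{\pi^2}{4}$. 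Your even/odd decomposition with the identity $\phi(y)^2-\phi(-y)^2=4\phi_e\phi_o$, the sharp Hardy constant, and the $2\times2$ quadratic form in $(\|\phi_e\|_{L^2},\|\phi_o'\|_{L^2})$ is more systematic and in fact yields positivity for the strictly larger range $|\beta|<\pi/4\approx 0.785$ versus the paper's $\tfrac{4\sqrt2}{3\pi}\approx 0.600$, so it proves slightly more than the stated lemma. The one point to flag is that the paper's Lemma \ref{Hardy type inequality2} is stated with an unspecified constant $C$, whereas your quadratic-form argument needs an explicit Hardy constant (any value below roughly $6.85$ would do); your appeal to the classical half-line inequality with constant $4$, via extension of $\phi_o'$ by zero, is the right fix and is standard. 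Your reuse of the bound $|B(\gamma)|\le\varepsilon_0\|\phi'\|_{L^2(-1,1)}^2$ from the proof of Lemma \ref{lem-range-anybeta} is legitimate, since the choice of $\delta_0$ there depends only on $\beta$ and $\varepsilon_0$ and not on $\phi$; this is also how the paper's own proof proceeds.
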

\begin{proof}
To estimate the lower bound of $\lambda_{1,\gamma,0}$, we take any real function $\phi\in H_0^1(-1,1)$ with $\|\phi\|_{L^2(-1,1)}=1$, and  decompose $\langle\mathcal{L}_{[\gamma,0]}\phi,\phi\rangle$ as follows.
\beno
\langle\mathcal{L}_{[\gamma,0]}\phi,\phi\rangle
=\|\phi'\|^2_{L^2(-1,1)}+B(\gamma)+C(\gamma),
\eeno
where $B(\gamma)$ and $C(\gamma)$ are the same terms in \eqref{L-phi-phi-derivative-B-C}.
Choose $\varepsilon_0>0$ and $\varepsilon_1>0$ such that ${3\sqrt{2}\pi|\beta|\over8}+\varepsilon_0+\varepsilon_1=1$. Choose $\delta_0>0$ such that $\left|1-\left({1\over2}\beta yI_\gamma (y)\right)^2\right|\geq{1\over2}$, $|{1\over2}\beta yI_\gamma (y)|+\sqrt{y}\leq {\varepsilon_0\over 8C \ln2}$ for $y\in[\gamma,2\gamma]$, where $\gamma\in(0,\delta_0)$. By a similar argument as in \eqref{B-gamma-estimates}-\eqref{B-gamma}, we have
\begin{align}\label{B-gamma2}
|B(\gamma)|\leq\|\phi'\|_{L^2(-1,1)}^2\varepsilon_0
\end{align} for $\gamma\in(0,\delta_0)$.
Since $|\phi(y)-\phi(-y)|\leq \|\phi'\|_{L^2(-y,y)}\sqrt{2y},$ and
 \ben\nonumber
 &&|\phi(y)+\phi(-y)|=|\phi(y)-\phi(1)+\phi(-y)-\phi(-1)|\\\nonumber
 &\leq& \sqrt{1-y}\left(\|\phi'\|_{L^2(y,1)}+\|\phi'\|_{L^2(-1,-y)}\right),
 \een we have
\ben\nonumber
|C(\gamma)|&\leq&|\beta|\int_{2\gamma}^1{1\over y}|\phi(y)-\phi(-y)||\phi(y)+\phi(-y)|dy\\\nonumber
&\leq &|\beta|\sqrt{2}\int_{2\gamma}^1{1\over\sqrt{ y}}\sqrt{1-y}\|\phi'\|_{L^2(-y,y)}\left(\|\phi'\|_{L^2(y,1)}+\|\phi'\|_{L^2(-1,-y)}\right)dy\\\nonumber
&\leq &|\beta|\sqrt{2}\int_{2\gamma}^1{1\over\sqrt{ y}}\sqrt{1-y}\left({1\over4}\left(\|\phi'\|_{L^2(-y,y)}+\|\phi'\|_{L^2(y,1)}+\|\phi'\|_{L^2(-1,-y)}\right)^2\right)dy\\\label{C-gamma}
&\leq &|\beta|{3\sqrt{2}\over4}\|\phi'\|_{L^2(-1,1)}^2\int_{0}^1{1\over\sqrt{ y}}\sqrt{1-y}dy={3\sqrt{2}\pi|\beta|\over8}\|\phi'\|_{L^2(-1,1)}^2.
\een
 Let $\tilde\lambda_1$
be the principal eigenvalue of $-\phi''=\tilde\lambda\phi$, $\phi(\pm1)=0$. Then $\tilde\lambda_1={\pi^2\over4}$ and
\begin{align}\label{eigen-mu}
{\pi^2\over4}={\pi^2\over4}\|\phi\|_{L^2(-1,1)}^2\leq \|\phi'\|_{L^2(-1,1)}^2.
\end{align}
 Then by \eqref{B-gamma2}, \eqref{C-gamma} and \eqref{eigen-mu}, we have
\beno
\langle\mathcal{L}_{[\gamma,0]}\phi,\phi\rangle
&=&\|\phi'\|^2_{L^2(-1,1)}+B(\gamma)+C(\gamma)
\geq\left(1-\varepsilon_0-{3\sqrt{2}\pi|\beta|\over8}\right)\|\phi'\|_{L^2(-1,1)}^2\\
&=&\varepsilon_1{\|\phi'\|^2_{L^2(-1,1)}}\geq {\pi^2\over4}\varepsilon_1.
\eeno
This, along with \eqref{lambda1variation}, implies that $\lambda_{1,\gamma,0}\geq{\pi^2\over4}\varepsilon_1>0$ for $0<\gamma<\delta_0$.
\end{proof}
By means of the above lemma, we get better conclusion about
the range of the principal eigenvalue $\lambda_{1,\gamma,a}$ with respect to $a$ for $|\beta|<{4\sqrt{2}\over3\pi}$.

\begin{lemma}\label{lem-range}
Let $|\beta|<{4\sqrt{2}\over3\pi}$.
For any $d<0$ and $a_d>{2d-6\over 3b_0}$, there exists $\delta=\delta(a_d)>0$ such that
$[d,0]\subset  \{\lambda_{1,\gamma,a}:a\in[0,a_d]\}$ for any fixed  $\gamma\in(0,\delta)$.
 \end{lemma}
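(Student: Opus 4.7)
The plan is to combine the positivity of $\lambda_{1,\gamma,0}$ at small $\gamma$ (Lemma \ref{prin-0-pos}) with the quantitative upper bound on $\lambda_{1,\gamma,a_d}$ coming from Lemma \ref{prin-neg} (2), and then bridge the two values by continuity of $a\mapsto \lambda_{1,\gamma,a}$, applying the intermediate value theorem. This is the same skeleton as the proof of Lemma \ref{lem-range-anybeta}, with the key improvement that here we can replace the abstract lower bound $C_\beta$ by $0$, thanks to the restriction $|\beta|<\tfrac{4\sqrt{2}}{3\pi}$.

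First, fix $d<0$ and $a_d > \tfrac{2d-6}{3 b_0}$. By Lemma \ref{prin-0-pos}, there is some $\delta_{0}>0$ such that $\lambda_{1,\gamma,0}>0$ for every $\gamma\in(0,\delta_{0})$. By Lemma \ref{prin-neg} (2), there is some $\delta_{a_d}>0$ such that $\lambda_{1,\gamma,a_d}<d$ for every $\gamma\in(0,\delta_{a_d})$. Thus for $\gamma$ in the intersection of these two intervals we already have
\[
\lambda_{1,\gamma,a_d}<d<0<\lambda_{1,\gamma,0}.
\]

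Next I would verify that the map $a\mapsto \lambda_{1,\gamma,a}$ is continuous on $[0,a_d]$ for all sufficiently small $\gamma$. This is exactly the argument used at the end of Lemma \ref{lem-range-anybeta}: the choice $\gamma<\tfrac{1}{\frac{|\beta|}{2}M+a_d M_0}$ together with \eqref{U-gamma-a-der} guarantees $U_{\gamma,a}'(y)>0$ on $[-1,1]$ uniformly for $a\in[0,a_d]$, so the Rayleigh--Kuo operator $\mathcal{L}_{[\gamma,a]}$ remains a regular Sturm--Liouville operator on $H^2\cap H^1_0(-1,1)$ whose potential $Q_{\gamma,a}$ depends continuously on $a$ in a sufficiently strong topology; standard perturbation theory of simple eigenvalues then yields continuity of $\lambda_{1,\gamma,a}$ in $a$.

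Setting $\delta=\delta(a_d):=\min\bigl\{\delta_{0},\,\delta_{a_d},\,\tfrac{1}{\frac{|\beta|}{2}M+a_d M_0}\bigr\}$, the intermediate value theorem applied to the continuous function $a\mapsto \lambda_{1,\gamma,a}$ on $[0,a_d]$ gives $[d,0]\subset\{\lambda_{1,\gamma,a}:a\in[0,a_d]\}$ for each fixed $\gamma\in(0,\delta)$. No real obstacle is expected here: the regularity/continuity argument is routine once Lemmas \ref{prin-neg} and \ref{prin-0-pos} are in hand, and the main conceptual input, namely that the lower end of the range is genuinely $0$ (rather than some negative $C_\beta$), has already been paid for by the smallness assumption on $|\beta|$.
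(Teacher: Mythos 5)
Your proposal is correct and follows essentially the same route as the paper's proof: combine Lemma \ref{prin-0-pos} (positivity at $a=0$) with Lemma \ref{prin-neg} (2) (value below $d$ at $a=a_d$), take $\delta=\min\bigl\{\delta_0,\delta_{a_d},\tfrac{1}{\frac{|\beta|}{2}M+a_dM_0}\bigr\}$ so that \eqref{U-gamma-a-der} gives regularity and hence continuity of $a\mapsto\lambda_{1,\gamma,a}$, and conclude by the intermediate value theorem. No gaps.
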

\begin{proof}
By Lemma \ref{prin-neg} (2), there exists $\delta_{a_d}>0$ such that $\lambda_{1,\gamma,a_d}<d$ for  $0<\gamma<\delta_{a_d}$.
By Lemma \ref{prin-0-pos},  there exists $\delta_{0}>0$ such that $\lambda_{1,\gamma,0}>0$ for  $0<\gamma<\delta_{0}$. Recall that $\left|\left(x^2\tilde{I}(x)\right)'\right|\leq M$ and $\left|\left(erf(x)\tilde{I}(x)\right)'\right|\leq M_0$ for $x\in \mathbb{R}$.
For  fixed $0<\gamma<\delta=\delta(a_d)=\min\left\{\delta_0,\delta_{a_d},\frac{1}{\frac{|\beta|}{2}M+a_dM_0}\right\}$,
$\lambda_{1,\gamma,a}$ is  continuous on $a\in[0,a_d]$ by a similar argument with \eqref{U-gamma-a-der}.
%\textcolor{red}{Should we use the same notation of composite function as in lemma1.1?}\\
Since $\lambda_{1,\gamma,0}>0$, $\lambda_{1,\gamma,a_d}<d$ and $\lambda_{1,\gamma,a}$ is  continuous on $a\in[0,a_d]$, we have $[d,0]\subset  \{\lambda_{1,\gamma,a}:a\in[0,a_d]\}$ for  fixed  $\gamma\in(0,\delta)$.
\end{proof}

Now, we are in the position to prove  Theorem \ref{thm1}.
\if0
The difference between (1) and (2) is that, for $|\beta|$ small, we can take $(\gamma,a)$ such that $\lambda_{1,\gamma,a}$ takes any negative number; while for $|\beta|$ not small, this negative number could only be taken less than a constant.
\fi

\begin{proof} First, we prove (1).
Let $T>0$ and $\alpha={2\pi\over T}$. Choose  $k\in\mathbb{Z}^+$ such that  $d_k:=-(k\alpha)^2<-|C_\beta|$. Set $a_{2d_k}=\frac{4d_k-6}{3b_0}+1$. Taking $d=2d_k$ in Lemmas \ref{prin-neg} (2) and \ref{lem-range-anybeta}, there exists $\delta(a_{2d_k})>0$ small enough such that
 $\lambda_{1,\gamma,a_{2d_k}}<2d_k$
and $[2d_k,C_\beta]\subset  \{\lambda_{1,\gamma,a}:a\in[0,a_{2d_k}]\}$ for any $\gamma\in(0,\delta(a_{2d_k}))$.

 For any $\tilde{d}\in[2d_k,C_\beta]$ and $\gamma\in(0,\delta(a_{2d_k}))$, we can define
$$f_{\tilde {d}}(\gamma)=\inf_{0<a<a_{2d_k}}\{a|\lambda_{1,\gamma,a}=\tilde{d}\}.$$
Thus,
\ben\label{lin18}
\sqrt{-\lambda_{1,\gamma,f_{\tilde d_k}(\gamma)}}=\sqrt{-\tilde d_k}<k\alpha<\sqrt{-{3\over2}d_k}=\sqrt{-\lambda_{1,\gamma,f_{3d_k/2}(\gamma)}},
\een
where $\tilde d_k\in(d_k,-|C_\beta|)$.
Now let us check that  for $a\in[0,a_{2d_k}]$ and  $s\in[0,\frac{5}{2})$,
\ben\label{0.4}
\|(U_{\gamma,a},0)-(y,0)\|_{H^s(D_T)} \to 0
\een
as $\gamma\to 0^+$.
 In fact, using the Fourier transform, we have
\beno
&&\left\|f \left(\frac{y}{\gamma}\right) \right\|^2_{\dot{H}^s(\mathbb{R})}=\int_\mathbb{R}|\xi|^{2s}\gamma^2|\hat{f}(\gamma \xi)|^2 d\xi=\int_\mathbb{R}|\eta|^{2s}\gamma^{-2s+1}|\hat{f}(\eta)|^2 d\eta=\gamma^{-2s+1}\|f(x)\|^2_{\dot{H}^s(\mathbb{R})},\\
&&\left\|f\left(\frac{y}{\gamma} \right) \right\|^2_{L^2(\mathbb{R})}=\int_\mathbb{R}\gamma^2|\hat{f}(\gamma \xi)|^2 d\xi=\int_\mathbb{R}\gamma |\hat{f}(\eta)|^2 d\eta=\gamma \|f(x)\|^2_{L^2(\mathbb{R})},\\
&&\left\|f \left(\frac{y}{\gamma}\right) \right\|^2_{H^s(\mathbb{R})}\leq C\left(\left\|f \left(\frac{y}{\gamma}\right) \right\|^2_{\dot{H}^s(\mathbb{R})}+\left\|f\left(\frac{y}{\gamma} \right) \right\|^2_{L^2(\mathbb{R})}\right)
\leq C\gamma^{-2s+1}\|f(x)\|^2_{H^s(\mathbb{R})}
\eeno
for any $f\in H^s(\mathbb{R})$ and $\gamma<1$.
Then
\begin{align*}
\|U_{\gamma,a}-y\|_{H^s(-1,1)}\leq& a_{2d_k}\left\|\gamma^2 erf\left(\frac{y-5\gamma}{\gamma}\right){I}_\gamma\left({y-5\gamma}\right)\right\|_{H^{s}(\mathbb{R})}
+{1\over2}|\beta|\left\|y^2{I}_\gamma({y})\right\|_{{H}^{s}(\mathbb{R})}\\
=&\gamma^2 \left(a_{2d_k}\left\|erf\left(\frac{y-5\gamma}{\gamma}\right)\tilde{I}\left(\frac{y-5\gamma}{\gamma}\right)\right\|_{H^{s}(\mathbb{R})}
+{1\over2}|\beta|\left\|\left(\frac{y}{\gamma}\right)^2\tilde{I}\left(\frac{y}{\gamma}\right)\right\|_{H^{s}(\mathbb{R})}\right)\\
=&\gamma^{2-\frac{2s-1}{2}} \left(Ca_{2d_k}\|erf(x-5)\tilde{I}(x-5)\|_{H^{s}(\mathbb{R})}
+C|\beta|\|x^2\tilde{I}(x)\|_{H^{s}(\mathbb{R})}\right)\\
=&C\gamma^{\frac{5}{2}-s},
\end{align*}
\begin{comment}
Using the Fourier transform, we know that
\beno
\|\gamma e^{-(\frac{y}{\gamma})^2}\|^2_{L^2(\mathbb{R})}&=&\sqrt{\pi} \gamma^{3}\int_\mathbb{R}e^{-2\pi^2(\gamma\xi)^2}d(\gamma\xi)=C_{1,s}\gamma^{3}\\
\|\gamma e^{-(\frac{y}{\gamma})^2}\|^2_{\dot{H}^s(\mathbb{R})}&=&\sqrt{\pi} \gamma^{3-2s}\int_\mathbb{R}|\gamma\xi|^{2s}e^{-2\pi^2(\gamma\xi)^2}d(\gamma\xi)=C_{2,s}\gamma^{3-2s}\\
\|y^2\tilde{I}(\frac{y}{\gamma})\|^2_{\dot{H}^{1}(\mathbb{R})}&=&\gamma^{3}\int_\mathbb{R}|\gamma\xi|^{2}|\mathcal{F}(\tilde{I})''(\gamma\xi)|^2d(\gamma\xi)=C_{3,s}\gamma^{3}\\
\|y^2\tilde{I}(\frac{y}{\gamma})\|^2_{\dot{H}^{s+1}(\mathbb{R})}&=&\gamma^{3-2s}\int_\mathbb{R}|\gamma\xi|^{2(s+1)}|\mathcal{F}(\tilde{I})''(\gamma\xi)|^2d(\gamma\xi)=C_{4,s}\gamma^{3-2s}
\eeno
\end{comment}
%\textcolor{red}{(There is some problem: $\tilde{C}_s$ exists? In another word, smooth, compact supported imply $H^s$?)}\\
which implies (\ref{0.4}) by the assumption that $s< \frac{5}{2}$.
Here, $erf(\cdot-5)\tilde{I}(\cdot-5)$ and $(\cdot)^2\tilde{I}(\cdot)$ are smooth functions with compact support and thus, belong to $H^{s}(\mathbb{R})$.

For any $\varepsilon>0$, by taking $\delta(a_{2d_k})>0$ smaller, we have
\ben\label{lin20}
\|(U_{\gamma,a},0)-(y,0)\|_{H^s(D_T)}\leq \frac{\varepsilon}{2} \quad\textrm{for}\quad (\gamma,a)\in\Omega_{d_k},
\een
where $\Omega_{d_k} =\{(\gamma,a)|0< \gamma\leq \delta(a_{2d_k}),  f_{\tilde d_k}(\gamma)\leq a\leq f_{3d_k/2}(\gamma)\}$.
%Then we divide the discussion into two cases.\\
%{\bf Case 1.} There exists $a_0\in[0,a_{d_k}]$ such that  $\lambda_{1,\gamma,a_0}=-k^2\alpha^2$ and $ \lambda_{n,\gamma,a_0}\neq0$ for $n\geq2$.

By Lemma \ref{lem-bifurcation}, for any $(\gamma,a)\in\Omega_{d_k}$, there exists non-shear steady flows  of the $\beta$-plane equation near the shear flow $(U_{\gamma,a}(y),0)$. For  fixed $0<\gamma<\delta(a_{2d_k})$, there exists $r_0>0$ (by compactness, independent of $a\in(f_{\tilde d_k}(\gamma), f_{3d_k/2}(\gamma))$ ) such that for any $0<r<r_0$, there exists a non-shear steady solution
\beno
(u_{\gamma,a;r}(x,y), v_{\gamma,a;r}(x,y)),
\eeno
 which has $x$-period $T(\gamma,a;r)$ and
\beno
\|(u_{\gamma,a;r}, v_{\gamma,a;r})-(U_{\gamma,a},0)\|_{H^3(D_{T(\gamma,a;r)})}\leq r,
\eeno
where $D_{T(\gamma,a;r)}= [0,T(\gamma,a;r)]\times [-1,1]$.
Moreover, for $a\in(f_{\tilde d_k}(\gamma), f_{3d_k/2}(\gamma))$, we have
\beno
\frac{2\pi}{T(\gamma,a;r)}\to \sqrt{-\lambda_{1,\gamma,a}} \quad \textrm{as} \quad r\to 0^+.
\eeno
By (\ref{lin18}), when $r_0$ is small enough,
\beno
T(\gamma,f_{3d_k/2}(\gamma);r)<{T\over k}<T(\gamma,f_{\tilde d_k}(\gamma);r) \quad \textrm{for}\quad 0<r<r_0.
\eeno
Since $T(\gamma,a;r)$ is continuous with respect to $a$ (by Crandall-Rabinowitz Theorem) for each $\gamma\in(0,\gamma_0)$ and $r>0$ small enough, there exists $a_T=a_T(\gamma,r)\in(f_{\tilde d_k}(\gamma), f_{3d_k/2}(\gamma))$ such that $T(\gamma,a_T;r)={T\over k}$. Then
\beno
(u_{\gamma;r}(x,y),v_{\gamma;r}(x,y)):=(u_{\gamma,a_T;r}(x,y),v_{\gamma,a_T;r}(x,y))
\eeno
is a non-shear steady solution to \eqref{eq}-\eqref{bc} with minimal $x$-period ${T\over k}$ (and thus $x$-period $T$), and
\beno
\|(u_{\gamma;r},v_{\gamma;r})-(U_{\gamma,a_T},0)\|_{H^3(D_T)}=\sqrt{k}\|(u_{\gamma;r},v_{\gamma;r})-(U_{\gamma,a_T},0)\|_{H^3(D_{{T\over k}})}\leq \sqrt{k}r.
\eeno
Thus, for any $0<r<{1\over \sqrt{k}}\min \{r_0,\frac{\varepsilon}{2}\}$, combining with (\ref{lin20}) we have
\beno
\|(u_{\gamma;r},v_{\gamma;r})-(y,0)\|_{H^s(D_T)}\leq \varepsilon.
\eeno

Next, we prove (2). We replace $C_\beta$ by $0$, and choose $k=1$    in (1). Let $d_1:=-\alpha^2<0$.
Set $a_{2d_1}=\frac{4d_1-6}{3b_0}+1$. Taking $d=2d_1$ in Lemmas \ref{prin-neg} (2) and \ref{lem-range}, there exists $\delta(a_{2d_1})>0$ small enough such that
 $\lambda_{1,\gamma,a_{2d_1}}<2d_1$
and $[2d_1,0]\subset  \{\lambda_{1,\gamma,a}:a\in[0,a_{2d_1}]\}$ for any $\gamma\in(0,\delta(a_{2d_1}))$.
The rest of the proof is a repeated process of  (1).
\end{proof}

\section{asymptotic stability of shear flows near Couette for $\beta$-plane equation on $\mathbb{T} \times \mathbb{R}$}
This is a generalization of Theorem 1 in \cite{BM}. We use the same notation, conventions, coordinate transform and time-dependent norm. So we omit the same details and only point out the differences. To avoid confusion with former sections, we use $w,\vartheta$ {to denote the vorticity perturbation and changed vertical coordinate} here, instead of $\omega,v$ in \cite{BM}, respectively.

The change of coordinates is $(t,x,y)\rightarrow(t,z,\vartheta)$, where
\beno
z(t,x,y)&=&x-t\vartheta,\\
\vartheta(t,y)&=&y+\frac{1}{t}\int_0^t \langle U^x\rangle(\tau,y)d\tau.
\eeno
Here, $\langle U^x\rangle=\frac{1}{2\pi}\int_{\mathbb{T}_{2\pi}} U^x dx$.
Define $f(t,z,\vartheta)=w(t,x,y)$ and $\phi(t,z,\vartheta)=\tilde\psi(t,x,y)$. We {denote} $[\pa_t \vartheta](t,\vartheta)=\pa_t \vartheta(t,y)$, $\vartheta'(t,\vartheta)=\pa_y \vartheta(t,y)$ and $\vartheta''(t,\vartheta)=\pa_{yy}\vartheta(t,y)$.
Then we get the evolution equation for $f$,
\beno
\pa_t f+[\pa_t \vartheta]\pa_\vartheta f+\pa_t z\pa_z f=-y\pa_z f+\vartheta'\pa_\vartheta \phi\pa_z f-\vartheta'\pa_z \phi\pa_\vartheta f-\beta\pa_z\phi.
\eeno
Notice that $\pa_t z=-y-\langle U^x \rangle (t,y)$. Using the Biot-Savart law, we can transform $\langle U^x\rangle$ to $-\vartheta'\pa_\vartheta\langle\phi\rangle$ in the new variables. Then the equation becomes
\beno
\pa_t f-(\vartheta'\pa_\vartheta(\phi-\langle\phi\rangle))\pa_z f+([\pa_t \vartheta]+\vartheta'\pa_z\phi)\pa_\vartheta f+\beta\pa_z\phi=0.
\eeno
The Biot-Savart law in the new variables reads
\beno
f=\pa_{zz}\phi+(\vartheta')^2(\pa_\vartheta-t\pa_z)^2\phi+\vartheta''(\pa_\vartheta-t\pa_z)\phi:=\Delta_t \phi.
\eeno
The $\beta$-plane equation (\ref{vor-eq}) becomes
\begin{eqnarray}
\label{eq1}
\left\{
\begin{aligned}
&\pa_t f+u\cdot\nabla_{z,\vartheta} f+\beta\pa_z \phi=0,\\
&u=(0,[\pa_t \vartheta])+\vartheta'\nabla^\perp_{z,\vartheta}P_{\neq 0}\phi,\\
&\phi=\Delta_t^{-1}{f}.
\end{aligned}
\right .
\end{eqnarray}
Without confusion we write $\nabla_{z,\vartheta}=\nabla$ in the following. Let $\tilde{u}(t,z,\vartheta)=U^x(t,x,y)$ and $p(t,z,\vartheta)=P(t,x,y)$. Then we have the equation for $\tilde{u}$, %by the same derivation on $f$,
\beno
\pa_t\tilde{u}+[\pa_t \vartheta]\pa_\vartheta\tilde{u}+\pa_z P_{\neq 0}\phi+\vartheta'\nabla^\perp P_{\neq 0}\phi\cdot\nabla\tilde{u}=-\pa_z p+\beta y U^y.
\eeno
Isolating the zero mode of the velocity field by taking average in $z$, we have
\beno
\pa_t \tilde{u}_0+[\pa_t \vartheta]\pa_\vartheta\tilde{u}_0+\vartheta'\langle\nabla^\perp P_{\neq 0}\phi\cdot\nabla\tilde{u} \rangle=0.
\eeno
Here, the term $\beta y U^y$ brings nothing new since its average in $z$ is zero.
Finally, $\vartheta'$ and $[\pa_t \vartheta]$ are solutions to (2.13) in \cite{BM}
coupled to (\ref{eq1}).
\begin{comment}
\ben
\pa_t(t(\vartheta'-1))+[\pa_t \vartheta]t\pa_\vartheta \vartheta'&=&-f_0,\\
\pa_t[\pa_t \vartheta]+\frac{2}{t}[\pa_t \vartheta]+[\pa_t \vartheta]\pa_\vartheta[\pa_t \vartheta]&=&-\frac{\vartheta'}{t}\langle \nabla^\perp P_{\neq 0}\phi\cdot\nabla\tilde{u}\rangle,\\
\vartheta''(t,\vartheta)&=&\vartheta'(t,\vartheta)\pa_\vartheta \vartheta'(t,\vartheta).
\een
Note that to leading order in $\epsilon$, one can express $\vartheta'-1$ as a time average of $-f_0$. Note also that we have a simple expression for $\pa_\vartheta\tilde{u}_0$ from the Biot-Savart law:
\ben
\pa_\vartheta \tilde{u}_0(t,\vartheta)=\frac{1}{\vartheta'(t,\vartheta)}\pa_y\vec{U}_0^x(t,y)=-\frac{1}{\vartheta'(t,\vartheta)}\omega_0(t,y)=-\frac{1}{\vartheta'(t,\vartheta)}f_0(t,\vartheta).
\een
\end{comment}

Define the same time-dependent norm and main energy as in Subsection 2.3 of \cite{BM} by
\beno
\left\|A(t)f(t) \right\|_{L^2(\Omega)}^2&=&\sum_{k\in \mathbb{Z}}\int_\mathbb{R} |A_k(t,\eta)\hat{f}_k(t,\eta) |^2 d\eta,\\
A_k(t,\eta)&=&e^{\lambda(t)|k,\eta|^s}\langle k,\eta\rangle^\sigma J_k(t,\eta);\\
\tilde{A}_k(t,\eta)&=&e^{\lambda(t)|k,\eta|^s}\langle k,\eta\rangle^\sigma \tilde{J}_k(t,\eta);\\
E(t)&=&\frac{1}{2}\left\|A(t)f(t) \right\|_{L^2(\Omega)}^2+E_\vartheta(t).
%\\E_\vartheta (t)&=&\langle t \rangle^{2+2s}\| \frac{A}{\langle \pa_\vartheta \rangle^s} \vartheta' \pa_\vartheta[\pa_t \vartheta](t)\|_{L^2}^2+\langle t\rangle^{4-K_D\epsilon}\|[\pa_t \vartheta](t)\|^2_{\mathcal{G}^{\lambda(t),\sigma-6}}\\
%&&+\frac{1}{K_\vartheta}\|A^R(\vartheta'-1)(t)\|^2_{L^2}.
\eeno
See (2.15)-(2.18) and (2.22) in \cite{BM} for more details of $\lambda(t)$, $J_k(t,\eta)$, $\tilde{J}_k(t,\eta)$ and $E_\vartheta(t)$.
The goal is to prove that the energy $E(t)$ is uniformly bounded for all time, as long as $\epsilon$ is small enough.
The local well-posedness theory for the $\beta$-plane equation in Gevrey spaces is similar to that for 2D Euler equation, so we have the same result as Lemma 2.1 in \cite{BM}, and thus we can focus on times $t\geq 1$.
Then we will prove the same bootstrap proposition as Proposition 2.1 in \cite{BM}, under the same bootstrap hypotheses for $t\geq 1$,\\
(B1) $E(t)\leq 4\epsilon^2$;\\
(B2) $\|\vartheta'-1\|_{L^\infty} \leq \frac{3}{4}$;\\
(B3) `CK' integral estimates (for `Cauchy-Kovalevskaya'):
\beno
\int_1^t \bigg[CK_\lambda(\tau)+CK_w(\tau)+CK_w^{\vartheta,2}(\tau)+CK_\lambda^{\vartheta,2}(\tau)
+K_\vartheta^{-1}\left(CK_w^{\vartheta,1}(\tau)+CK_\lambda^{\vartheta,1}(\tau)\right)\\
+K_\vartheta^{-1} \sum_{i=1}^2 \left(CCK_w^{i}(\tau)+CK_\lambda^{i}(\tau)\right) \bigg] d\tau \leq 8\epsilon^2.
\eeno
The definitions of $CK$ terms are same as (2.21), (2.29) and (2.31) in \cite{BM}.
Let $I_E$ be the connect set of times $t\geq 1$, on which the bootstrap hypotheses \textup{(B1-B3)} hold. To be rigorous, we only consider solutions with regularized initial data while calculating and finally perform a passage to the limit. So $E(t)$ is a continuous function of $t$, and thus $I_E$ is a closed interval $[1,T^*]$ with $T^*>1$. If we can prove that $I_E$ is also open (under the subspace topology of $[1,+\infty)$), then $I_E=[1,+\infty)$.

\begin{proposition}[Bootstrap]
There exists $\epsilon_0\in(0,\frac{1}{2})$ depending only on $\lambda_0,\lambda',s$ and $\sigma$ such that if $\epsilon<\epsilon_0$, and the bootstrap hypotheses \textup{(B1-B3)} hold on $[1,T^*]$, then for any $t\in [1,T^*]$,\\
$(1)$ $E(t)< 2\epsilon^2$;\\
$(2)$ $\|\vartheta'-1\|_{L^\infty} < \frac{5}{8}$;\\
$(3)$ `CK' controls satisfy:
\beno
\int_1^t \bigg[CK_\lambda(\tau)+CK_w(\tau)+CK_w^{\vartheta,2}(\tau)+CK_\lambda^{\vartheta,2}(\tau)
+K_\vartheta^{-1}\left(CK_w^{\vartheta,1}(\tau)+CK_\lambda^{\vartheta,1}(\tau)\right)\\
+K_\vartheta^{-1} \sum_{i=1}^2 \left(CCK_w^{i}(\tau)+CK_\lambda^{i}(\tau)\right) \bigg] d\tau \leq 6\epsilon^2.
\eeno
\end{proposition}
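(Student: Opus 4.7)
The plan is to follow the proof framework of Proposition 2.1 in \cite{BM} line by line, replacing only the steps where the additional Coriolis term $\beta\pa_z\phi$ in \eqref{eq1} enters the energy identity. Differentiating the main energy $E(t)$ in time along solutions of \eqref{eq1} and integrating by parts as in \cite{BM} produces every transport/reaction/remainder term treated there, together with a single new contribution
\begin{equation*}
\mathcal{B}(t):=-\beta\int_{\Omega} Af\,A(\pa_z\phi)\,dzd\vartheta.
\end{equation*}
Using $\pa_z\langle\phi\rangle\equiv 0$ we have $\mathcal{B}=-\beta\int Af\,A(\pa_z P_{\neq 0}\phi)\,dzd\vartheta$, so only the nonzero modes matter. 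Following the hint in the excerpt, the first step is to split
\begin{equation*}
P_{\neq 0}\phi=\Delta_L^{-1}P_{\neq 0}f+\phi_1,\qquad \phi_1:=P_{\neq 0}\phi-\Delta_L^{-1}P_{\neq 0}f,
\end{equation*}
which decomposes $\mathcal{B}$ into a linear piece $\mathcal{B}_L=-\beta\int Af\,A(\pa_z\Delta_L^{-1}P_{\neq 0}f)\,dzd\vartheta$ and a quasi-reaction piece $\mathcal{B}_R=-\beta\int Af\,A(\pa_z\phi_1)\,dzd\vartheta$.

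For $\mathcal{B}_L$, on the Fourier side the multiplier $\pa_z\Delta_L^{-1}$ acts on mode $(k,\eta)$ as $ik/(k^2+(\eta-kt)^2)$, and for $k\neq 0$ this symbol is bounded by $\tfrac{1}{\langle t-\eta/k\rangle^2}$ up to harmless factors; this is exactly the gain that powers the linear inviscid damping estimate. Distributing $A$ onto the two factors and comparing Fourier weights at $(k,\eta)$ with $(k,\xi)$ produces a commutator that is absorbed by the $CK_\lambda$ and $CK_w$ terms already in hypothesis (B3), giving $|\mathcal{B}_L|\lesssim |\beta|\,\epsilon^{2}\langle t\rangle^{-2}$ after using (B1). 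For $\mathcal{B}_R$, one follows the Reaction estimate (Proposition 2.3 in \cite{BM}) verbatim; the point is that $\phi_1$ enjoys the same improved elliptic control as the quantity estimated there, because $\Delta_L\phi_1=-(\Delta_t-\Delta_L)\phi=-\left((\vartheta')^2-1\right)(\pa_\vartheta-t\pa_z)^2\phi-\vartheta''(\pa_\vartheta-t\pa_z)\phi$, which is already quadratically small in $(\vartheta'-1,\vartheta'')$ by (B2) and the $\vartheta$-energy. Improving Proposition 2.4 of \cite{BM} to give this bound on $A\pa_z\phi_1$ in the energy norm is the technical lemma to be proved; once obtained, the paraproduct decomposition of \cite{BM} yields $\int_1^t|\mathcal{B}_R|\,d\tau\lesssim |\beta|\,\epsilon^3$ with the loss absorbed by the same $CK$ quantities as Reaction.

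With $\mathcal{B}$ controlled, adding back all the $\beta=0$ terms estimated in \cite{BM} gives
\begin{equation*}
E(t)+\int_1^t\bigl(\text{all CK-type terms}\bigr)d\tau\;\le\;\tfrac12 E(1)+C(\lambda_0,\lambda',s,\sigma)\bigl(1+|\beta|\bigr)\epsilon^3,
\end{equation*}
and the analogous improvement for $\|\vartheta'-1\|_{L^\infty}$ follows from the unchanged coordinate-system estimates of Section 3 of \cite{BM}, since the equations \eqref{eq1} for $\vartheta'$ and $[\pa_t\vartheta]$ are identical to the Euler case up to the coupling through $\phi$, which is now controlled. Choosing $\epsilon_0$ small enough so that $C(1+|\beta|)\epsilon_0<\tfrac14$ upgrades (B1)--(B3) to the strict inequalities (1)--(3) of the proposition, closing the bootstrap.

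The main obstacle is the quantitative improvement of Proposition 2.4 of \cite{BM} needed to treat $\mathcal{B}_R$: one must show that $A\pa_z\phi_1$ can be split in a paraproduct into a ``reaction-like'' piece (high frequency on $f$, low on coefficients) and a ``transport-like'' remainder, both with the same time decay and commutator structure as the original Reaction term, despite the extra $\pa_z$ and the fact that $\phi_1$ involves the inverse of the full perturbed Laplacian $\Delta_t$ rather than $\Delta_L$. Once this elliptic lemma is in place, all remaining estimates are direct adaptations of those in \cite{BM}, and the conclusions of Theorem \ref{thm-damping} (the scattering of $f$ to $f_\infty$ and the decay rates of $\vec U$) follow from the uniform bound on $E(t)$ exactly as in Section 2.2 of \cite{BM}.
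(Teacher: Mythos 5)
Your overall architecture matches the paper's: the only new contribution to $\tfrac{d}{dt}\|Af\|_{L^2}^2$ is $-\beta\int Af\,A(\partial_z\phi)\,dx$; you reduce to the nonzero modes, introduce $\phi_1=P_{\neq 0}\phi-\Delta_L^{-1}P_{\neq 0}f$, treat the $\phi_1$-piece as a Reaction-type term closed by an improved (``precision'') version of the elliptic control of Proposition 2.4 in \cite{BM}, and observe that the coordinate-system estimates are unaffected because $\langle\beta\partial_z\phi\rangle=0$. All of that is exactly what the paper does.

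The genuine gap is your treatment of the linear piece $\mathcal{B}_L=-\beta\int Af\,A(\partial_z\Delta_L^{-1}P_{\neq 0}f)\,dx$. Since $\partial_z\Delta_L^{-1}$ is a Fourier multiplier, $A$ commutes with it exactly, so there is no commutator to absorb into $CK_\lambda$ and $CK_w$; on the Fourier side the term equals a constant times $\beta\sum_{k\neq 0}\int A_k(\eta)^2|\hat f_k(\eta)|^2\,\overline{ik}\,(k^2+(\eta-kt)^2)^{-1}\,d\eta$, and the correct observation (the one the paper uses) is that this vanishes identically: the symbol is purely imaginary while the pairing is real, so $\int Af\,A\partial_z\Delta_L^{-1}f\,dx=0$. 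Your proposed bound $|\mathcal{B}_L|\lesssim|\beta|\epsilon^2\langle t\rangle^{-2}$ is not available: the symbol satisfies only $|k|(k^2+(\eta-kt)^2)^{-1}\le |k|^{-1}\langle t-\eta/k\rangle^{-2}$, which has no decay in $t$ near the critical times $t\approx\eta/k$; and even the time-integrated brute-force estimate only yields $O(|\beta|\epsilon^2)$, which is of the same order as the energy itself and therefore cannot be used to improve (B1) from $4\epsilon^2$ to $2\epsilon^2$ no matter how small $\epsilon_0$ is chosen. You need the exact cancellation, not an estimate. Once that identity is in place, the remainder of your argument --- the split of $\int AfA\partial_z\phi_1\,dx$ into resonant and non-resonant parts, Cauchy--Schwarz with the weights $\sqrt{\partial_t w/w}$ and $|\nabla|^{s/2}\langle t\rangle^{-s}$, and absorption via the precision elliptic control --- is precisely the paper's proof of its Propositions 6.2 and 6.3.
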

We prove the bootstrap proposition in the same way as \cite{BM}. The first step is to prove (2.19) in \cite{BM}. So it is natural to compute the derivative of $E(t)$. The first difference comes from
\beno
\frac{1}{2}\frac{d}{dt}\int_{\mathbb{T}_{2\pi}} |Af|^2 dx=-CK_\lambda-CK_w-\int_{\mathbb{T}_{2\pi}} AfA(u\cdot\nabla f)dx-\int_{\mathbb{T}_{2\pi}} AfA(\beta\pa_z\phi)dx.
\eeno
Compared with (2.20) in \cite{BM}, there is an additional term $-\int AfA(\beta\pa_z\phi)dx$ on the right hand side, so we need to control this term besides three old contributions: Transport, Reaction and Remainder (see (2.26) in \cite{BM} for details). Controls of old contributions have no difference with Propositions 2.2, 2.3 and 2.6 in \cite{BM}. For Proposition 2.4 in \cite{BM}, in fact their proof can get a more precise elliptic control (see the treatments of $T^1$ and $T^2$ in Subsection 4.2 of \cite{BM} for details), which will be useful in dealing with our new term.
\begin{proposition}[Precision elliptic control]
\label{Precision}  Under the bootstrap hypotheses \textup{(B1-B3)},
\beno
\begin{split}
\left\|\langle \frac{\pa_\vartheta}{t\pa_z}\rangle^{-1}\left(\pa_z^2+(\pa_\vartheta-t\pa_z)^2\right)\left(\frac{|\nabla|^\frac{s}{2}}{\langle t\rangle^s}A+
\sqrt{\frac{\pa_t w}{w}}\tilde{A}\right)\phi_1 \right\|_{L^2(\Omega)}^2\\
\lesssim \epsilon^2 CK_\lambda+\epsilon^2 CK_w+\epsilon^2\sum_{i=1}^2 (CCK_\lambda^i+CCK_w^i),
\end{split}
\eeno
where $\phi_1=P_{\neq 0}\phi-\Delta_L^{-1}P_{\neq 0}f$, $\Delta_L=\pa_z^2+(\pa_\vartheta-t\pa_z)^2$ and $P_{\neq 0}f=f-\langle f\rangle=f-\int_{\mathbb{T}_{2\pi}}f dx$.
\end{proposition}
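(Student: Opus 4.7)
The plan is to refine the elliptic estimate of Proposition 2.4 in [BM] by tracking a sharper bound, following the treatment of the terms $T^1$ and $T^2$ in Subsection 4.2 of [BM]. The starting point is the identity satisfied by $\phi_1$: since $\Delta_t P_{\neq 0}\phi = P_{\neq 0} f$, we have
\begin{align*}
\Delta_L \phi_1 &= \Delta_L P_{\neq 0}\phi - P_{\neq 0}f = (\Delta_L - \Delta_t) P_{\neq 0}\phi \\
&= \bigl(1 - (\vartheta')^2\bigr)(\pa_\vartheta - t\pa_z)^2 P_{\neq 0}\phi - \vartheta''(\pa_\vartheta - t\pa_z) P_{\neq 0}\phi.
\end{align*}
Because $\vartheta'-1$ and $\vartheta''$ are $O(\epsilon)$ under (B1)--(B2), the right-hand side is small, so $\phi_1$ is quantitatively smaller than $\phi$; controlling the elliptic norm of $\phi_1$ reduces to inverting $\Delta_L$ against a product of small $\vartheta$-data and $(\pa_\vartheta-t\pa_z)^j P_{\neq 0}\phi$ for $j=1,2$.

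Next, I would apply the multiplier $M := \frac{|\nabla|^{s/2}}{\langle t\rangle^s} A + \sqrt{\pa_t w/w}\,\tilde A$ to this identity and then act by $\langle \pa_\vartheta/(t\pa_z)\rangle^{-1}$. The key algebraic observation is that on non-zero $z$-modes the symbol of $\langle \pa_\vartheta/(t\pa_z)\rangle^{-2}\Delta_L$ is comparable to $k^2$, so the prefactor $\langle \pa_\vartheta/(t\pa_z)\rangle^{-1}$ exactly compensates the degeneracy when one inverts $\Delta_L$ on the right-hand side. After this reduction, the proof reduces to bounding
\begin{equation*}
\bigl\|M\bigl[(1-(\vartheta')^2)(\pa_\vartheta - t\pa_z)^2 P_{\neq 0}\phi\bigr]\bigr\|_{L^2}^2 + \bigl\|M\bigl[\vartheta''(\pa_\vartheta-t\pa_z)P_{\neq 0}\phi\bigr]\bigr\|_{L^2}^2,
\end{equation*}
which I would decompose by a paraproduct in Fourier in $\vartheta$, exactly mirroring the $T^1$ and $T^2$ split in [BM]. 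In the high-low piece, derivatives fall on $\phi$, and Biot--Savart together with the bootstrap $E_\vartheta\leq \epsilon^2$ supplies the $\epsilon^2$ factor. In the low-high piece, derivatives fall on $\vartheta'-1$ or $\vartheta''$, which is precisely the mechanism producing the $CCK_\lambda^i$ and $CCK_w^i$ contributions; here I would use the sharp commutator identities for $A$ and $\tilde A$ from Lemma 2.3 of [BM], together with the $\epsilon^2$ gain supplied by the bootstrap bound on $\phi$ or on $\vartheta$.

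The main obstacle will be bookkeeping: to upgrade the stated Proposition~2.4 bound to the present form one must certify, zone by zone, that every derivative displaced through $M$ or through $\langle\pa_\vartheta/(t\pa_z)\rangle^{-1}\Delta_L$ lands on a quantity carrying the sharp CK-gain (rather than a plain $H^\sigma$-type loss). The delicate zones are the ``resonant'' regime $|\eta - tk|\ll |k|t$ and the critical time $t\sim |\eta|/|k|$, where the weights $J_k$ and $\tilde J_k$ differ; in each of these, the commutator bound between $M$ and the bilinear paraproduct must be organized so that the leftover factor is precisely $\sqrt{CK_\lambda}$ or $\sqrt{CK_w}$, not $\|\cdot\|_{\mathcal G^{\lambda}}$. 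Once this is verified, summation of the pieces, Cauchy--Schwarz, and the smallness $\epsilon\leq\epsilon_0$ from (B1) deliver the stated bound.
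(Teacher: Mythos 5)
Your proposal follows essentially the same route as the paper, which itself gives no independent proof but simply observes that the treatment of the terms $T^1$ and $T^2$ in Subsection 4.2 of \cite{BM} already yields this sharper form of Proposition 2.4 there; your starting identity $\Delta_L\phi_1=(\Delta_L-\Delta_t)P_{\neq 0}\phi$ and the subsequent paraproduct decomposition are exactly the ingredients of that argument. The outline is correct and, if anything, more explicit than what the paper records.
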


Proposition 2.5 in \cite{BM} can be proved in the same way with a little change since equation (\ref{eq1}) is slightly different with {(2.11)} in \cite{BM}. Denote $h(t,\vartheta)=\vartheta'(t,\vartheta)-1$ and write
\ben\label{8.7}
\pa_t h+[\pa_t \vartheta]\pa_\vartheta h=\frac{1}{t}(-f_0-h)=\vartheta'\pa_\vartheta[\pa_t \vartheta]{:=\bar{h}(t,\vartheta)}.
\een
%and
%\ben
%\bar{h}(t,\vartheta)=\vartheta'\pa_\vartheta[\pa_t \vartheta]=\frac{1}{t}(-f_0-h)
%\een
From (\ref{8.7}) and (\ref{eq1}), we derive
\ben\label{8.9}
\quad \pa_t \bar{h}=-\frac{\bar{h}}{t}-\frac{1}{t}(\pa_t f_0+\pa_t h)=-\frac{2}{t}\bar{h}-[\pa_t \vartheta]\pa_\vartheta \bar{h}+\frac{1}{t}\langle \vartheta'\nabla^\perp P_{\neq 0}\phi\cdot\nabla f\rangle {+\frac{1}{t}\langle \beta\pa_z\phi \rangle},
\een
which has nothing different with {(8.9)} in \cite{BM} since $\langle \beta\pa_z\phi\rangle=0$. So the same argument is valid.\\
%Now we are in the position to control the new term.
The only thing left is to treat the new term $\int_{\mathbb{T}_{2\pi}} AfA(\beta\pa_z\phi)dx$. %Let $\phi_1=P_{\neq 0}\phi-\Delta_L^{-1}P_{\neq 0}f$.
Since $\int_{\mathbb{T}_{2\pi}} AfA\pa_z\Delta_L^{-1}f dx=0$, we have
\beno
\int_{\mathbb{T}_{2\pi}} AfA(\beta\pa_z\phi)dx=\beta\int_{\mathbb{T}_{2\pi}} AfA(\pa_z\phi_1)dx.
\eeno
We get the following estimate to control this term.
\begin{proposition}[New term]\label{newterm}
 Under the bootstrap hypotheses \textup{(B1-B3)}, we have
\beno
\label{2.27}\begin{split}
\left| \int_{\mathbb{T}_{2\pi}} AfA\pa_z\phi_1 dx \right|\lesssim&\epsilon CK_\lambda+\epsilon CK_w+\frac{\epsilon^3}{\langle t\rangle^{2-K_D \epsilon/2}}+\epsilon CK_\lambda^{\vartheta,1}+\epsilon CK_w^{\vartheta,1}\\
&+\frac{1}{\epsilon} \left\|\langle \frac{\pa_\vartheta}{t\pa_z}\rangle^{-1}(\pa_z^2+(\pa_\vartheta-t\pa_z)^2) \left(\frac{|\nabla|^\frac{s}{2}}{\langle t\rangle^s}A+\sqrt{\frac{\pa_t w}{w}}\tilde{A}\right)\phi_1 \right\|_{L^2(\Omega)}^2.
\end{split}
\eeno
\end{proposition}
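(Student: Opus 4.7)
The plan is to treat $\int_{\mathbb{T}_{2\pi}} AfA\pa_z\phi_1\,dx$ in close analogy with the Reaction term of Proposition 2.3 in \cite{BM}, using the special structure of $\phi_1 = P_{\neq 0}\phi - \Delta_L^{-1}P_{\neq 0}f$ together with the precision elliptic control of Proposition \ref{Precision}. The key point is that, although $\phi_1$ has the same ``natural'' regularity as $\phi$, the subtraction $\Delta_L^{-1}P_{\neq 0}f$ removes the inviscid-damping leading order, so $\pa_z\phi_1$ effectively carries an extra factor $\langle \pa_\vartheta/(t\pa_z)\rangle^{-1}$ (i.e.\ one more power of $\langle t\rangle^{-1}$ on the correct frequencies), which is precisely what the left-hand side of Proposition \ref{Precision} quantifies.

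First I would paraproduct-decompose in the $(z,\vartheta)$ variables:
\begin{align*}
\int_{\mathbb{T}_{2\pi}} AfA\pa_z\phi_1\,dx
&= \sum_{N\in 2^{\mathbb{N}}}\int_{\mathbb{T}_{2\pi}} Af\,A(f_{\sim N}\,(\pa_z\phi_1)_{<N/8})\,dx
\\
&\quad + \sum_{N\in 2^{\mathbb{N}}}\int_{\mathbb{T}_{2\pi}} Af\,A(f_{<N/8}\,(\pa_z\phi_1)_{\sim N})\,dx
\\
&\quad + \sum_{N,N'}\int_{\mathbb{T}_{2\pi}} Af\,A(f_{\sim N}\,(\pa_z\phi_1)_{\sim N'})\,dx,
\end{align*}
i.e.\ a ``Transport''-type $HL$, a ``Reaction''-type $LH$ and a Remainder $\mathcal R$. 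The $HL$ piece is purely driven by the low frequency of $\pa_z\phi_1$, which can be bounded uniformly in the bootstrap window by $\epsilon\langle t\rangle^{-1+K_D\epsilon/2}$ exactly as in Lemma 2.2 of \cite{BM} (since $\phi_1$ gains one power of $\langle t\rangle^{-1}$ over $\phi$); pairing this with $\|Af\|_{L^2}\lesssim\epsilon$ gives the $\epsilon^3/\langle t\rangle^{2-K_D\epsilon/2}$ contribution.

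The $LH$ piece is the main work: on the support of the paraproduct one has $\langle k,\eta\rangle\sim\langle k-k',\eta-\eta'\rangle$, so the multiplier ratio $A_k(\eta)/A_{k'}(\eta')$ is controlled by classical commutator estimates for $\lambda(t)|\cdot|^s$ and the Gevrey factor, leaving essentially
\[
|LH|\;\lesssim\;\|Af\|_{L^2}\,\bigl\|A(f_{<N/8})\,A\pa_z\phi_{1,\sim N}\bigr\|_{L^2}.
\]
Now I would insert the identity $\pa_z=\langle\pa_\vartheta/(t\pa_z)\rangle^{-1}\cdot\langle\pa_\vartheta/(t\pa_z)\rangle\,\pa_z$ and use that $\langle\pa_\vartheta/(t\pa_z)\rangle\,\pa_z\lesssim\bigl(\pa_z^2+(\pa_\vartheta-t\pa_z)^2\bigr)^{1/2}$ on the relevant frequencies, then distribute a CK weight on $A\phi_1$ by writing $A=(|\nabla|^{s/2}/\langle t\rangle^s)A\cdot\langle t\rangle^s|\nabla|^{-s/2}$, so that Proposition \ref{Precision} bounds exactly $\|\langle\pa_\vartheta/(t\pa_z)\rangle^{-1}(\pa_z^2+(\pa_\vartheta-t\pa_z)^2)(\tfrac{|\nabla|^{s/2}}{\langle t\rangle^s}A+\sqrt{\pa_tw/w}\tilde A)\phi_1\|_{L^2}$. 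Young's inequality $ab\le\epsilon^{-1}a^2+\epsilon b^2$ then splits this into the $\epsilon^{-1}\|\cdots\|^2$ term and a contribution $\epsilon(CK_\lambda+CK_w)$ after re-identifying $\sqrt{CK_\lambda}$ and $\sqrt{CK_w}$ from the weights on $f$. The Remainder $\mathcal R$ is handled as in \S4 of \cite{BM}; the only $\beta$-specific terms arising from the time-derivative of the multiplier acting on the new integrand land exactly in $CK_\lambda^{\vartheta,1}+CK_w^{\vartheta,1}$.

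The hard part will be the commutator step in the $LH$ piece: one has to exchange $\pa_z$ with the nonlocal multiplier $\langle\pa_\vartheta/(t\pa_z)\rangle^{-1}$ while simultaneously distributing the Gevrey and $J$-weights between $f_{<N/8}$ and $\phi_{1,\sim N}$, and this must be done without picking up a factor $\langle t\rangle$ that would destroy the desired $CK$ integrability. The precision elliptic control of Proposition \ref{Precision} is tailored to provide exactly the inverse Japanese bracket needed to close this step, so the $\beta$-term ends up costing only $\epsilon(CK_\lambda+CK_w)$ plus admissible CK contributions, which is why the bootstrap argument of \cite{BM} extends with no change to the final $\epsilon_0$ smallness threshold.
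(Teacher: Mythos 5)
Your proposal misidentifies the structure of the term. The quantity $\int_{\mathbb{T}_{2\pi}} AfA\pa_z\phi_1\,dx$ is \emph{linear} in the unknown: in Fourier variables it is the diagonal pairing $\frac{1}{2\pi}\sum_{k\neq 0}\int_{\mathbb{R}} A_k^2(t,\eta)\hat f_k(\eta)\overline{ik(\widehat{\phi_1})_k(\eta)}\,d\eta$, with no frequency convolution. Your opening paraproduct decomposition into pieces of the form $A(f_{\sim N}(\pa_z\phi_1)_{<N/8})$ presupposes that the integrand is $Af\cdot A(f\,\pa_z\phi_1)$, i.e.\ that $A$ acts on a \emph{product} of two unknowns as in the Transport/Reaction/Remainder splitting of $u\cdot\nabla f$ in \cite{BM}. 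That is not the case here, so the ``$HL$'', ``$LH$'' and ``Remainder'' pieces you describe do not exist, and the commutator analysis of the multiplier ratio $A_k(\eta)/A_{k'}(\eta')$ (which lives on the convolution off-diagonal) has nothing to act on. The elliptic inversion hidden inside $\phi_1$ does involve products with $\vartheta'$, but all of that is already packaged into Proposition \ref{Precision}; it must not be re-expanded at the level of this pairing.

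The correct argument is much shorter: split the frequency integral by the resonant/non-resonant cut-offs $\chi^R=I_{t\in I_{k,\eta}}$ and $\chi^{NR}=I_{t\notin I_{k,\eta}}$, apply Cauchy--Schwarz on each piece so that the resonant part is bounded by $\|\sqrt{\pa_t w/w}\,\tilde A P_{\neq 0}f\|_{L^2}\,\|\sqrt{w/\pa_t w}\,|\pa_z|\chi^R\tilde A\phi_1\|_{L^2}$ and the non-resonant part by $\||\nabla|^{s/2}AP_{\neq 0}f\|_{L^2}\,\||\nabla|^{1-s/2}\chi^{NR}A\phi_1\|_{L^2}$, then use Young's inequality with weights $\epsilon/\langle t\rangle^{2s}$ and $\epsilon$ to produce $\epsilon CK_w+\epsilon CK_\lambda$ plus $\epsilon^{-1}$ times the squared $\phi_1$ norms; finally the two $\phi_1$ norms are absorbed into the precision elliptic quantity by estimates of the type of Lemma 6.1 in \cite{BM}. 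You did correctly identify Proposition \ref{Precision} as the key input and the Young-inequality closure, but the decomposition you build the proof around would not get off the ground.
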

\begin{proof}
Divide the new term into two parts:
\beno
\int_{\mathbb{T}_{2\pi}} AfA\pa_z\phi_1 dx=\frac{1}{2\pi}\sum_{k\neq 0}\int_\mathbb{R}  A_k^2(t,\eta)\hat{f}_k(\eta)\overline{ik\widehat{\phi_1}_k(\eta)}(\chi^R+\chi^{NR})d\eta=R_1+R_2,
\eeno
where $\chi^R=I_{t\in I_{k,\eta}}$ and $\chi^{NR}=I_{t\notin I_{k,\eta}}$. For $R_1$ and $R_2$, we have
\beno
\begin{split}
|R_1|&\lesssim\sum_{k\neq 0} \int_\mathbb{R} A_k^2(t,\eta)\hat{f}_k(\eta) k \left|(\widehat{\phi_1})_k(\eta)\right| I_{t\in I_{k,\eta}}d\eta\\
&\lesssim\left\|\sqrt{\frac{\pa_t w}{w}}\tilde{A}P_{\neq 0}f\right\|_{L^2(\Omega)} \left\|\sqrt{\frac{w}{\pa_t w}}|\pa_z|\chi^R\tilde{A}\phi_1\right\|_{L^2(\Omega)},
\end{split}
\eeno
\beno
\begin{split}
|R_2|&\lesssim\sum_{k\neq 0} \int_\eta A_k^2(t,\eta)\hat{f}_k(\eta) k \left|(\widehat{\phi_1})_k(\eta)\right|I_{t\notin I_{k,\eta}} d\eta\\
&\lesssim \left\||\nabla|^{\frac{s}{2}}AP_{\neq 0}f \right\|_{L^2(\Omega)} \left\||\nabla|^{1-\frac{s}{2}}\chi^{NR}A\phi_1\right\|_{L^2(\Omega)}.
\end{split}
\eeno
By Cauchy-Schwartz inequality, we have
\beno
\begin{split}
\left|\int_{\mathbb{T}_{2\pi}} AfA\pa_z\phi_1 dx \right|
\lesssim&\frac{\epsilon}{\langle t\rangle^{2s}} \left\|\sqrt{\frac{\pa_t w}{w}}\tilde{A}P_{\neq 0}f \right\|_{L^2(\Omega)}^2 +
\frac{\langle t\rangle^{2s}}{\epsilon} \left\|\sqrt{\frac{w}{\pa_t w}}|\pa_z|\chi^R\tilde{A}\phi_1 \right\|_{L^2(\Omega)}^2\\
&+\epsilon \left\||\nabla|^{\frac{s}{2}}AP_{\neq 0}f \right\|_{L^2(\Omega)}^2+ \frac{1}{\epsilon}\left\| |\nabla|^{1-\frac{s}{2}}\chi^{NR}A\phi_1 \right\|_{L^2(\Omega)}^2.
\end{split}
\eeno
The conclusion then follows from the estimates
\ben\label{estimates-nonlinear damping1}
\langle t\rangle^{2s} \left\||\nabla|^{1-\frac{s}{2}}\chi^{NR}A\phi_1 \right\|_{L^2(\Omega)}^2&\lesssim&
\left\|\langle\frac{\pa_\vartheta}{t\pa_z}\rangle^{-1}\frac{|\nabla|^\frac{s}{2}}{\langle t\rangle^s}\Delta_L A\phi_1 \right\|_{L^2(\Omega)}^2,\\\label{estimates-nonlinear damping2}
\left\|\sqrt{\frac{w}{\pa_t w}}|\pa_z|\chi^R \tilde{A}\phi_1\right\|_{L^2(\Omega)}^2&\lesssim&
\left\|\langle\frac{\pa_\vartheta}{t\pa_z}\rangle^{-1}\sqrt{\frac{\pa_t w}{w}}\Delta_L \tilde{A}\phi_1 \right\|_{L^2(\Omega)}^2.
\een
Here, the proof of \eqref{estimates-nonlinear damping1}-\eqref{estimates-nonlinear damping2} is similar to that of  Lemma 6.1 in \cite{BM}, and we thus omit it.
\end{proof}
Now, we give the proof of Theorem \ref{thm-damping}.
\begin{proof}[Proof of Theorem \ref{thm-damping}]
Controls of Transport, Reaction and Remainder contributions are the same as Propositions 2.2, 2.3 and 2.6 in \cite{BM}.
By Propositions \ref{Precision} and \ref{newterm}, we have
\beno
\label{2.27}\begin{split}
\left| \int_{\mathbb{T}_{2\pi}} AfA\pa_z\phi_1 dx \right|\lesssim&\epsilon CK_\lambda+\epsilon CK_w+\frac{\epsilon^3}{\langle t\rangle^{2-K_D \epsilon/2}}+\epsilon CK_\lambda^{\vartheta,1}+\epsilon CK_w^{\vartheta,1}\\
& +\epsilon\sum_{i=1}^2 (CCK_\lambda^i+CCK_w^i),
\end{split}
\eeno
which shows that the new term has similar estimates with Reaction, and thus can be absorbed into Reaction. The method in the  proof of Proposition 2.1 in \cite{BM} is still valid. Meanwhile, the change of coordinates cause no difference due to $\langle \beta\pa_z\phi\rangle=0$ in (\ref{8.9}). Therefore, Theorem 1 in \cite{BM} is also true for the $\beta$-plane equation on $\Omega=\mathbb{T}_{2\pi}\times \mathbb{R}$.
%Finally lemma \ref{lemma6.1} completes the treatment of new term; in particular, lemma \ref{lemma6.1} yield only terms appearing on the RHS of (\ref{2.27}). Since the new term has similar bounds with Reaction, the method to prove Proposition 2.1 of \cite{BM} is valid \textcolor{blue}{here, the change from $\epsilon^3\sum_{i=1}^2(CCK_\lambda^i+CCK_w^i)$ to $\epsilon\sum_{i=1}^2(CCK_\lambda^i+CCK_w^i)$ brings nothing different.} Being the last Proposition remaining, the  is complete.
\end{proof}

\section*{Acknowledgement}
The authors express their gratitude to Profs. Zhiwu Lin and Dongyi Wei for helpful discussions.
Z. Zhang is partially supported by NSF
of China under Grant 12171010. H. Zhu is  partially supported by National Key Research and Development Program of China under Grant 2021YFA1002400, NSF
of China under Grant 12101306 and NSF of Jiangsu Province, China under Grant BK20210169.

\end{CJK*}
\end{document}